\newcommand{\Z}{\mathbb{Z}}
\newcommand{\R}{\mathbb{R}}
\newcommand{\N}{\mathbb{N}}
\newcommand{\C}{\mathbb{C}}
\newcommand{\U}{\mathbb{U}}
\newcommand{\E}{\mathbb{E}}
\newcommand{\mc}{\mathcal}
\newcommand{\mf}{\mathfrak}
\newcommand{\eps}{\varepsilon}
\newcommand{\ind}{{\bf 1}}
\renewcommand{\P}{\mathbb{P}}
\newcommand{\rK}{{\sf K}}
\newcommand{\rR}{{\sf R}}
\newcommand{\uK}{{\underline {\sf K}}}
\newcommand{\rS}{{\sf S}}
\DeclareMathOperator{\dist}{dist}
\newcommand{\dg}{\dagger}
\newcommand{\dmd}{\diamondsuit}
\newcommand{\mfm}{{\mathfrak m}}
\newcommand{\Mon}{{\sf Mon}}
\newcommand{\Lap}{\Delta\!}
\newcommand{\dbar}{\bar\partial}
\DeclareMathOperator{\Cov}{Cov}
\DeclareMathOperator{\Tr}{Tr}
\DeclareMathOperator{\Id}{Id}
\DeclareMathOperator{\sgn}{sgn}
\DeclareMathOperator{\Res}{Res}
\newcommand{\thetaf}[2]{\vartheta\left [\begin{array}{c}#1\\#2\end{array}\right]}
\title{Dimers and families of Cauchy-Riemann operators I}
\author{Julien Dub\'edat\footnote{Partially supported by NSF grant DMS-1005749 and the Alfred P. Sloan Foundation. MSC: 60G15, 82B20.}}
\newtheorem{thm}{Theorem}
\newtheorem{Thm}[thm]{Theorem}
\newtheorem{Prop}[thm]{Proposition}
\newtheorem{Lem}[thm]{Lemma}
\newtheorem{Cor}[thm]{Corollary}
\newglossaryentry{Sigma}
	{name={\ensuremath{\Sigma}}, 
	description={a flat torus $\C/(\Z+\tau\Z)$}
}
\newglossaryentry{tau}
	{name={\ensuremath{\tau}}, 
	description={modulus of the torus $\Sigma$}
}
\newglossaryentry{phi}
	{name={\ensuremath{\phi}}, 
	description={realisation of a (scalar) free field}
}
\newglossaryentry{J}
	{name={\ensuremath{J}}, 
	description={free field current}
}
\newglossaryentry{Wkp}
	{name={\ensuremath{W^{k,p}}}, 
	description={Sobolev space}
}
\newglossaryentry{Hs}
	{name={\ensuremath{H^s}}, 
	description={Fractional Sobolev space}
}
\newglossaryentry{dA}
	{name={\ensuremath{dA}}, 
	description={area measure}
}
\newglossaryentry{g0}
	{name={\ensuremath{g_0}}, 
	description={coupling constant of the free field}
}
\newglossaryentry{Lap}
	{name={\ensuremath{\Lap}}, 
	description={positive Laplacian}
}
\newglossaryentry{Om}
	{name={\ensuremath{{\mc O}_m}}, 
	description={magnetic ``operator"}
}
\newglossaryentry{ast}
	{name={\ensuremath{\ast}}, 
	description={Hodge star operator}
}
\newglossaryentry{Lambda}
	{name={\ensuremath{\Lambda}}, 
	description={rhombus tiling (with edge length $\delta$)}
}
\newglossaryentry{Gamma}
	{name={\ensuremath{\Gamma}}, 
	description={isoradial graph}
}
\newglossaryentry{Gammadg}
	{name={\ensuremath{\Gamma^\dg}}, 
	description={isoradial graph dual to $\Gamma$}
}
\newglossaryentry{spadesuit}
	{name={\ensuremath{(\spadesuit)}}, 
	description={rhombus angles are bounded away from 0}
}
\newglossaryentry{M}
	{name={\ensuremath{M}}, 
	description={a bipartite graph, superposition of $\Lambda$ and its dual. (Black vertices: $M_B$; white vertices: $M_W$)}
}
\newglossaryentry{K}
	{name={\ensuremath{K}}, 
	description={$\C$-linear Kasteleyn operator}
}
\newglossaryentry{rK}
	{name={\ensuremath{\rK}}, 
	description={$\C$-linear Kasteleyn operator}
}
\newglossaryentry{nu}
	{name={\ensuremath{\nu}}, 
	description={an angle assigned to a vertex of $M$}
}
\newglossaryentry{uKinv}
	{name={\ensuremath{\protect\underline{\rK}^{-1}}}, 
	description={kernel inverting the full-plane $\rK$}
}
\newglossaryentry{LapGamma}
	{name={\ensuremath{\Lap_\Gamma}}, 
	description={graph Laplacian on $\Gamma$}
}
\newglossaryentry{Xi}
	{name={\ensuremath{\Xi}}, 
	description={a subgraph of $M$}
}
\newglossaryentry{m}
	{name={\ensuremath{{\mfm}}}, 
	description={a perfect matching on $\Xi$}
}
\newglossaryentry{Kalpha}
	{name={\ensuremath{K_\alpha}}, 
	description={perturbed real Kasteleyn operator}
}
\newglossaryentry{rKalpha}
	{name={\ensuremath{\rK_\alpha}}, 
	description={perturbed complex Kasteleyn operator}
}
\newglossaryentry{mcZ}
	{name={\ensuremath{{\mathcal Z}}}, 
	description={dimer partition function}
}
\newglossaryentry{h}
	{name={\ensuremath{h}}, 
	description={dimer height function, defined on $\Xi^\dg$}
}
\newglossaryentry{pwb}
	{name={\ensuremath{p(w,b)}}, 
	description={$\P((bw)\in\mfm)$, under the full-plane measure}
}
\newglossaryentry{RB}
	{name={\ensuremath{R_B}}, 
	description={restrictions from continuous functions to $\C^{M_B}$}
}
\newglossaryentry{RW}
	{name={\ensuremath{R_W}}, 
	description={restrictions from $(0,1)$-forms to $\C^{M_W}$}
}
\newglossaryentry{Upsilon}
	{name={\ensuremath{\Upsilon}}, 
	description={the lattice $\Z+\tau\Z$}
}
\newglossaryentry{TSigma}
	{name={\ensuremath{T_\Sigma}}, 
	description={analytic torsion on the torus $\Sigma$}
}
\newglossaryentry{CMBchi}
	{name={\ensuremath{(\C^{M_B})_\chi}}, 
	description={$\chi$-multivalued functions on $M_B$}
}
\newglossaryentry{Gchi}
	{name={\ensuremath{G_\chi}}, 
	description={Chiral Green kernel}
}
\newglossaryentry{wedge}
	{name={\ensuremath{\wedge}}, 
	description={infix minimum}
}
\newglossaryentry{fkchi}
	{name={\ensuremath{f_{k,\chi}}}, 
	description={a special $\chi$-multivalued discrete holomorphic function}
}
\newglossaryentry{gw}
	{name={\ensuremath{g_w}}, 
	description={a discrete meromorphic $\chi$-multivalued (with a single pole at $w$ adjacent to the singularity}
}
\newglossaryentry{uKchi}
	{name={\ensuremath{\protect\underline{\rK}^{-1}_\chi}}, 
	description={chiral Cauchy kernel}
}
\newglossaryentry{Srho}
	{name={\ensuremath{S_\rho}}, 
	description={inverting kernel for $\rK$ operating on $\rho$-multivalued functions}
}
\newglossaryentry{MonM}
	{name={\ensuremath{\Mon_M}}, 
	description={Monomer correlation}
}
\begin{document}
\maketitle
\begin{abstract}
In the dimer model, a configuration consists of a perfect matching of a fixed graph. If the underlying graph is planar and bipartite, such a configuration is associated to a height function. For appropriate ``critical" (weighted) graphs, this height function is known to converge in the fine mesh limit to a Gaussian free field, following in particular Kenyon's work.
 
In the present article, we study the asymptotics of smoothed and local field observables from the point of view of families of Cauchy-Riemann operators and their determinants. This allows in particular to obtain a functional invariance principle for the field; characterise completely the limiting field on toroidal graphs as a compactified free field; analyse electric correlators; and settle the Fisher-Stephenson conjecture on monomer correlators.

The analysis is based on comparing the variation of determinants of families of (continuous) CR operators with that of their discrete (finite dimensional) approximations. This relies in turn on estimating precisely inverting kernels, in particular near singularities. In order to treat correlators of ``singular" local operators, elements of (multiplicatively) multi-valued discrete holomorphic functions are discussed.  

\end{abstract}

\tableofcontents

\section{Introduction}

The dimer model is a classical model of statistical mechanics; it consists in sampling uniformly (or with weights) perfect matchings of a bipartite graph. For planar graphs, Kasteleyn showed that the partition function can be expressed as the Pfaffian of a properly signed (weighted) adjacency matrix for the graph, the Kasteleyn matrix (or operator). In the small mesh limit, this operator can be interpreted as a finite difference version of a $\dbar$-operator. In the ``Temperleyan" case we will be considering, dimer configurations are in measure-preserving bijection with spanning trees on a related graph.

Following Thurston, one may associate a height function to a dimer configuration on the square lattice (or another bipartite graph). As suggested by Benjamini, this height function can be understood in terms of windings of the associated spanning tree.

In \cite{Ken_domino_conformal}, Kenyon considers the scaling limit of the height function of dimers on the square lattice for planar domains with appropriate boundary conditions, and proves that in the scaling limit, it converges in distribution to a conformally invariant object for $n$-connected domains. This is based on precise asymptotics of the inverse Kasteleyn matrix. In \cite{Ken_domino_GFF}, he identifies this limiting distribution as the classical massless Gaussian free field, in the simply connected case. 

In the present article, we are interested in several extensions of these results. The main point of view is that the invariance principles under consideration can be related in a natural way to Quillen's theory of families of Cauchy-Riemann (CR) operators (\cite{Qui}).

The Kasteleyn matrix may be thought of as a discretisation of the Cauchy-Riemann $\dbar$ operator:
$$\dbar f=\frac{\partial f}{\partial \bar z}d\bar z$$
and functions in its kernel (at least in some region of space) are discrete holomorphic functions. The question of discretisation of various objects and properties of discrete complex and harmonic analysis, going back to \cite{Duffin_rhombic}, has proved instrumental in the asymptotic analysis of combinatorial models such as the dimer and Ising model (\cite{Ken_domino_conformal,Smi_ising,Smi_ICM}).

We will consider broadly two types of perturbation of the ``standard" situation where the Kasteleyn operator approximates the $\dbar$ operator.

Firstly, we will study regular perturbations, corresponding in the continuous limit to $\dbar+\alpha$, $\alpha$ a smooth ``potential". This will lead in particular to a complete description of the scaling limit of the dimer height field on the torus as a compactified free field (Theorem \ref{Thm:toruscompact}).

Secondly, we will turn to singular perturbations. In the continuum, these may be thought alternatively as $\dbar+\alpha$, with $\alpha$ a potential with simple poles at prescribed singularities; or $\dbar$ operating on multiplicatively multivalued functions (or sections of a unitary line bundle over a punctured Riemann sphere). In this situation, the operators are parameterised by the position of the punctures and the monodromy data. The study of these families will lead to precise asymptotics for ``electric" correlators of the dimer height field (Theorem \ref{Thm:electr}), partially answering the Open problem 1 \cite{Ken_IAS}; and for monomer correlators (Theorem \ref{Thm:FS}), in fulfilment of the Fisher-Stephenson conjecture (\cite{FisSte2}, see also \cite{Ciucu_PNAS}). At some general level, all the end results in the article (on compactification, electric and monomer/magnetic correlators) naturally fit in the classical Coulomb Gas heuristics \cite{Nien_Coulomb,DiFSalZub_coulomb}.

The article is organised as follows. Section 2 provides brief background on the continuous free field, the discrete dimer height field, and families of CR operators. In Section 3, we review the necessary results on isoradial dimers. Invariance principles in the plane are discussed in Section 4. The case of toroidal graphs is treated in Section 5. Section 6 describes a general surgery principle for (converging sequences) of discrete CR operators. Discrete multi-valued holomorphic functions and electric correlators are described in Section 7. Monomers correlators are studied in Section 8.

\section{Background and overview}

The main goal of this article is to express asymptotics of natural dimer observables in terms of a (compactified) free field; a key tool will be the study of families of (discrete and continuous) Cauchy-Riemann operators and their variational properties. In this section we provide a brief introduction to these objects.

\subsection{Free field}

The Gaussian (or massless) free field is a Gaussian field with covariance kernel given by the Laplacian Green's function (or a multiple thereof). For a general introduction to the free field, see e.g. \cite{Simon_Pphi}, Chapter 6 in \cite{Glimm_Jaffe}, the survey \cite{She_GFFmath}, and Section 4 in \cite{Dub_SLEGFF}; for the general theory of Gaussian spaces, see \cite{Janson}. Here we will consider the free field on the plane $\C$ or a torus $\gls{Sigma}=\C/(\Z+\tau\Z)$. In both cases, the Laplacian has a non-trivial kernel consisting of constant functions. Consequently, the free field is only defined there up to an additive constant.

Following Gross' abstract Wiener space approach, one may regard the free field as a random element $\gls{phi}$ of a Banach space, usually a Sobolev space modulo additive constants: $H^{-\eps}(\Sigma)/\R$ (for some positive $\eps$). The elements of this space, and thus realisations of the free field, are distributions (in the sense of Schwarz, i.e. generalised functions). In order to avoid quotienting by constant functions, one may consider its distributional total derivative, the {\em current} 
\begin{equation}\label{eq:currentdef}
\gls{J}=d\phi
\end{equation}

After brief reminders on Sobolev spaces, we will discuss the scalar free field (in the plane) and the compactified free field.

\subsubsection{Sobolev Spaces}\label{ssec:Sobolev}

The results described here are used mostly to state and obtain sharper results in Sections \ref{sec:planar},\ref{sec:torus} and have no bearing on Sections \ref{Sec:surgery}, \ref{sec:electr} and \ref{sec:mono}.

For notational simplicity (and to avoid difficulties for domains with boundaries or unbounded domains), we discuss here the case of the square torus $\Sigma=\C/(\Z+i\Z)$. See e.g. Chapter 7.1 in \cite{Stroock_PDEs}, and, for a comprehensive treatment, see Chapters 8-9 in \cite{Brezis} or \cite{Adams_Sobolev}. For $k\in\N$, the space $H^k(\Sigma)$ of functions with square integrable $k$-th order derivatives is the completion of smooth functions on $\Sigma$ w.r.t. to the norm
$$\|f\|_{H^k}=\sum_{\alpha:|\alpha|\leq k}\|\partial^{\alpha}f\|_{L^2}$$
where $\alpha$ denotes a multi-index (possibly empty) and $|\alpha|$ its order. For example, $\|f\|_{H^1}=\|f\|_{L^2}+\|f_x\|_{L^2}+\|f_y\|_{L^2}$. 

More generally, for $k\in\N$ and $p\geq 1$, one may consider the Sobolev space $\gls{Wkp}$ obtained as the completion of smooth functions w.r.t.
$$\|f\|_{W^{k,p}}=\sum_{\alpha:|\alpha|\leq k}\|\partial^{\alpha}f\|_{L^p}$$
For $k\geq 0$, one can then define $H^{-k}(\Sigma)$ from $H^k(\Sigma)$ by duality, using $L^2(\Sigma)$ as a pivot (see e.g. Remark 3 in Section 5.2 of \cite{Brezis}). Since we have a natural inclusion $H^0=L^2\subset H^k$ with $\|.\|_{H^0}\leq\|.\|_{H^k}$, we may write 
\begin{align*}
H^{-k}(\Sigma)&\simeq\{\varphi\in L(H^k(\Sigma),\C):\sup_{f\in H^k(\Sigma)}\frac{|\varphi(f)|}{\|f\|_{H^k}}<\infty\}\\
&\supset \{\varphi\in L(H^0(\Sigma),\C):\sup_{f\in H^0(\Sigma)}\frac{|\varphi(f)|}{\|f\|_{H^0}}<\infty\}=L^2(\Sigma)\supset H^k(\Sigma)
\end{align*}
Here $L(U,V)$ denotes the space of linear maps from $U$ to $V$. Then one can interpolate between the $H^k$, $k\in\Z$, as follows. Take the Fourier basis of $L^2(\Sigma)$ given by 
$$e_{mn}(x+iy)=\exp(2i\pi(mx+ny))$$
for $m,n\in\Z^2$, so that (Parseval)
$$L^2(\Sigma)=\{f:f=\sum_{m,n\in\Z}a_{mn}e_{mn}, \sum |a_{mn}|^2<\infty\}$$
and
$$H^k(\Sigma)=\{f:f=\sum_{m,n\in\Z}a_{mn}e_{mn}, \sum (m^2+n^2)^k|a_{mn}|^2<\infty\}$$
For a ``fractional" (not necessarily integer) Sobolev index $s\in\R$, we may define
\begin{equation}\label{eq:defHs}
\gls{Hs}(\Sigma)=\{f:f=\sum_{m,n\in\Z}a_{mn}e_{mn}, \sum (1+m^2+n^2)^s|a_{mn}|^2<\infty\}
\end{equation}
which is the closure of smooth functions on $\Sigma$ w.r.t. the norm
$$\|f\|_s^2=\sum (1+m^2+n^2)^s|a_{mn}|^2$$
More intrinsically, set $H^0(\Sigma)=L^2(\Sigma)$, and for $s\in (0,1)$, one can define an equivalent Sobolev norm by
$$\|f\|_s^2=\|f\|_{L^2}^2+\int_{\Sigma^2}\frac{|f(x)-f(y)|^2}{|x-y|^{2s+2}}dA(x)dA(y)$$
where $\gls{dA}$ is the area measure (see 7.48 in \cite{Adams_Sobolev}; with a slight abuse of notation, $|x-y|$ is the distance on the torus). For $k\geq 0$, set e.g.
$$\|f\|_{k+s}=\sum_{\alpha:|\alpha|\leq k}\|\partial^\alpha f\|_s$$
Then $H^{k+s}(\Sigma)$ is the completion of smooth functions w.r.t. $\|.\|_{k+s}$. As for integer indices, the negative index spaces may be defined from the positive index ones by duality.

We list below a few key properties of Sobolev spaces (specialised to the plane or compact surfaces). 

\begin{enumerate}
\item For $s<s'$, the natural inclusion $H^{s'}\subset H^{s}$ is a compact embedding (clear e.g. from \eqref{eq:defHs}).
\item $\frac{\partial}{\partial x}$ (resp. $\frac{\partial}{\partial y}$) defines a bounded operator $H^{s+1}\rightarrow H^s$ (also clear from the definitions.)
\item Sobolev embedding: there is a natural bounded embedding $H^{k+s}\rightarrow W^{k,p}$, $k\in\N$, $s\in(0,1)$, $2\leq p\leq\frac 2{1-s}$ (see 7.58 in \cite{Adams_Sobolev}).
\item Morrey's inequality. If $f\in W^{1,p}$, $p>2$, $\alpha=1-\frac 2p$, then $f$ is $\alpha$-H\"older and $\|f\|_{C^\alpha}\leq c\|f\|_{W^{1,p}}$ (see Theorem 9.12 in \cite{Brezis}).
\item $C^\infty(\Sigma)=\cap_{s\in\R}H^s(\Sigma)$ (follows from Morrey's inequality).
\item The space of distributions $C^\infty(\Sigma)'$ is given by $C^\infty(\Sigma)'=\cup_{s\in\R}H^s(\Sigma)$ (if $T$ is a distribution, $\phi\mapsto T(\phi)$ extends to a bounded linear map on $H^s$ for $s$ large enough).
\end{enumerate}
Remark that combining the Sobolev embedding with Morrey's inequality shows that $f\in H^{1+s}$ is $\alpha$-H\"older for $\alpha<s<1$.

\subsubsection{Scalar free field}

The (centred) free field on the plane $\C$ (resp. on a torus $\Sigma$) (see \cite{Simon_Pphi}, Chapter 6 of \cite{Glimm_Jaffe}, or \cite{She_GFFmath}) is the Gaussian field with action functional
\begin{equation}\label{eq:GFFaction}
S(\phi)=\frac{g_0}{4\pi}\int|\nabla\phi|^2dA
\end{equation}
i.e. it is the law on distributions $\phi\in\C^\infty(\C)'$ (modulo additive constant) characterised by the fact that for any zero-mean test function $\psi\in C^\infty_c(\C)$ (resp. $C^\infty_c(\Sigma)$), $\int (\psi\phi) dA$ is a centered Gaussian variable with variance $\frac{2\pi}{g_0}\int \psi\Lap^{-1}\psi dA$, where $\gls{g0}>0$ is the coupling constant and $\gls{Lap}$ is the positive Laplacian:
$$\Lap=-\frac{\partial^2}{\partial x^2}-\frac{\partial^2}{\partial y^2}$$
(Note that $\int\psi\Lap^{-1}\psi dA$ is well-defined for a zero-mean function.)

Instead of working on the Fr\'echet space of distributions, it is sometimes convenient to work on a Banach space $H^s$. In dimension two, we can take $s$ to be any negative number: $\phi\in H^{s}$ for any $s<0$. Notice that almost surely $\phi$ is not defined pointwise (and not even almost everywhere). Then the current $J=d\phi$ is well-defined as a (random) $1$-form with coefficients in $H^{s-1}$; it is closed in the sense that $dJ=0$ in $H^{s-2}$. 

In terms of the characteristic functional,
\begin{equation}\label{eq:GFFchar}
\langle\exp(i\int(\psi\phi)dA\rangle=\exp(-\frac \pi {g_0}\int \psi\Lap^{-1}\psi dA)
\end{equation}
where $\langle\rangle$ denotes the expectation with respect to the free field. 

Instead of integrating against a smooth test function, it is often useful and natural to consider local ``operators", involving the local behaviour of the field at a certain number of selected points. For a detailed conceptual discussion, we refer the reader to \cite{MakKan_GFFCFT}; we will need here only a few simple cases.

\paragraph{Current correlations.\\}
As a basic example, one may consider current correlations, given by the Wick formula:
$$\langle J(z_1)\dots J({z_{2n}})\rangle =(2\pi {g_0}^{-1})^n\sum_{\{(\alpha_1,\beta_1),\dots,(\alpha_n,\beta_n)\}}\prod_{i=1}^n d_{z_{\alpha_i}}d_{z_{\beta_i}}G(z_{\alpha_i},z_{\beta_i})$$
where the sum bears on the pairings $\{(\alpha_1,\beta_1),\dots,(\alpha_n,\beta_n)\}$ of indices in $\{1,\dots,2n\}$ (for an odd number of currents, the correlator vanishes due to the symmetry in law $\phi\leftrightarrow-\phi$). Here $G_\C$ is the Green kernel inverting the Laplacian:
$$(\Lap^{-1} f)(z_1)=\int G_\C(z_1,z_2)f(z_2)dA(z_2)$$
with 
$$G_\C(z_1,z_2)=-\frac{1}{2\pi}\log|z_2-z_1|$$
in the plane. 

This expression makes sense when integrated against regular enough test functions $\psi_1(z_1),\dots,\psi_{2n}(z_{2n})$; for instance,
$$\left\langle\left(\int\psi_1\partial_{x}\phi dA\right)\left(\int\psi_2\partial_{y}\phi dA\right)\right\rangle=\int\psi_1(z_1)\partial^2_{x_1y_2}G_\C(z_1,z_2)\psi_2(z_2)dA(z_1)dA(z_2)$$
where $z_j=x_j+iy_j$ and $\psi_1,\psi_2$ are smooth test functions with disjoint supports.

More generally, for any multi-index $\alpha$, $\partial^\alpha\phi$ is a random distribution (which we can realise in $H^{-s-|\alpha|}$ for any $s>0$). The correlator
$$\langle \partial^{\alpha_1}\phi(z_1)\dots\partial^{\alpha_{n}}\phi(z_n)\rangle$$
is specified by
$$\left\langle\left(\prod_j\int\psi_j\partial^{\alpha_j}\phi dA\right)\right\rangle=\int \langle \partial^{\alpha_1}\phi(z_1)\dots\partial^{\alpha_{n}}\phi(z_n)\rangle\prod_j\psi_j(z_j)dA(z_j)$$
 for any $n$-tuple $\psi_1,\dots,\psi_n$ of test functions with disjoint support. (Again, this is only relevant when $n$ is even).

From this expression, we see that we can also use approximations of the identity. Let $\eta$ be a compactly supported smooth function with $\eta\geq 0$ and $\int\eta dA=1$, and for $\delta$ small set $\eta_\delta(z)=\delta^{-2}\eta(\delta^{-1}z)$. Then
\begin{equation}\label{eq:mollcurrentcorr}
\langle \partial^{\alpha_1}\phi(z_1)\dots\partial^{\alpha_{n}}\phi(z_n)\rangle=\lim_{\delta\searrow 0}\left\langle\prod_j\int \eta_\delta(-z_j+\cdot)
\partial^{\alpha_j}\phi
\right\rangle
\end{equation}
Notice that for any $\delta>0$, the RHS is a well-defined random variable. More generally, let $U$ be an open set and $z\notin\overline U$. Let $F(\phi_{|U})$ be a, say, bounded r.v. depending only on the field in $U$ (eg $F(\phi_{|U})=f(\int\psi_1\phi,\dots,\int\psi_n\phi)$, for test functions $\psi_1,\dots,\psi_n$ supported in $U$). Then one can define
$$\langle \partial^{\alpha}\phi(z)F(\phi_{|U})\rangle=\lim_{\delta\searrow 0}\left\langle F(\phi_{|U})\int \eta_\delta(z+\cdot)
\partial^{\alpha}\phi
\right\rangle$$

\paragraph{Electric correlators.\\}
After distributional derivatives of the field, the next local operators to consider are {\em electric} vertex operators (e.g. Lecture 1.6 in \cite{Gaw_CFT}, 10.4.1 and 12.6.2 in\cite{DiF}), which we write formally as $\exp(i\alpha \phi(z))$ where the constant $\alpha$ is the {\em charge}. The difficulty is that, as $\phi$ is not defined pointwise but as a distribution, it may not {\em a priori} be composed with the nonlinear function $\exp$. %
A standard normalisation scheme (e.g. Section 1.6 in \cite{Gaw_CFT}) consists, as in \eqref{eq:mollcurrentcorr}, in mollifying the field to obtain well-defined variables and take the limit of the resulting correlators as $\delta\searrow 0$. However, in order to obtain a non-degenerate limit, we now need a multiplicative counterterm. 

Let us specialise to the simplest case, which is the one relevant in this article, where we want to define a regularised ``$\langle\exp(i\sum_j\alpha\phi(z_j))\rangle$". Here $z_1,\dots,z_n$ are distinct points in the plane and $\alpha_1,\dots,\alpha_n$ are charges adding up to zero (which we need since $\phi$ is defined modulo an additive constant). We start with the well-defined, mollified correlator
$$\left\langle\exp\left(i\sum_j\alpha_j\int \phi\eta_\delta(-z_j+\cdot)\right)\right\rangle$$
which, by \eqref{eq:GFFchar}, equals 
$$\exp\left(-\frac{\pi}{g_0}\sum_{j,k}\alpha_j\alpha_k\int\eta_\delta(-z_j+u)G_\C(u,v)\eta_\delta(-z_k+v)dA(u)dA(v)\right)$$
In this sum, the off-diagonal terms have a finite limit (as $\delta\searrow 0$), while the diagonal terms have a logarithmic but purely local (i.e. independent of the position and nature of the other insertions) divergence. Indeed, by scaling we have
$$\int\eta_\delta(u)G_\C(u,v)\eta_\delta(v)dA(u)dA(v)=-\frac 1{2\pi}\log\delta+\int\eta(u)G_\C(u,v)\eta(v)dA(u)dA(v)$$
Let us choose $\eta$ so that the RHS is $-\frac 1{2\pi}\log\delta$. Then
\begin{equation}\label{eq:electcorrGFF}
\begin{array}{rl}
\langle :\exp(i\sum_j\alpha_j\phi(z_j)):\rangle_\C&\stackrel{def}{=}\lim_{\delta\searrow 0}\delta^{-\frac1{2g_0}\sum_j\alpha_j^2}\langle \exp(i\sum_j\alpha_j\int\eta_\delta(-z_j+\cdot)\phi)\rangle_\C\\
&=\prod_{j<k}|z_j-z_k|^{\frac {\alpha_j\alpha_k}{g_0}}
\end{array}
\end{equation}
where the colons recall the implicit multiplicative counterterm. More generally, if $F(\phi_{|U})$ is a test function as below \eqref{eq:mollcurrentcorr}, $z_1,\dots,z_n\notin\bar U$, we may define
$$\langle :\exp(i\sum_j\alpha_j\phi(z_j): F(\phi_{|U})\rangle\stackrel{def}{=}\lim_{\delta\searrow 0}\delta^{-\frac1{2g_0}\sum_j\alpha_j^2}\langle \exp(i\sum_j\alpha_j\int\eta_\delta(-z_j+\cdot)\phi)F(\phi_{|U})\rangle$$

\paragraph{Magnetic correlators.\\}
Another set of ``local operators" is given by {\em magnetic} correlators.  
These are disorder variables, i.e. they represent a modification of the state space rather than a modification of the state weights. Specifically, given marked points $z_1,\dots,z_{n}$ and ``magnetic charges" $m_1,\dots,m_n$, one considers additively multivalued functions which increase by $2\pi m_j$ when cycling clockwise around $z_j$; one requires $\sum_jm_j=0$ so that the function is single-valued near infinity. This defines an affine state space with a single element of minimal (at least after normalisation) Dirichlet energy: 
$$\phi_0=\sum_jm_j\Im\log(z-z_j)$$
In the Gaussian formalism, offsetting fields by a harmonic function $\phi_0$ results in multiplying the partition function by $\exp(-\frac{g_0}{4\pi}\int_\Sigma |\nabla\phi_0|^2dA)$ (see e.g. Section 5 in \cite{Dub_SLEGFF} for a discussion of free field partition functions). Here the Dirichlet energy is infinite, due to logarithmic singularities at the $z_j$'s. A regularised Dirichlet energy is obtained by discarding the leading part (as $\delta\searrow 0$) of $\int_{(\cup_j D(z_j,\delta))^c}|\nabla \phi_0|^2dA$. Representing by $\gls{Om}(z)$ a magnetic charge $m$ at $z$, we will thus consider the following regularised correlator:
$$\langle :\prod_j{\mc O}_{m_j}(z_j):\rangle\stackrel{def}{=}\exp\left(-\frac{g_0}{4\pi}\int^{reg}|\nabla\phi_0|^2dA\right)$$
where
$$\int^{reg}|\nabla\phi_0|^2dA\stackrel{def}{=}\lim_{\delta\searrow 0}\left(\int_{(\cup_j D(z_j,\delta))^c}|\nabla\phi_0|^2dA+2\pi\sum_jm_j^2\log|\delta|\right)$$
Again, in the planar case this correlator has a simple evaluation. Indeed, note that $\phi_0$ has the same Dirichlet energy as its (single valued) harmonic conjugate $\tilde\phi_0=\sum_j m_j\Re\log(z-z_j)$. By Green's formula,
\begin{align*}
\int_{\Sigma\setminus(\cup_j D(z_j,\delta))}|\nabla \tilde\phi_0|^2dA
&=\sum_j\int_{C(z_j,\delta)}\tilde\phi_0\partial_n\tilde\phi_0 d\ell=-2\pi\sum_j (m_j+O(\delta))(m_j\log|\delta|+\sum_{k\neq j}m_k\log|z_k-z_j|+O(\delta))\\
&=-{2\pi}(\sum_j m_j^2)\log|\delta|-{2\pi}\sum_{j\neq k}m_jm_k\log|z_j-z_k|
\end{align*}
so that
\begin{equation}\label{eq:GFFmagncorr}
\langle :\prod_j{\mc O}_{m_j}(z_j):\rangle_\C=\prod_{j<k}|z_j-z_k|^{g_0m_jm_k}
\end{equation}

\subsubsection{Compactified free field}

Up to now, we have discussed the {\em scalar} free field, taking real values. An important variant is the {\em compactified} free field (Lecture 1.4 in \cite{Gaw_CFT}, 6.3.5 and 10.4.1 \cite{DiF}), taking values in the circle $\R/2\pi r\Z$; let us begin with an informal discussion. Given two Riemannian manifolds $S$ and $T$, the classical harmonic mapping problem consists in finding a mapping $\phi:S\rightarrow T$ which minimises Dirichlet energy, for instance within a homotopy class. For example, the harmonic mappings $\C/\Upsilon\rightarrow\R/2\pi r\Z$ are written $z\mapsto\Re(z\bar w)$, where $\Re(\ell\bar w)\in 2\pi r\Z$ for all $\ell\in \Upsilon$ ($\Upsilon$ a lattice). In the quantised version of the problem, one considers mappings $\phi:\Sigma\rightarrow\R/2\pi r\Z$ ($\Sigma$ a surface) with action functional given by the Dirichlet energy
$$S(\phi)=\frac {g_0}{4\pi}\int_\Sigma|\nabla\phi|^2dA$$ 
and distribution formally given by $``e^{-S(\phi)}\prod_{x\in\Sigma}d\phi(x)"$ (the ``flat measure" $``\prod_{x\in\Sigma}d\phi(x)"$ does not make sense in and of itself).

In order to reduce to the well-understood scalar free field discussed above, observe that $\phi$ can be decomposed as the sum of a harmonic mapping $\phi_h$ and a scalar part $\phi_s:\Sigma\rightarrow\R$, projected on $\R/2\pi r\Z$; moreover the decomposition is unique up to additive constants. 

Indeed, the current $J=d\phi$ is well defined as a closed 1-form on $\Sigma$: $dJ=0$. Its periods $\int_\gamma J$, $\gamma$ a cycle on $\Sigma$, are integer multiple of $2\pi r$. Manifestly there is an equivalence
\begin{align*}
\{{\rm mappings\ }\Sigma\rightarrow\R/2\pi r\Z\}{\rm\ modulo\ additive\ constant} \leftrightarrow
\{J\in\Omega^1(\Sigma), dJ=0, \int_\gamma J\in 2\pi r\Z{\rm\ \ }\forall\gamma\in H_1(\Sigma,\Z)\}
\end{align*}

On a compact Riemannian manifold, closed forms may be decomposed uniquely as the sum of an exact form and a harmonic form (i.e. the de Rham cohomology groups are represented by harmonic forms). Here it means we may write the Hodge decomposition (e.g. Theorem 5.2 in \cite{Voisin_Hodge}, specialised here to closed 1-forms on surfaces) as
\begin{equation}\label{eq:hodgedec}
J=d\phi=\omega_h+d\phi_s
\end{equation}
where $\phi_s:\Sigma\rightarrow\R$ is well defined up to additive constant and $\omega_h$ is closed and harmonic: $d\omega_h=d\ast\omega_h=0$, where $\gls{ast}$ is the Hodge star operator on 1-forms. (If $x,y$ are isothermal coordinates on the surface $\Sigma$, i.e. if the metric is given locally by $dx\otimes dx+dy\otimes dy$, then $\ast(fdx+gdy)=(fdy-gdx)$). The harmonic 1-form $\omega_h$ integrates to an additively multivalued harmonic function $\phi_h$ with periods in $2\pi r\Z$.

In the case of interest to us (the torus $\Sigma=\C/(\Z+\tau\Z)$), it is very easy to justify directly \eqref{eq:hodgedec}. The closed 1-form $J$ lifts to a periodic (under action of the lattice $\Z+\tau\Z$) 1-form $\tilde J$ on $\C$. By the Poincar\'e lemma, it is the differential of a function: $\tilde J=d\tilde\phi$. Since the derivatives of $\tilde\phi$ are periodic, it readily appears that $\tilde\phi$ is additively quasiperiodic: there are $\alpha,\beta$ s.t.
$$\tilde\phi(z+m+n\tau)=\tilde\phi(z)+m\alpha+n\beta$$
for any $m,n\in\Z$, $z\in\C$. Then there is a unique affine function of type $A:z\mapsto a\Re(z)+b\Im(z)$ s.t.
$\tilde\phi-A$ is periodic. Setting $\omega_h=adx+bdy$, $\phi_s=\tilde\phi-A$ gives the decomposition \eqref{eq:hodgedec}. Uniqueness is also clear from that argument.

On the space $\Omega^1(\Sigma)$ of $1$-forms, there is a natural scalar product given by
$$(\omega_1,\omega_2)=\int \omega_1\wedge\ast\omega_2$$
so that $S(\phi)=\frac{g_0}{4\pi}(J,J)$. Moreover the decomposition \eqref{eq:hodgedec} is orthogonal w.r.t. this scalar product (as follows easily from Stokes' formula) and
$$S(\phi)=S(\phi_s)+S(\phi_h)$$
Given this discussion for smooth (or at least $C^1$) mappings, it is rather natural to define a free field on $\Sigma$ with compactification radius $r$ as the data $(\omega_h,\phi_s)$ where the two components are independent. The harmonic form $\omega_h$  is the {\em instanton} component and $\phi_s$ is the {\em scalar} component.

The 1-form $\omega_h$ has unnormalised distribution
$$\exp(-\frac{g_0}{4\pi}\int_\Sigma\omega_h\wedge\ast\omega_h)d\mu(\omega_h)$$
where $\mu$ is the counting measure on the lattice of harmonic 1-forms with periods in $2\pi r\Z$: $\int_\gamma\omega_h\in 2\pi r\Z$ for any closed cycle $\gamma$. The scalar component $\phi_s$ is a scalar free field with covariance kernel $2\pi g_0^{-1}G_\Sigma$ (and is defined modulo additive constant; here $G_\Sigma$ is the Green kernel inverting the Laplacian on $\Sigma$).

Finally, let us mention that, similarly to Kramers-Wannier duality for the Ising model, the free field possesses an abelian duality which is a simple example of $T$-{\em duality} (Lecture 1.5 in \cite {Gaw_CFT}, 10.4.1 in \cite{DiF}). For a well-chosen normalisation of the field, $T$-duality inverts the compactification radius and exchanges electric (order) operators with magnetic (disorder) operators. This gives (additional) motivation for the introduction of magnetic operators, as well as a conceptual interpretation of the similarly between the expressions \eqref{eq:electcorrGFF} and \eqref{eq:GFFmagncorr}.

\subsection{Dimers}

For simplicity, we discuss here dimers on the square lattice $\Z^2$; however we shall subsequently consider the more general framework of isoradial graphs. For background on dimers, see \cite{Ken_IAS} and references therein.

We will consider the square lattice itself, or some portion of it, or its quotient by a large scale lattice (yielding a graph embedded on a torus). A {\em perfect matching} or {\em dimer configuration} consists in a selection of edges such that each vertex abuts exactly one selected edge. For a finite graph, one may consider the uniform measure on such matchings. A Gibbs measure on matchings of the infinite volume graph $\Z^2$ may be obtained as the weak limit of finite volume measures (\cite{CKP,KOS,dT_isoradial}). The analysis of dimers, in particular in the fine mesh limit, relies crucially on Kasteleyn's Pfaffian enumeration of dimer configurations (\cite{Kas_square}) for planar graphs.

Following Thurston, to every dimer configuration on such a graph derived from $\Z^2$, one may associate an integer-valued (in some normalisation) height function $h$ on its dual graph. Equivalently (with some care given to boundary conditions for bounded domains and to Gibbs measures in the plane), one can consider the $6$-vertex model at the free-fermionic point and the corresponding height function, introduced in \cite{Beijeren}; see e.g. Section 5 in \cite{Dub_abelian} for a discussion of this formulation.

Kenyon established in several set-ups that in the small mesh limit this height function converges to a free field (\cite{Ken_domino_GFF}); this has been extended in \cite{dT_isoradial,KOS,Ken_honeycomb}. Specifically (for the square lattice), one can obtain a local central limit theorem, showing convergence of discrete current correlators to their free field limit. By integration against a test function, this gives the correct limit in law for the discrete height field, in some finite dimensional marginal sense. An approach of the GFF invariance principle for the dimer height function (on the hexagonal lattice, for certain boundary conditions including facets) and related models can be found in \cite{BorFer_growth} and subsequent articles.

The main goal of the article is to study, in the toroidal case, convergence of the height (current) to the compactified free field (current); and in the planar case, to express the asymptotics of suitable dimer observables to regularised correlators of electric and magnetic operators for the free field.

While the relationship between dimer height functions and the {\em scalar} free field was highlighted by these works, the relation with the {\em compactified} free field becomes apparent when considering toroidal graphs. There, only the height current is well-defined, and the height function is additively multivalued (when tracing it along non-contractible cycles on the torus): as explained earlier, it splits into an instanton and a scalar component. In the case of the hexagonal lattice, 
the limiting distribution of the instanton marginal was identified in \cite{BdT_loop}, in agreement with what is predicted by the compactified free field. This will be extended in the present article.

By analogy with vertex correlators for the free field, one may consider the asymptotics of discrete observables of the type: $\langle\exp(i\sum_j\alpha_j h(z_j)\rangle$, where $h$ denotes the height function and the weights $\alpha_j$ sum up to zero. The systematic study of these electric vertex correlators for dimer height functions has been initiated by Pinson (\cite{Pinson_vertex}) and will be further discussed here.

A classical question for dimers, introduced by Fisher and Stephenson in \cite{FisSte2}, is the problem of {\em monomer correlations}. Specifically, the question is to estimate the variation of the partition function when a certain number of vertices are removed from the graph. A motivation for this question is a close analogy with Ising correlations, as illustrated by Hartwig's results (\cite{Hartwig_monomer}) for monomer pair correlations on the diagonal. More recently, a series of articles by Ciucu (see \cite{Ciucu_PNAS,Ciucu_TAMS} and references therein) has born on several variants of monomer correlations, in particular correlations of appropriate ``islets" on the hexagonal lattice. The case of monomer insertions on the boundary of the half-square lattice $\Z\times\N$ is addressed in \cite{PriRue_monomer}. In terms of height function, monomer insertions may be thought of as magnetic charges. Indeed, in the presence of monomers (in the bulk), the height function becomes additively multivalued, picking fixed additive constants when cycling around monomers (or larger defects).

\subsection{Families of Cauchy-Riemann operators}

We shall be mostly concerned with establishing convergence of dimer observables (height field, vertex correlators) to the corresponding (compactified) free field quantities. In all cases, the arguments will be based on the analysis of families of Cauchy-Riemann operators and their discrete counterparts. While not logically needed later on, the analogies with existing results on family of CR operators in the continuum guide many of our arguments, especially in Section \ref{sec:torus}.

A Cauchy-Riemann operator in the sense of Quillen (\cite{Qui}) is a first-order differential operator $D:\Omega^0(L)\rightarrow\Omega^{0,1}(L)$ of type: 
$$D=\dbar+\alpha=d\bar z\left(\frac{\partial}{\partial{\bar z}}+a\right)$$ 
operating on sections of a complex line bundle $L$ over a compact Riemann surface $\Sigma$, say; here $\alpha=ad\bar z$ is a $(0,1)$-form. The operator $D$ maps smooth sections of $L$ to smooth $(0,1)$-forms with values in $L$ (i.e. something of type $sd\bar z$, where $s$ is a smooth section of $L$).

For our purposes (working on the plane and on the torus), we can trivialise $L$ and the cotangent bundle, and it will be enough to simply consider first-order differential operators of type:
$$\varphi\longmapsto\left(z\mapsto\frac{\partial\varphi}{\partial{\bar z}}+\alpha(z)\varphi(z)\right)$$ 
operating on smooth functions (on the plane or the torus).

These operators constitute an affine (infinite-dimensional) complex space parameterised by the ``potentials" $\alpha$. Given a metric on $\Sigma$ and $L$ (so that tangent spaces to $\Sigma$ are Euclidean and fibers of $L$ are Hermitian), one can define an adjoint operator $D^*$ and the Laplacian-type operator $D^*D$. The latter has a $\zeta$-regularised determinant $\det_\zeta(D^*D)$ (e.g. Section 9.6 in \cite{BGV_dirac}). In \cite{Qui}, it is shown in particular how to evaluate the logarithmic variation of this determinant when $D$ varies among Cauchy-Riemann operators. This is controlled by the asymptotic expansion of the inverting kernel of $D$ near the diagonal. More precisely, if $D$ is invertible, we have the near diagonal expansion (in the standard local coordinate):
$$D^{-1}(z,w)=D^{-1}_w(z,w)+r_\alpha(w)+O(|z-w|)$$
where $D_w^{-1}$ is the parametrix: 
$$D_w^{-1}(u,v)=\frac{1}{\pi(u-v)}\exp(2i\Im(a(w)\overline{(u-v)}))$$
and $\alpha(w)=a(w)d\bar w$ (here $D_w^{-1}$ inverts the translation invariant operator that agrees with $D$ at $w$). The curvature formula in \cite{Qui} is based on the variational identity (specialised here to the simple case of line bundles on a flat torus):
\begin{equation}\label{eq:Quillenvar}
\frac {d}{dt}\log{\det}_\zeta(D^*_tD_t)=\frac i2\int_\Sigma  r_{\alpha_t}(w)dw\wedge\dot \alpha_t=\int_\Sigma r_{\alpha_t}(w)\dot a_t(w)dA
\end{equation}
where $D_t=(\dbar+\alpha_t(z))$ is a smooth parametric family of CR operators. This is itself a specialisation of the general variational formula for $\zeta$-determinants of Laplacian-type operators (e.g. Proposition 9.38 in \cite{BGV_dirac}).

For instance, consider a flat torus $\Sigma=\C/(\Z+\tau\Z)$ and a flat unitary line bundle corresponding to a unitary character $\chi:\pi_1(\Sigma)\simeq \Z+\tau\Z\rightarrow\U$. Global sections of this bundle are identified with multiplicatively multivalued functions on $\Sigma$ corresponding to the character $\chi$. These sections may be written as 
$$s(z)=f(z)\exp(2i\Re(z\bar\lambda))$$
where $f$ is a (single-valued) function on $\Sigma$ and $\lambda$ is s.t. $\chi(1)=\exp(2i\Re(\lambda))$ and $\chi(\tau)=\exp(2i\Re(\lambda\bar\tau))$. If we write the $\dbar$ operator on this line bundle with this trivialisation, we get the (translation invariant) Cauchy-Riemann operator 
$$D=\dbar+\lambda d\bar z$$
In this case, $\det_\zeta(D^*D)$ is the (square of) the Ray-Singer {\em analytic torsion} (\cite{RS_analytic}). This is a numerical invariant depending on the complex structure of the torus (the modulus $\tau$) and the character $\chi$. The spectrum and semigroup for the Laplacian $D^*D$ are completely explicit, and the evaluation of $\det_\zeta(D^*D)$ reduces to a problem in analytic number theory known as the Second Kronecker Limit formula (see Section 4 in \cite{RS_analytic}).

An alternative approach to the evaluation of $\det_\zeta(D^*D)$ is based on a variational analysis (when varying $\chi$, $\tau$ being fixed). 
Applying the Quillen variational formula \eqref{eq:Quillenvar} to the family $(\dbar+\lambda d\bar z)_\lambda$, one obtains an alternative evaluation (up to multiplicative constant depending on $\tau$) of the analytic torsion; this is carried out in particular by Fay in \cite{Fay_kernel} (Chapter 3). (As we shall see in Section \ref{sec:torus}, mimicking this approach at the discrete level leads to a characterisation of the instanton component of the dimer height field).

More generally, one may also consider CR operators written as 
$$D=\dbar+g_{\bar z}d\bar z$$
which corresponds to changing the metric on the trivial line bundle. Indeed, if we define a metric on the trivial bundle in such a way that $e^g$ is a unitary section, and write a generic section as $s=fe^g$, then with respect to this trivialisation the $\dbar$ operator is written as $D=\dbar+g_{\bar z}d\bar z$. (Remark that the definition of the adjoint $\dbar^*$ involves the metric on the line bundle). Again \eqref{eq:Quillenvar} quantifies the variation of $\det_\zeta(\dbar^*\dbar)$ under change of the metric on the trivial line bundle (``anomaly"), or in terms of this trivialisation the variation
of $\det_\zeta(D^*D)$ under change of $g$. (In the asymptotic analysis, this will correspond to the scalar component of the height/compactified free field).

Then one can consider varying jointly the complex structure of the bundle and its metric; this corresponds to a family
$$D=\dbar+(\lambda+g_{\bar z})d\bar z$$
of Cauchy-Riemann operators. 

In order to deal with vertex correlators, we will need to control more singular situations. For instance in the plane with $n$ punctures $\Sigma=\C\setminus\{z_1,\dots,z_n\}$, consider a fixed unitary character $\chi$ of $\pi_1(\Sigma)$ and the associated complex line bundle $L_{(z_j)}^\chi$ over $\Sigma$ and $\dbar$ operator. (A section of this bundle may be identified with a multiplicatively multivalued function on $\Sigma$, that gets multiplied by $\chi(\ell)$ when tracked along a non-contractible loop $\ell$). 
Here the punctures induce a logarithmic divergence in the evaluation of $\det_\zeta(\dbar_L^*\dbar_L)$ and an additional principal part regularisation is needed (see \cite{Palmer_CR}). One may distinguish two types of variations: a moduli space (isomonodromic) variation (moving the punctures and keeping the character fixed) and a Jacobian variation (changing the character/complex structure of the line bundle with fixed punctures). 

On the discrete side, Kasteleyn (\cite{Kas_square}) has shown how to evaluate the partition function of dimer configurations on a planar graph as the determinant (in the bipartite case) of a nearest neighbour linear operator. This operator may be interpreted as a finite difference version of $\dbar$, which is reflected in the asymptotic expansion of its inverse (\cite{Ken_domino_conformal,Ken_isoradial,KOS}) and underpins the analysis of the scaling limit of dimers.

We will be relying systematically on this interpretation: in order to evaluate variations of the partition function, we will consider the associated modified Kasteleyn operator as a discrete version of a CR operator varying in a family. Variational formulae, both at the discrete and continuous level, involve the behaviour of the inverting kernel near the diagonal (after substracting the leading singularity, which is that of a translation-invariant operator). Convergence of observables will follow from convergence of these leading non-trivial coefficients in the short-range expansion of inverting kernels. For vertex correlators, this will involve a rather detailed study of discrete holomorphic functions with prescribed monodromy.

\subsection{Main results}

We are interested in asymptotic properties of the dimer height function on a suitable graph (e.g. the square lattice $\Z^2$) either for a fixed mesh $\delta=1$ and at large enough scale, or at fixed scale as the mesh $\delta$ goes to zero; $h$ denotes a dimer height function on a fixed lattice, and $h_\delta$ denotes the height on a mesh $\delta$ lattice (infinite volume measures are discussed in Section \ref{subsec:planecharfun}).

In order to outline the general philosophy and structure of the manuscript, we state (somewhat informal) versions of the main results, together with the relevant family of CR operators. The correspondence between dimer observables and families of CR operators (or Hermitian line bundles) goes as follows:
\begin{itemize}
\item scalar field $\leftrightarrow$ variation of the Hermitian norm of the line bundle\\
In terms of operators, this corresponds (in the continuum) to $\dbar+\alpha$, $\alpha$ a smooth potential; this yields e.g. relatively sharp results on height fluctuations in the plane (Corollary \ref{Cor:FFplane}), refining earlier results of \cite{Ken_domino_GFF,dT_isoradial}. This may be stated as follows: as the mesh $\delta$ goes to zero, the distribution of the height ``current" $2\pi dh_\delta$, seen as a probability measure on $H^{-2-\eps}_{loc}(\C)$, converges weakly to the distribution of a free field current $J=d\phi$ with coupling constant $g_0=\frac 12$.
\item instanton component $\leftrightarrow$ variation of the complex structure of the line bundle\\
On a torus, the law of the periods $(\int_A dh,\int_B dh)$ of the dimer height current converges to that of the periods of a compactified free field (current). This was known in the case of the hexagonal lattice \cite{BdT_loop}.\\
Moreover, we have (Theorem \ref{Thm:toruscompact}): as the mesh goes to zero, the dimer height current $2\pi dh_\delta$ converges in law to a compactified free field current with radius of compactification 1 and coupling constant $g_0=\frac 12$.\\
The result is stated as a functional invariance principle (convergence of probability measures on abstract Wiener space); a strong invariance principle (quantitative coupling between $dh_\delta$ and the limiting current $J$) could also be obtained from these methods.
\item variation of electric vertex correlators $\langle:\exp(i\sum_j\alpha_j\phi(z_j):\rangle$ w.r.t. position of insertions $\leftrightarrow$ 
family $(z_1,\dots,z_n)\mapsto L^\chi_{(z_j)}$ \\
In the plane (say in the fixed mesh, large separation regime), if $x_1,y_1,\dots,x_n,y_n$ are at large and comparable pairwise distances, and $s_1,\dots,s_n$ are small enough, then (Proposition \ref{Prop:electrsmalls})
$$\langle\exp(2i\pi\sum s_j(h(y_j)-h(x_j)))\rangle_{\rm dimer}=\langle:\exp(i\sum s_j(\phi(y_j)-\phi(x_j)):\rangle_{\rm GFF}(c+o(1))$$
where $\phi$ is a free field with $g_0=\frac 12$, and the positive constant $c$ depends on the exponents $s_j$ and (possibly) on the lattice (recall \eqref{eq:electcorrGFF}).
\item  variation of electric vertex correlators $\langle:\exp(i\sum_j\alpha_j\phi(z_j):\rangle$ w.r.t. charges $\leftrightarrow$ 
family $\chi\mapsto L^\chi_{(z_j)}$ \\
This allows to state Theorem \ref{Thm:electr}, which extends the previous result to the case $s_j\in (-\frac 12,\frac 12)$ (this condition is sharp, at least in the case of one pair).
\item Monomer correlations.\\
See Section \ref{ssec:FSintro} for a precise definition of the monomer correlation
$$\Mon_M(b_1,\dots,b_p,w_1,\dots,w_p)$$
where $M$ is the graph carrying dimer configurations, the $b_j$'s (resp. $w_j$'s) are black (resp. white) vertices; it may be thought of as the ratio of the dimer partition function in $M\setminus\{b_1,\dots,w_p\}$ by the partition function in $M$.

This corresponds to the family $(b_1,w_1,\dots,b_p,w_p)\mapsto L^\chi_{(b_1,\dots,w_p)}$ with $\chi(\ell)=-1$ for any loop $\ell$ encircling a single puncture; this is the limiting case $s\nearrow\frac 12$ of the objects considered in the analysis of electric correlators. However, the situation is now complicated by the appearance of growth conditions at the punctures, that distinguish black from white monomers. For instance, when $p=1$, the relevant line bundle over $\hat \C\setminus\{x,y\}$ has local sections $s$ with monodromy $(-1)$ around $x,y$ and $s(z)=O(\sqrt{\frac{z-x}{z-y}})$ near $x,y$.\\ 
The conclusion of that analysis is Theorem \ref{Thm:FS}: 
$$\Mon(b_1,\dots,w_p)=\langle:{\mc O}_1(w_1)\dots{\mc O}_1(w_p){\mc O}_{-1}(b_1)\dots{\mc O}_{-1}(b_p):\rangle_{\rm GFF}(c+o(1))$$
for large and mutually comparable pairwise distances (recall \eqref{eq:GFFmagncorr}, where again $g_0=\frac 12$). 
\end{itemize}

Remark that in recent work (\cite{Ken_double}), Kenyon employs related variational argument for rank 2 bundles in his study of the double dimer model.

\section{Dimers on isoradial graphs}

In this section, we are following the formalism of \cite{Duffin_rhombic,Mercat,Ken_isoradial,SmiChe_isoradial} for isoradial (critical) graphs. Notations and conventions are mostly as in \cite{SmiChe_isoradial}.

\subsection{Kasteleyn operator}

\subsubsection{Critical graphs}\label{sss:critgraphs}

Consider a tiling $\gls{Lambda}$ of the plane by rhombi with edge length $\delta$. As faces have even degree, it is bipartite (ie there is a 2-colouring of vertices, say red and blue). Thus one may obtain two graphs from this tiling: the vertices of $\gls{Gamma}$ are the blue vertices and are joined by an edge if they are vertices of the same rhombus; similarly the vertices $\Gamma^\dg$ are the red vertices of $\Lambda$. The graphs $\Gamma$ and $\gls{Gammadg}$ are dual. They are also {\em isoradial} in the sense that each face of $\Gamma$ is inscribed in a circle of radius $\delta$, the center of which is the corresponding vertex of the dual $\Gamma^\dg$. The dual of $\Lambda$ is denoted $\dmd$. By abuse of terminology, we sometimes identify a graph with the set of its vertices.

We will work under the following assumption (see \cite{SmiChe_isoradial}):
\begin{equation}\label{eq:spade}
\gls{spadesuit}:\textrm{\ \ the lozenge angles belong to }[\eta_0,\pi-\eta_0]\textrm{\ for some fixed\ }\eta_0\in (0,\pi)\end{equation}

{\em Here $\eta_0>0$ is fixed once and for all; throughout, ``absolute" constants may depend on $\eta_0$ (but not on $\Lambda$ etc). In particular, all error estimates on Green kernels, Cauchy kernels and their variants are uniform in $\Lambda$ under $(\spadesuit)$.}

One can form a new bipartite graph $\gls{M}$ as follows: black (resp. white) vertices of $M$ are the vertices (resp. centers) of rhombi in $\Lambda$. Edges of $M$ are half-diagonals of the rhombi in $\Lambda$. We will be interested in perfect matchings of $M$ (or subgraphs of $M$). We refer to vertices of $M$ as {\em nodes}. Black nodes corresponding to vertices of $\Gamma$ (resp. vertices of $\Gamma^\dg$) are {\em vertex nodes} (resp. {face nodes}); white nodes are {\em edge nodes}. Denote $M_V\simeq \Gamma$, $M_F\simeq \Gamma^\dg$, $M_W\simeq\dmd$ the sets of vertex nodes, face nodes and edge nodes (ie white nodes) respectively, and $M_B=M_V\sqcup M_F\simeq\Lambda$ (black nodes). See Figure \ref{fig:loztil}. Notice that $M$ is itself isoradial, with all faces inscribed in circles of radius $\delta/2$.

\begin{figure}[htb]
\begin{center}
\leavevmode
\includegraphics[width=0.8\textwidth]{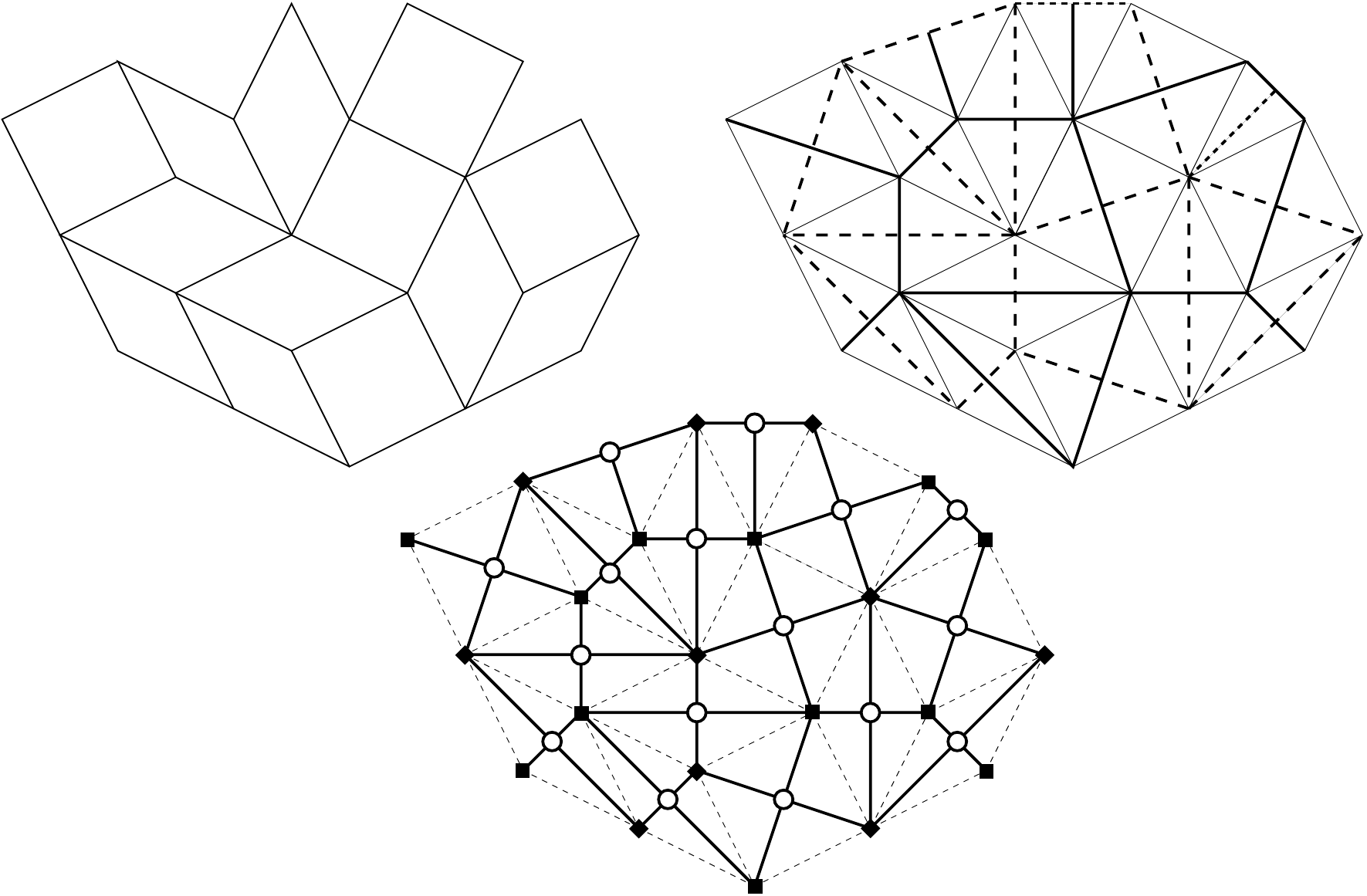}
\end{center}
\caption{(1) A portion of a rhombus tiling $\Lambda$. (2) Dual graphs $\Gamma$ (solid) and $\Gamma^\dg$ (dashed) obtained from $\Lambda$. (3) Corresponding bipartite graph $M$.}
\label{fig:loztil}
\end{figure}

Let $M_B$ (resp. $M_W$) be the set of black (resp. white) vertices of $M$. The {\em Kasteleyn operator} $\gls{K}:\C^{M_B}\rightarrow\C^{M_W}$ is given by (see Section 3.1 of \cite{Ken_isoradial}, where it is denoted by $K_{BW}$):
$$(Kf)(w)=\sum_{b\sim w}K(w,b)f(b)$$
where $K(w,b_0)=\frac i2(b_3-b_1)$ if $b_0,b_1,b_2,b_3$ are the black neighbours of $w$ listed in counterclockwise order; $\sim$ denotes adjacency in the relevant graph, here $M$. (Remark that in the present article, we consider only the special subclass of isoradial graphs, the ``Temperleyan" ones).

For our purposes, it will be convenient to consider a real operator $\gls{rK}:\R^{M_B}\rightarrow\R^{M_W}$ which differs from $K$ by the argument of its entries: $\rK(w,b)=\pm |K(w,b)|$, where the sign depends on a chosen orientation of $(w,b)\in E_M$, which now we describe (see \cite{dT_quadri}). Pick an arbitrary orientation of edges of $\Gamma$; then orient edges of $\Gamma^\dg$ in such a way that if $(xx')$ is an oriented edge of $\Gamma$, $(yy')$ the dual oriented edge of $\Gamma^\dg$, then $((xx'),(yy'))$ is a direct frame. Each edge of $M$ is a half-edge of an edge of either $\Gamma$ or $\Gamma^\dg$ and inherits
its orientation. Then $\rK(w,b)=+|K(w,b)|$ (resp. $\rK(w,b)=-|K(w,b)|$) if $(wb)$ is positively (resp. negatively) oriented. One readily checks that this yields a {\em Kasteleyn orientation} of $M$, i.e. an orientation such that each face has an odd number of clockwise oriented edges on its boundary.

For brevity of notations it will also be convenient to assign an argument $\gls{nu}$ to each vertex of $M$. 
If $w\in M_W$ is on the oriented edge $(xx')$ of $\Gamma$, set $e^{i\nu(w)}=\frac{x'-x}{|x'-x|}$; also set $e^{i\nu(b)}=1$ if $b\in\Gamma$ is a vertex node and $e^{i\nu(b)}=i$ if $b\in\Gamma^\dg$ is a face node. Then it is easy to check that
\begin{equation}\label{eq:Kastgauge}
K(w,b)=e^{i\nu(w)}\rK(w,b)e^{i\nu(b)}
\end{equation}
for all $b,w$, i.e. $\rK$ is obtained by composing $K$ with diagonal operators.

Translating results of \cite{Ken_isoradial} obtained for $K$ in terms of $\rK$ (see also \cite{SmiChe_isoradial}), we have the following fundamental results on the inverting kernel $\uK^{-1}$:

\begin{Thm}[\cite{Ken_isoradial}]\label{Thm:Kencrit}
\begin{enumerate}
\item %
For any $w\in M_W$, there is a unique $f_w\in\R^{M_B}$ satisfying $\rK f_w=\delta_w$ and $f_w(b)\xrightarrow[|b|\rightarrow\infty]{} 0$. Set $\gls{uKinv}(b,w)=f_w(b)$.
\item If $x,y,x',y'$ are black vertices in ccwise order around $w$, 
$$\rK(w,x)\uK^{-1}(x,w)=\frac{1}{2\pi}\arg\left(\frac{y'-x}{y-x}\right).$$\\
\item As $|b-w|\rightarrow\infty$,
$$\uK^{-1}(b,w)=\Re\left(\frac{e^{i\nu(w)}e^{i\nu(b)}}{\pi(b-w)}\right)+O\left(\frac{\delta}{|b-w|^{2}}\right)$$
\end{enumerate}
\end{Thm}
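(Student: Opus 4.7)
The plan is to deduce all three items from the corresponding statements for the complex Kasteleyn operator $K$ proved in \cite{Ken_isoradial}, by exploiting the diagonal phase conjugation $K(w,b)=e^{i\nu(w)}\rK(w,b)e^{i\nu(b)}$ already recorded in the excerpt. Writing $D_W$, $D_B$ for the unitary multiplication operators with entries $e^{i\nu(w)}$ on $M_W$ and $e^{i\nu(b)}$ on $M_B$, this conjugation reads $K=D_W\rK D_B$; all three assertions for $\rK$ will be obtained by composing Kenyon's formulas with $D_W^{-1}$ on the right and $D_B^{-1}$ on the left, and then checking that the outcome is real.

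For part (1), fix $w_0\in M_W$ and set $f(b):=e^{i\nu(b)}K^{-1}(b,w_0)e^{i\nu(w_0)}$, using Kenyon's unique decaying right inverse $K^{-1}$ of $K$. A direct computation with $K=D_W\rK D_B$ gives $\rK f=\delta_{w_0}$; since $\rK$ is an $\R$-linear operator and $\delta_{w_0}$ is real, $g:=\Re f$ still satisfies $\rK g=\delta_{w_0}$ and decays at infinity, furnishing the required $\uK^{-1}(\cdot,w_0)$. For uniqueness: two real decaying candidates differ by a real $h\in\ker\rK$, and then $b\mapsto e^{-i\nu(b)}h(b)$ is a decaying element of $\ker K$, hence zero by Kenyon's uniqueness.

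For part (2), the scalar combination on the left is invariant under the phase conjugation:
$$\rK(w,x)\uK^{-1}(x,w)=e^{-i\nu(w)}K(w,x)e^{-i\nu(x)}\cdot e^{i\nu(x)}K^{-1}(x,w)e^{i\nu(w)}=K(w,x)K^{-1}(x,w),$$
so the angle formula is exactly Kenyon's evaluation of $K(w,x)K^{-1}(x,w)$ for a black neighbour $x$ of $w$, obtained from a discrete residue computation on the rhombus around $w$.

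For part (3), Kenyon's far-diagonal expansion of $K^{-1}$ has, in the conventions of \cite{SmiChe_isoradial}, the form
$$K^{-1}(b,w)=\frac{1}{2\pi(b-w)}\,e^{-i\nu(b)}e^{-i\nu(w)}+\frac{1}{2\pi\overline{(b-w)}}\,e^{i\nu(b)}e^{i\nu(w)}+O\!\left(\frac{\delta}{|b-w|^2}\right),$$
a sum of a term and its complex conjugate in the prescribed phases. Multiplying by $e^{i\nu(b)}e^{i\nu(w)}$ (both from the left and right) collapses the two leading pieces to $\Re\bigl(\tfrac{e^{i\nu(w)}e^{i\nu(b)}}{\pi(b-w)}\bigr)$, which is manifestly real, and the error term is unaffected since $|e^{i\nu}|=1$. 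The real-part structure is exactly what makes the $\Re(f)$ operation in part (1) leave the leading asymptotic untouched, so the resulting real kernel is the $\uK^{-1}$ produced there. The main (and only) obstacle is bookkeeping: one must verify that the $\nu$-convention here (unit tangent to oriented edges at white nodes, $1$ or $i$ at black nodes according to $\Gamma$ or $\Gamma^\dg$) together with the chosen Kasteleyn orientation of $M$ reproduces on one rhombus exactly the factorization underlying \cite{Ken_isoradial}; this is a direct check and introduces no new analytic content.
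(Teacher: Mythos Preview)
Your approach is exactly what the paper does: it simply cites Theorems 4.1, 4.2 and Corollary 7.4 of \cite{Ken_isoradial} (and Theorem 2.14 of \cite{SmiChe_isoradial}), the translation to $\rK$ via the diagonal conjugation $K=D_W\rK D_B$ being the one-line observation made in the text just before the statement.

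One small inconsistency worth cleaning up: in part (1) you define $\uK^{-1}(\cdot,w_0)=\Re f$, but in part (2) you silently use $\uK^{-1}(x,w)=f(x)$ (without the real part) to get the phase cancellation. This is fine, but you should say why: $\Im f$ is real, decays at infinity, and lies in $\ker\rK$ (since $\rK f=\delta_{w_0}$ is real), so by the very uniqueness argument you gave, $\Im f=0$ and $f$ is already real. Once that is noted, part (2) is immediate and your part (3) manipulation becomes consistent as well; the precise phases you quote for Kenyon's $K^{-1}$ are slightly off in the antiholomorphic term under these conventions, but as you say this is pure bookkeeping and does not affect the conclusion.
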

\begin{proof}
See respectively Theorem 4.1, Theorem 4.2 and Corollary 7.4 in \cite{Ken_isoradial}, and also Theorem 2.14 in \cite{SmiChe_isoradial} for the general case.
\end{proof}

{\bf Remarks.}\begin{enumerate}
\item
If $E,F$ are (say, finite) sets, $T:E\times F\rightarrow\C$ (a ``kernel"), we may consider the corresponding linear operator (also denoted by $T$ by a slight abuse of notation) $T:\C^F\rightarrow\C^E$ defined by
$$(T\varphi)(e)=\sum_{f\in F}T(e,f)\varphi(f)$$
Throughout the article we will identity the kernel $T:E\times F\rightarrow\C$ with the corresponding ``integral" operator $T:\C^F\rightarrow\C^E$; this applies to $\rK,\uK^{-1}$ and their variants. A useful heuristic is to think of $\uK^{-1}$ as a discretisation of the classical integral operator
$$\varphi\longmapsto\left(w\mapsto\iint\varphi(z)\frac{dA(z)}{\pi(z-w)}\right)$$
(for, say, $\varphi$ smooth and compactly supported on $\C$), which inverts $\frac{\partial}{\partial \bar z}$ ($\dbar$-Poincar\'e Lemma, see e.g. Theorem 1.28 in \cite{Voisin_Hodge}).

\item Remark the following simple scaling property. If $\Lambda$ is a lozenge tiling with mesh $1$, with associated operator $\rK_\Lambda$ (and inverting kernel $\uK_\Lambda^{-1}$), one may consider the tiling $\delta\Lambda$ with mesh $\delta$ and associated operator $\rK_{\delta\Lambda}$ (and inverting kernel $\uK^{-1}_{\delta\Lambda}$). Then for all $b\in M_B,w\in M_W$, 
\begin{align*}
\rK_{\delta\Lambda}(\delta b,\delta w)&=\delta\rK_{\Lambda}(b,w)\\
\uK^{-1}_{\delta\Lambda}(\delta b,\delta w)&=\delta^{-1}\uK_{\Lambda}(b,w)
\end{align*}
Observe that the asymptotic expansion of $\uK^{-1}$ given in Theorem \ref{Thm:Kencrit} is invariant under scaling (replacing $\Lambda$ with $\delta\Lambda$); fixing the position of $b,w$ and letting $\delta\searrow 0$ or fixing $\delta=1$ and letting $|b-w|\rightarrow\infty$ are equivalent points of view.

\item As mentioned below \eqref{eq:spade}, the error is uniform in the tiling $\Lambda$ under $(\spadesuit)$. Let us phrase this uniformity extensively (once and for all): there is $C=C(\eta_0)>0$ s.t. for all tiling $\Lambda$ with mesh $\delta$ satisfying $(\spadesuit)$, and all $b\in M_B$, $w\in M_W$, we have
$$\left|\uK^{-1}(b,w)-\Re\left(\frac{e^{i\nu(w)}e^{i\nu(b)}}{\pi(b-w)}\right)\right|\leq C\left(\frac{\delta}{|b-w|^{2}}\right)$$
\end{enumerate}

\subsubsection{Interpretation as finite difference operators}\label{sss:finitediff}

\paragraph{First-order operators.\\}

Define restriction operators $\rR_B: C^0(\R^2,\C)\rightarrow \R^{ M_B}$ and $\rR_W: C^{(0,1)}(\R^2,\C)\rightarrow\R^{ M_W}$ by:
\begin{equation}\label{eq:Rrestrop}
\begin{array}{rlll}
(\rR_B\varphi)(b)&=\Re(e^{i\nu(b)}\varphi(b))&&b\in M_B\\
(\rR_W\alpha)(w)&=\Re(e^{-i\nu(w)}\alpha/d\bar z) &&w\in M_W
\end{array}
\end{equation}
that restrict complex-valued functions (resp. $(0,1)$-forms) to black (resp. white) nodes. Note that these operators are only $\R$-linear. 
For $\varphi$ a function on $\C$, we define restriction operators $\gls{RB},\bar R_B: C^0(\R^2,\C)\rightarrow \C^{ M_B}$ and $\gls{RW}: C^{(0,1)}(\R^2,\C)\rightarrow\C^{ M_W}$, $\bar R_W: C^{(1,0)}(\R^2,\C)\rightarrow\C^{ M_W}$,  by:
\begin{equation}\label{eq:Crestrop}
\begin{array}{rlll}
(R_B\varphi)(b)=(\bar R_B\varphi)(b)&=\varphi(b)&&b\in \Gamma\\
(R_B\varphi)(b)=-(\bar R_B\varphi)(b)&=i\varphi(b)&&b\in \Gamma^\dg\\
(R_W(fd\bar z))(w)&=e^{-i\nu(w)}f(w) &&w\in M_W\\
(\bar R_W(fdz))(w)&=e^{i\nu(w)}f(w) &&w\in M_W
\end{array}
\end{equation}
In other words, $(R_B\varphi)(b)=e^{i\nu(b)}\varphi(b)$ and $\bar R_B\bar\varphi=\overline{R_B\varphi}$. Similarly $\bar R_W\bar\alpha=\overline{R_W\alpha}$.

If $\varphi$ is a $C^1$ function on $\C$, a direct computation shows that:
\begin{align*}
(\rK(\rR_B\varphi))(w)&=
\frac 12\Re\left( |y-y'|(\varphi(w+u.\frac{|x'-x|}2)-\varphi(w-u.\frac{|x'-x|}2)+i|x-x'|(\varphi(w+iu.\frac{|y'-y|}2)-\varphi(w-iu.\frac{|y'-y|}2)\right)\\
&=2\mu_\dmd(w)\left(\Re\left(\frac{\partial\varphi}{\partial{\bar w}}(w)\bar u\right)+o(1)\right)=2\mu_\dmd(w)(\rR_W(\dbar\varphi)(w)+o(1))
\end{align*}
where $(xx')$ is the oriented edge of $\Gamma$ corresponding to $w$, $(yy')$ the dual edge, $u=e^{i\nu(w)}$ and $\mu_\dmd(w)$ is the area of the rhombus of $\Lambda$ containing $w$ (and thus of order $O(\delta^2)$).

Similarly, we have
\begin{align}\label{eq:findiff}
\begin{split}
\rK(R_B\phi)&=2\mu_\dmd R_W(\dbar\phi)+O(\delta^2\omega_{\phi'}(\delta))\\
&=2\mu_\dmd R_W(\dbar\phi)+O(\delta^3\omega_{\phi''}(\delta))
\end{split}
\end{align}
where $\omega$ designates the modulus of continuity, $\phi'$ and $\phi''$ the gradient and Hessian of $\phi$, and $\mu_\dmd$ is seen as a diagonal operator on $\C^{M_W}$. The second line uses symmetry of a lozenge around its centre $w$ and shows that $\rK$ is a ``surconvergent" first-order difference scheme (first-order significant term and third-order remainder). This explains ``better than expected" error terms later on, as e.g. in Lemma \ref{Lem:monholom}.

We will also consider $\rK_\alpha,K_\alpha$, perturbations of $\rK,K$ by a $(0,1)$-form $\alpha$. Define
\begin{align}\label{eq:Kastpert}
\begin{split}
\gls{Kalpha}(w,b)&=\rK(w,b)\exp(2i\Im\int_w^b\alpha)\\
\gls{rKalpha}(w,b)&=\rK(w,b)\exp(2\Re\int_w^b\alpha)
\end{split}
\end{align}
where $\alpha$ is a smooth $(0,1)$-form: $\alpha=a(z)d\bar z$. 
Observe that
$$K_\alpha(R_B\varphi)(w)=\rK(R_B(\tilde\varphi))(w)$$
where $\tilde\varphi(z)=\varphi(z)\exp(2i\Im\int_w^z\alpha)$. We deduce (here $\alpha=ad\bar z$)
\begin{equation}\label{eq:findiffpert}
\begin{array}{rl}
K_\alpha(R_B\phi)&=2\mu_\dmd R_W(\dbar\varphi+\varphi\alpha)+\delta^2O(\omega_{\varphi'}(\delta)+\delta\|\varphi'\|_\infty\|a\|_{\infty}+\|\varphi\|_\infty\omega_a(\delta)+\|\varphi\|_\infty\delta\|a\|^2_\infty)\\
&=2\mu_\dmd R_W(\dbar\varphi+\varphi\alpha)+O(\delta^3\|\varphi\|_{C^2}(1+\|a\|_{C^1})^2)\\
&=2\mu_\dmd R_W(\dbar\varphi+\varphi\alpha)+O(\delta^4\|\varphi\|_{C^3}(1+\|a\|_{C^2})^3)
\end{array}
\end{equation}

In the case $\alpha=\lambda d\bar z$ for some fixed $\lambda\in\C$, we denote simply $\rK_\lambda=\rK_{\lambda d\bar z}$. We have $\rK_\lambda(w,b)=e^{-2\Re(\bar\lambda w)}\rK(w,b)e^{2\Re(\bar\lambda b)}$ and consequently
$$\uK_\lambda^{-1}(b,w)\stackrel{def}{=}e^{-2\Re(\bar\lambda b)}\uK^{-1}(b,w)e^{2\Re(\bar\lambda w)}$$
satisfies $\rK_\lambda\uK_\lambda^{-1}=\Id$.

Similarly, we obtain
$$K_\alpha(\bar R_B\varphi)=2\mu_\dmd \bar R_W(\partial\varphi-\varphi\bar\alpha)
+O(\delta^4\|\varphi\|_{C^3}(1+\|a\|_{C^2})^3)$$

Fix $w_0\in M_W$ and set $\lambda=a(w_0)$. Then:
\begin{equation}\label{eq:close}
K_\alpha(w,b)=e^{2i\Im(\bar\lambda w)}\rK(w,b)e^{-2i\Im(\bar\lambda b)}+O(\delta^2\omega_a(|b-w_0|))
\end{equation}

\paragraph{Laplacian.\\}

On $\R^{M_B}$, take the inner product given by counting measure and on $\R^{M_W}$:
$$\langle f,g\rangle_{\R^{M_W}}=\sum_{w}\mu_\dmd(w)f(w)g(w)$$
where $\mu_\dmd(w)$ is the area of the rhombus of $\Lambda$ containing $w$. With respect to these inner products, the adjoint operator $\rK^*:\R^{M_W}\rightarrow\R^{M_B}$ has matrix elements:
$$\rK^*(b,w)=\mu_\dmd(w)^{-1} \rK(w,b)$$
Then it is easy to check that $(\rK^*\rK)(b,b')=0$ for all $b\in \Gamma,b'\in \Gamma^\dg$, and if $x\sim x'$ in $\Gamma$ and $(yy')$ is the edge of $\Gamma^\dg$ dual to $(xx')$, then
\begin{align*}
(\rK^*\rK)(x,x')=\frac{|y'-y|}{2|x'-x|}&&(\rK^*\rK)(x,x)=-\sum_{x'\sim x}(\rK^*\rK)(x,x')\\
 (\rK^*\rK)(y,y')=\frac{|x'-x|}{2|y'-y|}&&(\rK^*\rK)(y,y)=-\sum_{y'\sim y}(\rK^*\rK)(y,y')
\end{align*}
so that with respect to the decomposition $\R^{M_B}=\R^{M_V}\bigoplus\R^{M_F}$, 
\begin{equation}\label{eq:KstarK}
\rK^*\rK=\Lap_\Gamma\bigoplus\Lap_{\Gamma^\dg}
\end{equation}
where $\gls{LapGamma}$ is a graph Laplacian for appropriate edge weights (\cite{Ken_isoradial}).

Similarly, 
\begin{align*}
\sum_{k=1}^n\Lap_\Gamma(x,x_k)(x_k-x)&=\sum_{k=1}^n\frac{i(y_{k-1}-y_k)}{2(x_k-x)}(x_k-x)=0\\
\sum_{k=1}^n\Lap_\Gamma(x,x_k)(x_k-x)^2&=\sum_{k=1}^n\frac{i(y_{k-1}-y_k)}{2(x_k-x)}(x_k-x)^2=0
\end{align*}
which shows that the random walk on $\Gamma$ associated to $\Lap_\Gamma$ converges to isotropic Brownian motion.

\subsubsection{Basic estimates}\label{sss:basic}

We have seen that $\Lap_\Gamma$ and $\rK$ may be seen as finite-difference approximations of $\Lap$ and $\dbar$ respectively. We list a few estimates and results relating discrete harmonic/holomorphic functions with their continuous counterparts, and refer the reader to \cite{SmiChe_isoradial} for a thorough treatment.

\paragraph{Maximum principle.\\}

Let $\Omega_\delta$ be a bounded subset of $\Gamma$ ($\delta$ is the mesh of $\Lambda$), $\partial\Omega_\delta$ its boundary (vertices in $\Gamma\setminus\Omega_\delta$ adjacent to a vertex in $\Omega_\delta$), and $\bar\Omega_\delta=\Omega_\delta\sqcup\partial\Omega_\delta$. If $h:\bar\Omega_\delta\rightarrow\R$
is harmonic on $\Omega_\delta$ (in the sense that $\Lap_\Gamma h=0$ on $\Omega_\delta$), then we have the classical maximum principle:
\begin{equation}\label{eq:maxprincip}
\sup_{\bar\Omega_\delta}|h|=\sup_{\partial\Omega_\delta}|h|
\end{equation}
(this is valid for the generator of any irreducible nearest-neighbour Markov chain on a graph). It follows that $h$ is uniquely determined by its values on $\partial\Omega_\delta$ (uniqueness in the Dirichlet boundary value problem).

\paragraph{Harnack estimate.\\}

With the same notations and assumptions, let $v\sim v'$ be adjacent vertices in $\Gamma$ such that $\partial\Omega_\delta$ is at (Euclidean) distance at least $R\delta$ of $v,v'$ ($\delta$ is the mesh of $\Lambda$). Then
\begin{equation}\label{eq:harnack}
|h(v)-h(v')|\leq cR^{-1}\sup_{\partial\Omega_\delta}|h|
\end{equation}
where $c$ depends only on $\eta_0$ in $(\spadesuit)$ (see Proposition 2.7 in \cite{SmiChe_isoradial}). 

This is stronger than the Liouville property: if $h:\Gamma\rightarrow\R$ is bounded and harmonic, it is constant.

\paragraph{Dirichlet boundary value problem.\\}

Assume furthermore that $\partial\Omega_\delta$ is within $O(\delta)$ of, say, a piecewise $C^1$ simple loop enclosing a simply-connected domain $\Omega$; and that $\tilde h$ is a continuous function defined in and near $\Omega$, $1$-Lipschitz near $\partial\Omega$, and harmonic in $\Omega$: $\Lap\tilde h=0$ in $\Omega$. Let $F$ be a compact subset of $\Omega$. Then (see Theorem 3.10 in \cite{SmiChe_isoradial}) there is $c=c(\Omega,F,\eta_0)$ s.t.
\begin{equation}\label{eq:dirprob}
\begin{split}
\sup_{\Gamma\cap F}|h-\tilde h|&\leq c\sup_{\partial\Omega_\delta}|h-\tilde h|\\
\sup_{v\in\Gamma\cap F,v'\sim v}\left|(h(v')-h(v))-(\tilde h(v')-\tilde h(v))\right|&\leq c\delta\sup_{\partial\Omega_\delta}|h-\tilde h|
\end{split}
\end{equation}
The first line may be interpreted as $C^0$ convergence of the discrete Dirichlet boundary value problem (BVP) to its continuous limit; the second line is a version of $C^1$ convergence.

\paragraph{Cauchy boundary value problem.\\}

We turn to discrete holomorphic functions. Let $f:M_B\rightarrow\C$ be s.t. $\rK f=0$ on a subset $\Omega_W$ of $M_W$. Then by \eqref{eq:KstarK}, $f_{|\Gamma}$ is harmonic at every $b\in\Gamma$ the white neighbours of which are all in $\Omega_W$ (and similarly for $f_{|\Gamma^\dg}$). Let $\Omega_B$ be this set of black vertices.

Let $\tilde f_1$ (resp. $\tilde f_2$) be a holomorphic (resp. antiholomorphic) function extending continuously to $\bar\Omega$, where $\Omega$ is a domain with $\Omega_W=\Omega\cap M_W$. Let $F$ be a compact subset of $\Omega$; then from \eqref{eq:Crestrop} and \eqref{eq:dirprob} we have
\begin{equation}\label{eq:dirprobcauchy}
\sup_F\left|f-R_B(\tilde f_1)-\bar R_B(\tilde f_2)\right|\leq c\sup_{\partial\Omega_B}
\left|f-R_B(\tilde f_1)-\bar R_B(\tilde f_2)\right|  
\end{equation}
which may be combined with the Cauchy integral formula
$$\tilde f_1(w)=\frac{1}{2i\pi}\oint_{\partial\Omega}\frac{\tilde f_1(z)dz}{z-w}$$
(for $w\in\Omega$, a simply connected domain with Jordan boundary $\partial\Omega$) in order to approximately evaluate a discrete holomorphic function near $F$ given its values near $\partial\Omega$.

One can also obtain an exact formula as follows (see Proposition 2.22 in \cite{SmiChe_isoradial}). Let $f:M_W\rightarrow\C$ be s.t. $\rK f=0$ on $\Omega_W\subset M_W$, $\Omega_W$ finite. Let $\Omega'_B$ denotes the set of black neighbours of vertices in $\Omega_W$. Then
$$f\ind_{\Omega'_B}=\sum_{w\in M_W}\uK^{-1}(.,w)\rK(f\ind_{\Omega'_B})(w)=\uK^{-1}(\rK(f\ind_{\Omega'_B}))$$
Indeed, the RHS is well-defined (it is actually a finite sum), and the difference between the two sides is a discrete holomorphic function vanishing at infinity, hence (Liouville) is identically zero. Let $\gamma_W$ be the set of white vertices the black neighbours of which are not all in $\Omega'_B$ or not all in $M_B\setminus\Omega'_B$. Then $\rK(f\ind_{\Omega'_B})$ is supported on $\gamma_W$ and
$$f\ind_{\Omega'_B}=\sum_{w\in \gamma_W}\uK^{-1}(.,w)\rK(f\ind_{\Omega'_B})(w)$$
In particular when $\Omega_W=\Omega\cap M_W$, the RHS can be seen as a discretisation of a contour integral $\oint_{\partial\Omega}$, and this may be combined with Theorem \ref{Thm:Kencrit} and an examination of the local geometry near $\gamma_W$ to obtain an approximation of $f(b)$ for $b$ inside of and far from $\partial\Omega$ (in graph distance).

Remark that this argument requires only existence and uniqueness of the inverting kernel $\uK^{-1}$, which will allow for useful variants. The general idea is that controlling the inverting kernel allows to transfer information on discrete holomorphic functions from a contour $\partial\Omega$ to the bulk $\Omega$ (an argument we will use repeatedly in Sections \ref{Sec:surgery}, \ref{sec:electr} and \ref{sec:mono}).

\paragraph{Integration.\\}

Let $x_1,\dots,x_n$ be the neighbours of a vertex $x$ in $\Gamma$, in ccwise order (with cyclical indexing $x_n=x_0$). Let $y_k$ be the vertex of $\Gamma^\dg$ corresponding to the face of $\Gamma$ on the left-hand side of the oriented edge $(xx_k)$ and $w_k\in M_W$ the vertex corresponding to $(xx_k)$. Then for $\alpha=\lambda d\bar z$, $\lambda\in\C$ constant,
\begin{align*}
\sum_{k=1}^n\mu_\dmd(w_k)e^{i\nu(w_k)}(R_W\alpha)(w_k)&=\sum_{k=1}^n\frac i2(y_{k-1}-y_{k})\Re(\lambda(\overline{x_k-x}))\\
&=\sum_{k=1}^n\frac i4\left(\lambda (y_{k-1}-y_{k})(\overline{x_{k}-x})+\overline\lambda (y_{k-1}-y_{k})(x_k-x)\right)=\frac\lambda 2\sum_{k=1}^n\mu_\dmd(w_k)
\end{align*}
taking into account $x_k-x=(y_{k-1}-x)+(y_k-x)$ and $\mu_\dmd(w_k)=\frac i2(y_{k-1}-y_k)(\overline {x_k-x})$. Hence if $\varphi$ is continuous on a region $D\subset\C$, 
\begin{equation}\label{eq:int}
\sum_{w\in M_B\cap D}\mu_\dmd(w)e^{i\nu(w_k)}(\rR_W\varphi)(w)=\frac 12\int_D\frac{\alpha}{d\bar z} dA+o(1)=-\frac i4\int_D\alpha\wedge dz+o(1)
\end{equation}
since $dA=dx\wedge dy=-\frac i2 dz\wedge d\bar z$.

\subsection{Height function}\label{ss:height}

Consider a finite bipartite graph $\gls{Xi}$ which is a subgraph of $M$ bounded by a simple closed cycle on $M$. 
A {\em perfect matching} $\gls{m}$ of the bipartite graph $\Xi$ is a subset of edges of $\Xi$ such that every vertex of $\Xi$ is incident to exactly one edge in $\mfm$. The weight of a matching $\mfm$ is the product of the weights of the edges present in ${\mfm}$:
$$w(\mfm)=\prod_{(bw)\in\mfm}|\rK(b,w)|$$
The {\em partition function} of the model is the sum of these weights over all possible perfect matchings of $\Xi$:
\begin{equation}\label{eq:dimerpartfun}
\gls{mcZ}=\sum_{\mfm}\prod_{(bw)\in\mfm}|\rK(b,w)|
\end{equation}
We will consider the probability measure $\P$ on perfect matchings $\mfm$ of $\Xi$ (if they exist) given by 
$$\P(\mfm=\mfm_0)=\frac{w(\mfm_0)}{{\mathcal Z}}$$

In order to have at least one matching, it is necessary that $\Xi_B=\Xi\cap M_B$ and $\Xi_W=\Xi\cap M_W$ have the same number of vertices. Let us also denote by $\rK:\R^{\Xi_B}\rightarrow\R^{\Xi_W}$ the restriction of $\rK:\R^{M_B}\rightarrow\R^{M_W}$, ie 
$$(\rK f)(w)=\sum_{b\in \Xi_B}\rK(b,w)f(b)$$

The fundamental result, due to Kasteleyn \cite{Kas_square} (see also \cite{Per_onemore}), is the following determinantal enumeration formula:
$${\mathcal Z}=\pm\det(\rK)$$
(this is where the Kasteleyn orientation condition is required). Notice that $\rK:\R^{\Xi_B}\rightarrow\R^{\Xi_W}$ is not an endomorphism; however, $\R^{\Xi_B}$ and $\R^{\Xi_W}$ have canonical bases (up to permutation) with respect to which this determinant is evaluated (up to sign). Also, 
$$\det(K)=\det(\rK)\prod_{w\in\Xi_W} e^{i\nu(w)}\prod_{b\in\Xi_B} e^{i\nu(b)}$$
so that ${\mathcal Z}=|\det(K)|$.

Moreover, we can also evaluate the partition function for arbitrary (complex) edge weights. If we replace the positive weight $|\rK(b,w)|$ with $|\rK(b,w)|u(b,w)$, $u(b,w)\in\C$, then:
$${\mathcal Z}'=\sum_\mfm\prod_{(bw)\in\mfm}|\rK(b,w)|u(b,w)=\pm\det(\rK')$$
where $\rK':\C^{\Xi_B}\rightarrow\C^{\Xi_W}$ is given by $(\rK'f)(w)=\sum_{b\in\Xi_B}\rK(b,w)u(b,w)f(b)$. The undetermined sign is the same as before, and thus:
\begin{equation}\label{eq:ratiopart}
\E(\prod_{(bw)\in\mfm}u(b,w))=\frac{{\mathcal Z'}}{{\mathcal Z}}=\frac{{\det \rK'}}{\det \rK}=\det(\rK'\rK^{-1})=\det(K'K^{-1})
\end{equation}
where $(K'f)(w)=\sum_{b\in\Xi_B}K(b,w)u(b,w)f(b)$.

Let us now discuss the height function, see Section 5 in \cite{KPW} or Section 2 in \cite{Ken_IAS}. The height function is defined on the ``dual" $\Xi^\dg$, defined here as the subgraph of $M^\dg$ whose vertices correspond to faces of $M$ that are adjacent to vertices of $\Xi$; we can take the vertices of $\Xi^\dg$ to be the midpoints of edges of $\Lambda$. Given a perfect matching $\mfm$ of the bipartite graph $\Xi$, one can define a closed 1-form on its dual $\Xi^\dagger$ (ie. an antisymmetric function on oriented edges of $\Xi^\dagger$) as follows. Consider the 1-form $\omega\in C^1(\Xi^\dagger)$ defined by:
$$\omega((bw)^\dagger)=\left\{   \begin{array}{ll} 
      1 &\textrm{if } $(bw)$ \textrm{ matched, oriented from black to white}\\
      0 &\textrm{otherwise} \\
   \end{array}\right.$$ 
Then $d\omega\in C^2(\Xi^\dagger)\sim C^0(\Xi)$ is $1$ on black vertices, $-1$ on white vertices, where $(d\omega)(f)$ is the sum of $\omega$ on the edges bounding the face $f$ of $\Xi^\dagger$, oriented counterclockwise. 
Given the embedding of $\Xi$ in the plane, one can construct a fixed 1-form $\omega_0\in C^1(\Xi^\dagger)$ as follows (here $(bw)$ is an edge of $\Xi$; the black neighbours of $w$ are $x,y,x',y'$ in this order):
$$\omega_0((bw)^\dagger)=\frac 1{2\pi}\arg\frac{y'-x}{y-x}=\rK(w,b)\uK^{-1}(b,w)\stackrel{def}{=}\gls{pwb}$$
by Theorem \ref{Thm:Kencrit}. Then $d\omega_0=d\omega$ for any $\omega$ defined from a perfect matching as above. This is clear 
from the local geometry.

Since $d(\omega_0-\omega)=0$, we can write 
\begin{equation}\label{eq:heightdef}
\omega_0-\omega=d\gls{h}
\end{equation}
for some function $h$ on $\Xi^\dg$, which is uniquely
defined up to an additive constant. (Remark that if $h$ and $h'$ are the height functions corresponding to two matchings $\mfm,\mfm'$, then $h'-h$ takes values in $\Z$ modulo a global additive constant). In the case of graphs in multiply connected domains and on a torus, $h$  is additively multivalued (when tracing $h$ along a non-contractible cycle).

We now seek an interpretation of perturbed operators $K_\alpha,\rK_\alpha$ in terms of height functions. 
From \eqref{eq:ratiopart} we get
\begin{align*}
\E\left(\exp(2\sum_{(bw)\in\mfm}\Re\int_w^b\alpha)\right)&=\det(\rK_\alpha\rK^{-1})\\
\E\left(\exp(2i\sum_{(bw)\in\mfm}\Im\int_w^b\alpha)\right)&=\det(K_\alpha \rK^{-1})
\end{align*}
In the simply-connected case (ie when $h$ is single valued), one can write:
$$\ind_{(bw)\in\mfm}=\omega((ff'))=h(f)-h(f')+\omega_0((ff'))$$
where $(ff')=(bw)^\dagger$. Then
\begin{align*}
\sum_{(bw)\in\mfm}\Re\int_w^b\alpha&=\sum_{f\in\Xi^\dagger}h(f)(\Re\int_{\partial f}\alpha)+P(\alpha)
\end{align*}
where $\partial f$ is the boundary of the face $f$ (taken counterclockwise) and $P(\alpha)$ is the $\R$-linear form:
$$P(\alpha)=\sum_{(bw)\in E_\Xi}p(w,b)\Re\int_w^b\alpha$$
Let us extend $h$ (initially defined on $\Xi^\dg$) to a piecewise constant function, ie constant in each face of $\Xi$. Then by Stokes' formula
$$\sum_{f\in\Xi^\dagger}h(f)(\Re\int_{\partial f}\alpha)=\Re\int_\Xi h\partial\alpha$$
(here $\Xi$ designates the domain of $\C$ covered by the graph; $d\alpha=\partial\alpha$ as $\alpha$ is a $(0,1)$-form). Finally we obtain the following expression for the Laplace and Fourier functional of the height field:
\begin{equation}\label{eq:FourLap}
\begin{array}{rl}
\E\left(\exp(2\Re\int_\Xi h\partial\alpha)\right)&=\det(\rK_\alpha\rK^{-1})\exp(-2P(\alpha))\\
\E\left(\exp(2i\Im\int_\Xi h\partial\alpha)\right)&=\det(K_\alpha \rK^{-1})\exp(2iP(i\alpha))
\end{array}
\end{equation}
The characteristic functional is bounded and is more practical is the absence of boundary. The Laplace functional preserves the real structure ($\rK_\alpha:\R^{\Xi_B}\rightarrow\R^{\Xi_W}$), which is useful for domains with boundary.

\section{Planar graphs}\label{sec:planar}

In order to illustrate the general method while avoiding complications related to non-trivial homology or boundaries, we now discuss the case of the plane. The analysis is based on the family of operators $(K_\alpha)_\alpha$ (see \eqref{eq:Kastpert}), which may be thought of as discretisations of the continuous CR operators $(\dbar+\alpha)_\alpha$. 

In Section \ref{subsec:planecharfun}, we relate the $K_\alpha$'s with the characteristic functional of the (infinite volume) dimer height field. In Section \ref{subsec:planarinvker}, we construct and analyse (in particular near the diagonal) an inverting kernel $\rS_\alpha$. In Section \ref{subsec:varplane}, we apply these estimates to obtain a precise version of the convergence of the characteristic functional.

\subsection{Characteristic functional}\label{subsec:planecharfun}

Let $\Lambda=\Lambda_\delta$ be a lozenge tiling of the complex plane $\C$ with edge length $\delta$, with $\delta$ going to zero along some sequence; $\Gamma,\Gamma^\dg$ is the pair of associated isoradial graphs, $M$ the corresponding bipartite graph (as in Figure \ref{fig:loztil}).

In this section, the graph carrying dimer configurations is $\Xi=M$. We assume that condition $(\spadesuit)$ is satisfied for all $\delta$ (for a fixed $\eta_0$). 

From \cite{Ken_locstat,KOS,dT_quadri}, we know that there is a Gibbs measure on perfect matchings of $M$ such that, for any finite subset $\{(b_i,w_i),1\leq i\leq n\}$ of edges of $\Xi$, 
\begin{equation}\label{eq:locstats}
\P((b_1,w_1)\in\mfm,\dots,(b_n,w_n)\in\mfm)=\left(\prod_{i=1}^n\rK(w_i,b_i)\right)\det_{1\leq i,j\leq n}(\uK^{-1}(b_i,w_j))
\end{equation}
(manifestly, these {\em local statistics} completely specify the measure). When $\Lambda$ is realised as the local limit of a sequence of biperiodic lattices, this measure is the limit of uniform dimer covers of (finite volume) toroidal graphs. In particular ($n=1$),
$$\P((b,w)\in\mfm)=p(w,b)=\rK(w,b)\uK^{-1}(b,w)$$
Thus we have a probability measure on matchings of $\Xi=\Xi_\delta$; $\E=\E_\delta$ is the expectation under this measure, $h$ the height function, seen as a function constant on faces of $\Xi$ and well-defined modulo a global additive constant.   

Remark that $\E(h)$ is constant; indeed, if $(ff')$ is the edge of $M^\dg$ dual to $(bw)$, then
$$\E(h(f')-h(f))=\E((\omega_0-\omega)(ff'))=p(w,b)-\P((bw)\in\mfm)=0$$
which explains the choice of $\omega_0$ as a reference 1-form in \eqref{eq:heightdef}.

Consider $g\in C^1_c(\C)$ a function with compact support, continuous first derivatives and second derivatives in $L^1_{loc}$; set $\alpha=\bar\partial g=g_{\bar z}d\bar z$; we consider again the perturbation \eqref{eq:Kastpert}
$$K_\alpha(w,b)=\rK(w,b)\exp(2i\Im\int_w^b\alpha)$$
Since $g$ has compact support, $K_\alpha\uK^{-1}$ is a finite rank perturbation of the identity, and consequently $\det(K_\alpha\uK^{-1})$ is well-defined as a Fredholm determinant (see e.g. Chapter 3 in \cite{SimTrace}). Concretely, let $S\subset\Xi_W$ be finite and s.t. $K_\alpha(w,.)=\rK(w,.)$ whenever $w\notin S$. Then the determinant of $((K_\alpha\uK^{-1})(w,w'))_{w,w'\in S}$ does not depend on the choice of $S$ and is denoted by $\det(K_\alpha\uK^{-1})$. We may think of $K_\alpha\uK^{-1}$ as an (infinite) block triangular matrix, with a finite nontrivial diagonal block (corresponding to $S$) and an infinite identity block (corresponding to $S^c$), so that the problem is effectively finite dimensional.

\begin{Lem}\label{Lem:detplane}
Let $h$ be the height function (constant on faces); then
\begin{align*}
\det(K_\alpha \uK^{-1})&=\E(\exp(2i\Im\int_\C h\partial\dbar g))\exp(2i\sum_{w\sim b}p(w,b)\Im\int_w^b\alpha)\\
&=\E(\exp(-i\Re\int_\C h(\Lap g)dA))\exp(-2iP(i\alpha))
\end{align*}
\end{Lem}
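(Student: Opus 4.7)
The plan is to pass the finite-volume identity
$$\E_\Xi\bigl(\exp(2i\Im\int_\Xi h\,\partial\alpha)\bigr)=\det(K_\alpha\rK^{-1}_\Xi)\exp(2iP(i\alpha))$$
from the end of Section 2.2 to the whole plane, reinterpreting the right-hand side as a Fredholm determinant. The two forms displayed in the lemma are then obtained by (i) identifying $\exp(2iP(i\alpha))=\exp(-2i\sum_{w\sim b}p(w,b)\Im\int_w^b\alpha)$, which is immediate from $P(i\alpha)=-\sum_{w\sim b}p(w,b)\Im\int_w^b\alpha$, and rearranging to solve for $\det$; and (ii) converting $\partial\dbar g$ into $\Lap g\,dA$ for the second equality.

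Since $\alpha=\dbar g$ is compactly supported and black--white neighbours in $M$ lie at distance $O(\delta)$, the operator $D:=K_\alpha-\rK$ is supported on a finite set $E_0$ of edges in a bounded neighbourhood of $\mathrm{supp}\,g$. Hence $K_\alpha\uK^{-1}=\Id+D\uK^{-1}$ is a finite-rank perturbation of the identity, and its Fredholm determinant reduces to an ordinary $|E_0|\times|E_0|$ minor of $\Id+D\uK^{-1}$. Exhaust $\C$ by simply-connected subgraphs $\Xi_n\uparrow M$ containing $\mathrm{supp}\,g$ in their interior. Then $\det(K_\alpha\rK^{-1}_{\Xi_n})$ is the analogous minor built from $\rK^{-1}_{\Xi_n}$ instead of $\uK^{-1}$, and convergence of the finite-volume inverse Kasteleyn kernels $\rK^{-1}_{\Xi_n}\to\uK^{-1}$ on compacts---a consequence of the estimates of Theorem~\ref{Thm:Kencrit} together with the local-statistics construction of the infinite-volume Gibbs measure in \cite{Ken_locstat,KOS,dT_quadri}---yields $\det(K_\alpha\rK^{-1}_{\Xi_n})\to\det(K_\alpha\uK^{-1})$. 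On the left, $\partial\alpha=\partial\dbar g$ is compactly supported and $\int_\C\partial\dbar g=0$, so $\int_{\Xi_n}h\,\partial\alpha=\int_\C h\,\partial\dbar g$ is stable in $n$ and independent of the additive indeterminacy in $h$; the exponential is bounded by one and depends only on $h$ on the fixed compact $\mathrm{supp}\,g$, so convergence of local statistics and dominated convergence give $\E_{\Xi_n}(\cdot)\to\E(\cdot)$. Combined with the algebraic identification of the prefactor, this establishes the first displayed identity.

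For the second identity, the relations $dz\wedge d\bar z=-2i\,dA$ and $\Lap=4\partial_z\partial_{\bar z}$ give
$$\partial\dbar g=-\tfrac{i}{2}\Lap g\,dA,$$
so $\Im\int_\C h\,\partial\dbar g=-\tfrac12\int_\C h\,\Lap g\,dA$ (a real integral) and hence $2i\Im\int_\C h\,\partial\dbar g=-i\Re\int_\C h\,\Lap g\,dA$, which substituted into the first form yields the second. The principal obstacle is the quantitative control of boundary effects in the finite-volume kernels $\rK^{-1}_{\Xi_n}$ on the fixed finite set $E_0$; under assumption $(\spadesuit)$ this is essentially packaged by the inverse-kernel asymptotics of Theorem~\ref{Thm:Kencrit} and the infinite-volume constructions in \cite{Ken_isoradial,Ken_locstat,KOS,SmiChe_isoradial,dT_quadri}, so the technical work consists mainly in assembling these ingredients cleanly.
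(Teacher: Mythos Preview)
Your approach via finite-volume approximation differs from the paper's, which works directly in infinite volume. The paper in fact opens its proof by remarking that the finite-volume identity from Section~2.2 cannot be applied directly.

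The gap in your argument is the asserted convergence $\rK^{-1}_{\Xi_n}\to\uK^{-1}$ on the fixed finite set $E_0$. This is not a consequence of Theorem~\ref{Thm:Kencrit}, which only describes the infinite-volume kernel $\uK^{-1}$, nor of the local-statistics construction in \cite{Ken_locstat,KOS,dT_quadri}, which \emph{defines} the Gibbs measure directly through $\uK^{-1}$ rather than realising it as a limit along a sequence of simply-connected subgraphs. Such kernel convergence does hold for Temperleyan boundary conditions (as the paper notes later, in Section~8), but under the standing hypothesis $(\spadesuit)$ alone, for a general isoradial $\Lambda$, no suitable exhaustion $(\Xi_n)$ with convergent inverses is supplied by the cited references. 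Toroidal approximations are available, but there the Section~2.2 identity acquires extra homological terms and does not transfer straightforwardly.

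The paper instead argues directly in infinite volume. For a finitely supported edge weight $v$, it expands $\prod_{(bw)}(1+v(w,b)\ind_{(bw)\in\mfm})$ as a sum over finite subsets of edges, evaluates each expectation via the determinantal local-statistics formula, and matches the resulting sum term by term against the Fredholm expansion of $\det(\Id+(K'-\rK)\uK^{-1})$ using Cauchy--Binet. Specialising $v(w,b)=\exp(2i\Im\int_w^b\alpha)-1$ then gives the first display, entirely avoiding boundary-condition analysis. Your reduction of the second display to the first is correct (the parenthetical ``a real integral'' is only literally true for real $g$, but the conclusion $2i\Im\int h\,\partial\dbar g=-i\Re\int h\,\Lap g\,dA$ holds in general).
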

\begin{proof}
The issue is that we cannot apply directly the finite volume identity we observed earlier \eqref{eq:FourLap}. Let us consider $v$ a finitely supported function on edges of $\Xi$. Then
$$\prod_{(bw)\in E_\Xi}(1+v(w,b)\ind_{(bw)\in\mfm})=\sum_{S\subset E_\Xi}\prod_{(b,w)\in S}\ind_{(bw)\in\mfm}v(w,b)$$
(with finitely many nonzero summands on the right hand side and by convention $\prod_{\emptyset}=1$) and thus by \eqref{eq:locstats}
$$\E\left(\prod_{(bw)\in E_\Xi}(1+v(w,b)\ind_{(bw)\in\mfm})\right)
=\sum_{S=\{(b_iw_i),1\leq i\leq n\}\subset E_\Xi}\left(\prod_{i=1}^n\rK(w_i,b_i)v(w_i,b_i)\right)\det_{1\leq i,j\leq n}(\rK^{-1}(b_i,w_j))$$
On the other hand, if $K':\C^{\Xi_B}\rightarrow\C^{\Xi_W}$ is given by its matrix elements $K'(w,b)=(1+v(w,b))\rK(w,b)$, the Fredholm expansion (e.g. Lemma 3.3 \cite{SimTrace}) reads:
$$\det(\Id+(K'-K)\rK^{-1})
=\sum_{\{w_i,1\leq i\leq n\}\subset \Xi_W}\det_{1\leq i,j\leq n}(((K'-K)\rK^{-1})(w_i,w_j))$$
The Cauchy-Binet formula gives
$$\det_{1\leq i,j\leq n}(((K'-K)\rK^{-1})(w_i,w_j))
=\sum_{\{b_k,1\leq k\leq n\}\subset \Xi_B}\det_{1\leq i,k\leq n}((K'-K)(w_i,b_k))\det_{1\leq k,j\leq n}(\rK^{-1}(b_k,w_j))
$$
and besides
$$\det_{1\leq i,k\leq n}((K'-K)(w_i,b_k))=\sum_{\sigma\in{\mf S}_n}\sgn(\sigma)\prod_{i=1}^n\rK(w_i,b_{\sigma(i)})v(w_i,b_{\sigma(i)})$$ 
This completes the identification (reordering rows of $\det_{1\leq i,j\leq n}(\rK^{-1}(b_i,w_j))$ absorbs $\sgn(\sigma)$):
$$\E\left(\prod_{(bw)\in E_\Xi}(1+v(w,b)\ind_{(bw)\in\mfm})\right)=\det(K'\rK^{-1})$$
Specialising to $K'=K_\alpha$ concludes (the rest of the argument being as in the finite volume case \eqref{eq:FourLap}).
\end{proof}

\subsection{Inverting kernels}\label{subsec:planarinvker}

The next step is to construct and estimate (in particular near the diagonal) a kernel $\rS_\alpha$ inverting $K_\alpha$.

The finite difference approximation \eqref{eq:findiffpert} leads us to think of $K_\alpha$ as a finite difference version of (simultaneously) $\dbar+\alpha$ and the adjoint operator $\partial-\bar\alpha$. In order to construct a kernel $\rS_\alpha$ inverting $K_\alpha$, we are going to construct an approximate kernel $\tilde\rS_\alpha$ using at large scale inverting kernels for these continuous Cauchy-Riemann operators, and at small scale ($|b-w|\ll1$) ``standard" discrete holomorphic functions; and finally control the error between $\tilde\rS_\alpha$ and $\rS_\alpha$.

We have $\alpha=\dbar g$. Thus $\dbar+\alpha=e^{-g}(\dbar)e^{g}$, and consequently
$$S_\alpha(z,w)=\frac{e^{g(w)-g(z)}}{\pi(z-w)}$$
is a kernel inverting $(\dbar+\alpha)$ (on the right): $(\dbar+\alpha)S_\alpha(.,w)=\delta_wd\bar z$ as distributions. Among such kernels, it is uniquely characterized by $S_\alpha(z,w)\rightarrow 0$ as $z\rightarrow\infty$.

Set 
$$\tilde \rS_\alpha(b,w)=e^{2i\Im(\bar\lambda (b-w))}(\uK^{-1}(b,w)+R_B(\mu)+\bar R_B(\mu'))$$ 
for $|b-w|<\eta$ and
\begin{equation}\label{eq:planekerasymp}
\tilde \rS_\alpha(b,w)=\frac 12 \left(R_B(e^{i\nu(w)}S_\alpha(b,w))+\bar R_B(e^{-i\nu(w)}\overline {S_{-\alpha}(b,w)})\right)
\end{equation}
for $|b-w|\geq\eta$, where $\eta,\mu,\mu'$ are parameters to be specified.

We have
\begin{align*}
S_\alpha(b,w)e^{2i\Im\lambda\overline{(b-w)}}&=\frac{1}{\pi(b-w)}e^{2i\Im g_{\bar z}(w)\overline{(b-w)}-(g(b)-g(w))}\\
&=\frac{1}{\pi(b-w)}-\frac{1}\pi(\overline {g_{\bar z}}(w)+g_z(w))+O(b-w)
=\frac{1}{\pi(b-w)}-\frac{2}{\pi}(\partial_z\Re g)(w)+O(e^{2\|g\|_\infty}\omega_{g'}(|b-w|))
\end{align*}

We can now estimate $K_\alpha\tilde\rS_\alpha(.,w)$. 
If $w'$ is such that all its black neighbours are in the ball $B(w,\eta)=\{z:|z-w|<\eta\}$, given that $\uK^{-1}(b,w)=O(|b-w|^{-1})$, we get from \eqref{eq:close}:
$$(K_\alpha\tilde\rS_\alpha(.,w))(w')=\delta_w(w')+O\left(\delta^2\frac{\omega_{g'}(|w'-w|+\delta)}{|w'-w|+\delta}\right)$$
If all black neighbours of $w'$ are outside $B(w,\eta)$, we have as in \eqref{eq:findiffpert}
$$(K_\alpha\tilde\rS_\alpha(.,w))(w')=\delta^2e^{2\|g\|_\infty}O\left(
\frac{\omega_{g'}(\delta)}{|w'-w|}+\frac{\delta\|g'\|_\infty}{|w'-w|^2}+\frac{\delta}{|w'-w|^3}+\frac{\delta\|g'\|_\infty^2}{|w'-w|}\right)$$
and for $w'$ outside of $B(w,\eta)\cup supp(g)$, 
$$(K_\alpha\tilde\rS_\alpha(.,w))(w')=e^{\|g\|_\infty}O\left(
\frac{\delta^4}{|w'-w|^4}\right)$$
We may rephrase Theorem \ref{Thm:Kencrit} as
$$\uK^{-1}(b,w)=\frac 12 R_B\left(\frac{e^{i\nu(w)}}{\pi(b-w)}\right)+
\frac 12 \bar R_B\left(\frac{e^{-i\nu(w)}}{\pi\overline{(b-w)}}\right)+O\left(\frac{\delta}{|b-w|^2}\right)$$
Set
\begin{align*}
\mu&=\frac 12 e^{i\nu(w)}(-\frac {2}\pi(\partial_z\Re g)(w))\\
\mu'&=\frac 12 e^{-i\nu(w)}(\frac {2}\pi(\partial_{\bar z}\Re g)(w))=-\bar\mu
\end{align*}
Then if $\dist(b,w)$ is of order $\eta$, the difference between the short and long distance definitions of $\tilde\rS_\alpha$ is of order $O(\omega_{g'}(\eta)e^{2\|g\|_\infty}+\delta/\eta^2)$. Consequently, if $w'$ has neighbours both inside and outside of $B(w,\eta)$, $(K_\alpha\tilde\rS_\alpha(.,w))(w')=O(\delta\omega_{g'}(\eta)e^{2\|g\|_\infty}+\delta^2/\eta^2)$. 

Assume now that $g$ is in the Sobolev space $W^{2,p}$ with $p>2$ and has support in $B(0,r)$. In what follows constants may depend on $p,r$. By Morrey's inequality (see Section \ref{ssec:Sobolev}), $g'$ is $\eps$-H\"older with H\"older norm less than $c\|g''\|_p$ for $\eps=1-2/p$. It follows that $\omega_{g'}(s)\leq c\|g''\|_ps^\eps$ and $\|g\|_{C^1}\leq c\|g''\|_p$. Set $C=\|g''\|_p$.

Thus, the $L^1$ norm of $K_\alpha\tilde\rS_\alpha(.,w)-\delta_w$ (w.r.t. counting measure) is less than:
$$c\left(\delta^2\sum_{k=1}^{\eta/\delta}kC(k\delta)^{\eps-1}+(Ce^{cC}\delta\eta^\eps+\frac{\delta^2}{\eta^2})\frac{\eta}{\delta}+\delta^2e^{cC}\sum_{k=\eta/\delta}^{r/\delta}k.\left(\frac{C\delta^{\eps}+C^2\delta}{(k\delta)}+\frac{\delta C}{(k\delta)^2}+\frac{\delta}{(k\delta)^3}\right)
+e^{cC}\sum_{k=r/\delta}^\infty k\frac{\delta^4}{(k\delta)^4}
\right)$$
hence less than
$$ce^{cC}(\eta^{\eps+1}+\frac{\delta}{\eta}+\delta^{\eps}+\delta|\log\eta|+\frac{\delta}{\eta}+\delta^2)$$
Here we simply collect errors stemming from: replacing $K_\alpha$ with an operator conjugate to $\rK$ at short distance; gluing the short and long distance approximation; and using the limiting continuous kernel at long distance.

Setting now $\eta=\delta^{\beta}$ with $\beta=\frac{1}{\eps+2}$, and $T=\Id-K_\alpha\tilde\rS_\alpha$, we get that 
$$\lVert T\rVert_{L^1}\leq ce^{cC}(\delta^{(\eps+1)/(\eps+2)}+\delta^\eps)$$ 
where $\|.\|_{L^1}$ is the $L^1\rightarrow L^1$ operator norm, and consequently $(\Id-T):L^1\rightarrow L^1$ is invertible for $\delta$ small enough, and we may set:
$$\rS_\alpha\stackrel{def}{=}\tilde\rS_\alpha(\sum_{k=0}^\infty T^k)$$
so that $K_\alpha\rS_\alpha=\Id$.

We now want to estimate $\lVert\tilde\rS_\alpha T\rVert_{L^1\rightarrow L^\infty}$. We simply expand 
$$(\tilde\rS_\alpha T)(b,w)=\sum_{w'}\tilde S_\alpha(b,w')T(w',w)$$ 
and as before we split $T$ in a short range and long range part and notice that $\tilde\rS_\alpha(b,w)=O(e^{cC}/\dist(b,w))$ to obtain:
\begin{align*}
\|(\tilde\rS_\alpha T)(.,w)\|_\infty
&\leq 
ce^{cC}\left(\delta^2\sum_{k=0}^{\eta/\delta}k(k\delta)^{\eps-2}
+(\delta\eta^\eps+\frac{\delta^2}{\eta^2})\sum_{i=1}^{c\eta/\delta}\frac 1{\delta i}
+\delta^2\sum_{k=\eta/\delta}^{r/\delta}k.\left(\frac{\delta^{\eps}}{(k\delta)^2}+\frac{\delta}{(k\delta)^3}\right)+\sum_{k=1}^{\eta/\delta} \frac{k}{k\delta}\left(\frac{\delta^{2+\eps}}{\eta^2}+\frac{\delta^{3}}{\eta^3}\right)\right.\\
&{\rm\ \ \ \ \ \ \ }
\left.+\sum_{k=r/\delta}^\infty k\frac{\delta^4}{(k\delta)^5}+\sum_{k=1}^{r/\delta}\frac{k}{k\delta}\delta^{4}
\right)\\
&\leq ce^{cC}\left(\eta^\eps+(\eta^\eps+\delta^{1-2\beta})|\log(\delta)|+(\delta^\eps|\log(\delta)|+\delta^{1-\beta}+\delta^{\eps-\beta}+\delta^{1-2\beta})
+\delta^2\right)=O(e^{cC}\delta^{\eps'})
\end{align*}
The various terms correspond to the possible relative positions of $b,w',w$. Notice however that the estimate is simpler when $b\sim w$, which is the most useful case.

Since $\rS_\alpha-\tilde\rS_\alpha=(\tilde\rS_\alpha T)(\Id-T)^{-1}$, $\lVert(\Id-T)^{-1}\rVert_{L^1}=O(1)$, $\lVert\tilde\rS_\alpha T\rVert_{L^1\rightarrow L^\infty}=O(e^{cC}\delta^{\eps'})$, we conclude that  
$$\rS_\alpha(b,w)-\tilde\rS_\alpha(b,w)=O(e^{c\|g''\|_p}\delta^{\eps'})$$
uniformly in $b,w$ (for $p,r$ fixed; $\eps'>0$ depends on $p>2$). In particular for $b\sim w$ we have
\begin{equation}\label{eq:diagestplane}
\rS_\alpha(b,w)-e^{2i\Im\bar\lambda(b-w)}\uK^{-1}(b,w)=i\Im(e^{i\nu}r_\alpha)+O(e^{c\|g''\|_p}\delta^{\eps'})
\end{equation}
where $r_\alpha=-\frac 2\pi(\partial_z\Re g)(w)$ and $\nu=\nu(b)+\nu(w)$.

\subsection{Variational analysis}\label{subsec:varplane}

We now apply the asymptotic analysis of the inverting kernel $\rS_\alpha$ to the original problem, i.e. the convergence of the characteristic functional of the height field over a large class of test functions.

\begin{Lem}\label{Lem:planevariation}
The following estimate holds
$$\log\det(K_\alpha\uK^{-1})=2i\sum_{b\sim w}p(w,b)\Im\int_w^b\alpha-\frac 1{2\pi}\int |\nabla\Re g|^2+O(\delta^{\eps'})$$
\end{Lem}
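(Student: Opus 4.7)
The plan is to apply Jacobi's variation formula along the homotopy $t\mapsto t\alpha$, $t\in[0,1]$, insert the near-diagonal expansion of $\rS_{t\alpha}$ derived just above, and integrate in $t$. Since $K_{t\alpha}\uK^{-1}$ is a finite-rank perturbation of the identity supported on edges inside $\operatorname{supp}(g)$, the Fredholm determinant is smooth in $t$ and
\[
\partial_t\log\det(K_{t\alpha}\uK^{-1})
=\Tr(\rS_{t\alpha}\,\partial_t K_{t\alpha})
=\sum_{b\sim w}2i\,\Im\!\int_w^b\!\alpha\,\cdot\,K_{t\alpha}(w,b)\,\rS_{t\alpha}(b,w);
\]
integrating from $0$ to $1$ will produce the two terms of the claim plus an $O(\delta^{\eps'})$ remainder.

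For $b\sim w$ and $\lambda=a(w)$, equation \eqref{eq:close} gives $K_{t\alpha}(w,b)=\rK(w,b)e^{-2it\Im(\bar\lambda(b-w))}(1+O(\delta^{1+\eps}))$, while the expansion derived just before the Lemma, applied with $\alpha\mapsto t\alpha$ (so $\lambda\mapsto t\lambda$, $r_\alpha\mapsto tr_\alpha$ by linearity), reads
\[
\rS_{t\alpha}(b,w)=e^{2it\Im(\bar\lambda(b-w))}\uK^{-1}(b,w)+i\,t\,\Im(e^{i\nu}r_\alpha(w))+O(\delta^{\eps'}).
\]
The phases cancel in the product, leaving $K_{t\alpha}(w,b)\rS_{t\alpha}(b,w)=p(w,b)+i\,t\,\rK(w,b)\,\Im(e^{i\nu}r_\alpha(w))+O(\delta^{\eps'})$. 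Integrating in $t$, the constant piece yields the first claimed term $2i\sum_{b\sim w}p(w,b)\Im\!\int_w^b\!\alpha$, while the $t$-linear piece (via $\int_0^1 t\,dt=\tfrac12$) contributes $-\sum_{b\sim w}\rK(w,b)\,\Im(e^{i\nu}r_\alpha)\,\Im\!\int_w^b\!\alpha$.

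To identify this second sum as a Riemann sum for $\tfrac{1}{2\pi}\int|\nabla\Re g|^2$, fix $w$ with ccw black neighbors $x,y,x',y'$, $(xx')$ oriented in $\Gamma$, and set $u=e^{i\nu(w)}$, $A=|x'-x|$, $B=|y'-y|$, $a=g_{\bar z}(w)$, $\rho=\partial_z\Re g(w)$, so $r_\alpha(w)=-\tfrac{2\rho}{\pi}$. Using $\rK(w,\cdot)\in\{\mp B/2,\pm A/2\}$, $e^{i\nu(b)}\in\{1,i\}$, $b-w\in\{\pm\tfrac{A}{2}u,\pm\tfrac{iB}{2}u\}$, and the first-order expansion $\Im\!\int_w^b\!\alpha=\Im(a\,\overline{b-w})+O(\delta^{1+\eps})$, the four contributions collapse via $\Re(\rho u)\Re(a\bar u)-\Im(\rho u)\Im(a\bar u)=\Re(\rho a)$ to $\tfrac{AB}{\pi}\Re(a\rho)+O(\delta^{2+\eps})$. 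Since $\det(K_\alpha\uK^{-1})$ depends on $g$ only through $\Re g$---the gauge transformation $g\mapsto g+iv$ (real $v$) yields $K_{\dbar(g+iv)}=D_W^{-1}K_{\dbar g}D_B$ with diagonal unitary $D_W,D_B$, changing only the first claimed term by $i(\sum_b v(b)-\sum_w v(w))$ via the identities $\sum_{b\sim w}p(w,b)=\sum_{w\sim b}p(w,b)=1$---one may assume $g$ real, in which case $a=\bar\rho$ and $\Re(a\rho)=|\rho|^2=\tfrac14|\nabla\Re g|^2$. With $\mu_\dmd(w)=AB/2$, the local contribution equals $\tfrac{\mu_\dmd(w)}{2\pi}|\nabla\Re g|^2(w)$, and the Riemann sum estimate $\sum_w\mu_\dmd(w)f(w)=\int f\,dA+O(\delta^\eps)$ (with H\"older modulus of $f=|\nabla\Re g|^2$ controlled by $\|g''\|_p$ via Morrey's inequality, as in the preceding analysis of $\tilde\rS_\alpha$) completes the identification.

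The main obstacle is error bookkeeping in the trace. The pointwise estimate $\rS_{t\alpha}-\tilde\rS_{t\alpha}=O(e^{c\|g''\|_p}\delta^{\eps'})$, summed naively over the $O(\delta^{-2})$ edges in $\operatorname{supp}(g)$ weighted by $K_{t\alpha}\cdot\Im\!\int\!\alpha=O(\delta)$, would give a divergent $O(\delta^{\eps'-1})$. One must instead interpret the error as the trace $\Tr(\partial_tK_{t\alpha}(\rS_{t\alpha}-\tilde\rS_{t\alpha}))$, decompose $\rS_{t\alpha}-\tilde\rS_{t\alpha}=\tilde\rS_{t\alpha}T(\Id-T)^{-1}$, and use Schatten-class control---$\|\partial_tK_{t\alpha}\|_{\rm HS}=O(1)$ (from $O(\delta^{-2})$ entries of size $O(\delta)$) together with the operator norm bounds $\|T\|_{L^1\to L^1}=O(\delta^{\eps'})$ and $\|\tilde\rS_\alpha T\|_{L^1\to L^\infty}=O(\delta^{\eps'})$ derived above, paired via a duality/interpolation argument to yield the required $O(\delta^{\eps'})$ trace bound uniformly in $t\in[0,1]$, which persists after integration in $t$.
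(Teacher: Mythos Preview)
Your overall strategy---differentiate $t\mapsto\log\det(K_{t\alpha}\uK^{-1})$, insert the near-diagonal expansion of $\rS_{t\alpha}$, and integrate---is exactly the paper's approach, and your local computation collapsing the four neighbours of $w$ to $2\mu_\dmd(w)\Re(\dot\lambda r_\alpha)$ is correct (the paper writes this directly as $\Im(\dot\lambda e^{-i\nu(w)})\Im(e^{i\nu(w)}r_\alpha)+\Im(\dot\lambda e^{-i\nu(w)}(-i))\Im(e^{i\nu(w)}ir_\alpha)=-\Re(\dot\lambda r_\alpha)$ without the real-$g$ reduction, but your gauge argument is a legitimate alternative).

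However, your final paragraph manufactures a difficulty that does not exist. You assert that the weight $K_{t\alpha}\cdot\Im\!\int_w^b\!\alpha$ is $O(\delta)$; in fact it is $O(\delta^2)$, since in this section the mesh is $\delta$, so $\rK(w,b)=O(\delta)$ (half-diagonals of rhombi of side $\delta$) and $\Im\!\int_w^b\!\alpha=O(\delta\|a\|_\infty)$ separately. Thus $\dot K_{t\alpha}(w,b)\cdot O(\delta^{\eps'})=O(\delta^{2+\eps'})$ per edge, and summing over the $O(\delta^{-2})$ edges in $\operatorname{supp}(g)$ gives $O(\delta^{\eps'})$ directly---this is precisely how the paper bounds the trace error, writing $\dot K_\alpha\rS_\alpha(w,w)=\cdots+O(\delta^{2+\eps'})$ and then summing over $w$. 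No Schatten-class or interpolation argument is needed; the pointwise estimate suffices.
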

\begin{proof}
Observe that $K_\alpha\uK^{-1}$ and $\rK\rS_\alpha$ are finite rank (hence trace class) perturbations of the identity, and $(K_\alpha\uK^{-1})(\rK\rS_\alpha)=\Id$ (since $\uK^{-1}$ is also a left inverse of $\rK$). If $\alpha$ depends smoothly on a parameter $t$ (say $\alpha(t)=t\alpha$), we have the variational formula (eg \cite{GohKre}, IV.1)
$$\frac {d}{dt}\log\det(K_{\alpha}\rS)=\Tr((\frac{d}{dt}K_{\alpha})\rS_{\alpha})$$
as long as $K_{\alpha}\uK^{-1}$ is invertible, which is at least true for small enough $t$. From
\begin{align*}
\dot{K_{\alpha}}(w,b)&=\rK(w,b)\exp(2i\Im\int_w^b\alpha)2i\Im\int_w^b\dot\alpha\\
\rS_{\alpha}(b,w)&=\exp(-2i\Im\int_w^b\alpha)\uK^{-1}(b,w)+i\Im(e^{i(\nu(b)+\nu(w))}r_\alpha)+O(\delta^{\eps'})
\end{align*}
(for $b\sim w$), we obtain (if $x,y,x',y'$ denote the black neighbours of $w$ in ccwise order)
\begin{align*}
\dot{K_{\alpha}}\rS_{\alpha}(w,w)&=2i\sum_{b\sim w}p(w,b)\Im\int_w^b\dot\alpha-|y'-y|.|x'-x|\left(\Im(\dot\lambda e^{-i\nu(w)})\Im(e^{i\nu(w)}r_\alpha)+\Im(\dot\lambda e^{-i\nu(w)}(-i))\Im(e^{i\nu(w)}ir_\alpha)
\right)+O(\delta^{2+\eps'})\\
&=2i\sum_{b\sim w}p(w,b)\Im\int_w^b\dot\alpha+2\mu_{\dmd}(w)\Re(\dot\lambda r_\alpha)+O(\delta^{2+\eps'})
\end{align*}
and
$$\frac {d}{dt}\log\det(K_{\alpha}\uK^{-1})=
2i\sum_{b\sim w}p(w,b)\Im\int_w^b\dot\alpha+\Im\int \dot\alpha\wedge(r_\alpha dz)+O(\delta^{\eps'})
$$
(since $dz\wedge d\bar z=-2idA$). Moreover, with $\alpha=\dbar g$, 
$$\Im\int \dot\alpha\wedge(r_\alpha dz)=-\frac 2\pi\Im\int \dbar \dot g\wedge\partial(\Re g) =\frac 2\pi\Im\int \dot g\dbar\partial(\Re g)=\frac 1\pi\Re\int \dot g\Lap g dA= -\frac 1{2\pi}\frac {d}{dt}\int |\nabla \Re g|^2$$
In particular, for $\delta$ small enough $\det(K_\alpha\uK^{-1})$ does not vanish along the interpolation path, thus (see e.g. Theorem 3.5 in \cite{SimTrace})  $K_\alpha\uK^{-1}$ stays invertible and the variational formula is legitimate.
\end{proof}

From Lemmas \ref{Lem:detplane} and \ref{Lem:planevariation}, we immediately conclude:

\begin{Cor}\label{Cor:FFplane}
If $g\in W^{2,p}\cap C^0_c$, $p>2$, then $\int h\Lap g$ converges in distribution to a centered normal variable with variance $\frac 1\pi\int |\nabla g|^2$, as $\delta\searrow 0$.
\end{Cor}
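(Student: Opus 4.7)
The proof is essentially a combination of the two lemmas just established, together with a rescaling argument to extract the full characteristic function. I would proceed as follows.

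First, I would specialize Lemma \ref{Lem:detplane} to real-valued $g \in W^{2,p}\cap C^0_c$ with $\alpha = \dbar g$. Since $g$ is real, $\Re\int_{\C} h\, \Lap g\, dA = \int_{\C} h\, \Lap g\, dA$, and the lemma rearranges to
$$\E\Bigl(\exp\Bigl(-i\int_{\C} h\,\Lap g\, dA\Bigr)\Bigr) = \det(K_\alpha \uK^{-1})\exp(2iP(i\alpha)).$$
By the definition of $P$, $P(i\alpha) = \sum_{(bw)} p(w,b)\Re\int_w^b i\alpha = -\sum_{(bw)} p(w,b)\Im\int_w^b\alpha$, so the exponential correction $\exp(2iP(i\alpha))$ is precisely the inverse of the lattice sum appearing in the preceding determinant lemma. (Morrey's inequality ensures $W^{2,p}\cap C^0_c \subset C^1_c$ for $p>2$, so both lemmas apply.)

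Next, I would substitute the asymptotic expansion from the previous lemma,
$$\log\det(K_\alpha\uK^{-1}) = 2i\sum_{b\sim w} p(w,b)\,\Im\int_w^b \alpha - \frac{1}{2\pi}\int |\nabla g|^2 + O(\delta^{\eps'}),$$
into the above, obtaining the cancellation of the lattice sum and thereby
$$\log\E\Bigl(\exp\Bigl(-i\int h\,\Lap g\Bigr)\Bigr) = -\frac{1}{2\pi}\int |\nabla g|^2 + O(\delta^{\eps'}).$$
This identifies the characteristic function at the single value $t=1$. To extract convergence in distribution I would repeat the argument with $g$ replaced by $tg$ for arbitrary fixed $t\in\R$ (which remains in $W^{2,p}\cap C^0_c$); since both the left-hand side and $\int|\nabla(tg)|^2 = t^2\int|\nabla g|^2$ scale appropriately (the constant $C=\|g''\|_p$ in the error estimate is bounded on compact $t$-intervals), I get
$$\E\Bigl(\exp\Bigl(-it\int h\,\Lap g\Bigr)\Bigr) \xrightarrow[\delta\to 0]{} \exp\Bigl(-\frac{t^2}{2\pi}\int|\nabla g|^2\Bigr).$$
This is the characteristic function of a centered Gaussian with variance $\frac{1}{\pi}\int|\nabla g|^2$, and Lévy's continuity theorem concludes.

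There is no real obstacle: all the analytic work (control of the inverting kernel $\rS_\alpha$ near the diagonal, the variational identity, the error bound $O(\delta^{\eps'})$) was carried out in the two preceding lemmas. The only point requiring care is to verify that the lattice sum $\sum p(w,b)\Im\int_w^b\alpha$ produced by the Quillen-type variational formula exactly cancels the combinatorial correction $-2P(i\alpha)$ coming from Lemma \ref{Lem:detplane}; this is a direct consequence of $\Re(i\,\cdot) = -\Im(\cdot)$. Since $\int\Lap g\, dA = 0$ for compactly supported $g$, the integral $\int h\, \Lap g$ is insensitive to the additive ambiguity in $h$, so the limiting law is unambiguous.
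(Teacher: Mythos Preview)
Your proof is correct and follows exactly the approach indicated by the paper, which simply states that the corollary follows immediately from the two preceding lemmas. You have filled in the details of the cancellation between the lattice sum $2i\sum p(w,b)\Im\int_w^b\alpha$ and the correction $\exp(2iP(i\alpha))$, and made explicit the rescaling $g\mapsto tg$ needed to recover the full characteristic function before invoking L\'evy's continuity theorem.
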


Thus we can integrate the discrete height field $h$ against a test function in $L^p$, $p>2$ (the compact support assumption is for technical convenience). An optimal statement (given the free field limit) would involve a test function in $H^{-1}$; however $h$ (as defined here) is not in $H^1$, due to jump discontinuities on edges of $\Xi$. It is unclear whether one can define a better interpolation of the discrete height function for which one could relax substantially the condition $g\in W^{2,p}$ (to $g\in H^{1+\eps}$, say). 

This identifies the coupling constant of the limiting free field as $g_0=2\pi^2$. In the ``solid-on-solid" normalisation, the discrete height field considered is $2\pi h$, with the coupling constant for the limiting field given by $g_0=\frac 12$.

\section{Toroidal graphs}\label{sec:torus}

As before (Section \ref{sss:critgraphs}), we start from a rhombus tiling $\Lambda=\Lambda_\delta$ of the plane with edge length $\delta\ll1$, from which we construct $\Gamma$, $\Gamma^\dg$, $\dmd=\Lambda^\dg$, $M$. Additionally, we assume that these structures are biperiodic, with periods $1$ and $\tau=\tau_\delta$ ($\Im\tau>0$). Denote by $\gls{Upsilon}$ the lattice $\Z+\tau\Z$; by quotienting, we obtain a rhombus tiling of $\Sigma=\C/\Upsilon$. We are interested in perfect matchings of $\Xi=M/\Upsilon$.
Notice that Euler's formula applied to $\Gamma$ shows that $|\Xi_B|=|\Xi_W|$.

The height ``function" $h$ is associated to a perfect matching ${\mf m}$ on $\Xi$ by a local rule (see Section \ref{ss:height}). Since the torus is non contractible, the height function is now additively multivalued. The current $J=dh$ is a well-defined closed 1-form on $\Sigma$ (the discrete height function on $\Xi^\dg$ is extended to a piecewise continuous function on $\Sigma$, constant on faces of $\Xi$; then $J$ is a distributional 1-form). The Hodge decomposition \eqref{eq:hodgedec} of $J$ reads:
\begin{equation}\label{eq:hodgedecdimer}
J_\delta=\omega_h+dh_0
\end{equation}
where $\omega_h$ is a closed harmonic form (hence, in flat metric, $\omega_h=adx+bdy$ for some $a,b\in\R$) and $h_0$ is a (single-valued) function on $\Sigma$, well-defined up to an additive constant. The ``topological" (or ``instanton") component $\omega_0$ is uniquely specified by its periods, which equal those of the current: $\int_\gamma\omega_h=\int_\gamma J_\delta$, where $\gamma\in H_1(\Sigma,\Z)$ is a basic cycle on $\Sigma$.

We are interested in the asymptotic distribution of $J_\delta$, or equivalently the asymptotic joint distribution of $(\omega_h,h_0)$. For clarity we will be discussing first the topological component of the current in Section \ref{ss:linebundles}. Section \ref{subsec:bosonid} recalls useful Poisson summation arguments (bosonisation identities). Finally, Section \ref{ssec:torusgeneral} handles jointly the topological and scalar components.

\subsection{Flat line bundles}\label{ss:linebundles}

For $\lambda\in\C$, denote:
$$K_\lambda(w,b)=\rK(w,b)\exp(2i\Im\int_w^b\lambda d\bar z)=\rK(w,b)\exp(2i\Im\lambda\overline{(b-w)})$$
so that $K_\lambda$ defines an operator $\C^{M_B}\rightarrow\C^{M_W}$. Plainly $K_\lambda$ commutes with the action of  and by quotienting an operator $\C^{\Xi_B}\rightarrow\C^{\Xi_W}$. We are concerned with the inverse of that last operator. 
Let $\chi:\Upsilon\rightarrow \U=\{z\in\C:|z|=1\}$ be a unitary character; consider the finite-dimensional space 
$$(\C^{M_B})_\chi=\{f\in\C^{M_B}:\forall z\in M_B, \omega\in \Upsilon, f(z+\omega)=\chi(\omega)f(z)\}\subset \C^{M_B}$$
and $(\C^{M_W})_\chi$ is defined similarly; note that $(\C^{M_.})_{\Id}\simeq \C^{\Xi_.}$ (here $\Id$ denotes the trivial character). Plainly, as $K_\lambda$ commutes with the action of $\Upsilon$ by translation, it defines an operator $K_\lambda:(\C^{M_B})_\chi\rightarrow(\C^{M_W})_\chi$. Set :
$$(T_\lambda f)(z)=f(z)e^{2i\Im\lambda\overline z}$$
If $\chi_\lambda$ given by 
\begin{equation}\label{eq:chilambda}
\chi_\lambda(\omega)=e^{2i\Im\lambda\overline\omega}
\end{equation}
denotes the character associated to $\lambda$ , $T_\lambda$ maps $(\C^{M_.})_{\Id}$ to $(\C^{M_.})_{\chi_\lambda}$, and we have the commutative diagram
$$ \xymatrix{
 (\C^{M_B})_{\chi_\lambda} \ar@{->}[rr]^{\displaystyle \rK} \ar@{->}[dd]_{\displaystyle T_{-\lambda}}
      && (\C^{M_W})_{\chi_\lambda} \ar@{->}[dd]^{\displaystyle T_{-\lambda}}    \\ \\
 (\C^{M_B})_{\Id} \ar@{->}[rr]_{\displaystyle K_\lambda} && (\C^{M_W})_{\Id}
 }
 $$
so that we may focus on $\rK:(\C^{M_B})_{\chi}\rightarrow(\C^{M_W})_{\chi}$. An element in the kernel restricts to a bounded harmonic function on $\Gamma$ and $\Gamma^\dg$; then (Liouville) these restrictions are constant, hence the kernel is trivial iff $\chi\neq\Id$. As $|\Xi_B|=|\Xi_W|$, $\rK:(\C^{M_B})_{\chi}\rightarrow(\C^{M_W})_{\chi}$ is invertible iff $\chi\neq\Id$. Let $\rS_\chi$ denote the inverse operator ($\chi\neq\Id$). We now want to relate $\rS_\chi$ to kernels inverting (continuous) Cauchy-Riemann operators. In this subsection, we describe and discuss the continuous inverting kernels and their relation to analytic torsion.

\subsubsection{Inverting kernels and $\theta$-functions}

Denote by $L_\chi$ the holomorphic, unitary line bundle over $\Sigma$ obtained by twisting the trivial line bundle $L_{\Id}$ by the unitary character $\chi:\pi_1(\Sigma)\rightarrow\U$. Sections of $L_\chi$ may be identified with multiplicatively multi-valued functions on $\Sigma$, or multiplicatively quasi-periodic functions $\varphi$ on $\C$ with the transformation rule prescribed by $\chi$: 
\begin{equation}\label{eq:chiquasiper}
\varphi(\omega+z)=\chi(\omega)\varphi(z)
\end{equation}
for any $z\in\C$, $\omega\in\Upsilon$. We may consider the operator
$$
\begin{array}{rl}
\dbar_\chi: L_\chi&\longrightarrow\Omega^{(0,1)}(L_\chi)\\
s&\longmapsto\frac{\partial s}{\partial \bar z}d\bar z
\end{array}$$
where %
$\Omega^{(0,1)}(L_\chi)$ denotes $(0,1)$-forms with values in $L_\chi$. The $\dbar_\chi$ operator is canonically associated to the complex structure on $L_\chi$. It is well known that this is a zero index operator which is invertible iff $\chi\neq\Id$. Moreover its inverse has an explicit expression in terms of $\theta$-functions, which we now recall (see e.g. Chapter I \cite{Fay_Theta}).

Consider the $\theta$ function with characteristics:
\begin{equation}\label{eq:thetachardef}
\begin{array}{rl}
\thetaf{2\eps}{2\eps'}(z)&\stackrel{def}{=}\sum_{n\in\Z}\exp
2i\pi\left(\frac 12 \tau(n+\eps)^2+(n+\eps)(z+\eps')
\right)\\
&=\exp(2i\pi(\frac\tau 2\eps^2+\eps z+\eps\eps'))\thetaf{0}{0}(z+\eps'+\eps\tau)
\end{array}
\end{equation}
which is easily seen to transform as:
\begin{equation}\label{eq:thetatransf}
\begin{array}{rl}
\thetaf{2\eps}{2\eps'}(z+1)&=\exp(2i\pi\eps)\thetaf{2\eps}{2\eps'}(z)\\
\thetaf{2\eps}{2\eps'}(z+\tau)&=\exp(-2i\pi(z+\eps'+\frac\tau 2))\thetaf{2\eps}{2\eps'}(z)
\end{array}
\end{equation}
(see e.g. Chapter VI in \cite{FarKra}). For concision, let us denote $\vartheta=\thetaf{2\eps}{2\eps'}$, $\theta_3=\thetaf{0}{0}$ and $\theta=\thetaf{1}{1}$ (these last notations are as in \cite{Chandra}). Classically (e.g. Chapter V, Theorem 1 in \cite{Chandra}), $\theta$ is odd with simple zeroes on $\Z+\tau\Z$ and no zeroes elsewhere.

Then consider the meromorphic function:
$$T(z)=\frac{\vartheta(z)\theta'(0)}{\vartheta(0)\theta(z)}$$
From \eqref{eq:thetatransf} we see $T(z+1)=-e^{2i\pi\eps}T(z)$, $T(z+\tau)=-e^{-2i\pi\eps'}T(z)$. Moreover $T(z)$ has a simple pole at $z=0$ with residue 1 (if $(\eps,\eps')\neq(\frac 12,\frac 12)\mod\Z^2$). This follows from the statement on zeroes of $\theta$ and \eqref{eq:thetachardef}. It is also clear that $T$ is uniquely specified by these properties. 

Thus:
\begin{equation}\label{eq:Schi}
S_\chi(z,w)=\frac 1\pi\cdot\frac{\vartheta(z-w)\theta'(0)}{\vartheta(0)\theta(z-w)}
\end{equation}
is the kernel inverting $\dbar_\chi$, where $\chi$ is the character given by $\chi(1)=-e^{2i\pi\eps}$, $\chi(\tau)=-e^{-2i\pi\eps'}$, $\chi\neq\Id$. We may take $\Im\lambda=\pi(\eps+\frac 12)$, $\Im(\lambda\bar\tau)=-\pi(\eps'+\frac 12)$, i.e. 
\begin{equation}\label{eq:lambdachar}
\lambda=\frac\pi {\Im\tau}\left(\eps'+\eps\tau+\frac{\tau+1}2\right)
\end{equation}
so that $\chi=\chi_\lambda$ (see \eqref{eq:chilambda}).

In concrete terms, let $\varphi:\C\rightarrow\C$ be a quasiperiodic function as in \eqref{eq:chiquasiper}, then
$$(S_\chi\varphi)(z)=\iint_\Sigma S_\chi(z,w)\varphi(w)dA(w)$$
has the same quasiperiodicity (remark that the integrand is periodic) and satisfies
$$\frac{\partial}{\partial\bar z}(S_\chi\varphi)(z)=\varphi(z)$$
(see e.g. Theorem 1.28 in \cite{Voisin_Hodge}).

Since $\theta$ is odd, we have the asymptotic expansion near the diagonal:
\begin{equation}\label{eq:Schidiag}
S_\chi(z,w)=\frac{1}{\pi(z-w)}+r_\chi(w)+O(|z-w|/\vartheta(0))
\end{equation}
as $z\rightarrow w$, where
$$\pi r_\chi(w)=\lim_{z\rightarrow w}(\pi S_\chi(z-w)-\frac{1}{z-w})=\frac{\vartheta'}{\vartheta}(0)=2i\pi\eps+\frac{\theta_3'}{\theta_3}(\eps'+\eps\tau)$$
(which in this case depends only on $\chi$, by translation invariance). If we set $c_\chi=2-\Re\chi(1)-\Re\chi(\tau)$, which is comparable to $(\eps-1/2)^2+(\eps'-1/2)^2$ for $\eps,\eps'\in [0,1]$, from \eqref{eq:thetachardef} we have $\vartheta(0)=O(\sqrt {c_\chi})$.

\subsubsection{Variational analysis}\label{ssec:vartorus}

If $\eps,\eps'$ depend smoothly on a parameter $u$, we have from \eqref{eq:thetachardef}:
\begin{align*}
\frac{d}{du}\left(\log\thetaf{2\eps}{2\eps'}(0)\right)&=\frac{d}{du}(i\pi\tau\eps^2+2i\pi\eps\eps')+\frac{d}{du}\log\theta_3(\eps'+\eps\tau)\\
&=2i\pi\eps(\dot\eps\tau+\dot\eps')+2i\pi\dot\eps\eps'+(\dot\eps'+\dot\eps\tau)\frac{\theta_3'}{\theta_3}(\eps'+\eps\tau)\\
&=(\dot\lambda\Im\tau) r_\chi(0)+2i\pi\dot\eps\eps'
\end{align*}
the last term being pure imaginary. 

Following Ray and Singer (\cite{RS_analytic}),  we consider the {\em analytic torsion} defined by
$$\gls{TSigma}(\chi)=\exp(-\frac 12\zeta_\chi'(0))={\det}_\zeta((\dbar_\chi)^*\dbar_\chi)^{1/2}$$
where $\zeta_\chi$ is the $\zeta$ function defined from the Laplacian-type operator $(\dbar_\chi)^*\dbar_\chi$. Then an explicit diagonalisation of $(\dbar_\chi)^*\dbar_\chi$ (in flat metric) and Kronecker's second limit formula imply that (see Section 4 in \cite{RS_analytic})  
$$T_\Sigma(\chi)=e^{-\pi\nu^2\Im\tau}|\theta(\mu-\tau\nu)/\eta(\tau)|$$
where $\chi(m\tau+n)=\exp(2i\pi(m\mu+n\nu))$ and $\eta$ is the  Dedekind $\eta$ function (e.g. VIII.1 in \cite{Chandra}). Here $\mu=\frac 12-\eps'$, $\nu=\eps-\frac 12$, so that
$$T_\Sigma(\chi)=e^{-\pi(\Im z)^2/\Im\tau}|\theta_3(z)/\eta(\tau)|=\left|\eta(\tau)^{-1}\thetaf{2\eps}{2\eps'}(0)\right|$$
with $z=\eps'+\eps\tau$. Note that $\left|\thetaf{2\eps}{2\eps'}(0)\right|=e^{-\pi(\Im z)^2/\Im\tau}|\theta_3(z)|$, where $z=\eps'+\eps\tau$.

We admit for now the following near diagonal estimate for $\rS_\chi$ (see \eqref{eq:torusdiagest} for a more general statement): 
$$\rS_\chi(b,w)-\uK^{-1}(b,w)=i\Im(e^{i\nu}r_\chi(0))+O(\delta^{\eps_0})$$
for $|b-w|=O(\delta)$ (in particular if $b\sim w$ in $M$), for some positive constant $\eps_0$; we will also see that the estimate is uniform in $\chi$ for $\chi$ in a compact set of characters not containing the trivial one (see \eqref{eq:torusdiagest}). An alternative argument can be given based on the representation
$$\rS_\chi(b,w)=\sum_{\omega\in\Upsilon}\bar\chi(\omega)\uK^{-1}(b,w+\omega)$$
where summability can be justified by using an Abel integration by part argument, for $\chi$ non-trivial.

Let us consider now $\lambda$ as a differentiable function of a parameter $u$ and analyse the variation of $\det K_\lambda$, $K_\lambda:\C^{\Xi_B}\rightarrow\C^{\Xi_W}$. Assume that $\chi_\lambda\neq\Id$, so that $K_\lambda$ is invertible. Then:
$$\frac{d}{du}\log\det(K_\lambda)=\Tr(\dot K_\lambda K_\lambda^{-1})$$
where for $b\sim w$,
\begin{align*}
\dot K_\lambda(w,b)&=2i\Im(\dot\lambda\overline{(b-w)})e^{2i\Im(\lambda\overline{(b-w)})}\rK(b,w)\\
e^{-2i\Im(\lambda\overline{(b-w)})}K_\lambda^{-1}(b,w)&=\rS_{\chi}(b,w)\\
&=\uK^{-1}(b,w)+i\Im(e^{i\nu}r_\chi(0))+O(\delta^{\eps_0})
\end{align*}
where $\chi=\chi_\lambda$. If $b\sim b'$ in $\Gamma$ (resp. $\Gamma^\dg$), $w$ the white vertex corresponding to $(bb')$, we get $\rK(w,b)\uK^{-1}(b,w)=\rK(w,b')\uK^{-1}(b',w)$ from Theorem \ref{Thm:Kencrit}. Thus the contribution of $\uK^{-1}(b,w)$ in the trace cancels exactly; besides,
$$2i\Im(\dot\lambda\overline{(b-w)}))\rK(b,w)(i\Im(e^{i\nu}r_\chi(0)))=
-\mu_\dmd(w)\Im(\dot\lambda e^{-i\nu})\Im(e^{i\nu}r_\chi(0))$$
(where $\nu=\nu(w)+\nu(b)$) and we are left with:
\begin{align*}
\Tr(\dot K_\lambda K_\lambda^{-1})&=2\sum_{w\in\Xi_w}\mu_\dmd(w)\Re(\dot\lambda r_\chi(0))+O(\delta^{\eps_0})=2.{\rm Area}(\C/\Upsilon)\Re(\dot\lambda r_\chi(0))+O(\delta^\eps)\\
&=2\Re\frac{d}{du}\log\left|\thetaf{2\eps}{2\eps'}(0)\right|+O(\delta^\eps)=2\frac{d}{du}\log T_\Sigma(\chi)+O(\delta^{\eps_0})
\end{align*}
since around $w$, $e^{i\nu(b)}$ is alternatively $1$ and $i$; and ${\rm Area}(\C/\Upsilon)=\Im\tau$. We conclude (based on the near diagonal estimate \eqref{eq:torusdiagest} below) that if $\lambda_i\leftrightarrow\chi_i$ (as in \eqref{eq:chilambda}) non trivial ($i=1,2$), then
\begin{equation}\label{eq:discrtors}
\frac{\det(K_{\lambda_2})}{\det(K_{\lambda_1})}\longrightarrow\left(\frac{T_\Sigma(\chi_2)}{T_\Sigma(\chi_1)}\right)^2=\frac{\det_\zeta((\dbar_{\chi_2})^*\dbar_{\chi_2})}{\det_\zeta((\dbar_{\chi_1})^*\dbar_{\chi_1})}
\end{equation}
as the mesh $\delta\searrow 0$.

\subsection{Bosonisation identity}\label{subsec:bosonid}

Before describing the full limit of the dimer height current in the next subsection, we show here how to identify the limit of the topological component from \eqref{eq:discrtors}. 

Let $\phi_{nm}$ be the harmonic differential on $\Sigma$ with half-integer periods $n=\int_A\phi_{nm}$ and $m=\int_B\phi_{nm}$. Let
\begin{align*}
S(\phi)&=2\pi\int_\Sigma|\nabla\phi|^2dA+\frac{4i\pi}{\Im\tau}\Im((m-\bar\tau n)z)+4i\pi nm\\
&=\frac{2\pi}{\Im\tau}|m-\bar\tau n|^2+4i\pi(m\eps+n\eps')+4i\pi nm
\end{align*}
where $z=\eps'+\eps\tau=x+iy$, and
\begin{equation}\label{eq:Zinstdef}
Z_{inst}=Z_{inst}(z)\stackrel{def}{=}\sum_{n,m\in\frac 12\Z}e^{-S(\phi_{nm})}
\end{equation}
Then a Poisson summation argument (\cite{ABMNV}, Section 4.C) shows that
\begin{equation}\label{eq:bosonid}
Z_{inst}=(2\Im\tau)^{\frac 12}e^{-2\pi (\Im z)^2/\Im\tau}|\theta_3(z)|^2
\end{equation}

The determinants $\det(K_\lambda)$ count dimer configurations with some unitary weight, which depends only on the periods of the current. This is originally due to Kasteleyn (\cite{Kas_square}); see \cite{CimRes} for a recent (and exhaustive) treatment.
 
We begin with $\det\rK$ (which is 0 as the kernel of $\rK$ contains constant functions). Then (for a proper choice of ordering of vertices), each term in the determinant expansion corresponds to a matching of $\Xi$ (with associated current $J$) counted with a positive sign if $(\int_A J,\int_B J)=(0,0)\mod 2$, and negative sign otherwise. Let us denote this sign by $Q({\mf m})=Q(J)$.
In the expansion of $\det K_\lambda$, each matching is counted with an additional phase:
$$2\Im\sum_{(bw)\in{\mf m}}\int_w^b\alpha$$
where $\alpha=\lambda d\bar z$, i.e.
\begin{align*}
{\mc Z}&=\sum_\mfm w(\mfm)\\
\det\rK&=\sum_{\mfm}Q(\mfm)w(\mfm)\\
\det K_\lambda&=\sum_{\mfm}Q(\mfm)\exp\left(2i\Im\sum_{(bw)\in{\mf m}}\int_w^b\alpha\right)w(\mfm)
\end{align*}
As before (see \eqref{eq:FourLap}), we can integrate by parts over a fundamental domain $C$ bounded by cycles $A,B$ drawn on $\Gamma$ to obtain:
\begin{align*}
\sum_{(bw)\in{\mf m}}\int_w^b\alpha&=-\sum_{(bw)\in E_\Xi, (ff')=(bw)^\dg}(h(f')-h(f)-\omega_0((ff')))\int_w^b\alpha\\
&=\int_Bdh\int_A\alpha-\int_Adh\int_B\alpha
\end{align*}
(Note that in this case, $\sum\omega_0((ff'))\int_b^w\alpha=0$). Thus:
$${\mc Z}^{-1}\det(K_\lambda)=\E\left(Q(J)\exp(2i\Im(-\lambda\bar\tau\int_A J+\lambda\int_B J))\right)$$
where ${\mc Z}$ is the partition function of the dimer model on $\Xi$ (see \eqref{eq:dimerpartfun}). Let us set 
\begin{equation}\label{eq:halfpers}
(n,m)=\frac 12(\int_A J_\delta,\int_B J_\delta),
\end{equation}
the half-periods of the current (this somewhat awkward convention is used to connect with \eqref{eq:bosonid}), and 
$$\lambda=\lambda(\eps,\eps')=\frac{\pi}{\Im\tau}\left(\eps'+\eps\tau+\frac{\tau+1}2\right)$$
as before (see \eqref{eq:lambdachar}). Then:
$$2i\Im(-\lambda\bar\tau\int_A J_\delta+\lambda\int_B J_\delta)=4i\pi(m\eps+n\eps')+2i\pi(m+n)$$
Let us remark that
$$Q(J)=-e^{4i\pi (m+\frac 12)(n+\frac 12)}=e^{4i\pi mn}e^{2i\pi(m+n)}$$
so that
\begin{equation}\label{eq:torusabelianpert}
{\mc Z}^{-1}\det(K_\lambda)=\E\left(e^{4i\pi(m\eps+n\eps')+4i\pi mn}\right)
\end{equation}
which is 1-periodic in $\eps,\eps'$. 

Let $\E_0$ be the expectation relative to the probability measure on $(\frac 12\Z)^2$ with weights proportional to $e^{-\frac{2\pi}{\Im\tau}|m-\bar\tau n|^2}$. Then by definition \eqref{eq:Zinstdef}
$$Z_{inst}(\eps'+\eps\tau)\propto\E_0\left(e^{4i\pi(m\eps+n\eps')+4i\pi mn}\right)$$
as a function of $\eps,\eps'$.

From \eqref{eq:discrtors} and \eqref{eq:bosonid} we have
$$\frac{\det{K_{\lambda_2}}}{\det{K_{\lambda_1}}}\longrightarrow\left(\frac{T_\Sigma(\chi_2)}{T_\Sigma(\chi_1)}\right)^2=\frac{Z_{inst}(z_2)}{Z_{inst}(z_1)}$$
for $\chi_1,\chi_2\neq\Id$ (as we have seen, this follows from near diagonal estimates \eqref{eq:torusdiagest}).

This is enough to conclude that $(m,n)$ converges in distribution. Indeed, if we write $K_{\eps,\eps'}=K_{\lambda(\eps,\eps')}$, we have:
$${\mc Z}^{-1}\left(\det K_{\eps,\eps'}+\det K_{\eps+\frac 12,\eps'}+\det K_{\eps,\eps'+\frac 12}-\det K_{\eps+\frac 12,\eps'+\frac 12}\right)=2\E\left(e^{4i\pi(m\eps+n\eps')}\right)$$
by virtue of 
$$1+e^{2i\pi m}+e^{2i\pi n}-e^{2i\pi(m+n)}=2-(1-e^{2i\pi m})(1-e^{2i\pi n})=2e^{4i\pi mn}$$
for $m,n\in \frac 12\Z$. 

When $(\eps,\eps')=(\frac 12,\frac 12)$, $\det K_{\frac12,\frac 12}=\det\rK=0$ (as constant functions are in the kernel of $\rK$) and $Z_{inst}(\frac 12+\frac 12\tau)=0$ by \eqref{eq:bosonid}. Consequently,
$${\mc Z}^{-1}(\det K_{0,0}+\det K_{\frac 12,0}+\det K_{0,\frac 12})=2\E(1)=2$$
or (\cite{Kas_square}):
\begin{equation}\label{eq:toruspartfun}
{\mc Z}=\frac 12(\det K_{0,0}+\det K_{\frac 12,0}+\det K_{0,\frac 12})
\end{equation}
Similarly,
$$\frac{Z_{inst}(\eps'+\eps\tau)+Z_{inst}(\eps'+(\eps+\frac 12)\tau)+Z_{inst}(\eps'+\frac 12+\eps\tau)-Z_{inst}(\eps'+\frac 12+(\eps+\frac 12)\tau)}
{Z_{inst}(0)+Z_{inst}(\frac\tau 2)+Z_{inst}(\frac 12)-Z_{inst}(\frac 12+\frac\tau 2)}
=\E_0\left(e^{4i\pi(m\eps+n\eps')}\right)$$

It follows that, for any $(\eps,\eps')\in\R^2$ (distinguishing the case $(\eps,\eps')\in (\frac 12\Z)^2$, where we use $\det(\rK)=0$), we have
\begin{equation}\label{eq:periodconv}
\E\left(e^{4i\pi(m\eps+n\eps')}\right)\longrightarrow\E_0\left(e^{4i\pi(m\eps+n\eps')}\right)
\end{equation}
Convergence of the characteristic function then implies convergence in distribution of the half-periods $(n,m)$.

\subsection{General Cauchy-Riemann operators}\label{ssec:torusgeneral}

Here we are going to combine the arguments of the previous subsection - for the topological component of the field - with the framework explained in the planar case (leading to Corollary \ref{Cor:FFplane}) - for the scalar component, in order to describe fully the limit of the dimer height current.

\paragraph{Estimates near the diagonal.\\} 
Let $\alpha=a(z)d\bar z$ be an $\Upsilon$-periodic $(0,1)$-form on $\C$ (equivalently, a $(0,1)$-form on $\Sigma$). We can define a perturbed operator (see \eqref{eq:Kastpert})
$$K_\alpha(w,b)=\rK(w,b)\exp(2i\Im\int_w^b\alpha)$$
so that $K_\alpha$ defines an operator $\C^{M_B}\rightarrow\C^{M_W}$ and by quotienting an operator $\C^{\Xi_B}\rightarrow\C^{\Xi_W}$. We are interested in the inverse of that last operator, especially near the diagonal. We first consider the limiting continuous Cauchy-Riemann operators. 

As a $(0,1)$-form on $\Sigma$, $\alpha$ can be decomposed uniquely as
\begin{equation}\label{eq:Dolbdec}
\alpha=\lambda_0 d\bar z+\dbar g
\end{equation}
where $g$ is a function on $\Sigma$ (modulo additive constant), and $\lambda_0\in\C$ constant (Dolbeault decomposition, see e.g. Section 2.B in \cite{ABMNV}). Thus $\dbar+\alpha=e^{-g}(\dbar+\lambda_0d\bar z)e^{g}$. 

In the case of interest here ($(0,1)$-forms on the torus), it is easy to justify \eqref{eq:Dolbdec} directly (although this is not logically needed for our arguments). Write $\alpha=ad\bar z$, $a$ a smooth function, so that $\partial a=a_zdz\wedge d\bar z$. Since $a_z$ has zero mean on the torus (integration by parts), it is the Laplacian of a smooth function: we can find $g$ so that $\partial a=\partial\dbar g$. Then $(a-g_{\bar z})$ is antiholomorphic on $\Sigma$, hence a constant $\lambda_0$. For uniqueness, observe that if $\lambda_0d\bar z+\dbar g=0$, then $\lambda_0dz\wedge d\bar z=\dbar(gdz)$, so that $\lambda_0=0$ by Stokes' formula.

In the case $\chi=\chi_{\lambda_0}\neq\Id$ (see \eqref{eq:chilambda}), this is invertible, with inverting kernel given by (see \eqref{eq:Schi})
$$S_\alpha(b,w)=S_{\chi}(b,w)e^{g(w)-g(b)}$$
From the expansion \eqref{eq:Schidiag}
$$S_{\chi}(b,w)e^{2i\Im\lambda_0\overline{(b-w)}}=1/\pi(b-w)+r_{\chi}+O(|z-w|/\sqrt{c_\chi})$$
we obtain the asymptotic expansion as $b\rightarrow w$, $w\in\Sigma$ fixed, $\lambda=a(w)$:
\begin{align*}
S_\alpha(b,w)e^{2i\Im\lambda\overline{(b-w)}}&=\left(\frac{1}{\pi(b-w)}+r_{\chi}+O(|b-w|/\sqrt{c_\chi})\right)e^{2i\Im g_{\bar z}(w)\overline{(b-w)}-(g(b)-g(w))}\\
&=\frac{1}{\pi(b-w)}+r_{\chi}-\frac{1}\pi(\bar g_z(w)+g_z(w))+O(|b-w|.\|g\|_{C^2}/{\sqrt{c_\chi}})
\end{align*}

Let us turn now to the discrete operator $K_\alpha$, seen as a finite difference operator. For a function $\varphi$ on $\Sigma$, we define as in the planar case (see \eqref{eq:Crestrop}) restriction operators $R_B,\bar R_B$ by: $(R_B\varphi)(b)=(\bar R_B\varphi)(b)=\varphi(b)$ for $b\in\Gamma$ and $(R_B\varphi)(b)=-(\bar R_B\varphi)(b)=i\varphi(b)$ for $b\in\Gamma^\dg$. Let $w\in M_W$ be an edge node, with black neighbours $x,y,x',y'$ in ccwise order labelled in such a way that $(xx')$ is an oriented edge of $\Gamma$. We denote $(R_W\beta)(w)=b(w)e^{-i\nu(w)}$ for $\beta=b(z)d\bar z$; then (see \eqref{eq:findiffpert})
$$K_\alpha(R_B\varphi)=2\mu_\dmd R_W(\dbar\varphi+\varphi\alpha)+O(\delta^4\|\varphi\|_{C^3}(1+\|a\|_{C^2})^3)$$
Similarly, with $(\bar R_{W}\beta)=b(w)e^{i\nu(w)}$ for $\beta=b(z)dz$, we have:
$$K_\alpha(\bar R_B\varphi)=2\mu_\dmd \bar R_W(\partial\varphi-\varphi\bar\alpha)
+O(\delta^4\|\varphi\|_{C^3}(1+\|a\|_{C^2})^3)$$

Fix $w_0\in M_W$ and set $\lambda=a(w_0)$. Then:
$$K_\alpha(w,b)=e^{2i\Im(\bar\lambda w)}\rK(w,b)e^{-2i\Im(\bar\lambda b)}+O(\delta^2\dist(b,w_0)\|a\|_{C^1})$$

As in the planar case (see Section \ref{subsec:planarinvker}), the argument consists in defining an approximate inverting kernel $\tilde\rS_\alpha$ using the continuous inverting kernel $S_\alpha$ away from the diagonal and translation-invariant inverting kernels mesoscopically  close to the diagonal.
 
Set 
$$\tilde \rS_\alpha(b,w)=e^{2i\Im(\bar\lambda (b-w))}(\uK^{-1}(b,w)+R_B(\mu)+\bar R_B(\mu'))$$ 
for $|b-w|<\eta$ and
$$\tilde \rS_\alpha(b,w)=\frac 12 \left(R_B(e^{i\nu(w)}S_\alpha(b,w))+\bar R_B(e^{-i\nu(w)}\overline {S_{-\alpha}(b,w)})\right)$$
for $|b-w|\geq\eta$, where $\eta$ is a mesoscopic scale and
\begin{align*}
\mu&=\frac 12 e^{i\nu(w)}(r_{\chi}-\frac {2}\pi(\partial_z\Re g)(w))\\
\mu'&=\frac 12 e^{-i\nu(w)}(\overline{r_{\bar\chi}}+\frac {2}\pi(\partial_{\bar z}\Re g)(w))=-\bar\mu
\end{align*}
Reasoning as in the planar case (see Section \ref{subsec:planarinvker}; it is a bit simpler here as $\Sigma$ is compact), we may use this approximate inverse to show that $K_\alpha$ is invertible for $\delta$ small enough and its inverse $\rS_\alpha$ satisfies the following near diagonal estimate:
\begin{equation}\label{eq:torusdiagest}
\rS_\alpha(b,w)-e^{2i\Im\bar\lambda(b-w)}\uK^{-1}(b,w)=i\Im(e^{i\nu}r_\alpha)+O(\delta^{\eps_0})
\end{equation}
where $\eps_0$ is a positive constant, 
\begin{equation}\label{eq:ralphator}
r_\alpha=r_{\chi}-\frac 2\pi(\partial_z\Re g)(w)
\end{equation}
and the error term is uniform in $(\lambda_0,g)$ for $\|g\|_{C^2}$ bounded and $\chi=\chi_{\lambda_0}$ in a compact subset of the group of characters not containing the identity ($r_\chi$ blows up as $\chi\rightarrow\Id$). (Compare with \eqref{eq:diagestplane}).

\paragraph{Characteristic functional of the current.\\}

Similarly to the variational analysis of Sections \ref{subsec:varplane} (Lemma \ref{Lem:planevariation}) and \ref{ssec:vartorus} (leading to \eqref{eq:discrtors}), the near-diagonal estimate \eqref{eq:torusdiagest} implies:
$$\frac{\det{K_{\alpha_2}}}{\det{K_{\alpha_1}}}\exp(2i(P(i\alpha_2)-P(i\alpha_1)))\longrightarrow
\exp\left(-\frac 1{2\pi}\left(\int_\Sigma |\nabla\Re g_2|^2dA-\int_\Sigma |\nabla\Re g_1|^2dA\right)\right)
\left(\frac{T_\Sigma(\chi_2)}{T_\Sigma(\chi_1)}\right)^2$$
where $\alpha_i=\lambda_id\bar z+\dbar g_i$, $\chi_i$ is the associated character, and convergence is uniform for $\|g_i\|_{C^2}$ bounded and $\chi_i$ away from the trivial character. Let us point out at this stage that the additive decomposition of $r_\alpha$ (see \ref{eq:ralphator}) reflecting the Dolbeault decomposition of $\alpha$ (see \ref{eq:Dolbdec}) leads to the multiplicative decomposition of the relative determinant $\det(K_{\alpha_2})/\det(K_{\alpha_1})$ (in the fine mesh limit), which in turns yields the independence of the instanton and scalar components of the limiting field.

In order to complete the identification, we notice that, by combining Lemma \ref{Lem:detplane} and \eqref{eq:torusabelianpert}, we have
$$\det(K_\alpha)\exp(2iP(i\alpha))={\mc Z}\E\left(\exp(4i\pi(m\eps+n\eps')+4i\pi mn)\exp(-i\Re\int_\Sigma h_0(\Lap g)dA)\right)$$
with $h_0$ as in \eqref{eq:hodgedecdimer}, $\alpha=\lambda d\bar z+\dbar g$, $(\eps,\eps')$ as in \eqref{eq:lambdachar}, $(n,m)$ as in \eqref{eq:halfpers}. Classically (see \eqref{eq:toruspartfun})
$${\mc Z}=\frac 12(\det K_{0,0}+\det K_{\frac 12,0}+\det K_{0,\frac 12})$$
which implies
$$\E\left(\exp(4i\pi(m\eps+n\eps'))\exp(-i\Re\int_\Sigma h_0(\Lap g)dA)\right)\longrightarrow\E_0\left(\exp(4i\pi(m\eps+n\eps'))\right)\exp\left(
-\frac 1{2\pi}\int_\Sigma |\nabla\Re g|^2dA\right)$$
in the generic case $(\eps,\eps')\notin(\frac 12\Z)^2$. In the case $g=0$, we already noticed that this ensures convergence in distribution of the half-periods $(n,m)$ (Section \ref{subsec:bosonid}, \eqref{eq:periodconv}). Then for $(n_0,m_0)\in (\frac 12\Z)^2$, we write
$$\delta_{(n_0,m_0)}(n,m)=\int_{[0,1]^2}e^{4i\pi((m-m_0)\eps+(n-n_0)\eps')}d\eps d\eps'$$
and 
$$\E\left(\ind_{\{(n,m)=(n_0,m_0)\}}\exp(-i\Re\int_\Sigma h_0(\Lap g)dA)\right)=\int_{[0,1]^2}e^{-4i\pi(m_0\eps+n_0\eps')} 
\E\left(e^{4i\pi(m\eps+n\eps')}\exp(-i\Re\int_\Sigma h_0(\Lap g)dA)\right)d\eps d\eps'
$$
Since $\P((n,m)=(n_0,m_0))\rightarrow\P_0((n,m)=(n_0,m_0))>0$, we deduce by dominated convergence:
\begin{equation}\label{eq:fluctcondtor}
\E\left(\left.\exp(-i\Re\int_\Sigma h_0(\Lap g)dA)\right|(n,m)=(n_0,m_0)\right)\longrightarrow \exp\left(
-\frac 1{2\pi}\int_\Sigma |\nabla\Re g|^2dA\right)
\end{equation}
for any $(n_0,m_0)\in (\frac 12\Z)^2$.

From this we deduce:

\begin{Prop}[Finite dimensional marginals]\label{Prop:findimmargtor}
Let $h_\delta$ be the height function of a dimer configuration on the graph $\Xi$ and $J_\delta=dh_\delta$ be its current. Let $\alpha_1,\dots,\alpha_k$ be $C^1$ 1-forms on $\Sigma$. As $\delta\searrow 0$, the joint distribution of 
$$\left(2\pi\int_\Sigma J_\delta\wedge \ast\alpha_1,\dots,2\pi\int_\Sigma J_\delta\wedge\ast\alpha_k\right)$$
converges to the joint distribution of 
$$\left(\int_\Sigma J\wedge\ast \alpha_1,\dots,\int_\Sigma J\wedge\ast\alpha_k\right)$$
where $J$ is the current of the compactified free field on $\Sigma$ with compactification radius $1$ and coupling constant $g_0=\frac 12$ (ie action functional $S(J)=-\frac 1{8\pi}\int_\Sigma J\wedge\ast J$).
\end{Prop}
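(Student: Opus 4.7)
By the Cramér--Wold device, joint convergence of the vector $\left(\int_\Sigma J_\delta\wedge\ast\alpha_j\right)_{1\le j\le k}$ is equivalent to convergence in distribution of each real linear combination $\int_\Sigma J_\delta\wedge\ast\beta$ with $\beta:=\sum_j t_j\alpha_j$. It therefore suffices to show, for any single real $C^1$ 1-form $\beta$ on $\Sigma$, that $\E(\exp(i\int_\Sigma J_\delta\wedge\ast\beta))\to\E(\exp(i\int_\Sigma J\wedge\ast\beta))$, where $J$ is the current of the compactified free field.

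To put this in the Kasteleyn framework, set $\alpha:=\beta^{0,1}$, so that $\ast\beta=i(\alpha-\bar\alpha)$ and, after integration by parts on the closed surface, $\int_\Sigma J_\delta\wedge\ast\beta=2\Im\int_\Sigma h\,\partial\alpha$. Dolbeault-decompose $\alpha=\lambda d\bar z+\dbar g$ with $\lambda\in\C$ and $g\in C^2(\Sigma,\C)$, and let $(\eps,\eps')$ be the character parameters attached to $\lambda$ via $\lambda=\frac{\pi}{\Im\tau}(\eps'+\eps\tau+\frac{\tau+1}{2})$. The identity
$$\det(K_\alpha)\exp(2iP(i\alpha))=\mc Z\cdot\E\!\left(e^{4i\pi(m\eps+n\eps')+4i\pi mn}e^{-i\Re\int_\Sigma h\Lap g\,dA}\right)$$
already established in this subsection, the bosonisation $\mc Z=\frac 12(\det K_{0,0}+\det K_{\frac12,0}+\det K_{0,\frac12})$, and the elementary identity $1+e^{2i\pi m}+e^{2i\pi n}-e^{2i\pi(m+n)}=2e^{4i\pi mn}$ (valid for $m,n\in\frac12\Z$) jointly express $\E(\exp(i\int_\Sigma J_\delta\wedge\ast\beta))$ as an explicit linear combination of four quantities of the form $\det(K_{\alpha+\mathrm{shift}})\exp(2iP(i(\alpha+\mathrm{shift})))/\mc Z$, one for each corner $(\varphi,\varphi')\in\{0,\frac12\}^2$. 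This four-corner average simultaneously removes the topological twist $e^{4i\pi mn}$ and sidesteps the singularity at the trivial character, which can arise in at most one corner.

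Next, invoke the convergence derived in this subsection: for $\alpha_i=\lambda_i d\bar z+\dbar g_i$ with $\chi_{\lambda_i}\ne\Id$,
$$\frac{\det K_{\alpha_2}}{\det K_{\alpha_1}}e^{2i(P(i\alpha_2)-P(i\alpha_1))}\longrightarrow\exp\!\left(-\frac{1}{2\pi}\int_\Sigma(|\nabla\Re g_2|^2-|\nabla\Re g_1|^2)dA\right)\!\left(\frac{T_\Sigma(\chi_2)}{T_\Sigma(\chi_1)}\right)^2,$$
uniformly on compacts of non-trivial characters. Via $T_\Sigma(\chi)^2=|\vartheta(0)/\eta(\tau)|^2$ and the Poisson-summation form of $Z_{inst}$, the analytic-torsion ratios assemble into ratios of instanton sums; summing the four shifted contributions yields
$$\E\!\left(e^{i\int_\Sigma J_\delta\wedge\ast\beta}\right)\longrightarrow \E_0\!\left(e^{4i\pi(m\eps+n\eps')}\right)\cdot\exp\!\left(-\frac{1}{2\pi}\int_\Sigma|\nabla\Re g|^2dA\right),$$
where $\P_0$ is the $(\frac12\Z)^2$-measure with weights proportional to $e^{-\frac{2\pi}{\Im\tau}|m-\bar\tau n|^2}$ identified above.

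It remains to recognise this factorised limit as the characteristic functional of the compactified free field current $J=\omega_h+dh_0$ paired against $\ast\beta$. The instanton factor is precisely the characteristic function of a random harmonic 1-form $\omega_h$ with integer periods (compactification radius $1$) weighted by $e^{-\frac\pi2\int\omega_h\wedge\ast\omega_h}$ and paired with the harmonic part of $\beta$; the scalar factor is that of a Gaussian free field $h_0$ of covariance $\pi^{-1}G_\Sigma$ paired with the exact part of $\beta$; the product structure reflects the independence of $\omega_h$ and $h_0$. The additive decomposition $r_\alpha=r_\chi-\frac{2}{\pi}(\partial_z\Re g)$ of the short-range expansion of $\rS_\alpha$, observed in the previous subsection, is precisely what drives the multiplicative factorisation above. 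The delicate step is the handling of exceptional parameter values $(\eps,\eps')\in(\frac12\Z)^2$, cleanly resolved by the four-corner bosonisation average; the remaining verification of normalisation constants against the stated action $S(J)=\frac\pi2\int_\Sigma J\wedge\ast J$ is mechanical.
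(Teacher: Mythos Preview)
Your approach is sound and uses the same ingredients as the paper (the determinant identity for $K_\alpha$, the four-corner bosonisation, the near-diagonal asymptotics yielding the torsion/Dirichlet factorisation), but the organisation differs. The paper's proof is much shorter: it Hodge-decomposes each $\alpha_j$ into a harmonic part plus $dg_j$, observes that $\int J_\delta\wedge\ast\alpha_j$ is then a linear function of the half-periods $(n,m)$ plus a pairing of the scalar component $h_0$ against $\Lap g_j$, and simply invokes the two results already established in the preceding text---convergence in law of $(n,m)$ (Section~5.2) and, for each fixed $(n_0,m_0)$, convergence of the conditional characteristic function $\E(e^{-i\Re\int h_0\Lap g}\mid (n,m)=(n_0,m_0))$---to conclude joint convergence. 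Your Cram\'er--Wold route is legitimate but effectively re-derives that conditional statement through the four-corner average, which is redundant here. Two points of care in your write-up: the displayed identity $\int_\Sigma J_\delta\wedge\ast\beta=2\Im\int_\Sigma h\,\partial\alpha$ is not well-posed on the torus since $h$ is additively multivalued---one must separate the period contribution from the single-valued part $h_0$ (and doing so carefully introduces an extra phase $e^{2i\pi(m+n)}$ tied to the half-period offset in the $(\eps,\eps')\leftrightarrow\lambda$ parameterisation, which must be tracked against $Q(J)$); and the claim that the trivial-character corner is ``cleanly resolved'' when $g\neq 0$ requires checking that $\det K_{\dbar g}$ still vanishes (it does, since $e^{-g}$ times a constant lies in the kernel, but you should say so). The paper's marginal-plus-conditional packaging sidesteps both issues.
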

\begin{proof}
We have the Hodge decomposition $\alpha_j=(a_jdx+b_jdy)+dg_j$, $g_j\in C^2$ (see \eqref{eq:hodgedec}). We know that the marginal distribution of the current periods converges (see \eqref{eq:periodconv}, Section \ref{subsec:bosonid}), and that the conditional characteristic function
$$\E(\exp(i\sum_j\lambda_j\int_\Sigma J\wedge \ast dg_j)|(n,m))$$
converges pointwise by \eqref{eq:fluctcondtor}. This is enough to ensure convergence of the joint distribution.
\end{proof}

\paragraph{Tightness.\\}
In order to obtain a functional CLT from this finite dimensional CLT, we need a tightness estimate.

\begin{Lem}
For any $\epsilon>0$, the family of probability measures induced on $H^{-2-\epsilon}(\Omega^1(\Sigma))$ by the current $J_\delta$ is tight.
\end{Lem}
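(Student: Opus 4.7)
The plan is to deduce tightness in $H^{-2-\eps}$ from a uniform second moment bound in the slightly stronger Sobolev space $H^{-2-\eps/2}$, exploiting the compactness of the embedding $H^{-2-\eps/2}(\Omega^1(\Sigma))\hookrightarrow H^{-2-\eps}(\Omega^1(\Sigma))$ (Rellich, $\Sigma$ being compact). By Prokhorov's theorem, it then suffices to show
$$\sup_\delta \E\bigl[\|J_\delta\|^2_{H^{-2-\eps/2}(\Omega^1(\Sigma))}\bigr] < \infty.$$
Fix an $L^2$-orthonormal eigenbasis $(\beta_k)_{k\geq 0}$ of the Hodge Laplacian on $\Omega^1(\Sigma)$, with eigenvalues $\mu_k\asymp k$ by Weyl's law. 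Choose this basis compatibly with the Hodge decomposition, so each $\beta_k$ is either exact ($\beta_k=df_k$ with $\Lap f_k=\mu_k f_k$ and $\|f_k\|^2_{L^2}=\mu_k^{-1}$), coexact, or harmonic. Since
$$\|J_\delta\|^2_{H^{-2-\eps/2}} = \sum_k (1+\mu_k)^{-2-\eps/2}\,|\langle J_\delta,\beta_k\rangle|^2,$$
it is enough to prove the uniform variance bound
$$\E\bigl|\langle J_\delta,\beta_k\rangle\bigr|^2 \leq C(1+\mu_k) \qquad\text{uniformly in $k$ and $\delta$},$$
as the resulting series $\sum_k (1+\mu_k)^{-1-\eps/2}$ then converges.

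For $\beta_k=\ast dg_k$ coexact, $\int_\Sigma J_\delta\wedge\ast\beta_k=-\int_\Sigma dh_\delta\wedge dg_k=0$ since $J_\delta=dh_\delta$ is closed. The harmonic eigenspace is finite-dimensional (of rank two) and the associated pairings are linear functions of the current periods $(n,m)$, whose moments are controlled uniformly in $\delta$ by the convergence in distribution of $(n,m)$ established in the bosonisation subsection. The crux is thus the exact part. By Stokes (no boundary), $\int_\Sigma J_\delta\wedge\ast df_k = \mu_k\int_\Sigma h_\delta f_k\,dA$, hence
$$\E\bigl|\langle J_\delta,\beta_k\rangle\bigr|^2 = \mu_k^{2}\,\mathrm{Var}\Bigl(\int_\Sigma h_\delta f_k\, dA\Bigr).$$
Combining the determinantal two-point identity
$$\Cov\bigl(\ind_{(b_1 w_1)\in\mfm},\,\ind_{(b_2 w_2)\in\mfm}\bigr) = -\rK(w_1,b_1)\rK(w_2,b_2)\uK^{-1}(b_1,w_2)\uK^{-1}(b_2,w_1)$$
with the diagonal-free bound $\uK^{-1}(b,w)=O(|b-w|^{-1})$ of Theorem \ref{Thm:Kencrit}, and integrating by parts twice as in the proof of Lemma \ref{Lem:detplane}, gives a logarithmic covariance estimate $|\Cov(h_\delta(x),h_\delta(y))|\leq C(1+|\log|x-y||)$ uniform in $\delta$. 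This implies $\mathrm{Var}(\int_\Sigma h_\delta f_k\, dA)\leq C\|f_k\|^2_{H^{-1}}=C\mu_k^{-1}\|f_k\|^2_{L^2}=C\mu_k^{-2}$, and therefore $\E|\langle J_\delta,\beta_k\rangle|^2 \leq C$ uniformly, which is in fact stronger than what is required.

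The main obstacle is propagating the logarithmic covariance estimate for $h_\delta$ uniformly across the isoradial sequence $\Lambda_\delta$. The angle condition $(\spadesuit)$ ensures that all constants in Theorem \ref{Thm:Kencrit} are absolute, and the summation of the dimer two-point covariance against a smooth integrand reduces to a discrete Green's function estimate whose uniformity follows from standard comparison with the continuous Green's function on $\Sigma$. Once this logarithmic bound is established, the tightness statement follows immediately from the chain of reductions above.
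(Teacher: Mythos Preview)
Your overall strategy---reduce to a uniform second-moment bound via Rellich compactness, expand along a Hodge eigenbasis, and treat the harmonic, exact and coexact pieces separately---is sound and is in fact how the paper concludes. The gap is in the variance estimate for the exact part, where two steps fail.

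First, the determinantal two-point identity you invoke,
$$\Cov\bigl(\ind_{(b_1 w_1)\in\mfm},\ind_{(b_2 w_2)\in\mfm}\bigr)=-\rK(w_1,b_1)\rK(w_2,b_2)\,\uK^{-1}(b_1,w_2)\,\uK^{-1}(b_2,w_1),$$
is the \emph{planar} formula. On the torus the dimer measure is not determinantal: the partition function is ${\mc Z}=\tfrac12(\det K_{0,0}+\det K_{\frac12,0}+\det K_{0,\frac12})$, a signed combination over spin structures, one of which (the trivial character) is degenerate. Consequently edge--edge covariances are not products of a single inverse kernel, and Theorem~\ref{Thm:Kencrit} gives no direct control on them. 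The paper addresses precisely this obstruction with the elementary inequality $\E(X^2)\leq\sum_{(\eps,\eps')\in\{(0,0),(\frac12,0),(0,\frac12)\}}\E(Q_{\eps,\eps'}X^2)$, which reduces to second $t$-derivatives of $\log\det K_{\alpha_{\eps,\eps'}(t)}$ for the three \emph{invertible} characters; the main work is then the trace estimate for $\Tr((K_\alpha^{-1}\dot K_\alpha)^2)$, handled by a discrete principal-value argument.

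Second, even if one had a pointwise bound $|\Cov(h_0(x),h_0(y))|\leq C(1+|\log|x-y||)$, the inference $\mathrm{Var}(\int h_0 f_k\,dA)\leq C\|f_k\|_{H^{-1}}^2$ is incorrect: for oscillatory eigenfunctions one only gets $\iint|f_k(x)||f_k(y)|(1+|\log|x-y||)\,dA\,dA=O(1)$, with no decay in $\mu_k$. One would need the covariance to \emph{be} (close to) a Green's kernel, not merely dominated by one in absolute value. The paper instead derives the weaker bound $\E\bigl((\int h_0\Lap g\,dA)^2\bigr)=O(\|g\|_{C^1}\|g\|_{C^{1,\eps}})$, which for $g=g_u$ gives $\E(|a_u|^2)=O(|u|^{\eps-2})$; this is why the tightness is stated in $H^{-2-\eps}$ and not in the stronger $H^{-1-\eps}$ your argument would, if valid, produce.
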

\begin{proof}

Let us remark that for a r.v. $X$,
$$\E(X^2)\leq \E(Q_{0,0}X^2)+\E(Q_{0,\frac 12}X^2)+\E(Q_{\frac 12,0}X^2)$$
where $Q_{\eps,\eps'}=e^{4i\pi(m\eps+n\eps')+4i\pi mn}$ (a random sign), since $1\leq Q_{0,0}+Q_{0,\frac 12}+Q_{\frac 12,0}\leq 3$. 

Set $\alpha_{\eps,\eps'}(t)=\lambda(\eps,\eps')d\bar z+t\dbar g$ (see \eqref{eq:lambdachar}) and $X=\Re\int h\Lap g dA$. We get
$$\E((\Re\int h_0\Lap gdA)^2)\leq -{\mc Z}^{-1}{\frac{d^2}{dt^2}}_{|t=0}(\E(Q_{0,0}e^{itX})+\E(Q_{0,1}e^{itX})+\E(Q_{1,0}e^{itX}))$$
We have:
$$\det K_{\alpha_{\eps,\eps'}(t)}={\mc Z}\E(Q_{\eps,\eps'}e^{-itX})\exp(-2itP(i\dbar g))$$
and thus 
$$\E(Q_{\eps,\eps'}X^2)=-2{\mc Z}^{-1}{\frac{d^2}{dt^2}}_{|t=0}(\det K_{\alpha_{\eps,\eps'}(t)}\exp(2itP(i\dbar g)))$$
We now assume that $(\eps,\eps')\in\{(0,0),(0,\frac 12),(\frac 12,0)\}$, so that in particular $\det K_{\alpha_{\eps,\eps'}(t)}$ is invertible at $t=0$, and
\begin{align*}
\frac {d}{dt}\log\E(Q_{\eps,\eps'}e^{itX})&=\Tr(K_\alpha^{-1}\dot{K_\alpha})+2iP(i\dbar g)\\
\frac {d^2}{dt^2}\log\E(Q_{\eps,\eps'}e^{itX})&=\Tr(K_\alpha^{-1}\ddot{K_\alpha})-\Tr((K_\alpha^{-1}\dot {K_\alpha})^2)
\end{align*}
where $\alpha=\alpha_{\eps,\eps'}(t)$ for brevity.

From the short distance asymptotics for $K_\alpha^{-1}$ (see \eqref{eq:torusdiagest}), we get
$$\Tr(K_\alpha^{-1}\dot{K_\alpha})+2iP(i\dbar g)=O(\delta^{\eps_0} \|g\|_{C^1})$$
at $t=0$ for some $\eps_0>0$. We also have
$$\Tr(K_\alpha^{-1}\ddot{K_\alpha})=O(\|g\|_{C^1}^2)
$$
We are left with estimating $\Tr((K_\alpha^{-1}\dot {K_\alpha})^2)$ (at $t=0$). Since $K_\alpha^{-1}(b,w)=\uK^{-1}(b,w)+O(1)$ and $\dot K_\alpha=O(\delta^2(1+\|g\|_{C^1}))$, by isolating the leading singularity we may write
$$\Tr((K_\alpha^{-1}\dot {K_\alpha})^2)=\sum_{|w-w'|\leq \eta_0}(\dot K_\alpha\uK^{-1})(w,w')(\dot K_\alpha\uK^{-1})(w',w)+O((1+\|g\|_{C^1})^2)$$
where $\eta_0\leq \min(1,\Im\tau)/10$, say. Fix $w$; replacing $K_\alpha$ with $K_\beta$ where $\beta=tg_{\bar z}(w)d\bar z$ (a constant $(0,1)$-form) induces an error of order $O(\|g\|_{C^{1,\rho}}(1+\|g\|_{C^1})$ (here $C^{1,\rho}$ designates functions with $\rho$-H\"older derivative, $\rho>0$ arbitrarily small but fixed). Thus we simply need to estimate 
$$\sum_{w':|w-w'|\leq \eta_0}(\dot K_\beta\uK^{-1})(w,w')(\dot K_\beta\uK^{-1})(w',w)$$
which may be thought of as a discrete version of a principal value integral of type $p.v.\iint \frac{dA(w')}{(w'-w)^2}$. A probabilistic interpretation of this quantity goes as follows: a product $\uK^{-1}(b,w')\uK^{-1}(b',w)$ (for $b\sim w$, $b'\sim w'$) is, up to multiplicative local factors, the covariance $\Cov(\ind_{(bw)\in{\mf m}},\ind_{(b'w')\in{\mf m}})$ under the appropriate Gibbs measure on tilings of the full plane (see \eqref{eq:locstats}). By linearity ($w$ is fixed and we sum over $w'$), we are left with estimating $\Cov(\ind_{(bw)\in{\mf m}},\ell)$, where $\ell$ is a linear function of the heights in $B(w,\eps_0)$. Since $\beta$ is constant, $\partial\beta=0$
and it follows that $\ell$ depends only on the heights on $\partial B(w,\eps_0)$. Since $\Cov(\ind_{(bw)\in{\mf m}},\ind_{(b'w')\in{\mf m}})=O(1/|w'-w|^2)$, we conclude:
$$\sum_{w':|w-w'|\leq \eps_0}(\dot K_\beta\uK^{-1})(w,w')(\dot K_\beta\uK^{-1})(w',w)=O(\delta^2\|g\|_{C^1}^2)$$

We finally get the estimate
\begin{equation}\label{eq:2ndmomentbound}
\E((\Re\int h_0\Lap gdA)^2)=O((1+\|g\|_{C^1})(1+\|g\|_{C^{1,\rho}}))
\end{equation}
which is uniform in $\delta$ for $\delta$ small enough (from Proposition \ref{Prop:findimmargtor} we know that the limit as $\delta\searrow 0$ is of order at least $\|g\|_{C^1}^2$).

Consider an eigenbasis for the Laplacian on $\Sigma=\C/\Upsilon$: 
set
$$g_u(z)=\exp(i\Re(z\bar u))$$
where $u\in\check{\Upsilon}=\{v\in\C: \forall z\in\Upsilon,\Re(z\bar v)\in 2\pi\Z\}$. Note that $\|g_u\|_{C^k}=O(1+|u|^k)$ and $\|g_u\|_{C^{1,\rho}}=O(1+|u|^{1+\rho})$. We may define (see Section \ref{ssec:Sobolev})
$$\|\sum_{u\in\check{\Upsilon}} a_ug_u\|^2_{H^s}=\sum_{u\in\check{\Upsilon}} |a_u|^2(1+|u|^2)^s$$
If we choose $h_0$ (which is given modulo an additive constant) so that $\int_\Sigma h_0dA=0$, we may write $h_0=\sum_{u\in\check{\Upsilon}\setminus\{0\}}a_ug_u$, where from \eqref{eq:2ndmomentbound}
$$\E((a_u)^2)\leq c|u|^{\rho-2}$$
for ${u\in\check{\Upsilon}\setminus\{0\}}$, and $c$ is uniform in $\delta, u$. Consequently, the Chebychev inequality yields:
$$\P(\forall{u\in\check{\Upsilon}\setminus\{0\}}, |a_u|\leq C |u|^{-\gamma})\geq 1-\frac{c}{C^2}\sum_{u\in\check{\Upsilon}\setminus\{0\}}
|u|^{\eps-2-2\gamma}$$
where the sum converges if $2\gamma+\eps<0$. On this event, 
$$\|\sum_u a_ug_u\|^2_{H^s}\leq C\sum_{u\in\check{\Upsilon}\setminus\{0\}} |u|^{2s-2\gamma}$$
which converges if $2s-2\gamma<-2$. By taking $\gamma\in(-\rho,-\rho/2)$, and observing that $H^{s_1}(\Sigma)$ is compactly embedded in $H^{s_2}(\Sigma)$ for $s_1>s_2$,
we conclude that the probability measures induced by $h_0$ on $H^{-1-\epsilon}(\Sigma)$ are tight for $\delta$ small enough.

The current can be decomposed as $J_\delta=dh_0+\omega_h$. Since $\omega_h$ takes values in a two-dimensional lattice of $H^s(\Omega^1(\Sigma))$ and converges in distribution, it induces a tight family of probability measures ($s$ arbitrary). The lemma follows.

\end{proof}

\paragraph{Functional invariance principle.\\}

We may now state a functional limit theorem for the current.

\begin{Thm}\label{Thm:toruscompact}
For any $\epsilon>0$, the probability measures induced on $H^{-2-\epsilon}(\Omega^1(\Sigma))$ by the current $2\pi J_\delta$ converge as $\delta\searrow 0$ to the distribution of the current of the compactified free field on $\Sigma$ with compactification radius $1$ and coupling constant $g_0=\frac 12$ (ie action functional $S(J)=\frac \pi 2\int J\wedge\ast J$).
\end{Thm}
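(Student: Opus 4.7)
The plan is the standard Prokhorov two-step: the preceding lemma provides tightness of $\{J_\delta\}$ in $H^{-2-\eps}(\Omega^1(\Sigma))$, and the preceding Proposition identifies all finite-dimensional marginals of any subsequential limit as those of the compactified free field current $J$. Together these determine the weak limit uniquely.

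First, since $H^{-2-\eps}(\Omega^1(\Sigma))$ is a separable Hilbert space (in particular, a Polish space), the tightness established in the lemma together with Prokhorov's theorem gives sequential relative compactness: from every sequence $\delta_k\searrow 0$ one can extract a subsequence $\delta_{k_j}$ along which $J_{\delta_{k_j}}$ converges in distribution to some Borel probability measure $\mu$ on $H^{-2-\eps}(\Omega^1(\Sigma))$.

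Second, for any smooth 1-form $\alpha$ on $\Sigma$, the linear functional $\eta\mapsto\int_\Sigma\eta\wedge\ast\alpha$ is continuous on $H^{-2-\eps}(\Omega^1(\Sigma))$ (since $\ast\alpha$ lies in the dual $H^{2+\eps}$). By the continuous mapping theorem applied to finite collections of such functionals, the Proposition on finite-dimensional marginals forces any subsequential limit $\mu$ to have the same joint distribution of $\bigl(\int_\Sigma\eta\wedge\ast\alpha_j\bigr)_{1\le j\le k}$, for any $C^1$ forms $\alpha_1,\dots,\alpha_k$, as the compactified free field current $J$.

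Third, the characteristic functional $\Phi_\mu(\alpha)=\int e^{i\int\eta\wedge\ast\alpha}d\mu(\eta)$ of a Borel probability measure on $H^{-2-\eps}$ is continuous on $H^{2+\eps}(\Omega^1(\Sigma))$, and smooth 1-forms are dense there. Consequently $\Phi_\mu$ is determined by its restriction to smooth test forms, so $\mu$ coincides with the distribution of $J$. Since every subsequential limit is the same, the full family converges to this limit as $\delta\searrow 0$. The only non-routine input is the tightness estimate already supplied; the remainder is soft functional analysis.
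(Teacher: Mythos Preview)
Your proof is correct and follows essentially the same route as the paper: tightness (from the preceding lemma) plus identification of finite-dimensional marginals (from the preceding Proposition), combined via Prokhorov to yield weak convergence. The only cosmetic difference is that you pass through a density argument (smooth forms dense in $H^{2+\eps}$, characteristic functional continuous there), whereas the paper instead observes directly via Sobolev--Morrey embedding that every element of $H^{2+\eps}(\Omega^1(\Sigma))$ already has $C^1$ coefficients, so the Proposition applies to \emph{all} dual elements without needing density; the paper then cites a general result (Ledoux--Talagrand) that tightness plus pointwise convergence of the characteristic functional on the dual gives weak convergence, which is exactly the Prokhorov-plus-uniqueness argument you spell out.
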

\begin{proof}
We have obtained tightness of the measures (by the previous lemma) and convergence of the characteristic functional for smooth enough test functions by Proposition \ref{Prop:findimmargtor}. The dual of $H^{-2-\epsilon}(\Omega^1(\Sigma))$ may be identified with $H^{2+\epsilon}(\Omega^1(\Sigma))$ (via the $L^2$ product on $1$-forms), which consists of 1-forms with $C^1$ coefficients (see Section \ref{ssec:Sobolev}), so that we may apply Proposition \ref{Prop:findimmargtor}. %
For Banach space-valued variables, tightness and pointwise convergence of the characteristic functional ensures weak convergence (eg \cite{Ledoux_Talagrand}, 0.2.1).
\end{proof}

As in Corollary \ref{Cor:FFplane}, the result is one order of differentiability below the notional optimal result (convergence of the current in $H^{-2-\epsilon}$ rather than $H^{-1-\epsilon}$).

\section{Surgery}\label{Sec:surgery}

\paragraph{Introduction.}
Many problems in the asymptotic analysis of dimers boil down to estimating the inverse of the Kasteleyn operator $\rK$ or a perturbation or modification thereof. In this section, we develop a general surgery argument which allows to localise the analysis in the following (rough) sense. Assume that a graph $\Xi_g$, along with a Kasteleyn-type operator $K_g$, is obtained from the standard full-plane graph $M$ and operator $\rK$ by two disjoint local modifications. Possible local modifications include: adding a boundary component; adding a (compactly supported) smooth test function (as in \eqref{eq:Kastpert}); and, as we shall see in later sections, adding pairs of electric or magnetic insertions.

Let $(\Xi_i,K_i)$ and $(\Xi_o,K_o)$ be the graph/operator pairs corresponding to just one of these modifications - so that $(\Xi_g,K_g)$ is obtained by gluing $(\Xi_i,K_i)$ with $(\Xi_o,K_o)$. Our main contention is that, in the small mesh limit, controlling $K_i^{-1}$ and $K_o^{-1}$ allows to control $K_g^{-1}$; this is the content of Lemma \ref{Lem:surgery}.  

In order to build up some intuition, let us discuss a few elementary facts about (continuous) boundary value problems. Consider the Riemann sphere $\hat\C$ split into two analytic discs $D_+,D_-$ by the unit circle $\U$ (where $0\in D_+$ and $\infty\in D_-$). Consider the boundary value problems (BVPs):
$$\left\{\begin{array}{rll}
\dbar f_\pm&=0&{\rm\ in\ }D_{\pm}\\
f_{\pm}&=g_\pm&{\rm\ on\ }\U
\end{array}
\right.$$
say for $C^1$ functions up to the boundary, vanishing at infinity. Writing functions on the circle as $z\mapsto\sum_{n\in\Z}a_nz^n$, the BVP is solvable if $g_{\pm}\in C_{\pm}$, where $C_\pm$ are the {\em Cauchy data spaces} given by
\begin{align*}
C_+&=\{g:g=\sum_{n\geq 0}a_nz^n\}\\
C_-&=\{g:g=\sum_{n<0}a_nz^n\}
\end{align*}
where we disregard convergence and regularity issues. Given $g_+\in C_+$, the unique solution $f$ of the BVP is given by
$$f_+(w)=\frac 1{2i\pi}\oint_\U\frac{g_+(z)}{z-w}dz$$
The map $g_+\mapsto f_+$ is the Poisson operator for the BVP; its composition with the restriction to (or rather trace on) $\U$ is the Calder\'on projector $P_+$ given by
$$P_+(g)(z_0)=\lim_{r\nearrow 1}\frac 1{2i\pi}\oint\frac{g_+(z)}{z-rz_0}dz$$
This is a projection onto the Cauchy data space $C_+$; it is a pseudodifferential operator and extends to a bounded operator $L^2(\U)\rightarrow L^2(\U)$. 

Now if we consider a perturbation of the problem, e.g. by replacing $\dbar$ with $\dbar+\mu\partial$, $\mu$ supported away from $\U$, one may consider the modified Cauchy data spaces $C_\pm$ or projectors $P_\pm$. (These projectors need no longer be orthogonal in $L^2(\U)$). Information on the modified operator in the full space $\hat \C$ (such as its index) may be recovered from this data (``glueing"). We shall be concerned with natural discrete versions of these constructions and their convergence to their continuous counterparts.

\paragraph{Set-up.} The conditions given here will be assumptions in Lemma \ref{Lem:surgery}.

We start from a family of rhombi tilings $(\Lambda_\delta)_\delta$ with edge mesh $\delta$ going to zero along some sequence. Throughout the section, the dependence on $\delta$ will be omitted when there is no risk of ambiguity. Let $D_i$ (resp. $D_o$) be a simply connected neigbourhood of $0$ (resp. $\infty$) in $\hat\C$ such that $D_i\cap D_o$ is an annulus $A$ separating $0$ from $\infty$. 

Let $\gamma$ be a simple closed loop (say, piecewise $C^1$) such that $A$ is a tubular neighbourhood of $\gamma$. We assume that the distance between $\gamma$ and $\partial A$ is large enough compared with $\delta$.

From $\Lambda$ we construct a planar graph $M$ as in Section \ref{sss:critgraphs}. Let $\Xi_i=\Xi_i(\delta)$ (resp. $\Xi_o$) be a bipartite graph that agrees with $M$ in $D_o$ (resp. $D_i$); $K_s:\C^{\Xi_s^B}\rightarrow\C^{\Xi^W_s}$, $s\in\{i,o\}$, is a linear operator such that $K_i$ (resp. $K_o$) agrees with $\rK$ in $D_o$ (resp. $D_i$); in other words, $\Xi_i,K_i$ are obtained by modifying $M,\rK$ in $D_i\setminus D_o$, and vice versa. The subscripts $i,o,g$ stand for (modified) inside, (modified) outside, and glued. See Figure \ref{Fig:surgery}.
\begin{figure}[htb]
\begin{center}
\leavevmode
\includegraphics[width=0.8\textwidth]{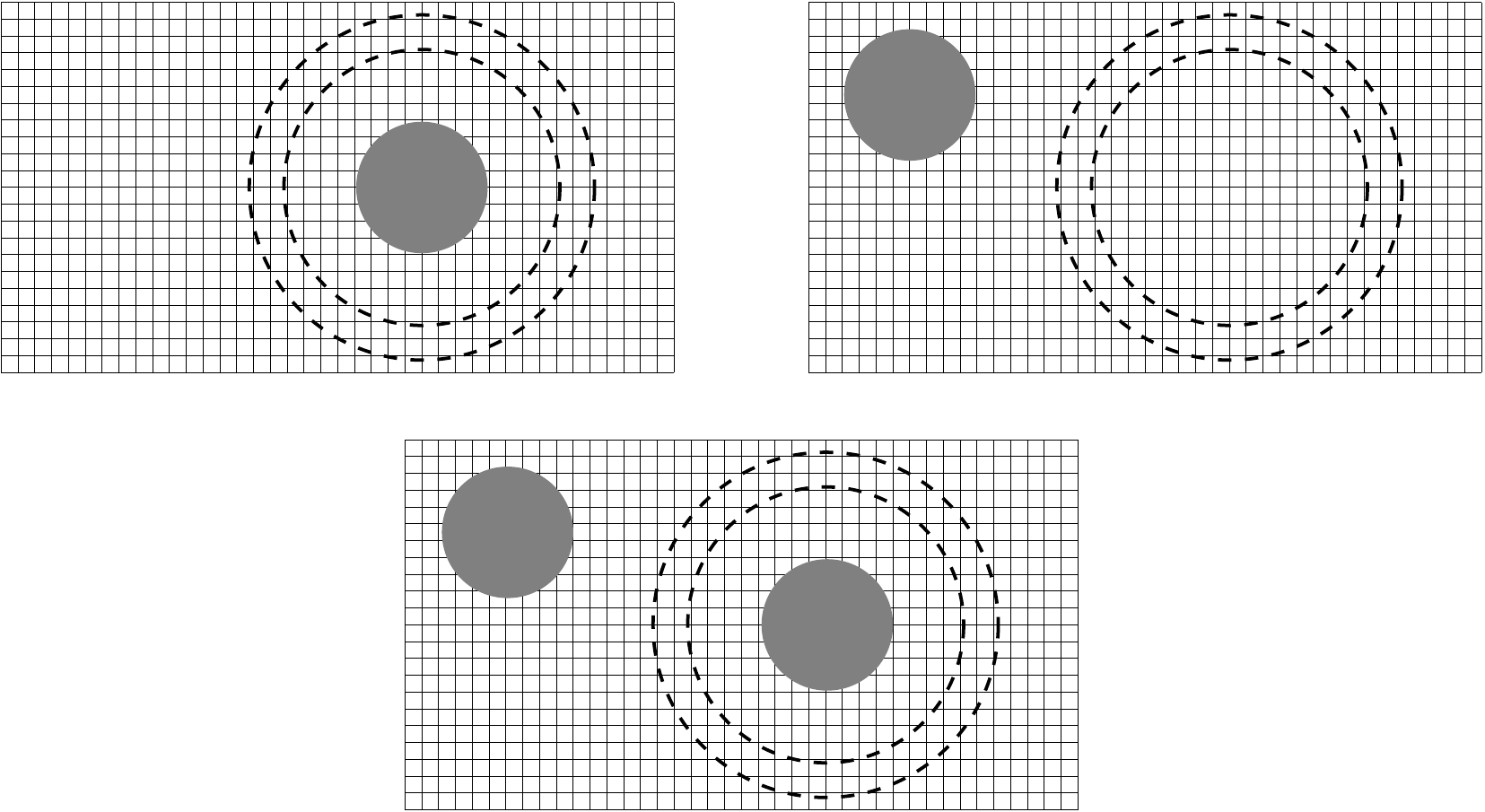}
\end{center}
\caption{Set-up (schematic). Top left: $\Xi_i$, obtained from $M$ by a modification of the graph and/or operator $\rK$ in the shaded area, inside the annulus $A$ (dashed). Top right: $\Xi_o$, modified in the shaded area (outside $A$). Bottom: the glued
graph $\Xi_g$, agreeing with $\Xi_o$ outside $A$ and with $\Xi_i$ inside $A$.
}
\label{Fig:surgery}
\end{figure}

We assume that $K_s$ is invertible in the sense that for each $w\in \Xi^W_s$, there is a unique function $f\in\C^{\Xi^B_s}$ vanishing at infinity such that $K_sf=\delta_w$, $s\in\{i,o\}$; it is denoted $K_s^{-1}(.,w)$.  This implies in particular that $K_sf=0$ and $f$ vanishes at infinity iff $f=0$. (Remark that $\Xi_o$ can be bounded, in which case the ``vanishing at infinity" condition is void.)

We assume that, for $s\in\{i,o\}$, there are kernels $(z,w)\mapsto S_s(z,w)$, $(z,w)\mapsto \bar S_s(z,w)$, and $\eta$ is an error rate: $\lim_{\delta\searrow 0}\eta(\delta)=0$ such that:
\begin{equation}\label{eq:surgkerconvassum}
K_s^{-1}(b,w)=\frac 12 R_B(e^{i\nu(w)}S_s(b,w))+
\frac 12\bar R_B(e^{-i\nu(w)}\bar S_s(b,w))+O(\eta(\delta))
\end{equation}
on $A^2\setminus\Delta_A$, uniformly on compact subsets, where $\Delta_A=\{(x,y)\in A^2:x= y\}$ (see \eqref{eq:Crestrop} and compare e.g. with \eqref{eq:planekerasymp}). 

Note that we do not require that $\bar S_s(z,w)=\overline{S_s(z,w)}$, which is the case when $K_s$ is real. More precisely, we assume that 
\begin{equation}\label{eq:surgregassum}
(z,w)\mapsto T_s(z,w)=S_s(z,w)-\frac{1}{\pi(z-w)}
\end{equation}
is $C^1$ in $A^2$ and holomorphic in $z$ in $A$ (in the presence of boundaries, it will be harmonic, rather than holomorphic, in $w$). The holomorphicity condition is actually superfluous but will always be obvious in applications. Correspondingly, we assume
$$(z,w)\mapsto \bar T_s(z,w)=\bar S_s(z,w)-\frac{1}{\pi\overline{(z-w)}}$$ 
is $C^1$ in $(z,w)$ and antiholomorphic in $z$.

A discrete Cauchy integral formula argument (see \eqref{eq:dirprobcauchy}, applied to $K_s^{-1}(.,w)-\uK^{-1}(.,w)$) shows that the convergence assumption \eqref{eq:surgkerconvassum} is equivalent to assuming the seemingly stronger condition:
$$K_s^{-1}(b,w)=\uK^{-1}(b,w)+\frac 12 R_B(e^{i\nu(w)}T_s(b,w))+
\frac 12\bar R_B(e^{-i\nu(w)}\bar T_s(b,w))+O(\eta(\delta))$$
on $A^2$, uniformly on compact subsets (including on the diagonal $\Delta_A$).

We now consider the glued data: $\Xi_g$ agrees with $\Xi_s$ in $D_s$, $s\in\{i,o\}$; $K_g: \C^{\Xi_g^B}\rightarrow\C^{\Xi^W_g}$ agrees with $K_s$ in $D_s$, $s\in\{i,o\}$.  Our goal to estimate $K_g^{-1}$ (if defined) by an expression of the type of \eqref{eq:surgkerconvassum}.

\paragraph{Discrete Cauchy integral formula.}

Let us consider $\gamma_\delta$ a simple cycle on $\Gamma$ (and thus on $M$) which approximates $\gamma$ in the sense that each arc of $\gamma$ of length $\ell$ is at Hausdorff distance $\leq C\delta$ of an arc of $\gamma_\delta$ of length at most $C\ell$, $C>0$ fixed.

Let $f\in\C^{\Xi_i^B}$ be such that $K_if(w)=0$ for any $w\in\Xi_i^W$ which is (strictly) inside $\gamma_\delta$. We want to express $f$ in terms of its boundary values and the kernel $K_i^{-1}$. This is a well-known argument, see e.g. Section 2.6 in \cite{SmiChe_isoradial} or the discussion after \eqref{eq:dirprobcauchy}.

Let $\hat f(b)=f(b)$ if $b$ is on or inside $\gamma_\delta$ and $0$ otherwise. Then $K_i\hat f$ is supported on white vertices on $\gamma_\delta$ or adjacent to a vertex on $\gamma_\delta$. Moreover $\hat f-K_i^{-1}(K_i\hat f)=0$ (since it is in the kernel of $K_i$ and vanishes at infinity). This yields the Cauchy integral formula:
$$f(b)=\sum_w K_i^{-1}(b,w)(K_i\hat f)(w)$$
for $b$ on or within $\gamma_\delta$. Let $\gamma^B$ be the set of black vertices which are either on $\gamma_\delta$ (and thus on $\Gamma$) or inside of $\gamma_\delta$ and adjacent to a white vertex on $\gamma_\delta$ (and thus on $\Gamma^\dg$). Correspondingly, let $\gamma^W$ be the set of white vertices which are either on $\gamma_\delta$ or outside of $\gamma_\delta$ and adjacent to a black vertex on $\gamma_\delta$. Let $\rK_{\gamma}(w,b)=\rK(w,b)$ if $w\in\gamma^W$, $b\in\gamma^B$ and $\rK_\gamma(w,b)=0$ otherwise. We can rephrase the Cauchy formula as:
\begin{equation}\label{eq:discrcauchform}
f(b)=\sum_{w\in\gamma^W}K_i^{-1}(b,w)(\rK_\gamma f_{|\gamma^B})(w)
\end{equation}
for $b\in\Xi_i^B$ on or inside $\gamma_\delta$ and $f$ such that $(K_if)(w)=0$ for $w$ inside $\gamma_\delta$.

\paragraph{Inner Cauchy data space and projector.} 
The (discrete) {\em Cauchy data space} (on $\gamma$, for $K_i$) is the subspace $C_i^\delta\subset\C^{\gamma^B}$ consisting of restrictions to $\gamma^B$ of functions $f$ such that $K_if=0$ strictly inside $\gamma_\delta$. Clearly, 
\begin{equation}\label{eq:surgdiscrcald}
\begin{array}{rl}
P_i^\delta:\C^{\gamma^B}&\longrightarrow \C^{\gamma^B}\\
g&\longmapsto\sum_{w\in\gamma^W}K_i^{-1}(.,w)(\rK_\gamma g)(w)
\end{array}
\end{equation}
is a projector onto the Cauchy data space $C_i^\delta$. 

We want to relate this to ``limiting" continuous Cauchy data spaces. Given our data \eqref{eq:surgkerconvassum}, we can define them in terms of the following operators on functions on $\gamma$:
\begin{equation}\label{eq:surgcontcald}
\begin{array}{rl}
P_i(f)(z_0)&=\lim_{z\rightarrow z_0} \frac 1{2i}\oint_\gamma S_i(z,w)f(w)dw\\
\bar P_i(f)(z_0)&=\lim_{z\rightarrow z_0} -\frac 1{2i}\oint_\gamma\bar S_i(z,w)f(w)d\bar w
\end{array}
\end{equation}
where limits are taken from inside $\gamma$. Writing
$$\frac 1{2i}\oint_\gamma S_i(z,w)f(w)dw=\frac 1{2i}\oint T_i(z,w)f(w)dw+f(z)+\oint_\gamma \frac{f(w)-f(z)}{2i\pi(z-w)}dw$$
shows that $P_i$ is a bounded operator ${\rm Lip}(\gamma)\rightarrow C^0(\gamma)$. This defines Cauchy data spaces by 
 \begin{equation}\label{eg:surgcontdata}
 \begin{array}{rl}
 C_i&=\{f\in{\rm Lip}(\gamma):f=P_if\}\\
 \bar C_i&=\{f\in{\rm Lip}(\gamma):f=\bar P_if\}
 \end{array}
 \end{equation} 
 (more classically one would consider the $L^2$ closure of these; Lipschitz functions are sufficient for our purposes).
 
\paragraph{Outer Cauchy data spaces.} 
While we will focus the discussion on {\em inner} Cauchy data spaces, one may repeat the argument for {\em outer} Cauchy data spaces. 
Let $\gamma^B_o$ be the black vertices which are on $\gamma_\delta$ or are outside of $\gamma_\delta$ and adjacent to a white vertex of $\gamma_\delta$. The outer Cauchy data space $C_o^\delta\subset\C^{\gamma^B_o}$ consists of restrictions to $\gamma^B_o$ of functions $f\in\C^{\Xi^B_o}$ such that $K_o f(w)=0$ for any $w\in \Xi^W_o$ strictly outside of $\gamma_\delta$ and $f$ vanishes at infinity. The continuous outer Cauchy data space $C_o$, $\bar C_o$ are defined as fixed points of the operators $P_o, \bar P_o$:
\begin{align*}
P_o(f)(z_0)&=\lim_{z\rightarrow z_0} \frac 1{2i}\oint_{\gamma}S_o(z,w)f(w)dw\\
\bar P_o(f)(z_0)&=\lim_{z\rightarrow z_0} -\frac 1{2i}\oint_{\gamma}\bar S_o(z,w)f(w)d\bar w
\end{align*} 
where limits are taken from outside $\gamma$.

\paragraph{Convergence of projectors.} 
Let $f$ be Lipschitz in a neigbourhood of $\gamma$. We wish to estimate the discrete ``contour integral" 
$$\sum_{w\in\gamma^W}K_i^{-1}(b,w)(\rK_\gamma (R_Bf))(w)$$ where $(R_Bf)(b)=f(b)$ if $b\in\Gamma\cap\gamma^B$ and  $(R_Bf)(b)=if(b)$ if $b\in\Gamma^\dg\cap\gamma^B$, as in \eqref{eq:Crestrop}. For this we note that $\gamma^W$ consists of white vertices on $\gamma_\delta$ (a simple cycle on $\Gamma$) and white vertices on $\gamma_\delta^\dg$ (a cycle on $\Gamma^\dg$). The path $\gamma_\delta^\dg$ may be described as follows. Let $(b_0,b_1,\dots,b_n=b_0)$ be the black vertices on $\gamma_\delta$ (taken counterclockwise). For each $i$, enumerate (in counterclockwise order) the faces of $\Gamma$ which are adjacent to $b_i$ and outside of $\gamma_\delta$: $b_{i,1}^\dg,\dots,b_{i,k_i}^\dg$. Concatenating these lists, one gets $\gamma_\delta^\dg=(b_{0,1}^\dg,\dots,b_{0,k_0}^\dg,b_{1,1}^\dg,\dots,b_{n-1,k_{n-1}}^\dg,\dots,b_{n,1}^\dg=b_{0,1}^\dg)$, a cycle on $\Gamma^\dg$ (which may involve some backtracking). Without loss of generality, one may assume that the reference orientation (used to define the Kasteleyn orientation, see Section \ref{sss:critgraphs}) of edges of $\Gamma$ on $\gamma_\delta$ agrees with the direct orientation of $\gamma_\delta$. Then if $w$ on $\gamma_\delta$ corresponds to the edge $(bb')$ of $\Gamma$,
\begin{equation}\label{eq:discrtcontint}
\begin{array}{rl}
(\rK_\gamma (R_Bf))(w)&=\frac 12|b'-b|if(b)+O(\delta^2 \|f\|_{{\rm Lip}})\\
e^{i\nu(w)}(\rK_\gamma (R_Bf))(w)&=\frac i2\int_b^{b'}f(z)dz+O(\delta^2 \|f\|_{{\rm Lip}})
\end{array}
\end{equation}
(with the integral taken on the segment $[b,b']$). Similarly, if $w$ on $\gamma_\delta^\dg$ corresponds to the edge $(bb')$ of $\Gamma^\dg$, and $b_0$ is the black neighbour of $w$ which is on $\gamma$, then
\begin{equation*}%
\begin{array}{rl}
(\rK_\gamma (R_Bf))(w)&=\sgn(\rK(b_0,w))\frac 12|b'-b|f(b)+O(\delta^2 \|f\|_{{\rm Lip}})\\
e^{i\nu(w)}(\rK_\gamma (R_Bf))(w)&=\frac i2\int_b^{b'}f(z)dz+O(\delta^2 \|f\|_{{\rm Lip}})
\end{array}
\end{equation*}
(if $w$ has two neighbours on $\gamma$,  $(\rK_\gamma (R_Bf))(w)=O(\delta^2 \|f\|_{{\rm Lip}})$). Then
$$\sum_{w\in\gamma^W}(e^{i\nu(w)}T_i(z,w))(\rK_\gamma (R_Bf))(w)=\frac i2\oint_{\gamma_\delta} T_i(z,w)f(w)dw
+\frac i2\oint_{\gamma^\dg_\delta} T_i(z,w)f(w)dw
+O(\delta\|f\|_{{\rm Lip}}\|S_i\|_\infty)
$$ 
and by Stokes' formula, $\oint_{\gamma_\delta} T_i(z,w)f(w)dw
-\oint_{\gamma^\dg_\delta} T_i(z,w)f(w)dw=O(\delta\|f\|_{{\rm Lip}}\|T_i\|_{C^1})$ (as the area of the annulus between $\gamma_\delta,\gamma_\delta^\dg$ is $O(\delta)$). (Estimates are uniform for $z$ in a compact subset of $A$). Similarly,
\begin{align*}
\sum_{w\in\gamma^W}(e^{i\nu(w)}T_i(z,w))(\rK_\gamma (R_Bf))(w)&=i\oint_{\gamma} T_i(z,w)f(w)dw+O(\delta\|f\|_{{\rm Lip}}\|T_i\|_{C^1})\\
\sum_{w\in\gamma^W}(e^{i\nu(w)}T_i(z,w))(\rK_\gamma (\bar R_Bf))(w)&=O(\delta\|f\|_{{\rm Lip}}\|T_i\|_{C^1})\\
\sum_{w\in\gamma^W}(e^{-i\nu(w)}\bar T_i(z,w))(\rK_\gamma (R_Bf))(w)&=O(\delta\|f\|_{{\rm Lip}}\|T_i\|_{C^1})\\
\sum_{w\in\gamma^W}(e^{-i\nu(w)}\bar T_i(z,w))(\rK_\gamma (\bar R_Bf))(w)&=-i\oint_{\gamma} \bar T_i(z,w)f(w)d\bar w+O(\delta\|f\|_{{\rm Lip}}\|T_i\|_{C^1})
\end{align*}
(One could replace $\|T\|_{C^1}$ with $\|T_i\|_\infty$ using biharmonicity of $T_i$; these norms are taken on a compact neighbourhood of $\gamma$). 
In order to deal with the singular part, we observe that the constant functions $R_B(\mu)$, $\bar R_B(\mu)$ are discrete holomorphic and consequently by replication \eqref{eq:discrcauchform}:
\begin{align*}
\sum_{w\in\gamma^W}\uK^{-1}(b,w)(\rK_\gamma (R_B(\mu)))(w)&=R_B(\mu)\ind_{\gamma^i}(b)\\
\sum_{w\in\gamma^W}\uK^{-1}(b,w)(\rK_\gamma (\bar R_B(\mu)))(w)&=\bar R_B(\mu)\ind_{\gamma^i}(b)
\end{align*}
 where $\gamma^i$ is the set of vertices on or inside $\gamma_\delta$. Then by Theorem \ref{Thm:Kencrit} and \eqref{eq:discrtcontint}

 \begin{align*}
\sum_{w\in\gamma^W}\uK^{-1}(b,w)(\rK_\gamma (R_Bf))(w)&=
R_B(f(b))\ind_{\gamma^i}(b)+\sum_{w\in\gamma^W}\uK^{-1}(b,w)(\rK_\gamma (R_B(f-f(b)))(w)\\
&=R_B(f(b))\ind_{\gamma^i}(b)+\frac{i}{2\pi}R_B\left(\oint_\gamma\frac{f(w)-f(b)}{b-w}dw\right)+O(\delta|\log\delta|.\|f\|_{{\rm Lip}})
\end{align*}
 and similarly
 $$\sum_{w\in\gamma^W}\uK^{-1}(b,w)(\rK_\gamma (\bar R_Bf))(w)=\bar R_B(f(b))\ind_{\gamma^i}(b)-\frac{i}{2\pi}\bar R_B\left(\oint_\gamma\frac{f(w)-f(b)}{\overline{b-w}}d\bar w\right)+O(\delta|\log\delta|.\|f\|_{{\rm Lip}})
 $$
We conclude that if $f$ is Lipschitz around $\gamma$ ($\|f\|_{{\rm Lip}}$ its Lipschitz norm in a neighbourhood of $\gamma$), then 
$$\|P_i^\delta(R_B(f))-R_B(P_if)\|_\infty=O(\tilde\eta(\delta)\|f\|_{{\rm Lip}})$$
(on $\gamma$), where 
$$\tilde\eta(\delta)=\eta(\delta)+(\delta|\log\delta|).$$ 
In particular, if $f$ is Lipschitz around $\gamma$, then it is in the (continuous) Cauchy data space $C_i$ iff $\|P_i^\delta(R_B(f))-R_B(f)\|_\infty$ goes to zero as $\delta\searrow 0$ (uniform norm on $\gamma^B$). Correspondingly, with the same assumptions, 
\begin{equation}\label{eq:surgprojconv}
\|P_i^\delta(\bar R_B(f))-\bar R_B(\bar P_if)\|_\infty=O(\tilde\eta(\delta)\|f\|_{{\rm Lip}})
\end{equation}
again on $\gamma$.

\paragraph{Glueing - uniqueness.} 
Let us address uniqueness for $K_g$, ie $K_gf=0$, $f$ vanishing at infinity implies that $f=0$. This will follow (for small enough $\delta$) from the following natural assumption on continuous Cauchy data spaces: 
\begin{equation}\label{eq:surgzeroind}
C_i\cap C_0=\{0\},{\rm\ \ \ }\bar C_i\cap \bar C_o=\{0\}.
\end{equation}
 Indeed, assume by contradiction that for some sequence $\delta_n\searrow 0$, there is $f_n\in\C^{\Xi^{B,\delta_n}_g}\neq 0$ such that $K_g^{\delta_n}f_n=0$, $f_n$ vanishes at infinity (the line of argument here is similar to some arguments in Section 3 of\cite{SmiChe_isoradial}). Let $\gamma$ be a simple cycle in $A$ that disconnects $0$ from $\infty$. We normalise $f_n$ so that $\|(f_n)_{|\gamma}\|_\infty=1$. Let $\gamma_1$,$\gamma_2$ be two disjoint cycles in $A$ bounding an open annulus $A'$ that contains $\gamma$. By replication (as in \eqref{eq:discrcauchform}), we see that $\|(f_n)_{|\gamma_1\cup\gamma_2}\|_\infty$ is bounded and consequently the Lipschitz norm of $f_n$ on compact subsets of $A'$ is bounded. Up to extracting a subsequence, one may assume that (a suitable interpolation of) $(f_n)$ converges uniformly on compact subsets of $A'$ to a non-vanishing (since it has uniform norm $1$ on $\gamma$) Lipschitz function $f$. More precisely, we may write $f_n=R_B(g_n)+\bar R_B(h_n)$, where $(g_n)$ and $(h_n)$ converge in Lispchitz norm on compact subsets of $A'$ (again by replication). Consequently, $g=\lim g_n  $ is in $C_i\cap C_0=\{0\}$,  $h=\lim h_n  $ is in $\bar C_i\cap \bar C_0=\{0\}$, yielding the needed contradiction.

\paragraph{Glueing - convergence.}

We may now address the central question of this section, i.e. convergence of the glued inverting kernel $K_g^{-1}$ assuming \eqref{eq:surgkerconvassum}.

Assume that $S_g:A^2\rightarrow\C$ is a kernel with the same regularity conditions as $S_i,S_o$ (see around \eqref{eq:surgregassum}) and such that: for any simple cycle $\gamma$ in $A$ disconnecting $0$ from $\infty$, if $w$ is outside $\gamma$, $S_g(.,w)\in C_i$ and $S_g(.,w)-S_o(.,w)\in C_o$ ; and if $w$ is inside $\gamma$, $S_g(.,w)\in C_o$ and $S_g(.,w)-S_i(.,w)\in C_i$ (where $C_i,C_o$ are the inner and outer Cauchy data space on $\gamma$). Similarly, we assume given $\bar S_g:A^2\rightarrow\C$ a kernel compatible in the same way with inner and outer Cauchy data spaces $\bar C_i$, $\bar C_o$, and with the same regularity as $\bar S_i,\bar S_o$. Remark that this uniquely specifies $S_g$ by \eqref{eq:surgzeroind}.

Given this data, we start with constructing $\tilde S_g$, an approximate inverting kernel for $K_g:\C^{\Xi_g^B}\rightarrow\C^{\Xi_g^W}$ (at least for some $w$'s). First let us consider two disjoint simple cycles $\gamma_1$, $\gamma_2$ in $A$ that disconnect $0$ from $\infty$, with $\gamma_1$ inside $\gamma_2$; $\gamma_i^\delta$ is an approximation of $\gamma_i$ on $\Gamma_\delta$. For $w\in M_W^\delta$ within $O(\delta)$ of $\gamma_2$, we set:
\begin{align*}
\tilde S_g(b,w)&=K_o^{-1}(b,w)+P_o^{\gamma^\delta_1}\left(\frac 12R_B(e^{i\nu(w)}(S_g(.,w)-S_o(.,w)))+\frac 12\bar R_B\left(e^{-i\nu(w)}(\bar S_g(.,w)-\bar S_o(.,w))\right)\right)\\
&{\rm\ \ \ \ \ for\ }b{\rm\ outside\ }\gamma_1\\
&=P_i^{\gamma^\delta_1}\left(\frac 12R_B\left(e^{i\nu(w)}S_g(.,w)\right)+\frac 12\bar R_B\left(e^{-i\nu(w)}\bar S_g(.,w)\right)\right)\\
&{\rm\ \ \ \ \ for\ }b{\rm\ inside\ }\gamma_1
\end{align*}
and for $b$ on $\gamma^\delta_1$ one may use either definition (here $P_i^{\gamma_\delta}$, $P_o^{\gamma_\delta}$ denote the inner and outer discrete Cauchy data space projectors described earlier, see \eqref{eq:surgdiscrcald}). Symmetrically, for $w\in M_W^\delta$ within $O(\delta)$ of $\gamma_1$, we set:
\begin{align*}
\tilde S_g(b,w)&=K_i^{-1}(b,w)+P_i^{\gamma^\delta_2}\left(\frac 12R_B(e^{i\nu(w)}(S_g(.,w)-S_i(.,w)))+\frac 12\bar R_B\left(e^{-i\nu(w)}(\bar S_g(.,w)-\bar S_i(.,w)))\right)\right)\\
&{\rm\ \ \ \ \ for\ }b{\rm\ inside\ }\gamma_2\\
&=P_o^{\gamma^\delta_2}\left(\frac 12R_B\left(e^{i\nu(w)}S_g(.,w)\right)+\frac 12\bar R_B\left(e^{-i\nu(w)}\bar S_g(.,w)\right)\right)\\
&{\rm\ \ \ \ \ for\ }b{\rm\ outside\ }\gamma_2
\end{align*}
Let us observe that if $w$ is within $O(\delta)$ of $\gamma_i$, $K_g\tilde S_g(.,w)-\delta_w$ is supported on white vertices with graph distance $\leq 1$ to $\gamma_{3-i}^\delta$, $i\in\{1,2\}$; let us denote $\hat\gamma^W_i$ these sets of white vertices, $i\in\{1,2\}$.

In the continuous limit, the assumptions on compatibility of $S_g$ with the Cauchy data spaces specified by $S_i$, $S_o$ translate into the replication identities: 
$$
P^{\gamma_1}_i(S_g(.,w))=S_g(.,w),{\rm\ \ \ \ }
P^{\gamma_1}_o(S_g(.,w)-S_o(.,w))=S_g(.,w)-S_o(.,w)
$$
on $\gamma_1$ if $w$ is on $\gamma_2$; and symmetrically
$$
P^{\gamma_2}_o(S_g(.,w))=S_g(.,w),{\rm\ \ \ \ }
P^{\gamma_2}_i(S_g(.,w)-S_i(.,w))=S_g(.,w)-S_i(.,w)
$$
on $\gamma_2$ if $w$ is on $\gamma_1$. The corresponding identities for $\bar S_g$ also hold. Together with the earlier convergence result for $P^{\gamma_\delta}_s$, $s\in\{i,o\}$ (see \eqref{eq:surgprojconv}), we get for instance that 
$$\|P^{\gamma^\delta_1}_i(R_B(S_g(.,w))-R_B(S_g(.,w))\|_\infty=O(\tilde\eta(\delta))$$ 
(uniform norm on $\gamma^B_1$), uniformly in $w\in\gamma_2$.

Consequently, if $w$ is near $\gamma_2$, we have
$$\tilde S_g(b,w)=\frac 12 R_B\left(e^{i\nu(w)}S_g(b,w)\right)+\frac 12 \bar R_B\left(e^{-i\nu(w)}\bar S_g(b,w)\right)+O(\tilde\eta(\delta))
$$
for $b$ a black vertex in a compact subset of $A\setminus \gamma_1$, and in particular for $b$ on either side of $\gamma_1$ (ie both definitions of $\tilde S_g$ agree up to $O(\tilde\eta(\delta))$ near $\gamma_1$). It follows that $K_g\tilde S_g(.,w)-\delta_w$ is supported on $\hat\gamma_1^W$ and is $O(\delta\tilde\eta(\delta))$ there. Thus
$(K_g\tilde S_g)$, seen as an operator $\C^{\hat\gamma^W}\rightarrow\C^{\hat\gamma^W}$ (where $\hat\gamma^W=\hat\gamma^W_1\sqcup\hat\gamma^W_2$) is such that $\||K_g\tilde S_g-\Id\||_{L^1(\gamma^W)}=O(\tilde\eta(\delta))$ (since $|\gamma^W|=O(\delta^{-1})$). Thus for $\delta$ small enough, $K_g\tilde S_g:\C^{\hat\gamma^W}\rightarrow\C^{\hat\gamma^W}$ is invertible, and $\||(K_g\tilde S_g)^{-1}-\Id\||_{L^1(\gamma^W)}=O(\tilde\eta(\delta))$. Then for $w\in\gamma^W$, we set
$$S^\delta_g(.,w)=\sum_{w'\in\gamma^W}(K_g\tilde S_g)^{-1}(w,w')\tilde S_g(.,w')$$
so that $K_gS^\delta_g(.,w)=\delta_w$. It follows that
$$S^\delta_g(b,w)=\frac 12 R_B\left(e^{i\nu(w)}S_g(b,w)\right)+\frac 12 R_B\left(e^{-i\nu(w)}\bar S_g(b,w)\right)+O(\tilde\eta(\delta))
$$
uniformly in $w\in\gamma^W$ and $b$ in a compact subset of $A\setminus(\gamma_1\cup\gamma_2)$. 

For $w$ in general position in $A$, $w$ is in the exterior of $\gamma_1$ or in the interior of $\gamma_2$ (or both). The two cases are similar, so assume that $w$ is in a compact set of $A\setminus\gamma_1$, on the exterior of $\gamma_1$. Then we define $\tilde S_g(.,w)$ as we did for $w$ near $\gamma_2$. Then $K_g\tilde S_g(.,w)-\delta_w$ is supported on $\hat\gamma^W_1$, is $O(\delta\tilde\eta(\delta))$ there, and we may set:
$$S_g^\delta(.,w)=\tilde S_g(.,w)+\sum_{w'\in\hat\gamma^W_1}(K_g(\tilde S_g(.,w)))(w')S^\delta_g(.,w')$$
so that $K_g S_g^\delta(.,w)=\delta_w$. We know that there is a most one $f$ vanishing at infinity such that $K_gf=\delta_w$ (for $\delta$ small enough, under \eqref{eq:surgzeroind}). Thus $S^\delta_g(.,w)$ does not depend on the choice of $\gamma_1,\gamma_2$. By moving $\gamma_1$, $\gamma_2$ towards the boundary cycles of $A$, we can extend estimates of $S^\delta_g(b,w)$ for $(b,w)$ in a compact subset of $A^2\setminus\Delta_A$. 

Finally for $w$ outside of $A$, it is easy to construct $S_g^\delta(.,w)$. For instance if $w$ is in $D_o\setminus D_i$, one starts with a truncation of $K_o^{-1}(.,w)$ and set
$$S_g^\delta(.,w)=\ind_{\gamma^{int}}K_o^{-1}(.,w)-\sum_{w'\neq w}K_g(\ind_{\gamma^{int}}K_o^{-1}(.,w))S^\delta_g(.,w')$$
where $\gamma^{int}$ denotes the inside of the closed cycle $\gamma$. 

\paragraph{Conclusion.} 
Let us summarise the results of this section. Recall that $M$ is a bipartite graph derived from a rhombi tiling $\Lambda_\delta$ with edge length $\delta$, $\delta$ going to zero along some sequence; it is equipped with a linear operator $\rK:\C^{M_B}\rightarrow\C^{M_W}$. We consider $D_i$ (resp. $D_o$) a simply connected neighbourhood of $0$ (resp. $\infty$) in $\hat\C$, such that $A=D_i\cap D_o$ is an annulus separating $0$ from $\infty$. The (sequences of) graphs $\Xi_s=\Xi_s^\delta$, $s\in\{i,o\}$, are bipartite graphs equipped with $K_s:\C^{\Xi_s^B}\rightarrow\C^{\Xi_s^W}$, a nearest neighbour (or finite range) linear operator. The pair $(
\Xi_i,K_i)$ is obtained by modifying $(M,\rK)$ in $D_i\setminus D_o$, and vice versa for $(\Xi_o,K_o)$. In particular $(\Xi_i,K_i)$ and $(\Xi_o,K_o)$ agree with $(M,\rK)$ in $A$. The glued data $(\Xi_g,K_g)$ agrees with $(\Xi_s,K_s)$ in $D_s$, $s\in\{i,o\}$. We also assume that for any $w\in\Xi_i^W$, there is a unique $K_s^{-1}(.,w)\in\C^{\Xi^B_i}$ vanishing at infinity such that $K_s(K_s^{-1}(.,w))=\delta_w$, $s\in\{i,o\}$. 

\begin{Lem}\label{Lem:surgery}
Assume that $S_i,\bar S_i,S_o,\bar S_o:A^2\setminus \Delta_A\rightarrow\C$ are such that
$$K_s^{-1}(b,w)=\frac 12 R_B(e^{i\nu(w)}S_s(b,w))+
\frac 12\bar R_B(e^{-i\nu(w)}\bar S_s(b,w))+O(\eta(\delta))$$
uniformly in compact sets of $A^2\setminus\Delta_A$, $s\in\{i,o\}$, with $(z,w)\mapsto S_i(z,w)-\frac 1{\pi(z-w)}$ (resp. $\bar S_i(z,w)-\frac 1{\pi\overline{(z-w)}}$) $C^1$ in $A^2$, and $\lim_{\delta\searrow 0}\eta(\delta)=0$. Assume that there are $S_g,\bar S_g:A^2\setminus \Delta_A\rightarrow\C$ with same regularity such that: if $\gamma$ is a fixed simple cycle in $A$ disconnecting $0$ from $\infty$, then for $w\in A$ outside $\gamma$, $S_g(.,w)_{|\gamma}$ is in the inner Cauchy data space $C_i$ defined by $S_i$ and $(S_g(.,w)-S_o(.,w))_{|\gamma}$ is in the outer Cauchy data space $C_o$ defined by $S_o$ (see \eqref{eg:surgcontdata}); and the corresponding conditions for $\bar S_g$ and for $w\in A$ inside $\gamma$ also hold. Assume that $C_i\cap C_o=\{0\}$, $\bar C_i\cap\bar C_o=\{0\}$.

Then for $\delta$ small enough, for each $w\in\Xi_g^W$ there is a unique $K_g^{-1}(.,w)\in\C^{\Xi^B_g}$ vanishing at infinity such that $K_g(K_g^{-1}(.,w))=\delta_w$, and 
$$K_g^{-1}(b,w)=\frac 12 R_B(e^{i\nu(w)}S_g(b,w))+
\frac 12\bar R_B(e^{-i\nu(w)}\bar S_g(b,w))+O(\eta(\delta)+\delta|\log \delta|)$$
uniformly in compact sets of $A^2\setminus\Delta_A$, $s\in\{i,o\}$.
\end{Lem}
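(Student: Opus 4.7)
The plan is to construct $K_g^{-1}(\cdot,w)$ as a correction of an explicit approximate kernel $\tilde S_g(\cdot,w)$ built by patching the known inverses $K_i^{-1}$, $K_o^{-1}$ through the discrete Cauchy projectors on two nested cycles inside $A$. Uniqueness of $K_g^{-1}(\cdot,w)$ has essentially been dealt with in the paragraph preceding the lemma by a replication-plus-compactness argument: any nonzero $f_n$ in $\ker K_g^{\delta_n}$ vanishing at infinity, normalized on a cycle $\gamma\subset A$, yields subsequential limits whose $R_B$ and $\overline{R_B}$ parts lie in $C_i\cap C_o=\{0\}$ and $\bar C_i\cap\bar C_o=\{0\}$ respectively, contradicting non-triviality. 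So the core task is existence together with the near-diagonal estimate.

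Fix two disjoint cycles $\gamma_1,\gamma_2\subset A$ with $\gamma_1$ inside $\gamma_2$, and approximations $\gamma_1^\delta,\gamma_2^\delta$ on $\Gamma$. For $w$ within $O(\delta)$ of $\gamma_2$ I would define $\tilde S_g(\cdot,w)$ by the two-case formula displayed in the discussion: outside $\gamma_1$, use $K_o^{-1}(\cdot,w)$ plus $P_i^{\gamma_1^\delta}$ applied to the $R_B,\overline{R_B}$ images of $S_g(\cdot,w)-S_o(\cdot,w)$ and $\bar S_g(\cdot,w)-\bar S_o(\cdot,w)$; inside $\gamma_1$, use $P_i^{\gamma_1^\delta}$ applied to the $R_B,\overline{R_B}$ images of $S_g(\cdot,w),\bar S_g(\cdot,w)$. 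Symmetrically for $w$ within $O(\delta)$ of $\gamma_1$. The key input is the estimate
\begin{equation*}
\|P_s^{\gamma^\delta}(R_B f)-R_B(P_sf)\|_\infty=O(\tilde\eta(\delta)\|f\|_{\mathrm{Lip}}),\qquad s\in\{i,o\},
\end{equation*}
with $\tilde\eta(\delta)=\eta(\delta)\vee(\delta|\log\delta|)$, established above by comparing the discrete contour sum to $\oint_\gamma T_s(z,w)f(w)dw$ via the identification of $\gamma^W$ with arcs on $\gamma_\delta\cup\gamma_\delta^\dagger$ and Stokes' formula. Combined with the replication identities imposed on $S_g,\bar S_g$ on each cycle (fixed-point property in $C_i$ or $C_o$), this gives that the two definitions of $\tilde S_g$ agree up to $O(\tilde\eta(\delta))$ in the overlap region, and that $K_g\tilde S_g(\cdot,w)-\delta_w$ is supported on $\hat\gamma^W=\hat\gamma_1^W\sqcup\hat\gamma_2^W$ with pointwise size $O(\delta\tilde\eta(\delta))$.

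Since $|\hat\gamma^W|=O(\delta^{-1})$, the operator $K_g\tilde S_g$, viewed as an endomorphism of $\C^{\hat\gamma^W}$, satisfies $\|K_g\tilde S_g-\mathrm{Id}\|_{L^1(\hat\gamma^W)\to L^1(\hat\gamma^W)}=O(\tilde\eta(\delta))$. For $\delta$ small enough it is invertible by Neumann series, with $\|(K_g\tilde S_g)^{-1}-\mathrm{Id}\|_{L^1}=O(\tilde\eta(\delta))$. Setting
\begin{equation*}
S_g^\delta(\cdot,w)=\sum_{w'\in\hat\gamma^W}(K_g\tilde S_g)^{-1}(w,w')\,\tilde S_g(\cdot,w')
\end{equation*}
gives $K_gS_g^\delta(\cdot,w)=\delta_w$, and the $L^1\to L^\infty$ bound $\|\tilde S_g\|_{L^1(\hat\gamma^W)\to L^\infty}=O(1)$ propagates the error to yield the claimed expansion in terms of $S_g,\bar S_g$ uniformly on compact subsets of $A^2\setminus\Delta_A$. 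For a general $w\in A$ not near either cycle, say outside $\gamma_1$, the same patched formula for $\tilde S_g(\cdot,w)$ produces a defect supported only on $\hat\gamma_1^W$, which I correct by subtracting $\sum_{w'\in\hat\gamma_1^W}(K_g\tilde S_g(\cdot,w))(w')S_g^\delta(\cdot,w')$ using the kernel just built; uniqueness ensures independence of the choice of $\gamma_1,\gamma_2$ so the construction extends to all of $A$ by letting the auxiliary cycles sweep $A$. For $w$ outside $A$, say $w\in D_o\setminus D_i$, a truncation of $K_o^{-1}(\cdot,w)$ against $\gamma$ corrected by the already-defined $S_g^\delta(\cdot,w')$ on the defect set does the job.

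The main obstacle I expect is controlling the gluing error precisely on the two-cycle support $\hat\gamma^W$: one must show that the mismatch between the ``inner'' and ``outer'' definitions of $\tilde S_g$ near $\gamma_1$ (and $\gamma_2$) is genuinely $O(\tilde\eta(\delta))$ in $L^\infty$, so that summing $O(\delta^{-1})$ terms against $K_g$ only loses one power of $\delta$ and keeps $K_g\tilde S_g-\mathrm{Id}$ a small $L^1$-bounded perturbation of identity. This rests on the twin facts that $S_g,\bar S_g$ lie in the Cauchy data spaces cut out by $S_i,\bar S_i$ on $\gamma_1$ (and by $S_o,\bar S_o$ on $\gamma_2$) and that the discrete projectors $P_s^{\gamma^\delta}$ approximate their continuous counterparts to leading order, with the $\delta|\log\delta|$ term of $\tilde\eta$ arising only from the principal-value contribution $\oint_\gamma(f(w)-f(b))/(b-w)\,dw$ in the $\uK^{-1}$-singular part. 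Once this quantitative Cauchy formula is in hand, the rest of the argument is pure linear algebra.
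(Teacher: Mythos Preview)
Your proposal is correct and follows essentially the same approach as the paper: build $\tilde S_g$ by patching via discrete Cauchy projectors on two nested cycles, show the defect is supported on $\hat\gamma^W$ with $L^1$ norm $O(\tilde\eta(\delta))$, invert by Neumann series, then bootstrap to general $w$. One slip: for $w$ near $\gamma_2$ and $b$ outside $\gamma_1$, the correction $S_g(\cdot,w)-S_o(\cdot,w)$ lies in the \emph{outer} Cauchy data space $C_o$, so you must apply $P_o^{\gamma_1^\delta}$ (not $P_i^{\gamma_1^\delta}$) to obtain a function discrete-holomorphic outside $\gamma_1$; your later remarks about the replication identities show you have the right picture, just the wrong label in that formula.
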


Let us remark that in the case where $\Xi_o$ is actually bounded, the ``vanishing at $\infty$" condition is void. Conceivably, for unbounded $\Xi_o$ (agreeing with $M$ far enough), one might find use for ``divisor" type boundary conditions at infinity: $f(z)=O(|z|^n)$ as $z\rightarrow\infty$, $n\in\Z$ fixed. In this case the previous surgery argument is still valid.

\section{Electric correlators}\label{sec:electr}

We are now interested in scaling limits of vertex correlators. For instance in the plane, one may consider asymptotics of
$$\langle e^{2i\pi\sum_js_j h(z_j)}\rangle$$
where $\sum s_j=0$. Here $h$ is the height field of a dimer configuration and $\langle\rangle$ is the expectation under the full-plane measure specified by \eqref{eq:locstats}. Heuristically, we expect these asymptotics to be governed by electric correlators for a free field 
$$\langle :\exp(i\sum_js _j\phi(z_j):\rangle$$
 (see \eqref{eq:electcorrGFF}), with coupling constant $g_0=\frac 12$ (the value read eg from Corollary \ref{Cor:FFplane}, with $2\pi h$ converging to $\phi$ in the small mesh limit). However, the discrete height function is at every point deterministic modulo $\Z$ (recall the choice of normalisation for the height from Section \ref{ss:height}). Thus
$$(s_j)_j\mapsto \langle e^{2i\pi\sum_js_j h(z_j)}\rangle$$
is $1$-periodic in each variable, which is not the case for the scalar free field electric correlators:
$$\langle :\exp(2i\pi\sum_js_j\phi(z_j):\rangle_\C
\propto \prod_{i<j}|z_i-z_j|^{2s_is_j}$$
This may be seen as a manifestation of the compactified nature of the height field.

The relevant Cauchy-Riemann operators are those associated to the line bundle $L_\rho$ over the punctured sphere $\Sigma=\hat\C\setminus\{z_1,\dots,z_n\}$, where $\rho:\pi(\Sigma)\rightarrow\U$ is a unitary character. The two types of variations we shall consider are the isomonodromic family $(z_1,\dots,z_n)\mapsto L_\rho(z_1,\dots,z_n)$, and Jacobian family $\rho\mapsto L_\rho$.

The analysis relies on a rather precise description of the corresponding discrete operators and their inverting kernels, in particular near the diagonal. 

In Section \ref{ssec:monodromy}, we study in some details discrete (harmonic and) holomorphic functions with monodromy around a given face of $M$ (in terms of the Riemann sphere, there is another singularity at infinity). Building on these results, we analyse in Section \ref{ssec:monoinv} the case of discrete holomorphic functions (and the associated inverting kernels) with monodromy around one or several pairs of points in $\C$ (and the point at infinity is regular). In Section \ref{ssec:monovariat}, we vary the position of singularities and the monodromy exponents in order to analyse asymptotically the electric correlators, yielding the main result (Theorem \ref{Thm:electr}).

For notational simplicity (and without loss of generality), throughout this section we take $\delta=1$.

\subsection{Discrete holomorphic functions with monodromy around a point}\label{ssec:monodromy}

We proceed with a local study of discrete holomorphic functions and inverting kernels in the presence of a singularity. As described in Section \ref{sss:critgraphs}, we consider a rhombus tiling $\Lambda$ of the plane. Let us mark the midpoint $v_0$ of an edge of $\Lambda$ (that is, the center of a face of $M$). Up to scaling and centering we may assume 
$$\delta=1{\rm,\ \ }v_0=0.$$

\begin{figure}[htb]
\begin{center}
\leavevmode
\includegraphics[width=0.4\textwidth]{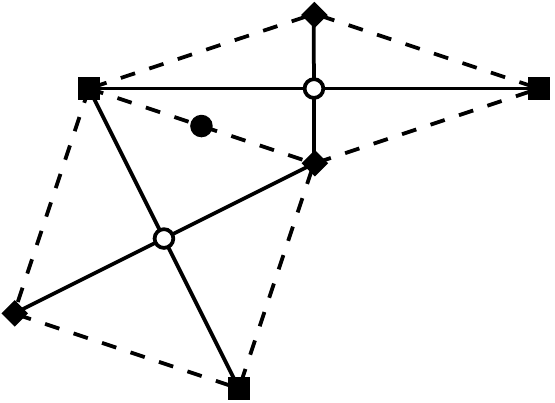}
\end{center}
\caption{Local geometry of $M$ near the singularity $v_0$ (black disk)}
\label{fig:monloc}
\end{figure}

The unitary characters of $\pi_1(\C\setminus\{v_0\})\simeq\Z$ are identified with the unit circle
$\U$. Fix a {\em non-trivial} character $\chi$ (identified to an element of $\U\neq\{1\}$). 

We consider $(\C^{M_B})_\chi$, the space of functions on the lift of $M_B$ to the universal cover of the punctured plane $\C\setminus\{v_0\}$ which belong to the character $\chi$ (recall Section \ref{ss:linebundles}, where similar constructions for the torus are discussed). Explicitly, if $\hat M$ is the lift of $M$ to the universal cover of $\C\setminus\{v_0\}$, and $\theta:M\rightarrow M$ is the deck transform corresponding to a simple counterclockwise loop around $v_0$,
$$\gls{CMBchi}\simeq\{f\in\C^{\hat M_B}:f\circ\theta=\chi f\}$$
We can choose the universal cover to be $\C$, the covering map to be $z\mapsto v_0+e^z$, and $\theta(z)=z+2i\pi$. Alternatively, we can choose a branch cut $\gamma$ (running from $v_0$ to $\infty$ on $M^\dg$) and realise $\hat M$ as $M\times\Z$ (as vertex sets); $M\times\{n\}$ is the $n$-th sheet, and one moves to the $(n\pm 1)$-th sheet when crossing $\gamma$. With this identification, $f\in(\C^{M_B})_\chi$ if $f((b,n+1))=\chi f((b,n))$.

Elements of $(\C^{M_B})_\chi$ may also be seen as multiplicatively multivalued elements of $\C^{M_B}$ (that get multiplied by $\chi$ when tracked along a counterclockwise loop around $v_0$). The space $(\C^{M_W})_\chi$ is defined similarly; we may also consider the restrictions $(\C^{M_V})_\chi$ and $(\C^{M_F})_\chi$. We are concerned with the operators
\begin{align*}
\Lap_\Gamma: &(\C^{M_V})_\chi\longrightarrow (\C^{M_V})_\chi\\
\rK:& (\C^{M_B})_\chi\longrightarrow (\C^{M_W})_\chi
\end{align*}
and functions $f$ s.t. $\Lap_\Gamma f=0$ or $\rK f=0$ on all or part of $M$ (discrete harmonic and holomorphic functions).

\subsubsection{Multivalued harmonic functions: {\em a priori} estimates}

We begin with a few basic estimates on harmonic functions with monodromy (i.e. multiplicatively multivalued). In what follows, 
$$B(0,R)=\{z\in M_V: |z|\leq R\}.$$

\begin{Lem}\label{Lem:chirharmest}
\begin{enumerate}
\item There is $\eps=\eps(\chi)>0$ such that for $n$ large enough, if $f\in(\C^{M_V})_\chi$ is harmonic in $B(0,2n)$, then
$$\sup_{x\in B(0,n)} |f(x)|\leq (1-\eps)\sup_{x\in \partial B(0,2n)}|f(x)|$$
Similarly,  if $f\in(\C^{M_V})_\chi$ is harmonic in $A(n,3n)=B(0,3n)\setminus B(0,n)$, then
$$\sup_{x\in A(\frac 32n,\frac 52n)} |f(x)|\leq (1-\eps)\sup_{x\in \partial A(n,3n)}|f(x)|$$
\item If $f\in (\C^{M_V})_\chi$ is  bounded and harmonic, $f\equiv 0$.
\item If $y\in M_V$, there is at most one function $\gls{Gchi}(.,y)\in (\C^{M_V})_\chi$ vanishing at infinity such that $\Lap_\Gamma G_\chi(.,y)=\delta_y$ (here $\delta_y$ designates an element of $(\C^{M_V})_\chi$ such that $\delta_y(y)\in\{\chi^n:n\in\Z\}$ and $\delta_y(x)=0$ otherwise).
\item  If $y\in M_V$, $G_\chi(.,y)$ exists and satisfies
$$G_\chi(x,y)=O(\left|\frac xy\right|^\eps\wedge \left|\frac yx\right|^\eps)$$
if $|x-y|\geq|y|/2$, for some $\eps=\eps(\chi)>0$.
\item For all $x,y\in M_V$, $G_\chi(x,y)=G_{\bar\chi}(y,x)$.
\item If $y\sim y'$ ($y,y'$ adjacent in $\Gamma\simeq M_V$),
$$G_\chi(x,y')-G_\chi(x,y)=O(|y|^{-1}(\left|\frac xy\right|^\eps\wedge \left|\frac yx\right|^\eps))$$
if $|x-y|\geq |y|/4$ and $G_\chi(x,y')-G_\chi(x,y)=O(|x-y|^{-1})$ otherwise.
\end{enumerate}
\end{Lem}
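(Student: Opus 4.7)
The central task is (1), a strict Harnack-type contraction that crucially uses the non-triviality of $\chi$. Once (1) is established, items (2)--(6) follow by rather standard arguments.

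Approach to (1). First note that $|f|^2$ is a well-defined single-valued subharmonic function on $M_B$ (since $\chi\bar\chi = 1$), so the maximum principle gives $|f| \leq \sup_{\partial B(0, 2n)}|f|$ on all of $B(0, 2n)$. To upgrade this to strict contraction by a factor $1 - \eps(\chi)$, use the probabilistic representation $f(x) = \E_x[\chi^{w(X)} f(X_T)]$, where $X$ is the random walk on the universal cover generated by the weighted Laplacian, $T$ is the exit time from $B(0, 2n)$, and $w(X)$ is the winding of the walk around $v_0$. Thus $f(x) = \sum_y q(x,y) f(y)$ with $q(x,y) = \E_x[\chi^{w(X)} \ind_{\{X_T = y\}}]$, and the contraction rate is controlled by the conditional variance of $\chi^{w(X)}$ given $X_T$. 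Convergence of the walk to isotropic Brownian motion (an input available from the isoradial framework recalled in the paper, Section~3) and Spitzer-type positivity of the winding variance for planar Brownian motion in an annulus give a uniform positive lower bound on this conditional variance for $x \in B(0, n)$ and $n$ large, yielding $\eps(\chi) > 0$. The annular version of (1) is proved identically.

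Item (2) follows by iterating (1): if $M = \sup|f|$, then (1) on $B(0, 2n)$ gives $|f| \leq (1-\eps)M$ on $B(0, n)$ for every $n$, hence $M = 0$. Item (3) is immediate: the difference of two candidate Green's functions is bounded and harmonic in $(\C^{M_V})_\chi$, hence zero by (2). For (4), set $G_\chi(.,y) = \lim_R G_{\chi, R}(.,y)$ where $G_{\chi, R}$ is the Dirichlet Green's function on $B(0, R)$; existence and uniqueness of $G_{\chi, R}$ follow from invertibility of $\Lap_\Gamma$ on the finite-dimensional Dirichlet character space (trivial kernel by subharmonicity of $|f|^2$ with vanishing boundary data). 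For $R' > R$, $G_{\chi, R'} - G_{\chi, R}$ is harmonic in $B(0, R)$ in the character space and equals $G_{\chi, R'}$ on $\partial B(0, R)$, so the maximum principle combined with iterated application of the annular part of (1) to $G_{\chi, R'}(.,y)$ (harmonic with monodromy on $A(2|y|, R'/2)$) yields $\sup_{\partial B(0, R)}|G_{\chi, R'}(.,y)| \leq C(|y|/R)^\eps$ uniformly in $R'$. This gives Cauchyness of the sequence and the claimed decay in (4) for $|x| \gg |y|$; the case $|x| \ll |y|$ follows from (5).

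For (5), the finite-volume symmetry $G_{\chi, R}(x, y) = G_{\bar\chi, R}(y, x)$ is the self-adjointness of $\Lap_\Gamma$ paired between the $\chi$ and $\bar\chi$ character spaces (their product descends to a single-valued finite sum on $M_V$), and passes to $R \to \infty$. For (6), use (5) to rewrite $G_\chi(x, y') - G_\chi(x, y) = G_{\bar\chi}(y', x) - G_{\bar\chi}(y, x)$: when $|x-y| \geq |y|/4$, $z \mapsto G_{\bar\chi}(z, x)$ is harmonic on $B(y, c|y|)$ (bounded away from both $x$ and $v_0$), and (4) bounds it there by $C(|x/y|^\eps \wedge |y/x|^\eps)$; a standard discrete gradient estimate for harmonic functions on isoradial graphs converts this $L^\infty$ bound into the claimed $O(|y|^{-1}(|x/y|^\eps \wedge |y/x|^\eps))$ difference bound. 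For $|x-y| < |y|/4$, the logarithmic singularity of $G_\chi(.,y)$ at $y$---local behaviour unaffected by the distant character around $v_0$---gives $O(|x-y|^{-1})$. The main obstacle throughout is the uniform lower bound on the conditional winding variance underlying (1); once it is in hand, the rest of the lemma is standard.
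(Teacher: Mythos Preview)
Your approach is largely correct and close to the paper's. Item (1) rests on the same mechanism---the probabilistic representation $f(x)=\E_x[\chi^{w(X)}f(X_T)]$ and the fact that the winding is not deterministic given the exit point---though the paper realises this by an explicit coupling (two walks from $x$ that merge before exiting $B(0,2n)$ with windings differing by $2\pi$) rather than by invoking a conditional-variance/Spitzer statement. Items (2), (3), (6) match the paper; for (5) you use finite-volume self-adjointness and pass to the limit, while the paper verifies $\Lap^y_\Gamma G_\chi(x,y)=\delta_x(y)$ directly from uniqueness---both are fine.

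There is a genuine gap in (4). Your annular iteration from scale $2|y|$ outward gives $\sup_{\partial B(0,R)}|G_{\chi,R'}(.,y)|\leq (|y|/R)^\eps\cdot M$, where $M=\sup_{\partial B(0,2|y|)}|G_{\chi,R'}(.,y)|$; but you never bound $M$ uniformly in $R'$. This is not free: in the untwisted case the Dirichlet Green's function on $B(0,R')$ grows like $\log R'$ at fixed distance from the pole. The paper avoids the issue by constructing $G_\chi$ directly rather than as a finite-volume limit: with $C_i=\partial B(y,|y|/4)$, $C_e=\partial B(y,|y|/2)$, it shows that the ``excursion--chiral return'' operator $T:L^\infty(C_i)\to L^\infty(C_i)$ satisfies $\|T\|\leq 1-\eps$ (same winding argument as (1)), sets $G_\chi(.,y)|_{C_i}=(\Id-T)^{-1}G_{B(y,|y|/2)}(.,y)|_{C_i}$, and extends. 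This yields existence and the $O(1)$ bound at scale $|y|$ simultaneously, after which the decay follows from iterating (1). Your finite-volume route can be salvaged, but the missing step is precisely this contraction estimate at the scale of the pole.
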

1. is an improved maximum principle; 2. is a Liouville-type result; 3,4. state the existence and uniqueness of a chiral Green kernel $G_\chi$; $C^0$ and $C^1$ estimates for $G_\chi$ are given in 4., 6.. We use $\wedge$ to denote the infix minimum:
$$a\gls{wedge} b\stackrel{def}{=}\min (a,b)$$

\begin{proof}
\begin{enumerate}
\item Take $x\in\partial B(0,n)$; up to rotation, we may assume $\arg(x)=0$. Consider 
$$C=A(\frac 23 n,\frac 43n)\cap\{z:|\arg(z)|\leq \pi-\eps_0\},$$ 
$\eps_0>0$ small enough and fixed. From the convergence of discrete harmonic measure (see \cite{SmiChe_isoradial}, Theorem 3.8), we may deduce that there exists $\eta>0$ such that for $n$ large enough, the probability that the random walk on $\Gamma$ started from $x$ exits $C$ on either the top or bottom side of the cone $\{z:|\arg(z)|\leq \pi-\eps_0\}$ is at least $\eta$; let us denote $T$ and $B$ these events. If $y$  is a point on the top side, a random walk starting from $y$ disconnects the bottom side from $\partial B(-x,\frac n2)$ before exiting $B(-x,\frac n2)$ with probability at least $\eta'>0$ (uniformly in $n$ large enough, $y\in A(\frac 23 n,\frac 43n)$; this may also be seen easily using harmonic measure estimates).

Hence with probability at least $\eta'>0$, we may couple the random walk started from $x$ conditional on $T$ with the random walk conditional on $B$ in such a way that they couple before exiting $B(0,2n)$ and their winding around 0 differs by $2\pi$. This shows that
$$|f(x)|\leq \left(\eta\eta'|1+\chi|+(1-2\eta\eta')\right)\sup_{x\in \partial B(0,2n)}|f(x)|$$
and since $\chi\neq 1$, we have $|1+\chi|<2$. The same argument works in the annular case.

\item By iterating 1., we get 
$$|f(x)|\leq (1-\eps)^n\sup_{x\in \partial B(0,|x|2^n)}|f(x)|)\leq (1-\eps)^n\|f\|_\infty$$ 
and consequently $f(x)=0$.

\item Follows from 2.

\item 
If $y\in M_V$, set $h_y(x)=\E_x(\delta_y(X_{\tau_y}))$ where is a $\chi$-multivalued Dirac mass at $y$ and $\tau_y$ is the first time $y$ is attained. Clearly $h_y$ is harmonic on $M_V\setminus\{y\}$, $\chi$-multivalued and bounded by $1$. Since it is not identically $0$, by 2. $\Lap_\Gamma h_y(y)\neq 0$. Thus we may set
$$G_\chi(.,y)=\frac{h_y}{\Lap_\Gamma h_y(y)}$$
Due to the denominator, it is delicate to estimate $G_\chi$ using this representation. We are going to give another representation, based on a Markovian decomposition.

Up to rotation we may assume $\arg(y)=0$. Let $C_e=\partial B(y,|y|/2)$ and $C_i=\partial B(y,|y|/4)$. For $x\in C_e$, $y\in C_i$, let $\mu_e(y,\{x\})$ be the harmonic measure on $C_e$ seen from $y\in C_i$ and $\mu_i(x,\{y\})$ be the chiral harmonic measure on $C_i$ seen from $x\in C_e$. Explicitly, if $C^n_i$ (resp. $y_n$) is the lift of $C_i$ (resp. $y$) to the $n$-th sheet of the universal cover of $\C\setminus\{0\}$, 
$$\mu_i(x,\{y\})=\sum_{m\in \Z}\chi^m {\rm Harm}_{\sqcup C_i^n}(x,\{y_m\})$$
where ${\rm Harm}_I(x,J)$ is the probability that the random walk started from $x$ on $\hat M_V$ (the lift of $M_V$ to the universal cover of $\C\setminus\{0\}$) first hits $I$ on $J\subset I$.

Reasoning as in 1., we can couple a random walk starting from $x$ which approaches the crosscut $(-\infty,0)$ from above before reaching $C_i$ with a random walk approaching it from below in such a way that, with probability bounded away from 0, if the lift of the first random walk exits at $y_m$, the second one exits at $y_{m-1}$. This shows that $\|\mu_i(x,.)\|_{TV}\leq 1-\eps$ for some constant $\eps>0$ ($\|.\|_{TV}$ denotes the total variation norm).

Consider the (continuous-time) random walk $(X_t)_{t\geq 0}$ on $M_V$ (lifted to the universal cover) started from, say, $x\in B(y,|y|/4)$; let $\tau_e$ be the first time the RW touches $C_e$, and $\tau_i$ be the first time it touches $\sqcup_n C_i^n$ after $\tau_e$. Then by Dynkin's formula
\begin{align*}
G_\chi(x,y)&=\E^x\left(G_\chi(X_{\tau_i},y)+\int_0^{\tau_i}\ind_{X_t=y}dt\right)\\
&=\E^x\left(\int_0^{\tau_e}\ind_{X_t=y}dt\right)+\E^x\left(\E^x\left(G_\chi(X_{\tau_i},y)|X_{\tau_e}\right)\right)\\
&=G_{B(y,|y|/2)}(x,y)+\sum_{a\in C_e,b\in C_i}\mu_e(x,\{a\})\mu_i(a,\{b\})G_\chi(b,y)
\end{align*}
Here $G_B$ is the Green kernel for the RW on $B$ with Dirichlet boundary conditions on $\partial B$ (standard arguments show that $\E^x(\tau_e)<\infty$). 

Denoting by $T:L^\infty(C_i)\rightarrow L^\infty(C_i)$ the operator:
$$(Tf)(x)=\sum_{a\in C_e,b\in C_i}\mu_e(x,\{a\})\mu_i(a,\{b\})f(b)$$
we have $\lVert T\rVert_{L^\infty\rightarrow L^\infty}\leq 1-\eps$ since $\|\mu_i(x,.)\|_{TV}\leq 1-\eps$ for all $x$. Consequently, on $C_i$ we have
$$G_\chi(.,y)=(\Id-T)^{-1}G_{B(y,|y|/2)}(.,y)_{|C_i}$$
The values of $G_\chi(.,y)$ on $C_i$ determine its values elsewhere: by harmonic extension outside of $C_i$, and by harmonic extension inside $C_i$ after substracting $G_{B(y,|y|/4)}(.,y)$. 

Having expressed $G_\chi$ in terms of the Green kernel in balls, we may used known estimates for the latter. We have $G_{B(y,R)}(x,y)=O(1)$ for $x\in B(y,R)\setminus B(y,R/2)$
and $ G_{B(y,R)}(x,y)=O(\log R)$ for $|x-y|\ll R$ (see Sections 2.2. and 3.3 in \cite{SmiChe_isoradial}).
Thus $G_\chi(x,y)=O(1)$ for $x\in C_i$. Consequently, by 1. we have 
$$G_\chi(x,y)=O(\left|\frac xy\right|^\eps\wedge \left|\frac yx\right|^\eps)
$$
if $|x-y|\leq |y|/2$.

\item By uniqueness 3., $G_\chi(x,y)$ is $\bar\chi$-multivalued in $y$ for $x$ fixed (reasoning on the universal cover). To evaluate
$$\Lap^y_\Gamma G_\chi(x,y)$$
we fix $y$ and consider it as a $\chi$-multivalued function in $x$. Then observe that it decays at infinity and has the same Laplacian as a lifted Dirac mass $\delta_y(.)$. Consequently by 2., 
$$\Lap^y_\Gamma G_\chi(x,y)=\delta_y(x)=\delta_x(y)$$
which concludes.

\item We write $G_\chi(x,y')-G_\chi(x,y)=G_{\bar\chi}(y',x)-G_{\bar\chi}(y,x)$. If $|x-y|\geq |y|/2$, a combination of the estimate in 4. (for $G_{\bar\chi})$ and a discrete Harnack estimate (see \eqref{eq:harnack}) concludes. If $x\in B(y,|y|/4)$, as in 4. we may write 
$$G_{\bar\chi}(y,x)=G_\Gamma(y,x)-\frac 1{2\pi}\log|x|+h(y)$$
where $G_\Gamma$ is the ``free Green function" (e.g. Theorem 2.5 in \cite{SmiChe_isoradial}, Theorem 2.5) and $h$ is harmonic in $B(x,|x|/2)$ and uniformly bounded. We conclude with the Harnack estimate \eqref{eq:harnack} and asymptotics for first differences of $G_\Gamma$.

\end{enumerate}
\end{proof}

\subsubsection{Classification and asymptotic behaviour}

We now turn to discrete holomorphic functions. We are interested in characterising and estimating discrete holomorphic functions in $(\C^{M_B})_\chi$. More precisely, we shall describe bounded discrete holomorphic functions in $(\C^{M_B})_\chi$ (Lemma \ref{Lem:monholom}); discrete meromorphic functions with a single pole adjacent to the singularity (Lemma \ref{Lem:monpole}); and the expansion at infinity of $\chi$-multivalued discrete holomorphic functions (Lemma \ref{Lem:moninf}).

\paragraph{Classification.}

Let us start with reminding a few constructive elements on discrete holomorphic functions in the absence of monodromy. The bounded holomorphic functions in $\C^{M_B}$ are spanned (over $\C$) by $R_B(1)$, $\bar R_B(1)$. Then one verifies directly that $z\mapsto R_B(z)$, $z\mapsto R_B(z^2)$ and their conjugates are also discrete holomorphic. 

There is a notion of integration of discrete holomorphic functions (see \cite{Merc_poly} and references therein) which allows to construct by induction a sequence $(P_n)_{n\geq 0}$ of discrete holomorphic functions in $\C^{M_B}$ s.t. $P_n(z)=R_B(z^n)(1+o(1))$ as $z\rightarrow\infty$. The special discrete holomorphic functions (see the proof of Lemma \ref{Lem:monholom}) may be seen as the (exponential) generating series of the sequence $(P_n)$. Furthermore, one may check that a discrete holomorphic function with at most polynomial growth is a linear combination of the $P_n$'s and their conjugates. 

Using special holomorphic functions and an integral representation, Kenyon obtained Theorem \ref{Thm:Kencrit}, which shows e.g. that one can find a function $f$ in $\C^{M_B}$ which is discrete holomorphic except at two points s.t. $f(z)=R_B(z^{-1})(1+o(1))$ as $z\rightarrow\infty$.

In the presence of monodromy, if we set $\chi=e^{2i\pi s}$, then $\chi$-multivalued (continuous) holomorphic functions in $\C\setminus\{0\}$ may be written as
$$z\longmapsto z^s\sum_{n\in\Z}a_nz^n$$
by standard results on Laurent series. Similarly, antiholomorphic $\chi$-multivalued functions may be written
$$z\longmapsto \bar z^{-s}\sum_{n\in\Z}b_n\bar z^n$$
By analogy with the discrete polynomials $P_n$, a natural question is whether there are basic discrete functions given asymptotically by $z\mapsto R_B(z^{s+n})(1+o(1))$. This is addressed by the following lemma.

\begin{Lem}\label{Lem:monholom}
\begin{enumerate}
\item
The space of $\chi$-multivalued bounded discrete holomorphic functions:
$$\{f\in(\C^{M_B})_\chi: \|f\|_\infty<\infty, \rK f=0\}$$
is one-dimensional and is spanned by a function $f_\chi$ with the following asymptotic expansion:
$$f_\chi(u)=R_B\left(2^{s-1}\Gamma(1-s)(u-v_0)^{s-1}\right)+\bar R_B\left(\bar\tau 2^{-s}\Gamma(s)\overline{(u-v_0)}^{-s}\right)+O(|u-v_0|^{-s-2}+|u-v_0|^{s-3})$$
where $s\in (0,1)$, $\chi=e^{2i\pi s}$, and $\tau=(x_0-v_0)/|x_0-v_0|$ if $x_0$ is the vertex of $\Gamma$ adjacent to the singularity $v_0$.
\item More generally, for $s\in (0,1)$, $k\in\N$, there exists $f_{k,\chi}$ discrete holomorphic and $\chi$-multivalued s.t.
$$\gls{fkchi}(u)=\bar R_B\left(\bar\tau 2^{k-s}\Gamma(s-k)\overline{(u-v_0)}^{k-s}\right)+O(|u-v_0|^{s-k-1}+|u-v_0|^{k-s-2})$$
with $f_{0,\chi}=f_\chi$ and for $b$ a black vertex adjacent to the singularity,
$$f_{k,\chi}(b)=\frac{2i\pi e^{i\nu(b)}}{(b-v_0)^{k+1-s}}\frac{2^{s-k-1}(-1)^ke^{-i\pi s}}{1-e^{-2i\pi s}}=(-1)^ke^{i\nu(b)}\frac{\pi}{\sin(\pi s)}\overline{(2(b-v_0))}^{k+1-s}$$
\end{enumerate}
\end{Lem}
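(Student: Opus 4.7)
The plan is to treat existence and uniqueness separately, constructing $f_{k,\chi}$ by an Ansatz-plus-correction scheme and obtaining uniqueness from the $\chi$-twisted maximum principle established in the previous lemma.

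For existence, in the continuum the functions $(u-v_0)^{s-1}$ and $\overline{(u-v_0)}^{k-s}$ are respectively holomorphic and antiholomorphic on the punctured plane and both transform under $\chi=e^{2i\pi s}$ upon encircling $v_0$. By the finite-difference identities $\rK(R_B\phi) = 2\mu_\dmd R_W(\dbar\phi) + O(\delta^3\|\phi''\|)$ and its antiholomorphic analogue, $R_B$ applied to a holomorphic function and $\bar R_B$ applied to an antiholomorphic function are approximately in $\Ker\rK$. I would take as starting Ansatz, far from $v_0$,
\[
\tilde f_{k,\chi}(u) = \bar R_B\bigl(\bar\tau\,2^{k-s}\Gamma(s-k)\overline{(u-v_0)}^{k-s}\bigr) + R_B\bigl(c_k(u-v_0)^{s-k-1}\bigr),
\]
with a free scalar $c_k$; applying $\rK$ leaves a residual error that is algebraically decaying. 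The constant $c_k$ would be determined so that the subleading term in the residual cancels against the discrete correction coming from $\bar R_B$ of the antiholomorphic piece; this matching is automatic from the Dolbeault structure, and for $k=0$ produces the specific value $c_0 = 2^{s-1}\Gamma(1-s)$, consistent via the reflection formula $\Gamma(s)\Gamma(1-s) = \pi/\sin(\pi s)$. The residual $\rK\tilde f_{k,\chi}$ would then be absorbed by a correction term built from the $\chi$-twisted Green function $G_\chi$ of the preceding lemma (applied componentwise using $\rK^*\rK = \Lap_\Gamma \oplus \Lap_{\Gamma^\dg}$ away from $v_0$); the decay bound $G_\chi(x,y) = O(|x/y|^\eps \wedge |y/x|^\eps)$ ensures that the correction falls into the claimed remainder class.

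For uniqueness in part 1, suppose $f \in (\C^{M_B})_\chi$ is bounded with $\rK f = 0$. Then $f|_\Gamma$ and $f|_{\Gamma^\dg}$ are bounded and $\chi$-harmonic on their respective sublattices away from $v_0$. Iterating outward the annular contraction estimate from the preceding lemma forces $f$ to decay polynomially at infinity; a discrete Cauchy-type integral on a large circle (in the spirit of Section 6) then extracts two leading asymptotic coefficients, one each in the $R_B$ and $\bar R_B$ directions, and the equation $\rK f = 0$ asymptotically couples these coefficients via a single linear relation, reducing the effective parameter space to one dimension. Hence $f$ is a scalar multiple of $f_\chi$. The closed-form values at black neighbors of $v_0$ in part 2 would follow by direct evaluation of the Ansatz at the four nearest neighbors (where $|2(b-v_0)| = 1$), using the product $\Gamma(s-k)\Gamma(1-s+k) = (-1)^k\pi/\sin(\pi s)$ to match the stated expression.

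The main obstacle will be controlling the remainder at the precise order $O(|u-v_0|^{-s-2} + |u-v_0|^{s-3})$ claimed: this requires the odd-order correction in the discrete-to-continuous finite-difference identity to vanish by a symmetry of the rhombus tiling, producing a jump of two in the exponent of the error, and a careful coefficient-matching between the holomorphic and antiholomorphic parts of the Ansatz. A secondary difficulty is adapting the Cauchy-integral extraction of Section 6 to the $\chi$-multivalued setting, as the natural contour must be lifted to the universal cover with a consistent choice of branch to track the monodromy contribution.
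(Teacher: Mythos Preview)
Your approach diverges substantially from the paper's, and while parts of it are plausible in spirit, there is a genuine gap.

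\textbf{Existence.} The paper does not use an Ansatz-plus-correction scheme at all. It constructs $f_\chi$ and $f_{k,\chi}$ directly via an integral representation in terms of the discrete exponentials $e_\lambda$ (the ``special'' discrete holomorphic functions of Kenyon/Mercat): one sets
\[
f_s(u)=\int_0^{\bar u\infty}\lambda^{-s}e_{-\lambda}(u)\,d\lambda,\qquad
f_{k,s}(u)=\int_\gamma \lambda^{k-s}e_{-\lambda}(u)\,d\lambda
\]
for a Hankel-type contour $\gamma$. Discrete holomorphicity is immediate from that of $e_\lambda$; the asymptotic expansion (including the precise error $O(|u|^{s-3}+|u|^{-s-2})$) falls out of the small- and large-$\lambda$ asymptotics of $e_\lambda(u)$ with no delicate matching. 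Crucially, the exact values at black vertices adjacent to $v_0$ in part~2 are obtained by a residue computation on the contour integral, since $e_\lambda(b)$ is an explicit rational function of $\lambda$ for those $b$.

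Your Ansatz-plus-$G_\chi$-correction scheme could in principle produce a function with the right asymptotics, but it cannot deliver the closed-form evaluation at neighbours of $v_0$. Your claim that these ``follow by direct evaluation of the Ansatz at the four nearest neighbours'' is incorrect: the Ansatz is a continuous function restricted to lattice points, and the $G_\chi$-correction contributes an $O(1)$ term there with no closed form. The exact constants $\frac{\pi}{\sin(\pi s)}\overline{(2(b-v_0))}^{k+1-s}$ are not recoverable this way; the integral representation is doing real work.

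\textbf{Uniqueness.} The paper's argument is much simpler and structurally different from yours. If $\rK f=0$ and $f$ is bounded, a local computation shows $f_{|M_V}$ is $\Lap_\Gamma$-harmonic except at the single vertex $x_0\in\Gamma$ adjacent to $v_0$; by the previous lemma it is therefore proportional to $G_\chi(\cdot,x_0)$. Then $f_{|M_F}$ is the harmonic conjugate, determined up to an additive constant, and that constant is pinned down by $\chi$-multivaluedness. This gives $\dim\le 1$ directly. Your proposed mechanism---extracting two asymptotic coefficients via a Cauchy integral and arguing they are ``coupled by a single linear relation''---misidentifies where the constraint comes from (it is the local structure near $v_0$ and the monodromy, not an asymptotic relation), and would in any case be considerably harder to make rigorous.
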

Remark that $\overline{f_{k,\bar\chi}}$ is also discrete holomorphic and $\chi$-multivalued, and that $\overline{f_{\bar\chi}}=\tau f_\chi$. 
Let us also point out that if $b$ is a black vertex adjacent to the singularity,
$$f_{k,\chi}(b)=\frac{(-1)^k\pi}{\Gamma(s-k)\sin(\pi s)}\bar R_B\left(\bar\tau 2^{k-s}\Gamma(s-k)\overline{(b-v_0)}^{k-s}\right)$$
ie plugging $b$ in the leading term of the asymptotic expansion of $f_{k,\chi}$ yields the correct value up to the positive multiplicative constant $\frac{(-1)^k\pi}{\Gamma(s-k)\sin(\pi s)}=\Gamma(k+1-s)$.

\begin{proof}
\begin{enumerate}
\item
Let $f\in(\C^{M_B})_\chi$ be bounded with $\rK f=0$. Then a local computation (see Section \ref{sss:finitediff}) shows that $f_{|M_V}$ is harmonic except at $x_0$, the vertex of $\Gamma$ adjacent to the marked edge of $\Lambda$. Consequently (Lemma \ref{Lem:chirharmest}), $f_{|M_V}$ decays at infinity and is proportional to $G_\chi(.,x_0)$. Besides $f_{|M_F}$ is locally given as the harmonic conjugate of $f_{|M_F}$ and is thus given modulo a global additive constant (on the lift to the universal cover). This additive constant is determined by the fact that $f_{|M_F}$ is $\chi$-multivalued. Thus the space of bounded holomorphic functions in $(\C^{M_B})_\chi$ is at most one-dimensional.

We then simply need to exhibit a non-trivial holomorphic function. Adapting the argument for the construction of $\uK^{-1}$ in Section 4.2 of \cite{Ken_isoradial} (see also \cite{SmiChe_isoradial}, Appendix A, which we follow more closely), we use an integral representation involving the ``special" discrete holomorphic functions (or {\em discrete exponentials}). Recall that the singularity is located at $v_0=0$, the midpoint of an edge of $\Lambda$ which abuts $x_0\in\Gamma$ and $y_0\in\Gamma^\dg$ (thus $x_0=-y_0$). For a parameter $\lambda\in\C$, we set
\begin{align*}
e_\lambda(x_0)&=\frac 1{1-\frac \lambda 2(x_0-y_0)}\\
e_\lambda(y_0)&=\frac 1{1-\frac \lambda 2(y_0-x_0)}
\end{align*}
 and for $x\in\Gamma$, $y\in\Gamma^\dg$ adjacent, 
 $$\frac{e_\lambda(x)}{e_\lambda(y)}=\frac {1-\frac \lambda 2(y-x)}
{1-\frac \lambda 2(x-y)}$$
It may be checked that $e_\lambda\in\C^{M_B}$ is well-defined and discrete holomorphic in the sense that $K e_\lambda=0$ (recall that $\rK$ is obtained from $K$ by a gauge change). For $u\in M_B$, we have $e_\lambda(u)=O(1/|\lambda|)$ as $\lambda\rightarrow\infty$. Besides, one can choose a path $(u_0\dots u_n)$ on $\Lambda$ from $u_0=x_0$ or $y_0$ to $u_n=u$ in such a way that $u_0,u_1-u_0,\dots,u_{n}-u_{n-1}$ lie in the cone:
$$\{z:|\arg(z)-\arg(u)|\leq \pi-\eps_0\}$$
for some $\eps_0>0$. Consequently the poles of $e_\lambda(u)$ are in $\{\lambda:|\arg(\lambda)-\arg(\bar u)|\leq\pi-\eps_0\}$ (and have norm $\delta/2$).

For $s\in(0,1)$, set
$$f_s(u)=\int_0^{\bar u\infty}\lambda^{-s} e_{-\lambda}(u)d\lambda$$
which is $\chi=e^{2i\pi s}$-multivalued. Here $\int_0^{\bar u\infty}$ represents integration along a half-line from 0 to infinity in the direction $\bar u$. If $w\in M_W$, $u_1,\dots,u_4$ its four black neighbours, one can show that the same integration path may be used for $u_1,\dots,u_4$ and consequently by linearity $(K f_s)(w)=0$. 

For small $\lambda$ we have
$$e_\lambda(u)=\exp(\lambda u+O(|u\lambda^3|+|\lambda^2|))$$
and for large $\lambda$
$$e_\lambda(u)=-\frac{\epsilon}{\lambda(x_0-v_0)}\exp(4\frac{\bar u}{\lambda}+O(|u\lambda^{-3}|+|\lambda^{-2}|)$$
where $\epsilon=1$ on $\Gamma$ and $\epsilon=-1$ on $\Gamma^\dg$ (see A.1 in  \cite{SmiChe_isoradial}). (Notice that $\frac{1+\eps}{1-\eps}=\exp(2\eps+O(\eps^3))$). The asymptotic expansion comes from small values of $\lambda$ (say $|\lambda|\leq1/\sqrt{|u|}$) and large values (say $|\lambda|\geq\sqrt{|u|}$). The intermediate values of $\lambda$ contribute to an exponentially small error in the asymptotic analysis. We have
\begin{align*}
\int_0^{\bar u\infty}\lambda^{-s} \exp(-\lambda u)d\lambda&=\Gamma(1-s)u^{s-1}\\
\int_0^{\bar u\infty}\lambda^{-s}\left(\frac{\epsilon}{\lambda(x_0-v_0)} \exp(-4\bar u\lambda^{-1})\right)d\lambda
&=\frac\epsilon{x_0-v_0}\Gamma(s)(4\bar u)^{-s}
\end{align*}
The error terms are estimated via
$$\int_0^{1/\sqrt{|u|}}t^{-s}\exp(-|u|t)O(|u|t^3+t^2)dt=\int_0^{\sqrt{|u|}}t^{-s}\exp(-t)O(t^3+t^2)\frac{dt}{|u|^{3-s}}=O(|u|^{s-3})$$
and similarly for the other integral. This shows in particular that $f_s$ is not identically zero. We then obtain $f_\chi$ by multiplying by a constant and applying a gauge transformation (recall \eqref{eq:Kastgauge}).

\item For $k\in\N$, $s\in (0,1)$, set 
$$f_{k,s}(u)=\int_\gamma \lambda^{k-s} e_{-\lambda}(u)d\lambda$$
where $\gamma=\gamma(u)$ is an integration contour consisting of a large counterclockwise circle around zero (encircling all poles of the $e_.(u)$'s); two connecting segments on $\bar u(0,\infty)$; and a small clockwise circle around 0 (with all poles on its exterior). As before it may be checked that $Kf_{k,s}=0$, since for a given $w\in M_W$, the same integration contour may be used for all black neighbours of $w$.\\
We choose a lift of the mapping $u\mapsto\lambda=-u^{-1}$ to the covers of $\{u:u\neq v_0\}$ and $\{\lambda:\lambda\neq 0\}$. We choose determinations of $\log$ on these pointed covers in such a way that for $\lambda=u^{-1}$, $\log(\lambda)=-\log(u)$.
Then $\gamma(u)$ is a simple contour on the cover of $\{\lambda:\lambda\neq 0\}$, for instance by specifying that the outward segment is in the direction $u^{-1}$; then the inward segment is on the next sheet of the cover.  

Notice that, up to rotating the lattice around $v_0$, one may assume that $u-v_0\in (0,\infty)$, in which case one may a fixed branch of $\log$, which will be convenient for local computations. More precisely, if $M'=e^{i\theta}M$ (the graph rotated by $\theta$), one may choose branches in the definitions of $f^{M}_{k,s}$, $f^{M'}_{k,s}$ in such a way that $f^{M'}_{k,s}(e^{i\theta}u)=e^{is\theta}f^{M}_{k,s}(u)$. 

In order to obtain an asymptotic expansion for large $R=|u|$, one may take the outer circle of the integration contour with radius $R$ and the inner circle with radius $R^{-1}$.  The contribution of the inner half of the contour $\gamma$ is, modulo exponentially small error terms, 
$$R^{s-k-1}\int_{\gamma_i}\lambda^{k-s}\exp(-\lambda u/R+O(u\lambda^2/R^2))d\lambda=O(|u|^{s-k-1})$$
where $\gamma_i$ is a contour consisting of a clockwise unit circle around 0 connected to infinity by two half-lines along $\bar u(0,\infty)$.
Similarly, the contribution of the outer half of $\gamma$ is:
$$R^{(k-s)}\int_{\gamma_o}\lambda^{k-s}\left(\frac\epsilon{\lambda (x_0-v_0)} \exp(-4\bar u(R\lambda)^{-1}+O(u(R\lambda)^{-3}+(R\lambda)^{-2})\right)d\lambda$$
where $\gamma_o$ consists of the unit circle connected to zero by two rays along $\bar u(0,\infty)$.\\
Up to rotation and inversion, we have to evaluate 
$$\int_{\gamma_0}t^{s'-1}e^{-t}dt$$
where $\gamma_0$ is a contour consisting of a clockwise circle around 0 connected to infinity by two half-lines along $(0,\infty)$. (For definiteness, set $\log(re^{i\theta})=\log r+i\theta$ for $\theta\in [0,2\pi)$, and accordingly $t^{s'-1}=\exp((s'-1)\log(t))$). We observe that, by contour deformation, 
$$\int_{\gamma_0}t^{s'-1}e^{-t}dt=(1-e^{2i\pi s'})\Gamma(s')$$
when $s'\in(0,\infty)$; and that both sides are analytic in $s'$ (the RHS has removable singularities on $-\N$). Consequently they agree for all $s'\in\C$. Thus
$$f_{k,s}(u)=\epsilon\frac{(1-\chi^{-1})}{x_0-v_0}\Gamma(s-k)(4\bar u)^{k-s}+O(|u|^{k-s-2}+|u|^{s-k-1})$$
On the other hand, for $u\in\{x_0,y_0\}$, the residue formula yields:
$$f_{k,s}(u)=\int_\gamma\lambda^{k-s} \frac{1}{1+\lambda(u-v_0)}d\lambda=2i\pi e^{i\pi(k-s)}(u-v_0)^{s-k-1}
$$

\end{enumerate}
\end{proof}

\paragraph{Discrete meromorphic functions with a pole adjacent to the singularity.\\}

We proceed with another constructive result on discrete holomorphic/meromorphic functions with monodromy, where we now allow a single ``pole" at one of the white vertices adjacent to the singularity (see Figure \ref{fig:monloc}). The argument is based on special discrete holomorphic functions, as in Lemma \ref{Lem:monholom}. A possible alternative route involves displacing the singularity across an edge of $M$, see the discussion after Lemma \ref{Lem:monCauchy}.

\begin{Lem}\label{Lem:monpole}
Let $w\in M_W$ be one of the two white vertices adjacent to the singularity $v_0$. Then there is $\gls{gw}\in (\C^{M_B})_\chi$ which is discrete holomorphic except at $w$, has asymptotic expansion
$$g_w(u)=R_B\left(2^{s}e^{i\nu(w)}\Gamma(1-s)(u-v_0)^{s-1}\right)+\bar R_B\left(2^{-s}e^{-i\nu(w)}\Gamma(1+s)\overline{(u-v_0)}^{-s-1}\right)+O(|u|^{-s-3}+|u|^{s-3})$$
and such that
\begin{align*}
(\rK g_w)(w)&=2^{1+s}\pi(w-v_0)^{s}
\end{align*}
\end{Lem}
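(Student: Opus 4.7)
The plan is to construct $g_w$ as a $\chi$-multivalued analogue of the standard inverting kernel $\uK^{-1}(\cdot,w)$, via a contour-integral representation twisting the one used in Lemma~\ref{Lem:monholom} for the discrete holomorphic functions $f_{k,\chi}$. Recall that $\uK^{-1}(u,w)$ itself admits an integral representation (\cite{Ken_isoradial,SmiChe_isoradial}) in which a logarithmic factor is responsible both for the Cauchy-kernel leading behavior $\sim 1/(u-w)$ and for the identity $\rK\uK^{-1}(\cdot,w)=\delta_w$. Replacing $\log\lambda$ by a suitable multiple of $\lambda^{-s}$ twists this kernel to the $\chi$-multivalued setting while preserving the source structure at $w$: the factor $\lambda^{-s}$ produces the required monodromy $\bar\chi=e^{-2i\pi s}$ upon crossing the branch cut on the universal cover of $\C\setminus\{0\}$.

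More concretely, working on the universal cover and with a contour $\gamma$ consisting (as in Lemma~\ref{Lem:monholom}) of a large counterclockwise circle, a small clockwise circle, and two connecting rays along $\overline{(u-v_0)}(0,\infty)$, I propose to define
\begin{equation*}
g_w(u)=C_s\int_\gamma \lambda^{-s}\,P_w(\lambda)\,e_{-\lambda}(u)\,d\lambda,
\end{equation*}
where $P_w(\lambda)$ is a rational function of $\lambda$ (depending only on $w$) whose singular structure is chosen so that, in combination with the discrete-holomorphic identity $Ke_{-\lambda}\equiv 0$, it produces a source term exactly at $w$, and $C_s$ is a normalization constant. For $w'\neq w$, the contour can be chosen uniformly across the four black neighbors of $w'$ (as in the construction of $f_{k,\chi}$), so that $(\rK g_w)(w')=0$. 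At $w'=w$, the singularity of $P_w$ (or equivalently a distinguished $\lambda$-pole of the integrand) yields a non-vanishing residue contribution which, after choosing $C_s$ appropriately, equals $2^{1+s}\pi(w-v_0)^s$. If the leading holomorphic coefficient at infinity does not come out exactly right, the result is adjusted by a multiple of the bounded holomorphic $f_\chi$ from Lemma~\ref{Lem:monholom}; since $f_\chi$'s antiholomorphic leading term has exponent $-s\neq -s-1$, this adjustment preserves the leading antiholomorphic term.

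The asymptotic expansion at infinity then follows by the splitting argument used in the proof of Lemma~\ref{Lem:monholom}. On an inner sub-contour of radius $\sim|u|^{-1/2}$ one uses $e_{-\lambda}(u)\approx e^{-\lambda u}$, and after integrating against $\lambda^{-s}P_w(\lambda)$ one obtains the $R_B$-leading term $R_B\bigl(2^s e^{i\nu(w)}\Gamma(1-s)(u-v_0)^{s-1}\bigr)$. On an outer sub-contour of radius $\sim|u|^{1/2}$ one uses $e_{-\lambda}(u)\approx \epsilon[\lambda(x_0-v_0)]^{-1}\exp(4\bar u/\lambda)$, and after the substitution $\mu=4\bar u/\lambda$ the outer contribution yields the $\overline{R_B}$-leading term $\overline{R_B}\bigl(2^{-s}e^{-i\nu(w)}\Gamma(1+s)\overline{(u-v_0)}^{-s-1}\bigr)$. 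The remainder $O(|u|^{-s-3}+|u|^{s-3})$ comes from the subleading corrections in the local expansions of $e_{-\lambda}$ near $\lambda=0$ and $\lambda=\infty$, together with the exponentially small contribution of the intermediate annulus.

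The main obstacle will be the precise bookkeeping of constants and phases: the gauge factors $e^{\pm i\nu(\cdot)}$ distinguishing $\rK$ from $K$, the signs $\epsilon=\pm 1$ that distinguish $\Gamma$ from $\Gamma^\dg$ black vertices, the correct branches of $\lambda^{-s}$ on successive sheets of the universal cover, and the powers of $2$ appearing both in the residue at the distinguished $\lambda$-pole and in the factor $(w-v_0)^s$. Arranging that the residue produces exactly $2^{1+s}\pi(w-v_0)^s$ while simultaneously the leading coefficients at infinity match the stated $2^s e^{i\nu(w)}\Gamma(1-s)$ and $2^{-s}e^{-i\nu(w)}\Gamma(1+s)$ demands a delicate residue and contour computation. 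A useful sanity check is the degeneration $s\to 0^+$: after regularizing $\lambda^{-s}=1-s\log\lambda+O(s^2)$, the function $g_w$ should reduce, up to multiplicative constants, to Kenyon's $\uK^{-1}(\cdot,w)$ with its familiar asymptotic $\uK^{-1}(u,w)\sim\Re[e^{i(\nu(u)+\nu(w))}/(\pi(u-w))]$.
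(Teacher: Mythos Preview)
Your approach is essentially that of the paper, and the asymptotic analysis via the inner/outer splitting is exactly as you describe. The one point you leave vague is the choice of $P_w(\lambda)$, and this is precisely where the paper is concrete: rather than inserting an ad hoc rational factor into the exponentials of Lemma~\ref{Lem:monholom}, the paper reverts to Kenyon's $w$-centered normalization of the discrete exponentials, setting
\[
e_\lambda(x_0)=\frac{1}{(1-\tfrac\lambda2(x_0-y_0))(1-\tfrac\lambda2(x_0-y_0'))}
\]
(two simple poles rather than one, where $y_0,y_0'$ are the two $\Gamma^\dg$-neighbours of $w$). Relative to the $v_0$-centered exponentials of Lemma~\ref{Lem:monholom} this amounts to your $P_w(\lambda)=(1-\tfrac\lambda2(x_0-y_0'))^{-1}$. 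With this normalization the paper uses a half-line integral $\tilde g(u)=\int_{-\bar u\infty}^{0}(-\lambda)^{-s}e_\lambda(u)\,d\lambda$ rather than your closed contour (the two conventions differ by a factor of $(1-\chi^{-1})$, as in the comparison between parts 1 and 2 of Lemma~\ref{Lem:monholom}). The evaluation $(K\tilde g)(w)=2\pi(-v_0)^s$ then follows from an explicit residue computation at the four black neighbours of $w$; no $f_\chi$-correction is needed, and one obtains $g_w$ by multiplying $\tilde g$ by $2^s e^{i\nu(w)}$ and applying the gauge change from $K$ to $\rK$.
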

Remark that for $s=0$, $g_w=2\pi\uK^{-1}(.,w)$; compare with Theorem \ref{Thm:Kencrit}.
\begin{proof}
We proceed as in the construction of $f_\chi$, simply changing the normalisation of the discrete exponentials (or rather reverting to the normalisation used in \cite{Ken_isoradial}, \cite{SmiChe_isoradial}; again we follow closely Appendix A of \cite{SmiChe_isoradial} here). Notations are as in the proof of Lemma \ref{Lem:monholom}. Let $x_0\in\Gamma$, $y_0\in\Gamma^\dg$ be the two black vertices adjacent to the singularity; let $x_0'\in\Gamma$, $y_0'\in\Gamma^\dg$ be such that $w$ corresponds to the edges $(x_0x_0')$ and $(y_0y_0')$ (see FIgure \ref{fig:monloc}). For $\lambda\in\C$, set
$$e_\lambda(x_0)=\frac 1{(1-\frac\lambda 2(x_0-y_0))(1-\frac\lambda 2(x_0-y_0'))}$$
and for $x\in\Gamma$, $y\in\Gamma^\dg$ adjacent, 
 $$\frac{e_\lambda(x)}{e_\lambda(y)}=\frac {1-\frac \lambda 2(y-x)}
{1-\frac \lambda 2(x-y)}$$
Then $K e_\lambda=0$ and we have the following estimates:
\begin{align*}
e_\lambda(u)&=\exp(\lambda (u-w)+O(|u\lambda^3|+|\lambda^2|))\\
e_\lambda(u)&=\frac{4\epsilon}{(x_0-y_0)(x_0-y_0')\lambda^2}\exp(4\frac{\overline{u-w}}\lambda+O(|u\lambda^{-3}|+|\lambda^{-2}|)
\end{align*}
For notational simplicity, we now assume that $w=0$.

Fix a ray $\bar u_0(0,\infty)$ from $w=0$ to infinity which is disjoint of the rays $\bar u(0,\infty)$ for all $u\in M$, and such that $\arg(u_0-v_0)\in(-\pi,\pi)$. We also fix a branch of $\log$ in $\C\setminus \bar u_0[0,\infty)$ and set $z^s=\exp(s\log(z))$ there. By rotating the lattice we may assume that $u_0=-1$ and $\log$ is the usual branch in $\C\setminus (-\infty,0]$ (so that $(z^{-1})^s=z^{-s}$ outside of the branch cut).

Then set
$$\tilde g(u)=\int_{-\bar u\infty}^0(-\lambda)^{-s}e_\lambda(u)d\lambda$$
which defines a single valued function on $M_B$. It can be identified with an element of $(\C^{M_B})_\chi$ by taking $u_0(0,\infty)$ as a branch cut. As before (see Lemma \ref{Lem:monholom}), one then checks that $K\tilde g$ vanishes except at $w$, by deformation of the integration path.

We have
\begin{align*}
\int_{-\bar u\infty}^0(-\lambda)^{-s} \exp(\lambda u)d\lambda&=\Gamma(1-s)u^{s-1}\\
\int_{-\bar u\infty}^0(-\lambda)^{-s}\left(\frac{4\epsilon}{(x_0-y_0)(x_0-y_0')\lambda^2} \exp(4\bar u\lambda^{-1})\right)d\lambda
&=\frac {4\epsilon}{(x_0-y_0)(x_0-y_0')}\Gamma(1+s)(4\bar u)^{-s-1}
\end{align*}
Observe that $(y_0-x_0)(y_0'-x_0)=e^{2i\nu(w)}$ (if $\delta=1$). We need to evaluate $\tilde g$ at neighbors of $w$. 
In order to evaluate $\int_{-\bar u\infty}^0(-\lambda)^{-s}f(\lambda)d\lambda$, where $f$ is rational of degree $-2$ without poles on the integration path, we may write (here $\chi=e^{2i\pi s}$)
$$(\chi^{-1}-1)\int_{-\bar u\infty}^0(-\lambda)^{-s}f(\lambda)d\lambda=\oint_\gamma\widetilde{(-\lambda)^{-s}}f(\lambda)d\lambda$$
where $\gamma$ is a closed counterclockwise contour containing all poles of $f$ and not crossing the ray $-\bar u(0,\infty)$ and $\lambda\mapsto \widetilde{(-\lambda)^{-s}}$ is the determination of $\lambda\mapsto {(-\lambda)^{-s}}$ with a branch cut on $-\bar u(0,\infty)$ which agrees with the reference determination (i.e. the one with branch cut on the negative half-line) on the right handside of $-\bar u(\infty,0)$. Specifying to $f(\lambda)=\frac{1}{(1-\lambda z_1)(1-\lambda z_2)}$, the residue formula yields
$$\int_{-\bar u\infty}^0 (-\lambda)^s\frac{1}{(1-\lambda z_1)(1-\lambda z_2)}d\lambda=\frac{2i\pi}{\chi^{-1}-1}\sum_z\widetilde{(-z)^{-s}}\Res_z(f)=\frac{2i\pi}{(1-\chi^{-1})(z_2-z_1)}\left(\widetilde{(-z_2^{-1})^{-s}}-\widetilde{(-z_1^{-1})^{-s}}\right)$$
Let us denote $v_0,v_1,v_2,v_3$ the midpoints of edges of the rhombus of $\Lambda$ corresponding to $w$ (where $v_0$ is the midpoint of $(x_0y_0)$) listed counterclockwise, and also $(b_0,b_1,b_2,b_3)=(x_0,y_0,x_0',y_0')$. We have
$$e_\lambda(b_i)=\frac{1}{(1-\lambda v_{i-1})(1-\lambda v_{i})}$$
for $i=0,\dots,3$, with cyclical indexing. 
For definiteness, let us assume that $u_0$ was chosen in the interior of the cone generated by $x_0$ and $v_0$. Then we obtain (in terms of the reference determination)
\begin{align*}
\tilde g(b_0)&=\frac{2i\pi}{(1-\chi^{-1})(v_0-v_3)}\left((-v_0)^{s}-(-v_3)^{s}\right)\\
\tilde g(b_1)&=\frac{2i\pi}{(1-\chi^{-1})(v_1-v_0)}\left(\chi^{-1}(-v_1)^{s}-\chi^{-1}{(-v_0)^{s}}\right)\\
\tilde g(b_2)&=\frac{2i\pi}{(1-\chi^{-1})(v_2-v_1)}\left((-v_2)^{s}-\chi^{-1}{(-v_1)^{s}}\right)\\
\tilde g(b_3)&=\frac{2i\pi}{(1-\chi^{-1})(v_3-v_2)}\left((-v_3)^{s}-{(-v_2)^{s}}\right)
\end{align*}
Taking into account $K(w,b_j)=i(v_{j-1}-v_{j})$, we get
$$(K\tilde g)(w)=2\pi(-v_0)^s$$
In order to obtain $g_w$ we multiply $\tilde g$ by $2^s e^{i\nu(w)}$ and apply a gauge transformation (see \eqref{eq:Kastgauge}).
\end{proof}

\paragraph{Discrete holomorphic functions with monodromy: asymptotic behaviour at infinity.\\}

Consider a bounded discrete function $f\in\C^{M_B}$ which is (discrete) holomorphic in $B(0,r)^c$ ($r\gg 1$ a large radius); from harmonic function arguments one sees that
$$f(z)=R_B(a)+\bar R_B(a')+O((r/z))$$
for some $a,a'\in\C$. By Theorem \ref{Thm:Kencrit} and a Cauchy integral formula 
(e.g. \eqref{eq:dirprobcauchy}) it is easy to see that
$$f(z)=R_B(a+b\frac{r}z)+\bar R_B(a'+b'\frac{r}{\bar z})+O((r/z)^2)$$
and one could expand at higher order terms with additional arguments. In the following Lemma, we identify the leading and subleading term at infinity for a discrete holomorphic function with monodromy.

\begin{Lem}\label{Lem:moninf}
Let $\chi=e^{2i\pi s}$, $s\in (0,\frac 12)$. Let $f\in (\C^{M_B})_\chi$ be bounded and s.t. $\rK f=0$ outside of $B(0,R)$. Then there exists $c=c(f)$ such that for $\eps_0>0$, $|z|\geq R$,
$$f(z)=cf_\chi(z)+O(\|f_{|B(0,R)}\|_\infty |z/R|^{s-1+\eps_0})$$
More precisely, for $s\in (0,\frac 12]$, there exists $c_1(f)=O(\|f_{|B(0,R)}\|_\infty R^{s})$, $c_2(f)=O(\|f_{|B(0,R)}\|_\infty R^{1-s})$ such that for $\eps_0>0$, $|z|\geq R$,
\begin{align*}
f(z)&=c_1f_\chi(z)+O(\|f_{|B(0,R)}\|_\infty|z/R|^{s-1})\\
&=c_1f_\chi(z)+c_2g_\chi(z)+O(\|f_{|B(0,R)}\|_\infty|z/R|^{-s-1+\eps_0})
\end{align*}
where $g_\chi=g_w$ for a white vertex $w\in M_W$ adjacent to $v_0$.
\end{Lem}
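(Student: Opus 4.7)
The strategy is a discrete Laurent-type expansion at the puncture $v_0$. By linearity we normalize $\|f_{|B(0,R)}\|_\infty = 1$ and aim to show $f = c_1 f_\chi + c_2 g_\chi + (\text{remainder})$ with the stated bounds. Lemmas \ref{Lem:monholom} and \ref{Lem:monpole} identify $f_\chi$ and $g_\chi$ as the slowest-decaying bounded $\chi$-multivalued discrete holomorphic modes at infinity, with rates $|z|^{-s}$ and $|z|^{s-1}$ respectively for $s\in(0,\tfrac12]$; these are exactly the first two terms one expects in a Laurent expansion on the exterior.

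First I construct an outer Cauchy kernel $S^{\rm out}_\chi(b,w)$: the unique element of $(\C^{M_B})_\chi$ with $\rK S^{\rm out}_\chi(\cdot,w) = \delta_w$ and vanishing at infinity. Existence and uniqueness are obtained exactly as in the construction of $G_\chi$ in the preceding lemma, by inverting $\mathrm{Id} - T$ (with $T$ the chiral harmonic-measure operator on a suitable crosscut) via a Neumann series, using the $(1-\eps)$-annular contraction. Applying the discrete Cauchy integral formula on the annulus between a cycle $\gamma$ near $\partial B(0,R)$ and a large outer cycle at radius $N$ and letting $N\to\infty$, one extracts first the unique scalar $c_1$ making $f - c_1 f_\chi$ vanish at infinity (its existence relies on the polynomial decay $(1-\eps)^k$ after $k$ annular doublings, which suffices for the subtracted function to be a limit of contour integrals), and then obtains
\[
f(z) = c_1 f_\chi(z) + \sum_{w \in \gamma^W} S^{\rm out}_\chi(z,w)\,(\rK_\gamma f)(w).
\]

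The second step is to expand $S^{\rm out}_\chi(\cdot,w)$ at infinity. For $w$ fixed, $S^{\rm out}_\chi(\cdot,w)$ is bounded, $\chi$-multivalued, discrete holomorphic away from $w$, and vanishing at infinity. Matching its leading asymptotics against $f_\chi$ and $g_\chi$ using the explicit expansions of Lemmas \ref{Lem:monholom} and \ref{Lem:monpole} gives
\[
S^{\rm out}_\chi(z,w) = A(w) f_\chi(z) + B(w) g_\chi(z) + O\!\left(|w|^{s-1} (R/|z|)^{s+1-\eps_0}\right),
\]
with $A(w) = O(|w|^{-s})$ and $B(w) = O(|w|^{s-1})$ uniformly for $|z|\geq R$. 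Substituting into the Cauchy representation yields the formulas $c_1 = (\text{first extraction}) + \sum_w A(w)(\rK_\gamma f)(w)$ and $c_2 = \sum_w B(w)(\rK_\gamma f)(w)$; the bounds $|(\rK_\gamma f)(w)|=O(1)$ (from $\|f_{|B(0,R)}\|_\infty=1$ and a local replication estimate) together with $|\gamma^W|=O(R)$ produce $|c_1|=O(R^s)$, $|c_2|=O(R^{1-s})$ and the claimed remainder.

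The main obstacle is the sharp $|z/R|^{-s-1+\eps_0}$ remainder in the expansion of $S^{\rm out}_\chi$. The crude annular contraction only provides a rate $(1-\eps)^k$ after $k$ doublings, i.e.\ $|z/R|^{-\alpha}$ for a suboptimal $\alpha = -\log_2(1-\eps)$ depending on $\chi$, whereas to place $g_\chi$ as the second-order term one needs decay strictly faster than $|z|^{s-1}$ in the remainder. Bridging this gap requires constructing explicitly a third mode $h_\chi$ with leading antiholomorphic term $\overline{(z-v_0)}^{-s-1}$ (in the spirit of Lemma \ref{Lem:monholom}, via a contour integral of discrete exponentials with weight $\lambda^{-s-1}$), subtracting an appropriate multiple of it before applying annular contraction; the $\eps_0>0$ loss absorbs the $|\log\delta|$-type slack from the discrete Cauchy surgery and the fact that coupling yields a rate only slightly better than $s+1$, not exactly $s+1$.
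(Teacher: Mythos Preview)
Your approach has a genuine circularity that you flag yourself but do not resolve. The crucial step is the expansion
\[
S^{\rm out}_\chi(z,w) = A(w) f_\chi(z) + B(w) g_\chi(z) + O\!\left(|w|^{s-1} (R/|z|)^{s+1-\eps_0}\right),
\]
but this is exactly the statement of the lemma applied to $S^{\rm out}_\chi(\cdot,w)$ (which is bounded, $\chi$-multivalued, and discrete holomorphic outside a ball). The Green-function construction from the preceding lemma only yields $S^{\rm out}_\chi(b,w)=O((|w|/|b|)^\alpha)$ with $\alpha=-\log_2(1-\eps)$ coming from the coupling estimate; there is no reason $\alpha$ should exceed $s+1$, or even $1-s$. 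Your proposed fix---construct a third mode $h_\chi$ and subtract---does not close the gap: the coefficient in front of $h_\chi$ is not known a priori (it is part of what you are trying to extract), and even granting it you would need a fourth mode $z^{s-2}$ (since $-s>s-1>-s-1>s-2$ for $s<\tfrac12$), then a fifth, and so on until you dip below the crude rate $\alpha$. You never explain how to break this regress.

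The paper's proof sidesteps the issue entirely by never invoking a discrete Cauchy kernel. Instead it uses the elementary fact that a continuous bounded $\chi$-multivalued harmonic function on $\{|z|>1\}$ has an explicit Laurent expansion $\sum_{n\ge1}a_nz^{s-n}+\sum_{n\ge0}b_n\bar z^{-s-n}$, together with the (independently established) convergence of discrete harmonic functions to continuous ones on large annuli. One then iterates across dyadic scales $R_n=C^nR_0$: at each scale, approximate $f$ by a continuous harmonic function, read off the leading Laurent coefficient $\beta_n$, and bound the remainder on $\partial B(0,R_{n+1})$ by $(\eps+C^{s-1}(1+\eps'))$ times its value on $\partial B(0,R_n)$. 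Choosing $\eps$ small and $C$ large makes this contraction factor at most $C^{s-1+\eps_0}$, which gives the claimed decay; the $\beta_n$ are then shown to be Cauchy. The $\eps_0$ loss arises from the discrete-to-continuous approximation error $\eps$, not from any Cauchy-kernel surgery. The sharper two-term statement is obtained by extracting two coefficients $(\beta_n,\gamma_n)$ at each step. This multiscale iteration is the missing idea in your proposal.
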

\begin{proof}
Fix $\eps,\eta>0$. Then there exists $R_0>0$ such that for all $g\in (\C^{M_V})_\chi$ discrete harmonic outside of $B(0,R_0)$, there exists a continuous harmonic, $\chi$-multivalued function $\hat g$ s.t. for all $z\in M_V$, $|z|\geq (1+\eta)R_0$
\begin{equation}\label{eq:Lemmoninf1}
|g-\hat g|(z)\leq \eps \|g_{|{\partial B(0,R_0)}}\|_\infty
\end{equation}
Moreover $R_0$ is uniform in the choice of rhombus tiling $\Lambda$ 
under \eqref{eq:spade}. This is obtained by contradiction using equicontinuity and Ascoli-Arzel\`a (see e.g. Section 3.1 in \cite{SmiChe_isoradial} for similar arguments). Similarly, if $g$ is discrete holomorphic, then $g_{|M_F}$ converges to a continuous harmonic function $\hat g^\dg$, which is conjugate to $\hat g$.\\

Let $\hat g$ be bounded, harmonic, $\chi$-multivalued on $\{z:|z|>1\}$. Then $\hat g$ can be written locally as the sum of a holomorphic function $h$ and an antiholomorphic function $\tilde h$. The decomposition $\hat g=h+\tilde h$ is unique up to additive constants. It is easy to see that the additive constant can be set so that $h,\tilde h$ are $\chi$-multivalued. From the Laurent expansion of $h(z)z^{-s}$, $\tilde h(z)\bar z^{s-1}$, we get that
$$\hat g(z)=\sum_{n\geq 1}a_n z^{s-n}+\sum_{n\geq 0}b_n \bar z^{-s-n}$$
for coefficients $(a_n)$, $(b_n)$ which are bounded by $\|\hat g_{\|\partial B(0,1)}\|_\infty$. Thus for all $|z|\geq C>1$,
$$|\hat g(z)-b_0\bar z^{-s}|\leq |z|^{s-1}\frac {C(1+C^{-2s})}{C-1}\|\hat g_{|\partial B(0,1)}\|_\infty$$

Let us now consider $f$ (restricted to $M_V$ for notational simplicity) satisfying the assumptions of the lemma. Let $\hat g_n$ be, as in \eqref{eq:Lemmoninf1}, a continuous, harmonic such that for $|z|\geq (1+\eta)R_n$
$$|f-\hat g_n|(z)\leq \eps\|f_{\partial B(0,R_n)}\|_\infty$$
($\eps,R_n$ to be fixed later). There is $c>0$, $\alpha_n=O(\|g_{|\partial B(0,R_n)}\|_\infty R^s)$ such that for $C>1$ large enough, $|z|\geq CR_n$
$$|\hat g_n(z)-\alpha_n \bar z^{-s}|\leq |z|^{s-1}(1+cC^{-1})\|\hat g_{|\partial B(0,(1+\eta)R_n)}\| \leq |z/R_n|^{s-1}(1+cC^{-1})(1+\eps)\|f_{|\partial B(0,R_n)}\|_\infty$$
Given that $f_\chi(z)=\overline R_B(\bar z^{-s})+O(|z|^{s-1})$ (up to multiplicative constant, see Lemma \ref{Lem:monholom}), we get that for $z\geq CR_n$,
$$|f-\alpha_nf_\chi|(z)\leq (\eps+|z/R_n|^{s-1}(1+cC^{-1})(1+\eps)(1+cR_n^{2s-1}))\|f_{|\partial B(0,R_n)}\|_\infty$$
By iterating, one can find coefficients $\beta_n$ such that
$$|f-\beta_{n+1} f_\chi|(z)\leq (\eps+|z/R_n|^{s-1}(1+\eps'))\|(f-\beta_n f_\chi)_{|\partial B(0,R_n)}\|_\infty$$
for $|z|\geq R_{n+1}=CR_n$. Choose $\eps$ small enough and $C,R_0$ large enough so that $\eps+C^{s-1}(1+\eps')\leq C^{s-1+\eps_0}$ (and $R_0\geq R$ in any case). It follows that
$$\|(f-\beta_n f_\chi)_{|\partial B(0,R_n)}\|_\infty=O(\|(f_{|\partial B(0,R_0)}\|_\infty(R_n/R_0)^{s-1+\eps_0})$$
Since $\|(f-\beta_n f_\chi)_{|\partial B(0,R_{n+1})}\|_\infty$ is of the same order, we get
$$\beta_{n+1}-\beta_n=O(\|(f_{|\partial B(0,R_0)}\|_\infty(R_n/R_0)^{s-1+\eps_0}R_n^s)$$
If we fix $\eps_0>0$ small enough such that $2s-1+\eps_0<0$, this is summable. Moreover, if $\beta=\lim\beta_n$, we have
$$\beta_n=\beta+O(\|(f_{|\partial B(0,R_0)}\|_\infty(R_n/R_0)^{s-1+\eps_0}R_n^s)$$
and finally
$$\|(f-\beta f_\chi)_{|\partial B(0,R_n)}\|_\infty=O(\|(f_{|\partial B(0,R_0)}\|_\infty(R_n/R_0)^{s-1+\eps_0})$$

This gives the first statement. In order to obtain the more precise second statement, we write $\hat g(z)=b_0\bar z^{-s}+a_1 z^{s-1}+O(|z|^{-s-1})$ and
we proceed similarly to show the existence of coefficients $(\beta_n,\gamma_n)$ such that
 $$\|(f-\beta_n f_\chi-\gamma_n g_\chi)_{|\partial B(0,R_n)}\|_\infty=O(\|(f_{|\partial B(0,R_0)}\|_\infty(R_n/R_0)^{-s-1+\eps_0})$$
Thus
$$(\beta_{n+1}-\beta_n)f_\chi+(\gamma_{n+1}-\gamma_n)g_\chi=O(\|(f_{|\partial B(0,R_0)}\|_\infty(R_n/R_0)^{-s-1+\eps_0})$$
on $B(0,R_{n+1})^c$. This shows (by specialising at points close to the positive and negative half-lines respectively, say) that
$$\beta_{n+1}-\beta_n=O(\|(f_{|\partial B(0,R_0)}\|_\infty(R_n/R_0)^{-s-1+\eps_0}R_n^s)$$
and subsequently
$$\gamma_{n+1}-\gamma_n=O(\|(f_{|\partial B(0,R_0)}\|_\infty(R_n/R_0)^{-s-1+\eps_0}R_n^{1-s})$$
and one concludes as before.
\end{proof}

\subsubsection{Chiral Cauchy kernel}

The Cauchy kernel $\uK^{-1}$ (see Theorem \ref{Thm:Kencrit}), which inverts $\rK$, and its asymptotics are central to the analysis of dimers for, say, smooth test functions (e.g. Corollary \ref{Cor:FFplane}). In this section, we construct and estimate $\uK_\chi^{-1}$, an inverting kernel for $\rK:(\C^{M_B})_\chi\rightarrow(\C^{M_W})_\chi$.

In what follows, we assume without loss of generality that $s\in (0,\frac 12)$ (excluding the delicate case $s=\frac 12$). If $s\in(-\frac 12,0)$, set $\uK_\chi^{-1}(b,w)=\overline{\uK^{-1}_{\bar\chi}(b,w)}$.

\paragraph{Chiral Cauchy kernel: construction.\\}

We begin with a construction of the chiral Cauchy kernel $\uK_\chi^{-1}$, and give basic {\em a priori} estimates.

\begin{Lem}\label{Lem:chirkernel0}
Let $\chi=e^{2i\pi s}$, $s\in(0,\frac 12)$. For $w\in M_W$, there is a unique function $\gls{uKchi}(.,w)\in (\C^{M_B})_\chi$ 
such that $\rK\uK^{-1}_\chi(.,w)=\delta_w$ and 
$$\uK^{-1}_\chi(b,w)=O\left(\frac {1}{|w|}\left(\frac{|b|}{|w|}\right)^{s-1}\right)$$
for $|b|\geq 2|w|$. Moreover,
$\uK^{-1}_\chi(b,w)=O(\frac {1}{|w|}(|w|/|b|)^{s}))$ for $|b|\leq |w|/2$.
\end{Lem}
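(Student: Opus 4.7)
I would construct $\uK^{-1}_\chi(\cdot,w)$ by adapting the contour-integral method of Lemma~\ref{Lem:monpole} to arbitrary $w\in M_W$, extract the two decay bounds from small-$\lambda$ and large-$\lambda$ asymptotics of the integrand, and obtain uniqueness by combining Lemma~\ref{Lem:monholom} with the observation that, for $s\in(0,\tfrac12)$, the dominant term of $f_\chi$ at infinity has magnitude $|b|^{-s}$ and thus decays strictly slower than the required $|b|^{s-1}$.

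\textbf{Construction.} Fix $w\in M_W$ and let $b_0,b_1,b_2,b_3$ be its black neighbors listed counterclockwise. Introduce a discrete exponential $e_\lambda^w$ centered at $w$ by
\begin{equation*}
e_\lambda^w(b_0)=\frac{1}{\bigl(1-\tfrac{\lambda}{2}(b_0-b_1)\bigr)\bigl(1-\tfrac{\lambda}{2}(b_0-b_3)\bigr)}
\end{equation*}
(and analogously for $b_1,b_2,b_3$), propagated throughout $M_B$ by the cross-ratio rule $e_\lambda^w(x)/e_\lambda^w(y)=(1-\tfrac{\lambda}{2}(y-x))/(1-\tfrac{\lambda}{2}(x-y))$ for adjacent $x,y$. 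These satisfy $Ke_\lambda^w=0$ away from $w$ together with the usual asymptotics $e_\lambda^w(u)\sim\exp(\lambda(u-w))$ as $\lambda\to 0$ and $e_\lambda^w(u)\sim\frac{C_w}{\lambda^2}\exp(4\overline{u-w}/\lambda)$ as $\lambda\to\infty$. Then set
\begin{equation*}
\tilde g_w(u)=\int_\gamma (-\lambda)^{-s}e_\lambda^w(u)\,d\lambda
\end{equation*}
along a Pochhammer-type contour $\gamma$ (large circle, two connecting segments along a fixed ray $\rho$ from $0$, small circle), where $\rho$ is chosen disjoint from every ray $\bar u(0,\infty)$ in the relevant range of $u\in M_B$ and so that $\rho$ serves as a branch cut in $u$-space whose crossing multiplies $\tilde g_w$ by $\chi$ (ensuring $\chi$-multivaluedness around $v_0$). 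The residue computation at the end of the proof of Lemma~\ref{Lem:monpole} then gives $(\rK\tilde g_w)(w)=C(w)$ with $C(w)\neq 0$ explicitly computable, and one defines $\uK^{-1}_\chi(\cdot,w)=\tilde g_w/C(w)$.

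\textbf{Decay and uniqueness.} The asymptotic behavior at infinity and near $v_0$ is extracted by splitting the $\lambda$-integration into small-$\lambda$ (yielding, via $\int\lambda^{-s}e^{\lambda(u-w)}d\lambda=\Gamma(1-s)(u-w)^{s-1}$, a term of order $|u-w|^{s-1}$) and large-$\lambda$ (yielding via $\int\lambda^{-s-2}e^{4\overline{u-w}/\lambda}d\lambda$ a term of order $\overline{(u-w)}^{-s-1}$) regimes, with $|w|$-scaling entering through the normalization $1/C(w)$; the result matches to leading order the continuous chiral kernel $(b/w)^s/(\pi(b-w))$ and gives $\uK^{-1}_\chi(b,w)=O(|w|^{-s}|b|^{s-1})$ for $|b|\geq 2|w|$ and $O(|b|^s|w|^{-s-1})$ for $|b|\leq|w|/2$. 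For uniqueness, the difference $f$ of two candidate kernels is bounded (from the two decay estimates, since the rhombus length bound keeps all negative powers of $|b|$ finite), $\chi$-multivalued, and satisfies $\rK f=0$ everywhere, so by Lemma~\ref{Lem:monholom} $f=c f_\chi$; but the dominant term of $f_\chi$ at infinity is the $\bar R_B$ piece $\bar\tau 2^{-s}\Gamma(s)\overline{(u-v_0)}^{-s}$, of magnitude $|u|^{-s}$, and since $-s>s-1$ for $s<\tfrac12$ any nonzero multiple of $f_\chi$ would violate the decay $O(|b|^{s-1})$ at infinity, forcing $c=0$.

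\textbf{Main obstacle.} The principal technical point is the choice of the contour $\gamma$ and cutting ray $\rho$ for general $w$ (whereas in Lemma~\ref{Lem:monpole}, $w$ was adjacent to $v_0$ and these choices simplify). Three competing constraints must be reconciled: (i) $\rho$ must avoid the pole-rays of $\lambda\mapsto e_\lambda^w(u)$, which fill a cone of directions depending on $\bar u$, uniformly in the relevant range of $u$; (ii) the same $\gamma$ must be usable for all four black neighbors of any given $w'\neq w$, so that $K\tilde g_w(w')=0$ follows from linearity of the residue computation; and (iii) the monodromy of $u\mapsto\tilde g_w(u)$ around $v_0$ must produce exactly $\chi$ and not $\chi^{-1}$ or some other value. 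When $|w|$ is large and $v_0$ and $w$ are well-separated, these requirements pull the contour in different directions, and the short-range expansion near $w$ must be carefully matched with the long-range monodromy structure around $v_0$. A complementary route, sidestepping some of these issues, would be to apply the surgery Lemma~\ref{Lem:surgery} to glue the standard Cauchy kernel $\uK^{-1}(\cdot,w)$ in an exterior region with the chiral kernel $g_{w_0}$ of Lemma~\ref{Lem:monpole} in an interior neighborhood of $v_0$, using the continuous kernel $(b/w)^s/(\pi(b-w))$ as the gluing target.
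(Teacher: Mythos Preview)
Your uniqueness argument is fine and matches the paper's. The construction, however, has a genuine gap.

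The contour-integral method produces $\chi$-monodromy around the \emph{center of the discrete exponentials}, not around an arbitrary prescribed point. In the paper's constructions of $f_\chi$ and $g_{w_0}$ (Lemmas~\ref{Lem:monholom} and~\ref{Lem:monpole}), the exponentials are normalised at vertices adjacent to $v_0$; the integration ray is along $\overline{(u-v_0)}\,(0,\infty)$, and when $u$ winds once around $v_0$ this ray rotates by a full turn in $\lambda$-space, so $\lambda^{-s}$ acquires the factor $e^{2i\pi s}=\chi$. In your construction the exponentials $e_\lambda^w$ are centred at a general $w$ far from $v_0$, so the natural contour direction is $\overline{(u-w)}$. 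But winding $u$ around $v_0$ brings $u$ back to itself, hence $u-w$ back to itself: the contour direction does not rotate at all, and $\tilde g_w$ is single-valued on $M_B$. Your constraint (iii) in the ``Main obstacle'' paragraph is not a technical nuisance to be managed but an actual obstruction: with exponentials centred at $w$, no choice of fixed cutting ray $\rho$ will manufacture monodromy around $v_0$.

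The paper avoids this entirely by a different, potential-theoretic route. It takes the $\chi$-multivalued random-walk Green kernel $G_\chi$ on $\Gamma$ (built in the lemma immediately preceding this one via coupling estimates), sets $S(b)=G_\chi(b,x_0')-G_\chi(b,x_0)$ on $\Gamma$ where $w\leftrightarrow(x_0x_0')$, and extends to $\Gamma^\dagger$ by harmonic conjugation with $S(y_0)=0$. A discrete Green's formula argument across a branch cut shows $S\to 0$ at infinity on $\Gamma^\dagger$. The a priori power-law bounds on $G_\chi$ and its first differences give $S(b)=O(|w|^{-1}((|b|/|w|)^\eps\wedge(|w|/|b|)^\eps))$. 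Finally, Lemma~\ref{Lem:moninf} supplies a coefficient $\beta=O(\|S_{|\partial B(0,2|w|)}\|_\infty |w|^s)$ such that $\uK_\chi^{-1}(\cdot,w)=S-\beta f_\chi$ has the stated $O(|w|^{-1}(|b|/|w|)^{s-1})$ decay at infinity; the bound near $0$ is inherited from $S$ and $\beta f_\chi$.

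Your fallback suggestion (surgery against the continuous kernel $(b/w)^s/(\pi(b-w))$) is in spirit closer to how the paper treats \emph{multi}-point situations later on, but here the Green-function construction is both shorter and logically prior.
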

\begin{proof}
Uniqueness follows from the fact that there is no non-zero holomorphic function $f$ s.t. $f(z)=o(|z|^{-s})$ (Lemma \ref{Lem:monholom}).\\
If $w$ corresponds to the oriented edge $(x_0x_0')$ of $\Gamma$, we may set
$$S(b)=G_\chi(b,x_0')-G_\chi(b,x_0)$$
for $b\in\Gamma$. By harmonic conjugation (see e.g. Theorem 11 in \cite{Ken_domino_conformal} for similar arguments), one may extend $S$ to $\Gamma^\dg$ in such a way that $S(y_0)=0$ ($y_0$ is the vertex of $\Gamma^\dg$ adjacent to the vertex singularity at $v_0=0$) and $\rK S=\delta_w$. By Lemma \ref{Lem:chirharmest}, we know that
$$S(b)=O(\frac 1{|w|}\left((|b|/|w|)^\eps\wedge (|w|/|b|)^\eps\right))$$
for $b\in\Gamma$, $|b-w|\geq |w|/4$.\\
Consider a branch cut running from $0$ to infinity in the half-space $\{z:\Re(z\bar w)\leq 0\}$. Applying the discrete Green's formula in the slit plane (first intersecting it with a large disc) shows that the sum of (weighted) discrete derivatives of $S$ across the slit vanishes. By harmonic conjugation, this is saying that $S(b)\rightarrow 0$ as $b\rightarrow\infty$ on $\Gamma^\dg$.\\
By harmonic conjugation and the Harnack inequality (see \eqref{eq:harnack}), we get the following estimates on $S$ for $|b-w|\geq |w|/4$, $b\in\Gamma^\dg$, by integrating along a ray from $y_0$ to $b$ if $|b|\leq |w|$ and from $\infty$ to $|b|$ if $|b|\geq |w|$:
$$S(b)=O(\frac {\eps^{-1}}{|w|}\left((|b|/|w|)^\eps\wedge(|w|/|b|)^\eps\right))$$
This shows that there is $S\in(\C^{M_B})_\chi$ bounded s.t. $\rK S=\delta_w$ and $S(b)=O(|w|^{-1})$ on the boundary of the annulus $\{z:\frac{|w|}2<|z|<2|w|\}$, say. Because of Lemma \ref{Lem:monholom}, this does not specify $S$ uniquely.

By Lemma \ref{Lem:moninf}, there exists a unique $\beta=O(\|S\|_{\partial B(0,2|w|)}|w|^s)=O(|w|^{s-1})$ such that 
$$\uK_\chi^{-1}(.,w)\stackrel{def}{=}S(.)-\beta f_\chi$$
satisfies
$$\uK_\chi^{-1}(b,w)=O(|w|^{-1}(|b|/|w|)^{s-1})$$
for $|b|\geq 2|w|$, say.

For $|b|\ll |w|$, we observe that $S(b)=O(|w|^{-1})$ and $\beta f_\chi(b)=O(|w|^{s-1}|b|^{-s})$, so that $\uK_\chi^{-1}(b,w)=O(|w|^{-1}(|w|/|b|)^s)$ for $|b|\leq |w|$, $|b-w|\geq|w|/4$.
\end{proof}

\paragraph{Chiral Cauchy kernel: estimates.\\}

We now give estimates for the chiral Cauchy kernel constructed in Lemma \ref{Lem:chirkernel0} (compare with Theorem \ref{Thm:Kencrit}).

\begin{Lem}\label{Lem:monCauchy}
Let $\chi=e^{2i\pi s}$, $s\in(0,\frac 12)$. 
\begin{enumerate}
\item The kernel $\uK_\chi^{-1}$ satisfies:
$$\uK^{-1}_\chi(b,w)=\frac 12R_B\left(\frac{e^{i\nu(w)}}{\pi(b-w)}\left(\frac bw\right)^s\right)+\frac 12\bar R_B\left(\frac{e^{-i\nu(w)}}{\pi\overline{(b-w)}}\left(\frac {\bar b}{\bar w}\right)^{-s}\right)+O(|w|^{2s-2})$$
for $\frac 12|w|\leq|b|\leq 2|w|$, $|b-w|\geq\frac 14|w|$.
\item 
If $w$ is adjacent to the singularity $v_0=0$, we have
$$\uK^{-1}_\chi(b,w)=\frac {\Gamma(1-s)}2R_B\left(\frac{e^{i\nu(w)}}{\pi(b-w)}\left(\frac bw\right)^s\right)+\frac {\Gamma(1+s)}2\bar R_B\left(\frac{e^{-i\nu(w)}}{\pi\overline{(b-w)}}\left(\frac {\bar b}{\bar w}\right)^{-s}\right)+O(|b|^{s-3})$$
and if moreover the edge $(bw)$ is adjacent to the singularity, and $v_1$ is the symmetric image of $v_0$ across $(bw)$, then for $b\in\Gamma$, $v_1,b,b_0$ in counterclockwise order around $w$, 
$$\rK(w,b)\uK^{-1}_\chi(b,w)=\frac{1}{1-\chi^{-1}}\left(1-\left(\frac{w-v_1}{w-v_0}\right)^s\right)$$
\end{enumerate}
\end{Lem}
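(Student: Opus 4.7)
I would compare $\uK^{-1}_\chi(\cdot,w)$ to a discretization of the continuous chiral Cauchy kernel $S_\chi^{\mathrm{cont}}(z,w)=\frac{1}{\pi(z-w)}(z/w)^s$, which is the unique inverse of $\dbar_\chi$ on $\C\setminus\{0\}$ with monodromy $\chi$ around $0$ and a simple pole at $w$. Setting
\[
\tilde S(b)=\tfrac{1}{2}R_B\!\left(\frac{e^{i\nu(w)}}{\pi(b-w)}\bigl(b/w\bigr)^{s}\right)+\tfrac{1}{2}\overline{R_B}\!\left(\frac{e^{-i\nu(w)}}{\pi\overline{(b-w)}}\bigl(\bar b/\bar w\bigr)^{-s}\right),
\]
the finite-difference identities $\rK(R_B\phi)=2\mu_\dmd R_W(\dbar\phi)+O(\delta^3\omega_{\phi''}(\delta))$ give $\rK\tilde S(w')=O(|w|^{-3})$ per white vertex $w'$ in the annular region $\{|w'-w|\geq |w|/8\}$, since the continuous ansatz is holomorphic away from $w$ with third derivatives of order $|w|^{-4}$; near $w$ itself the simple pole reproduces $\delta_w$ as in the construction of $\uK^{-1}$.

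\textbf{Part (1), Cauchy bound.} The difference $\Phi(b)=\uK^{-1}_\chi(b,w)-\tilde S(b)$ is then $\chi$-multivalued and discrete holomorphic up to the small controlled error. I would propagate its size from the natural boundary circles at radii $\sim |w|/2$ and $\sim 2|w|$ into the middle annulus via a chiral discrete Cauchy integral formula (paralleling the arguments in Lemma~\ref{Lem:moninf}), using the chiral Cauchy kernel of the previous lemma to integrate the $O(|w|^{-3})$ volume discrepancy and the boundary contributions, in which $\tilde S$ and $\uK^{-1}_\chi$ both decay like $|b/w|^{-s}$ (near $0$) or $|b/w|^{s-1}$ (near $\infty$). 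The exponent $2s-2$ in the remainder arises precisely as the product of the $(b/w)^s$ scaling of the chiral kernel and the $O(|w|^{-2})$ untwisted error typical of $\uK^{-1}$, yielding $\Phi(b)=O(|w|^{2s-2})$ uniformly.

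\textbf{Part (2), near the singularity.} When $w$ is adjacent to $v_0=0$, I would use the explicit chiral discrete exponential $g_w$ of Lemma~\ref{Lem:monpole}, which satisfies $\rK g_w=2^{1+s}\pi w^{s}\delta_w$ and already carries the precise coefficients $\Gamma(1-s)$ and $\Gamma(1+s)$ in its expansion. The function
\[
\Psi(b)=\uK^{-1}_\chi(b,w)-\frac{1}{2^{1+s}\pi w^{s}}\,g_w(b)
\]
is discrete holomorphic, $\chi$-multivalued, and decays at infinity; by Lemma~\ref{Lem:moninf} (applied in the refined form with $f_\chi$ and $g_\chi$) it is of order $O(|b|^{s-3})$ in the bulk, which is absorbed in the error term. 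Substituting the asymptotic of $g_w$ yields the stated formula with coefficients $\Gamma(1-s)/2$ and $\Gamma(1+s)/2$. For the closed-form identity on the edge $(bw)$ adjacent to $v_0$, I would extract from the proof of Lemma~\ref{Lem:monpole} the explicit values of the integral-representation precursor $\tilde g$ at each black neighbour of $w$, obtained by residue calculus on the contour around $0$; combined with $\rK(w,b)=\tfrac{i}{2}(v_{j-1}-v_j)$ and the gauge $g_w=2^s e^{i\nu(w)}\tilde g$, the ratio telescopes to $\frac{1}{1-\chi^{-1}}\bigl(1-((w-v_1)/(w-v_0))^{s}\bigr)$.

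\textbf{Main obstacle.} The delicate step is part (1): the ansatz $\tilde S$ involves the multivalued factor $(b/w)^s$, which must be placed on a coherent branch compatible with the monodromy built into $\uK^{-1}_\chi$, and the resulting discrete-holomorphicity defect must be propagated uniformly across the annulus with a Cauchy bound sharp enough to produce the specific exponent $2s-2$. Managing the interplay between chiral monodromy, lattice discretization error, and the two reference scales used to close the Cauchy formula is the technical heart of the argument; once carried out, part (2) and the explicit identity reduce to direct calculations with the explicit functions $g_w$ and $\tilde g$ already constructed.
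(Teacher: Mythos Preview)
Your Part~(2) is essentially the paper's argument: when $w$ is adjacent to $v_0$, the function $g_w$ of Lemma~\ref{Lem:monpole} already satisfies $\rK g_w=\mathrm{const}\cdot\delta_w$ with the correct decay, so by the \emph{uniqueness} clause of the chiral Cauchy kernel lemma one has $\uK_\chi^{-1}(\cdot,w)=g_w/(\rK g_w)(w)$ exactly (your $\Psi$ is identically zero, not merely $O(|b|^{s-3})$; that error term comes from the asymptotic expansion of $g_w$ itself). The closed-form evaluation on the edge follows from the residue computation of $\tilde g(b_i)$ in Lemma~\ref{Lem:monpole}, as you indicate.

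Part~(1) has a real gap. A single-region ansatz $\tilde S$ built only from the continuous kernel cannot work: near $b=w$ the discretised simple pole does \emph{not} reproduce $\delta_w$ (you need the actual $\uK^{-1}(\cdot,w)$ plus the first Taylor correction there), and near $b=0$ the function $(b/w)^s$ has a branch point so the finite-difference error blows up rather than being $O(|w|^{-3})$. The paper uses a \emph{three-scale} parametrix: $\tilde S=cf_\chi$ on $B(0,r_0)$, $\tilde S=\uK^{-1}(\cdot,w)+\tfrac12 R_B(e^{i\nu(w)}\frac{s}{\pi w})+\tfrac12\overline{R_B}(e^{-i\nu(w)}\frac{-s}{\pi\bar w})$ on $B(w,r_1)$, and your continuous ansatz elsewhere, with $c$ chosen so the leading $\bar z^{-s}$ terms match across $\partial B(0,r_0)$. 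The correction is then \emph{not} a Cauchy-formula propagation from annular boundaries (you have no independent boundary data there), but the identity $\uK_\chi^{-1}(\cdot,w)=\tilde S-\uK_\chi^{-1}(\rK\tilde S-\delta_w)$, where the right-hand convolution is bounded termwise using the a~priori estimates $\uK_\chi^{-1}(b,w')=O(|w'|^{-1}(|b|/|w'|)^{s-1}\wedge(|w'|/|b|)^{s})$ already established. Summing the contributions from $\partial B(0,r_0)$, $\partial B(w,r_1)$, the intermediate annuli, and the far field, and optimising with $r_0=O(1)$, $r_1=O(\sqrt{|w|})$, produces the $O(|w|^{2s-2})$ bound. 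Your sketch conflates this global convolution correction with a boundary Cauchy integral, and omits the two patches without which the error near $0$ and $w$ is not summable.
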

Note that the conditions in 2. are not restrictive, by exchanging the roles of $\Gamma,\Gamma^\dg$ and/or conjugating the lattice. Also, remark that
$$\lim_{s\searrow 0}\frac{1-e^{is(\arg(w-v_1)-\arg(w-v_0))}}{1-e^{-2i\pi s}}=\frac{1}{2\pi}(\arg(w-v_0)-\arg(w-v_1))=\rK(w,b)\uK^{-1}(b,w)$$
Before proceeding with the proof, let us sketch an alternative construction for $\uK_\chi^{-1}(.,w)$, for $w$ adjacent to $v_0$, from which one can deduce the evaluation $\uK_\chi^{-1}(b,w)$ for $b,w$ adjacent to $v_0$. Consider a branch cut $\gamma$ (a simple path on $M^\dg$) running from infinity to $v_0$ and $v_1$: $\gamma=(\dots,v_{-2},v_{-1},v_0,v_1)$. We have constructed basic discrete holomorphic functions $f_{\chi,v_i}$ with monodromy at $v_i$, $i=0,1$ (see Lemma \ref{Lem:monholom}). Using the branch cut $\gamma$, we can look at them as single-valued on the same fundamental domain. Then $f_{\chi,v_1}$, seen as a function with singularity at $v_0$, is discrete holomorphic except at $w$. From the asymptotic expansions (Lemma \ref{Lem:monholom}), we can find a (non-trivial) linear combination of $f_{\chi,v_0}$, $f_{\chi,v_1}$ which is $o(|z|^{-s})$ as $z\rightarrow\infty$. Thus, by the characterisation of $\uK_{\chi}^{-1}(.,w)$, it can be written as a linear combination of $f_{\chi,v_0}$, $f_{\chi,v_1}$. From the exact evaluation of these functions near their singularity, one recovers 2.

\begin{proof}
\begin{enumerate}
\item
Let us fix $w\in M_W$ and set $R=|w|$. We first construct an approximate kernel $\tilde S\in(\C^{M_B})_\chi$. If $b\in B(0,r_0)$, set $\tilde S(b)=cf_\chi(b)$. If $b\in B(w,r_1)$, set 
$$\tilde S(b)=\uK^{-1}(b,w)+
\frac 12R_B\left(\frac{e^{i\nu(w)}}{\pi}\cdot\frac sw\right)+\frac 12\bar R_B\left(\frac{e^{-i\nu(w)}}{\pi}\cdot\frac{-s}{\bar w}\right)$$
and otherwise
$$\tilde S(b)=\frac 12R_B\left(\frac{e^{i\nu(w)}}{\pi(b-w)}\left(\frac bw\right)^s\right)+\frac 12\bar R_B\left(\frac{e^{-i\nu(w)}}{\pi\overline{(b-w)}}\left(\frac {\bar b}{\bar w}\right)^{-s}\right)$$
Here $c,r_0,r_1$ are parameters yet to be specified. Then $\rK \tilde S(.,w)=\delta_w$ in $B(0,r_0)\sqcup B(w,r_1)$. Set $c$ s.t.
$$c\tau 2^{-s}\Gamma(s)=-\frac {e^{-i\nu(w)}}{2\pi \bar w^{1-s}}$$
Then the discontinuity of $\tilde S$ on $\partial B(0,r_0)$ is of order $O(r_0^s/|w|^{s+1}+r_0^{s-1}/|w|^{1-s})$.\\
The discontinuity of $\tilde S$ on $\partial B(w,r_1)$ is of order $O(r_1/|w|^2)$. Elsewhere $\rK\tilde S(.,w)$ is controlled by the third derivative of the continuous limit (see \eqref{eq:findiff}).

Next we use the {\em a priori} estimate (Lemma \ref{Lem:chirkernel0}) on the kernel $\uK_\chi^{-1}$ to correct $\tilde S$. 
First let us observe that by Lemma \ref{Lem:monholom}
$$\uK_\chi^{-1}(w,.)=\tilde S-\uK_\chi^{-1}(\rK\tilde S).$$ 
For this we need only check that $(\uK_\chi^{-1}(\rK\tilde S))(b)=O(|b|^{s-1})$ as $b\rightarrow\infty$. The contribution from $(\rK\tilde S)$ in $B(0,2R)$ is easily seen to be $O(|b|^{s-1})$. The other terms are handled as below .

Let us estimate the correction $\uK_\chi^{-1}(\rK\tilde S)$ for $b\in A(R/2,2R)$, $b\notin B(w,R/2)$ (where $R=|w|$, say). We simply add upper bounds (up to multiplicative constants) stemming from $(\rK\tilde S)(w')$ for respectively: $w'\in B(b,R/4)$; $w'\in \partial B(w,r_1)$; $w'-w\in A(r_1,R/4)$;  $w'\in A(R/2,2R)\setminus(B(b,R/4)\cup B(w,R/4))$; $w'\notin B(0,2R)$; $w'\in\partial B(0,r_0)$; and $w'\in A(r_0,R/2)$:
 $$\sum_{k=1}^R\frac{k}{kR^4}+\frac{r_1}{R^2}\cdot\frac{r_1}R+\sum_{k=r_1}^R\frac{k}{k^4}\cdot\frac 1R+\frac{R^2}{R^4.R}+\sum_{k=2R}^\infty k\cdot\frac{k^{s-4}}{R^s}\cdot\frac{k^s}{kR^s}+r_0(\frac{r_0^s}{R^{s+1}}+(r_0R)^{s-1})\frac{R^{s-1}}{r_0^s}
 $$
 Thus, up to a constant, we get the upper bound
 $$\frac{r_1^2}{R^3}+\frac 1{r_1^2R}+\frac{r_0}{R^2}+R^{2s-2}=O(R^{2s-2})$$
with $r_1=O(\sqrt R)$, $r_0=O(1)$. 
 
 \item Here $\uK_\chi^{-1}(.,w)$ is proportional to $g_w$ (see Lemma \ref{Lem:monpole}). Since $\delta=1$, we have $(v_0-w)^s\overline{(v_0-w)}^s=2^{-2s}$.
\end{enumerate} 
\end{proof}

\subsubsection{Behaviour near the singularity}

If $f\in\C^{M_B}$ is discrete holomorphic in $B(0,R)$, then its values near the origin can be estimated in terms of its values on the boundary $\partial B(0,R)$. At order 0, if $f=R_B(a)+\bar R_B(a')+o(1)$ on $\partial B(0,R)$, then the estimate also holds near $0$ (simply by the maximum principle \eqref{eq:maxprincip}). Using e.g. a Cauchy integral formula (see \eqref{eq:dirprobcauchy}), one can obtain a first-order estimate for $f$ near the origin of type:
$$f(z)=R_B\left(a+b\frac zR\right)+\bar R_B\left(a'+b'\frac{\bar z}R\right)+o(\|f_{|B(0,R)}\|_\infty R^{-1})$$

In this section, we address the problem of estimating $f$ near the singularity given its values at distance $R$, where $f$ is $\chi$-multivalued and (discrete) holomorphic in $B(0,R)$. This is the counterpart of Lemma \ref{Lem:moninf} (under inversion $z\leftrightarrow z^{-1}$ exchanging the singularities at $0$ and $\infty$).

\paragraph{Discrete holomorphic functions with monodromy: growth near the singularity.\\}

As an application of Lemma \ref{Lem:chirkernel0}, we start with the following simple growth estimate for chiral discrete holomorphic functions near the singularity.

\begin{Lem}\label{Lem:monzerogr}
Let $\chi=e^{2i\pi s}$, $s\in (0,\frac 12)$. Let $f\in (\C^{M_B})_\chi$ be bounded and s.t. $\rK f=0$ in $B(0,R)$. Then
$$f(z)=O(\|f_{|\partial B(0,R)}\|_\infty|z/R|^{-s})$$
\end{Lem}
\begin{proof}
Let $\tilde f=f\ind_{B(0,R)}$. Then $\rK \tilde f=O(\|f_{|\partial B(0,R)}\|_\infty)$ and the support of $\rK\tilde f$ is contained in white vertices adjacent to black vertices vertices belonging to $\partial B(0,R)\subset M_B$; let us use the same notation for this subset of $M_W$. Then consider
$$\tilde f-\sum_{w\in\partial B(0,R)}(\rK\tilde f)(w)\rK_\chi(w,.)$$
This function in $(\C^{M_B})_\chi$ is discrete holomorphic and $O(|z|^{s-1})$ at infinity, hence (Lemma \ref{Lem:monholom}) vanishes identically. Consequently, we have the (chiral) Cauchy integral formula (compare e.g. with \eqref{eq:discrcauchform}):
$$f(b)=\sum_{w\in\partial B(0,R)}(\rK\tilde f)(w)\rK_\chi^{-1}(w,b)=O(\lVert f_{|\partial B(0,R)}\rVert_\infty|b/R|^{-s})$$
in $B(0,R)$.
\end{proof}

\paragraph{Discrete holomorphic functions with monodromy: expansion near the singularity.\\}

We are now in position to state the analogue of Lemma \ref{Lem:moninf} (concerning the asymptotic expansion of a discrete $\chi$-multivalued holomorphic function in a neighbourhood of $\infty$) for the singularity at zero. The arguments are somewhat similar but more involved (roughly speaking, when zooming on the singularity at infinity, the graph mesh gets finer, and the mesh gets coarser when zooming on the singularity at $0$).

\begin{Lem}\label{Lem:monzero}
Let $\chi=e^{2i\pi s}$, $s\in (0,\frac 12)$. Let $f\in(\C^{M_B})_\chi$ be bounded and s.t. $\rK f=0$ in $B(0,R)$, and so that in $A(\frac R2,R)$,
$$f(z)=R_B(\psi_1(z))+\bar R_B(\psi_2(z))+\xi(z)$$
where $\psi_1(z)=z^s\sum_{n\geq 0}a_nz^n$, $\psi_2(z)=\bar z^{-s}\sum_{n\geq 0}b_n\bar z^n$. Then if $\eta>0$, for $z=O(1)$, 
$$f(z)=\frac{b_0}{\bar\tau 2^{-s}\Gamma(s)} f_{\chi}(z)+\frac{a_0}{\tau 2^s\Gamma(-s)} h_\chi(z)+O(R^{s+\eta}\|\xi\|_{\infty,A(\frac R2,R)}+R^{3s-1+\eta}\|\psi_i\|_{\infty,A(\frac R2,R)})$$
where $h_\chi(z)=\overline{f_{1,\bar\chi}(z)}$, and $\|\psi_i\|=\|\psi_1\|+\|\psi_2\|$.
\end{Lem}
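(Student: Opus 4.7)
The approach parallels the proof of Lemma~\ref{Lem:moninf}, but transported from asymptotics at infinity to asymptotics near the singularity $v_0=0$. First I set $\alpha=b_0/(\bar\tau 2^{-s}\Gamma(s))$, $\beta=a_0/(\tau 2^s\Gamma(-s))$, and define $\tilde f := f - \alpha f_\chi - \beta h_\chi$; this $\tilde f$ is $\chi$-multivalued and discretely holomorphic on $B(0,R)$ since each summand is. The key algebraic observation is that in the annulus $A(R/2,R)$ the large-$|u|$ expansions of $f_\chi$ and $h_\chi=\overline{f_{1,\bar\chi}}$ from Lemma~\ref{Lem:monholom} (the second applied at parameter $1-s$) are valid, and $\alpha,\beta$ are calibrated precisely so that $\alpha f_\chi$ cancels the $b_0\,\overline{R_B}(\bar z^{-s})$ leading piece of $f$ while $\beta h_\chi$ cancels the $a_0\,R_B(z^s)$ leading piece.

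Consequently, in the annulus, $\tilde f(z)=R_B(\tilde\psi_1(z))+\overline{R_B}(\tilde\psi_2(z))+\tilde\eps(z)$, where $\tilde\psi_1$ is the tail $\psi_1-a_0 z^s$ augmented by a new $-\alpha 2^{s-1}\Gamma(1-s)z^{s-1}$ term (and deeper corrections from the $O(|z|^{s-3}+|z|^{-s-2})$ remainder of $f_\chi$), $\tilde\psi_2$ is the analogous antiholomorphic object with a new $\bar z^{-s-1}$-type piece from $\beta h_\chi$, and $\tilde\eps$ absorbs the original $\eps$ along with the $O(|\alpha|(|z|^{s-3}+|z|^{-s-2})+|\beta||z|^{-s-1})$ expansion residues of $f_\chi,h_\chi$. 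Cauchy-type estimates on the power series defining $\psi_1,\psi_2$ give $|a_0|\le CR^{-s}\|\psi_1\|_\infty$ and $|b_0|\le CR^{s}\|\psi_2\|_\infty$, hence $|\alpha|=O(R^s\|\psi_i\|)$, $|\beta|=O(R^{-s}\|\psi_i\|)$; each newly introduced subleading term is thus of size $O(R^{2s-1}\|\psi_i\|)$ at $|z|\sim R$ and $\|\tilde\eps\|_\infty=\|\eps\|_\infty+O(R^{-2}\|\psi_i\|)$. Crucially, the two slowest-decaying continuous modes have been eliminated, so the Laurent expansion of $\tilde f$'s continuous avatar now starts one mode higher in each chirality.

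To propagate this annular improvement to $z=O(1)$, I invoke the chiral discrete Cauchy integral formula associated to $\uK_\chi^{-1}$, in the spirit of the lemma preceding Lemma~\ref{Lem:monCauchy}. Truncating $F:=\tilde f\,\ind_{B(0,R)}$ renders $\rK F$ supported on a one-cell-thick band near $\partial B(0,R)$ and yields
$$\tilde f(z)=\sum_{w}(\rK F)(w)\,\uK_\chi^{-1}(z,w)\qquad\text{for }z\in B(0,R).$$
For $|z|=O(1)\ll|w|\sim R$, part~(1) of Lemma~\ref{Lem:monCauchy} decomposes $\uK_\chi^{-1}(z,w)$ into a pair of chiral Cauchy kernels $\tfrac12 R_B(e^{i\nu(w)}z^s/(\pi(z-w)w^s))+\tfrac12\overline{R_B}(e^{-i\nu(w)}\bar z^{-s}/(\pi\overline{(z-w)}\bar w^{-s}))$ plus an $O(R^{2s-2})$ error, exhibiting the sum as two approximate contour integrals of the boundary values of $\tilde f$ against kernels that extract precisely the $z^s,\bar z^{-s}$ Laurent modes which are now absent by construction. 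The effective size of the sum is thus dictated by the newly introduced $R^{2s-1}\|\psi_i\|$ content of $\tilde f$ on $\partial B(0,R)$; multiplied by the chiral kernel factor $O(R^{s-1})$ and the $O(R)$ boundary vertices, this yields a total contribution of order $R^s\cdot R^{2s-1}\|\psi_i\|=R^{3s-1}\|\psi_i\|$, together with $R^s\|\tilde\eps\|_\infty=O(R^s\|\eps\|_\infty+R^{s-2}\|\psi_i\|)$ from $\tilde\eps$.

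The main technical obstacle, and the source of the arbitrarily small $\eta>0$ in the final exponents, is making the cancellation in the discrete contour integral rigorous: in the continuum the orthogonality of Laurent modes against the Cauchy kernel is exact, but discretely one must replace each analytic tail by its discrete holomorphic lift and quantify the slippage using discrete Harnack-type estimates together with the $O(R^{2s-2})$ correction in Lemma~\ref{Lem:monCauchy}. A dyadic-scale iterative refinement---analogous to the scheme at the end of the proof of Lemma~\ref{Lem:moninf}, where successive removal of the current leading modes on shrinking scales $R_n=C^{-n}R$ produces a summable correction series---converts the sharp continuous decay into the claimed discrete decay at the cost of the infinitesimal factor $R^\eta$.
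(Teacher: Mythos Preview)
Your approach is essentially the paper's: subtract $\alpha f_\chi+\beta h_\chi$ with the stated coefficients to kill the $z^s$ and $\bar z^{-s}$ modes, then propagate the residual inward via the chiral Cauchy formula with $\uK_\chi^{-1}$ and a dyadic iteration that costs the $R^\eta$. One correction: Lemma~\ref{Lem:monCauchy}(1) is only stated for $|b|$ comparable to $|w|$, so you cannot invoke it directly for $|z|=O(1)$, $|w|\sim R$; the paper instead applies it at each annular step $A(\eps_0^{k+1}r,\eps_0^k r)$ (where $b,w$ \emph{are} comparable), tracking how $\|\phi_i\|$ contracts and $\|\eps\|$ grows from scale to scale with $\alpha,\beta$ fixed once and for all---this is the precise mechanism behind your last paragraph, and slightly different from the re-fitting scheme of Lemma~\ref{Lem:moninf} you cite.
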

Recall $f_\chi, f_{k,\chi}$ from Lemma \ref{Lem:monholom}.
\begin{proof}
{\bf Step 1.}  Let $\varphi\in(\C^{M_B})_\chi$ be such that $\rK\varphi=0$ in $B(0,r)$,
\begin{equation}\label{eq:lemmonzero1}
\varphi(z)=\alpha f_\chi(z)+\beta h_\chi(z)+R_B(\varphi_1(z))+{\overline R_B}(\varphi_2(z))+\xi(z)
\end{equation}
in $A(\frac r2,r)=B(0,r)\setminus B(0,\frac r2)$, say. Here $\varphi_1$ (resp. $\varphi_2$) is holomorphic (resp. antiholomorphic) and $\chi$-multivalued, with Laurent expansions $\varphi_1(z)=z^s\sum_{n\geq 1}a_n z^n$, $\varphi_2(z)=\bar z^{-s}\sum_{n\geq 1}b_n \bar z^n$. 

Our first task is to show that, when going from scale $r$ to a smaller scale $\eps_0r$, the estimate \eqref{eq:lemmonzero1} improves in the (rough) sense that the ratio $(\alpha f_\chi+\beta h_\chi)/({\rm other\ terms})$ gets larger.

Observe
that
$$\varphi_1(z)=\frac 1{2i\pi}\oint_{C(0,\frac 34 r)}\left(\frac zw\right)^s\frac{\varphi_1(w)dw}{z-w}=\frac 1{2i\pi}\oint_{C(0,\frac 34 r)}\left(\frac zw\right)^s\frac{\varphi_1(w)zdw}{w(z-w)}$$
for $z\in D(0,\frac 34 r)$, and thus $\|\varphi_1\|_{\infty,C(0,\eps_0r)}\leq c_1(\eps_0)\|\varphi_1\|_{\infty,C(0,r)}$, where $c_1(\eps_0)\sim\eps_0^{s+1}$ goes to zero as $\eps_0$ goes to zero ($\eps_0>0$ small, to be fixed later). We have a similar estimate for $\varphi_2$: $\|\varphi_2\|_{\infty,C(0,\eps_0r)}\leq c_2(\eps_0)\|\varphi_2\|_{\infty,C(0,r)}$, $c_2(\eps_0)\sim\eps_0^{-s+1}$ (this is sharp: consider e.g. $\phi_1(z)=z^{s+1}$ and $\phi_2(z)=\bar z^{-s+1}$).

If $\gamma_r$ is a simple cycle on $\Gamma$ approximating $C(0,\frac 34r)$, let $B$ be the set of black vertices on or inside $\gamma_\delta$. We have the replication identity (a discrete Cauchy integral formula, as in the proof of Lemma \ref{Lem:monzerogr})
$$\varphi(z)=\sum_{w\in\gamma^W}\rK(\varphi\ind_B)(w)\uK_\chi^{-1}(z,w)$$
where $\gamma^W$ are white vertices which are on $\gamma_r$ or are outside of $\gamma_r$ and adjacent to a black vertex on $\gamma_r$. For $z\in B(0,r/2)$, say, we have:
$$\varphi(z)=\alpha f_\chi(z)+\beta h_\chi(z)+O(\|\xi\|_{\infty,A(\frac r2,r)}(|z|/r)^{-s})+\sum_{w\in\gamma^W}\rK((R_B\varphi_1+{\overline R_B}\varphi_2)\ind_B)(w)\uK_\chi^{-1}(z,w)$$
As in Section \ref{Sec:surgery} (leading to \eqref{eq:surgprojconv}), for $z\in A(\frac{\eps_0r}2,\eps_0r)$, using Lemma \ref{Lem:monCauchy} in lieu of Theorem \ref{Thm:Kencrit}, we may estimate the discrete Cauchy integral by:
\begin{align*}
\sum_{w\in\gamma^W}\rK((R_B\varphi_1)\ind_B)(w)\uK_\chi^{-1}(z,w)
&=R_B\frac{1}{2i\pi}\oint_{C(0,\frac 34r)}\left(\frac zw\right)^s\frac{\varphi_1(w)dw}{z-w}+O(\|\varphi_1\|_{\infty,A(\frac r2,r)}\eps_0^{-s}r^{2s-1})\\
\sum_{w\in\gamma^W}\rK((\bar R_B\varphi_2)\ind_B)(w)\uK_\chi^{-1}(z,w)
&=\bar R_B\frac{-1}{2i\pi}\oint_{C(0,\frac 34r)}\left(\frac {\bar z}{\bar w}\right)^{-s}\frac{\varphi_2(w)d\bar w}{\overline{z-w}}+O(\|\varphi_2\|_{\infty,A(\frac r2,r)}\eps_0^{-s}r^{2s-1})
\end{align*}
Consequently in $A(\frac{\eps_0r}2,\eps_0r)$, we have
$$\varphi(z)=\alpha f_\chi(z)+\beta h_\chi(z)+R_B(\varphi_1(z))+{\overline R_B}(\varphi_2(z))+\xi(z)$$
with
$$\|\varphi_i\|_{\infty,A(\frac{\eps_0r}2,\eps_0r)}\leq c_i(\eps_0)\|\varphi_i\|_{\infty,A(\frac{r}2,r)}$$
for $i=1,2$, and
$$\|\xi\|_{\infty,A(\frac{\eps_0r}2,\eps_0r)}\leq c\eps_0^{-s}\left(\|\xi\|_{\infty,A(\frac{r}2,r)}+\|\varphi_i\|_{\infty,A(\frac r2,r)}r^{2s-1}\right)$$

\noindent {\bf Step 2.} We now iterate Step 1 along a sequence of scales $\eps_0^kr$, for $\eps_0$ small enough.

Fix $\eta>0$ arbitrarily small. For small enough $\eps_0$, $k\geq 1$, we get
$$\|\varphi_1\|_{\infty,A(\frac{\eps_0^kr}2,\eps_0^kr)}\leq (\eps_0^k)^{s+1-\eta}\|\varphi_1\|_{\infty,A(\frac{r}2,r)},{\rm\ \ \ \ \ }
\|\varphi_2\|_{\infty,A(\frac{\eps_0^kr}2,\eps_0^kr)}\leq (\eps_0^k)^{-s+1-\eta}\|\varphi_2\|_{\infty,A(\frac{r}2,r)}
$$
and
$$\eps_0^{s+\eta}\|\xi\|_{\infty,A(\frac{\eps^{k+1}_0r}2,\eps^{k+1}_0r)}\leq \|\xi\|_{\infty,A(\frac{\eps_0^kr}2,\eps_0^kr)}+r^{2s-1}(\eps_0^k)^{2s-1+1-s-\eta}\|\varphi_i\|_{\infty,A(\frac r2,r)}$$
so that for $\eta$ small enough we get
$$\|\xi\|_{\infty,A(\frac{\eps_0^kr}2,\eps_0^kr)}\leq (\eps_0^k)^{-s-\eta}\left(\|\xi\|_{\infty,A(\frac{r}2,r)}+C.\|\varphi_i\|_{\infty,A(\frac r2,r)}r^{2s-1}\right)$$
and for $z=O(1)$:
$$\varphi(z)=\alpha f_\chi(z)+\beta h_\chi(z)+
O(r^{s+\eta}\|\xi\|_{\infty,A(\frac r2,r)}+r^{3s+\eta-1}\|\varphi_i\|_{\infty,A(\frac r2,r)}+r^{s-1}\|\varphi_i\|_{\infty,A(\frac r2,r)})
$$
{\bf Step 3.} We have by Lemma \ref{Lem:monholom}
\begin{align*}
f_\chi(z)&=\bar R_B(\bar\tau 2^{-s}\Gamma(s)\bar z^{-s})+O(|z|^{s-1})\\
h_\chi(z)&=R_B(\tau 2^{s}\Gamma(-s)z^s)+O(|z|^{-s-1})
\end{align*}
Taking $f$ as in the statement of the lemma, set $\varphi_1(z)=\psi_1(z)-a_0z^s$, $\varphi_2(z)=\psi_2(z)-b_0\bar z^{-s}$, $\alpha=\frac{b_0}{\bar\tau 2^{-s}\Gamma(s)}$, $\beta=\frac{a_0}{\tau 2^s\Gamma(-s)}$. We get
$$f(z)=\alpha f_\chi(z)+\beta h_\chi(z)+R_B(\varphi_1(z))+R_B(\varphi_2(z))+\xi'(z)$$
where $\|\varphi_i\|_{\infty,A(\frac R2,R)}$, $\alpha R^{-s}$ and $\beta R^s$ are of order $\|\psi_i\|_{\infty,A(\frac R2,R)}$ and
$$\|\xi'\|_{\infty,A(\frac R2,R)}=O(\|\xi\|_{\infty,A(\frac R2,R)}+\|\psi_i\|_{\infty,A(\frac R2,R)}R^{2s-1})
$$
and applying Step 2 concludes.
\end{proof}

This applies in particular to $f=\uK_\chi^{-1}(.,w)$, $R=|w|/2$. For  $b=O(1)$, combining the estimate of Lemma \ref{Lem:monCauchy} (for $|b|,|w|,|b-w|$ mutually comparable), this leads to an estimate of $\uK_\chi^{-1}(b,w)$ (which is {\em a priori} of order $O(R^{s-1})$, see Lemma \ref{Lem:chirkernel0}) within $O(R^{3s-2+\eta})$. By replication (as in Lemma \ref{Lem:monzerogr}), for $s<\frac 14$, if $\varphi$ is discrete holomorphic and $\chi$-multivalued in a ball $B(0,R)$, we can estimate $\varphi(b)$ for $b$ adjacent to the singularity based on values taken by $\varphi$ in, say, the neighbourhood of a closed cycle around 0 in $A(\frac R2,R)$.

\subsection{More singularities}\label{ssec:monoinv}

In Section \ref{ssec:monodromy}, we studied in some details discrete holomorphic functions with monodromy around a singularity (in terms of the Riemann sphere $\hat\C$, this corresponds to a pair of singularities, including the one at $\infty$).

Let us now consider the situation where two points (singularities) $x\neq y$ (midpoints of edges of $\Lambda$) are marked. We consider functions with monodromy $\chi=e^{2i\pi s}$, $s\in (0,\frac 12)$, around $x$ and $\bar\chi$ around $y$. We are interested in estimating $S_\rho$, a kernel inverting $\rK$ operating on these functions, in particular near the diagonal.

More specifically, we consider a family of rhombus tilings $(\Lambda_\delta)_\delta$ with respective edge length $\delta$, $\delta\searrow 0$ along some sequence.
Two points $x_\delta,y_\delta$ (midpoints of edges of $\Lambda$) are marked; we assume $x_\delta\rightarrow x$, $y_\delta\rightarrow y$. Around each singularity, the local picture is as in Figure \ref{fig:monloc}.

Let $\rho:\pi_1(\C\setminus\{x,y\})\rightarrow\U$ be the unitary character s.t. $\rho(\gamma_x)=\chi$, $\rho(\gamma_y)=\bar\chi$ where $\gamma_x$ (resp. $\gamma_y$) is a simple loop winding around $x$ (resp. $y$) counterclockwise. 

Let $(\C^{M_B})_\rho$ (resp. $(\C^{M_V})_\rho$) be the space of multiplicatively multi-valued functions on $M_B$ transforming according to $\rho$. As discussed at the beginning of Section \ref{ssec:monodromy}, this can be realised by lifting $M$ to the universal cover of $\hat\C\setminus\{x,y\}$; more concretely (and less canonically), one can use a branch cut running from $x$ to $y$ on $M^\dg$ and glue decks, isomorphic to $M$ and indexed by $\Z\simeq\pi_1(\hat\C\setminus\{x,y\})$, along the branch cut.

\subsubsection{Green kernel}

First we consider the discrete Laplacian operating on $(\C^{M_V})_\rho$. In the following lemma, we construct an inverting (Green) kernel $G_\rho^\delta$, based on arguments similar to those of Lemma \ref{Lem:chirharmest}.

\begin{Lem}\label{Lem:Green2pts}
We assume $x\neq y$, $\rho$ non-trivial.
\begin{enumerate}
\item There is no nonzero bounded harmonic function in $(\C^{M_V})_\rho$.
\item For $w\in M_V$, there is a unique bounded function $G_\rho^\delta(.,w)\in (\C^{M_V})_\rho$ s.t. $\Lap_\Gamma G_\rho(.,w)=\delta_w$.
\item There a unique bounded continuous $\rho$-multivalued function $G_\rho(.,w)$ on $\C\setminus\{x,y\}$ s.t. $\Lap G_\rho(.,w)=\delta_w$ and $G_\rho(z,w)\rightarrow 0$ as $z\rightarrow x,y$.
\item As $\delta\searrow 0$, $G_\rho^\delta(.,.)$ converges uniformly to $G_\rho(.,.)$ uniformly on compact subsets of $\{(z,w)\in (\C\setminus\{x,y\})^2, z\neq w\}$.
\end{enumerate}
\end{Lem}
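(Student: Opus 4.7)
The plan is to prove the four assertions in order, largely by paralleling the single-puncture Liouville/Green-function argument of the preceding lemma; the only genuinely new ingredient is an observation about $|f|^2$ which handles Part 1 cleanly.

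For Part 1, note that because $\rho$ is unitary, $|f|^2$ descends to a single-valued, bounded, nonnegative function on $\Gamma$. Harmonicity of $f$ together with Cauchy--Schwarz
\[
|f(x)|^2 = \Bigl|\sum_{y\sim x} p_{xy}f(y)\Bigr|^2 \leq \sum_{y\sim x} p_{xy}|f(y)|^2
\]
(where $(p_{xy})$ is the reversible transition kernel associated to $\Lap_\Gamma$) shows that $|f|^2$ is discrete subharmonic. Under $(\spadesuit)$ the random walk on $\Gamma$ is comparable to simple random walk on $\Z^2$, hence recurrent; the bounded submartingale $|f(X_n)|^2$ therefore converges a.s., and recurrence combined with the countability of the value set forces the limit to be a deterministic constant $c^2$, with $|f|\equiv c$. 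If $c>0$, the equality case of Cauchy--Schwarz at every vertex forces $f(y)/f(x)$ to be constant across neighbors, i.e.\ $f$ is a parallel section of the flat bundle $L_\rho$ on the universal cover of $\C\setminus\{x,y\}$; since $\rho$ is nontrivial no such nonzero section exists, a contradiction. Hence $f\equiv 0$.

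For Part 2, uniqueness is immediate from Part 1 by linearity. For existence I would construct $G^\delta_\rho(\cdot,w)$ as the limit, after a diagonal extraction, of finite-volume Green functions $G_\rho^{\delta,N}$ on $B(0,N)\cap M_V$ with Dirichlet boundary. The required uniform bound away from $w,x,y$ is obtained by splicing: near $w$ one compares to the free Green function $G_\Gamma(\cdot,w)$ from \cite{SmiChe_isoradial}, while near each puncture one iterates the contraction estimate from the single-puncture Liouville lemma to get polynomial decay of the error. Discrete Harnack on isoradial graphs then gives equicontinuity on compact sets, and any two subsequential limits differ by a bounded $\rho$-multivalued harmonic function, hence coincide by Part 1. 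For Part 3, the continuous Green's function is the unique bounded section of $L_\rho\to\C\setminus\{x,y\}$ with $\Lap G_\rho = \delta_w$ and vanishing at the punctures; uniqueness is the continuous analogue of Part 1 (subharmonicity of $|G|^2$ plus absence of parallel sections), and existence can be obtained, for example, by lifting to the universal cover and summing $(2\pi)^{-1}\log|z-\gamma w|$ over the deck transformations with weights $\rho(\gamma)$, convergence being ensured by unitarity of $\rho$ together with the imposed boundary conditions.

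Part 4 follows from Parts 2 and 3 by a compactness argument. The uniform bounds of Part 2 combined with discrete Harnack yield precompactness of $(G^\delta_\rho)_\delta$ in the topology of uniform convergence on compact subsets of $\{(z,w):z\neq w,\ z,w\notin\{x,y\}\}$; any subsequential limit is a continuous bounded $\rho$-multivalued solution of $\Lap G = \delta_w$ vanishing at the punctures, hence equals $G_\rho$ by Part 3, and therefore the whole family converges. The main obstacle is verifying the vanishing boundary condition at $x,y$ in the limit: this requires the single-puncture contraction estimate to yield a quantitative bound $|G_\rho^\delta(z,w)|\leq C|z-x|^\eps$ (and similarly at $y$), uniform in $\delta$, so that vanishing survives passage to the limit. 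The rest of the argument — identifying the Laplacian, the monodromy, and the log singularity at $w$ in the limit — is routine, using local convergence of $\rK$ to $\dbar$ (as in Section 4) and the local character of each property.
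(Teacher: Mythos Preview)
Your Part~1 is correct and takes a different route from the paper. The paper argues via an excursion decomposition of the random walk: stopping at first return to $z$ or first exit from $B(0,n)$, one obtains $h(z)=\dfrac{1-\P(X_\tau=z)}{1-\P(X_\tau=z)\,\E(\rho(\gamma)\mid X_\tau=z)}\,\E(h(X_\tau)\mid X_\tau\neq z)$ and lets $n\to\infty$. Your subharmonicity argument for $|f|^2$, combined with recurrence and the equality case of Cauchy--Schwarz, is arguably more conceptual; both routes use recurrence of the isoradial walk, which holds under $(\spadesuit)$.

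Parts~2 and~3, however, have real gaps. In Part~2 your finite-volume scheme needs $\sup_N\|G^{\delta,N}_\rho\|_\infty<\infty$, and your sketch does not provide it: comparing to the free $G_\Gamma(\cdot,w)$ gives only $O(\log N)$ in two dimensions, and the single-puncture contraction controls values near $x,y$ only \emph{relative to} values on a surrounding circle; since the total monodromy around $\{x,y\}$ is $\chi\bar\chi=1$, there is no decay mechanism at infinity to close the estimate. The paper sidesteps this entirely by setting $h(z)=\E_z(\rho(\gamma))$, where $\gamma$ is the random-walk path from $z$ to its first hit of $w$ (well-defined by recurrence) followed by a fixed arc back to $z$: then $|h|\le 1$ for free, $h$ is harmonic off $w$, and $h(w)=1$ forces $\Lap h(w)\neq 0$ by Part~1. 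In Part~3, summing $\rho(\gamma)\log|z-\gamma w|$ over the deck group of the universal cover of $\C\setminus\{x,y\}$ cannot work: that group is free on two generators and the sum has no reason to converge. The paper first compactifies at $\infty$ (a bounded harmonic function has a removable singularity there), so one works on $\hat\C\setminus\{x,y\}$, a cylinder; mapping $\{x,y\}\mapsto\{0,\infty\}$ by a homography and lifting by $\log$ gives deck group $\Z$ and reduces to a standard problem on $\C$.

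Your Part~4 compactness scheme is reasonable in outline but inherits the missing uniform-in-$\delta$ bound from Part~2. The paper's argument is instead constructive: it writes $G^\delta_\rho(.,w)_{|\gamma_i}=\sum_{k\ge 0}(P_eP_i)^k\,G^\delta_B(.,w)_{|\gamma_i}$, where $P_e,P_i$ are twisted Poisson operators on nested circles around $w$ and $P_eP_i$ is shown (by the same coupling idea as in the single-puncture lemma) to be a strict contraction on $L^\infty$ uniformly in $\delta$; termwise convergence to the continuous Neumann series then yields the result together with the needed uniform bound.
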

\begin{proof}
\begin{enumerate}
\item Let $h$ be a bounded harmonic function in $(\C^{M_V})_\rho$. Fix $z\in M_V$. Consider the random walk $(X_n)$ started from $z$ stopped at $\tau$, the time of first return to $z$ or the first exit of $B(0,n)$, whichever comes first. Let $w_x$ (resp. $w_y$) be the winding of $(X_0,\dots,X_\tau)$ around $x$ (resp. $y$). An excursion decomposition of the random walk between successive visits to $z$ before time $\tau$ shows that:
$$h(z)=\frac{1-\P(X_\tau=z)}{1-\P(X_\tau=z)\E(\chi^{w_x-w_y}|X_\tau=z)}\E(h(X_\tau)|X_\tau\neq z)$$
As $n\rightarrow\infty$, $|\E(\chi^{w_x-w_y}|X_\tau=z)|$ stays bounded away from 1 while $\P(X_\tau=z)\rightarrow 1$. Consequently $h(z)=0$.

\item Uniqueness follows from 1. For existence, we set $h(z)=\E_z(\rho(\gamma))$, where $\gamma$ is the concatenation of the sample path of a random walk started from $z$ stopped when it first hits $w$ and a fixed path from $w$ to $z$ (one may also reason on a branched cover of $M_V$, stopping the walk when it hits a lift of $w$). Then $h$ is bounded, $\rho$-multivalued and harmonic except at $w$. It is indeed not harmonic at $w$ by 1. Consequently it is proportional to $G_\rho(.,w)$.

\item By compactifying at infinity, a bounded harmonic function has a removable singularity at infinity. By mapping $\{x,y\}$ to $\{0,\infty\}$ by a homography and lifting to the universal cover of $\C\setminus\{0\}$ via $\log$, the problem it to find $\tilde h$ harmonic except on $w'+2i\pi\Z$ s.t. $\tilde h(\cdot+2i\pi)=\chi \tilde h(\cdot)$ and $\Lap \tilde h=\sum_k\chi^k\delta_{w'+2i\pi k}$. It is then easy to see that the solution to this problem exists and is unique (up to normalisation) and decays as $\Re(z)\rightarrow \pm\infty$ (corresponding to $x,y$ in the original coordinates). Explicitly, we may write
$$\tilde h(.)=\frac{1}{2-\chi-\bar\chi}\sum_{k\in\Z}\chi^k\left(-G_\C(.,w'+2i\pi(k-1))+2G_\C(.,w'+2i\pi k)-G_\C(.,w'+2i\pi(k+1))\right)$$
where $G_\C(z_1,z_2)=-\frac{1}{2\pi}\log|z_1-z_2|$ is the full-plane (continuous) Green kernel. (Remark that for a fixed value of the argument $z$, the summand is $O(1/(k^{2}+\Re(z)^2)$).

\item Fix $\eps>0$; we consider $\{(z,w)\in (B(0,\eps^{-1})\setminus (B(x,3\eps)\cup B(y,3\eps)))^2:|z-w|\geq\eps\}$. We need to show uniform convergence of $G^\delta_\rho$ on this set.\\
Let $\gamma_e$, $\gamma_i$ be simple paths on $\Gamma_\delta$ at distance $O(\delta)$ of the circles $C(w,2\eps)$, $C(w,\eps)$ respectively. Let $B$ be the connected component of $w$ in $\Gamma_\delta\setminus\gamma_e$. Let $\tau_e$ (resp. $\tau_i$) be the time of first exit of $\Gamma_\delta\setminus\gamma_e$ (resp. $\Gamma_\delta\setminus\gamma_i$) by a random walk on $\Gamma$. We define (twisted) Poisson operators $P_e:\C^{\gamma_e}\rightarrow\C^{\gamma_i}$ and $P_i:\C^{\gamma_i}\rightarrow\C^{\gamma_e}$ by $(P_ef)(z)=\E_z(f(X_{\tau_e}))$ and 
$$(P_if)(z)=\E_z(\chi^Nf(X_{\tau_i})),$$
where $N$ is the algebraic number of crossings of a branch cut between $x$ and $y$ (not intersecting $\gamma_e$) by the random walk before $\tau_i$. (One may also reason on a branched cover of $M_V$, lifting $f$ to a $\rho$-multivalued function). We omit the dependence on $\delta$ for lightness of notation.\\
As before (see Lemma \ref{Lem:chirharmest}), one can show that $(P_eP_i)$ is a strict contraction on $L^\infty(\gamma_i)$, uniformly in $w$ in a compact subset of $\C\setminus\{x,y\}$, $\delta$ small enough ($\eps>0$ is fixed). Moreover, by starting a random walk on $\gamma_i$ and stopping it on its first return to $\gamma_i$ after its first visit to $\gamma_e$, we get 
the following identity in $L^\infty(\gamma_i)$:
$$G_\rho^\delta(.,w)_{|\gamma_i}=G^\delta_B(.,w)_{|\gamma_i}+(P_eP_i)G_\rho^\delta(.,w)_{|\gamma_i}$$
Here $G^\delta_B$ denotes the Green kernel with Dirichlet boundary conditions in $B$. Consequently,
$$G_\rho^\delta(.,w)_{|\gamma_i}=(\sum_{k=0}^\infty(P_eP_i)^k)G^\delta_B(.,w)_{|\gamma_i},$$
a summable series in $L^\infty(\gamma_i)$.\\
Clearly, the analogue decomposition holds for the continuous Green kernel $G_\rho$. We know that $G^\delta_B(.,w)_{|\gamma_i}$ converges uniformly to $G_B(.,w)_{|\gamma_i}$ as $\delta\searrow 0$ and in particular is uniformly bounded. Since $(P_eP_i)$ is uniformly strictly contracting, we only need to establish uniform convergence of each term in the expansion.

We note that $P_e^\delta h$ converges to $P_eh$ in $L^\infty(\gamma_i(w))$ uniformly in $h,w$ as $\delta\searrow 0$, where $h$ is (the restriction to $\gamma_i(w)$ of) a 1-Lipschitz function. This may be shown by contradiction, using equicontinuity (of $h_{|\gamma_i}$ and of $P_e^\delta h$ on compact subsets of $B$). \\
Similarly, $P_i^\delta h$ converges to $P_ih$ in $L^\infty(\gamma_e(w))$ uniformly in $h,w$, where $h$ is (the restriction to $\gamma_e(w)$ of) a 1-Lipschitz function. We reason as above, the contradiction coming from the uniqueness in the following Dirichlet problem: find $h_0$ harmonic, bounded, $\rho$-multivalued in $\C\setminus(\{x,y\}\cup D(w,\eps))$, with boundary condition a continuous function $h$ on $C(0,\eps)$. As in 3., this may be recast as a standard Dirichlet problem in $\C$.

Using iteratively these convergence statements for $P_e^\delta$, $P_i^\delta$, we obtain term-wise convergence in the series expansion for $G^\delta_\rho$ above. 
\end{enumerate}
\end{proof}

Remark that since the statement holds for {\em any} sequence of tilings $\Lambda_\delta$ with mesh $\delta$ going to zero, it holds uniformly in the tiling $\Lambda$ (under $(\spadesuit)$).

\subsubsection{Cauchy kernel}\label{sss:moninv}

\paragraph{Two singularities.\\}

We want to construct and estimate an inverting kernel $S_\rho$ for $\rK:(\C^{M_B})_\rho\rightarrow(\C^{M_W})_\rho$. This is complicated by the fact that we do not have (yet) an analogue of the classification result of Lemma \ref{Lem:monholom} and there are distinct possible choices of normalisation for an inverting kernel (namely, vanishing at a singularity or at infinity, on the primary lattice $\Gamma$ or its dual $\Gamma^\dg$).

\begin{Lem}\label{Lem:Srho2pts}
For $s\in (-\frac 12,\frac 12)$, $\delta$ small enough, for $w\in M_W$ there is a unique $\rho$-multivalued function $S_\rho(.,w)$ vanishing at infinity s.t. $\rK S_\rho(.,w)=\delta_w$. Moreover,
$$S_\rho(z,w)=\frac 12R_B\left(\left(\frac{z-x}{w-x}\cdot\frac{w-y}{z-y}\right)^s\frac{e^{i\nu(w)}}{\pi(z-w)}\right)
+\frac 12\bar R_B\left(\overline{\left(\frac{z-x}{w-x}\cdot\frac{w-y}{z-y}\right)}^{-s}\frac{e^{-i\nu(w)}}{\pi\overline{(z-w)}}\right)+o(|x-y|^{-1})$$
with uniform convergence on compact subsets of $\{(z,w)\in (\C\setminus\{x,y\})^2:z\neq w\}$.
\end{Lem}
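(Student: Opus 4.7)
The plan is to assemble $S_\rho(\cdot,w)$ in three movements: build a provisional inverting kernel $\hat S_\rho(\cdot,w)$ from the chiral Green kernel $G^\delta_\rho$ of Lemma \ref{Lem:Green2pts}; construct two bounded $\rho$-multivalued discrete holomorphic functions $g_x,g_y$ with prescribed limits at infinity; and correct $\hat S_\rho$ by a unique linear combination of $g_x,g_y$ to enforce decay at infinity. I would first reduce to the case $s\in(0,1/2)$, the case $s\in(-1/2,0)$ following by complex conjugation ($\chi\leftrightarrow\bar\chi$), and $s=0$ being the baseline $\uK^{-1}$.

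For $\hat S_\rho(\cdot,w)$, if $w$ corresponds to the oriented edge $(xx')$ of $\Gamma$, I would set $\hat S_\rho(\cdot,w)_{|M_V}=G^\delta_\rho(\cdot,x')-G^\delta_\rho(\cdot,x)$ and extend to $M_F$ by discrete harmonic conjugation, normalised to vanish at the vertex of $\Gamma^\dg$ corresponding to $x$; consistency is the flux-across-the-branch-cut argument sketched in the excerpt ($\hat S_\rho$ decays at infinity and its Laplacian integrates to $0$ over the slit domain). By construction $\rK\hat S_\rho(\cdot,w)=\delta_w$. For $g_x$, I would take $f=2^s\Gamma(s)^{-1}f_{\chi,x}$ from Lemma \ref{Lem:monholom} truncated to a disk around $x$, and replicate by $g_x=f-\sum_w(\rK f)(w)\hat S_\rho(\cdot,w)$; the support of $\rK f$ lies on a mesoscopic cycle on $\Gamma\cup\Gamma^\dg$ around $x$, and contour-deformation of the resulting discrete Cauchy integral to the circle $C(x,|x-y|/2)$ combined with the residue formula delivers
\begin{align*}
g_x(z) &= (z-x)^{s-1}\left(\frac{z-y}{x-y}\right)^{1-s}+o(1), &
g_y(z) &= \overline{(z-y)}^{s-1}\overline{\left(\frac{z-x}{y-x}\right)}^{1-s}+o(1),
\end{align*}
uniformly on compact subsets of $\C\setminus\{x,y\}$. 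By Lemma \ref{Lem:Green2pts}(1), the space of bounded $\rho$-multivalued discrete holomorphic functions on $M$ is at most two-dimensional, so $\{g_x,g_y\}$ is a basis for small $\delta$; the $2\times 2$ matrix of their limits at infinity on $M_V$ and $M_F$ (whose existence comes from the usual ``bounded discrete harmonic implies limit at $\infty$'' random walk coupling) is asymptotically $\begin{pmatrix}(x-y)^{s-1}&(\overline{y-x})^{s-1}\\ i(x-y)^{s-1}&-i(\overline{y-x})^{s-1}\end{pmatrix}$, which is non-degenerate. Unique $a(w),b(w)\in\C$ are therefore determined by the requirement that $S_\rho(\cdot,w):=\hat S_\rho(\cdot,w)-a(w)g_x-b(w)g_y$ vanish at infinity. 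Uniqueness of $S_\rho(\cdot,w)$ is automatic: the difference of two candidates lies in the kernel of $\rK$, is bounded $\rho$-multivalued holomorphic, and vanishes at infinity, hence is zero.

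For the near-diagonal expansion I would upgrade the uniform convergence $G^\delta_\rho\to G_\rho$ of Lemma \ref{Lem:Green2pts}(4) to a uniform convergence of the first discrete difference $G^\delta_\rho(\cdot,x')-G^\delta_\rho(\cdot,x)$ to the corresponding real combination of $\partial_w G_\rho$ and $\partial_{\bar w}G_\rho$, via a discrete Harnack/equicontinuity argument applied to shifted Green functions. Using the explicit evaluation
\[
\partial_w G_\rho(z,w)=\left(\frac{z-x}{w-x}\right)^s\left(\frac{z-y}{w-y}\right)^{1-s}\frac{1}{\pi(z-w)}
\]
and the algebraic identity
\[
\left(\frac{z-x}{w-x}\right)^s\left(\frac{z-y}{w-y}\right)^{1-s}=\frac{z-y}{w-y}\left(\frac{z-x}{w-x}\cdot\frac{w-y}{z-y}\right)^s,
\]
the discrepancy between $\partial_w G_\rho$ and the leading kernel $\frac{1}{\pi(z-w)}\left(\frac{z-x}{w-x}\cdot\frac{w-y}{z-y}\right)^s$ written in the lemma equals $\frac{1}{\pi(w-y)}\left(\frac{z-x}{w-x}\cdot\frac{w-y}{z-y}\right)^s$, which is holomorphic in $z$ and bounded, hence absorbed by the correction $-a(w)g_x-b(w)g_y$; after matching the leading terms in the asymptotics of $\hat S_\rho(\cdot,w)$, $g_x$ and $g_y$ at infinity, one identifies $a(w),b(w)$ up to $o(1)$ and collects the remaining error into the announced $o(|x-y|^{-1})$. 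The main obstacle of the plan is precisely this regularity upgrade: one needs that the first discrete differences of $G^\delta_\rho$ converge to those of $G_\rho$ uniformly in $w$ ranging over compact subsets of $\C\setminus\{x,y\}$ and uniformly in the tiling $\Lambda_\delta$ under $(\spadesuit)$, which requires a version of discrete Harnack for $\rho$-multivalued harmonic functions that accommodates the monodromy near $x,y$; once this is granted, the remainder of the argument is the residue/contour-integral bookkeeping outlined above.
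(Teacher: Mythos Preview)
Your proposal is essentially the paper's own argument: build $\hat S_\rho$ from first differences of $G^\delta_\rho$, exhibit the two-dimensional space of bounded $\rho$-multivalued holomorphic functions via $g_x,g_y$ constructed by truncation-and-replication of $f_{\chi,x}$, and correct $\hat S_\rho$ by the unique combination that kills the limits at infinity.

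Two remarks. First, the ``main obstacle'' you flag (upgrading uniform convergence of $G^\delta_\rho$ to uniform convergence of its first $w$-differences) is dispatched in the paper by the symmetry $G^\delta_\rho(z,w)=\overline{G^\delta_{\bar\rho}(w,z)}$: since $z\mapsto G^\delta_{\bar\rho}(z,w)$ is discrete harmonic (with monodromy) away from $w$, the standard Harnack-type regularity gives convergence of its $z$-differences, and the symmetry transfers this to $w$-differences of $G^\delta_\rho$. You do not need a separate Harnack principle tailored to $\rho$-multivalued functions in the $w$-variable. Second, a minor bookkeeping point: you reduce to $s\in(0,\tfrac12)$ but then quote the $g_x$ asymptotic $(z-x)^{s-1}\bigl(\tfrac{z-y}{x-y}\bigr)^{1-s}$, which in the paper is computed under the assumption $s\in(\tfrac12,1)$ (so that the holomorphic piece of $f_{\chi,x}$ dominates and the residue calculation proceeds as written). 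For $s\in(0,\tfrac12)$ the antiholomorphic piece $\bar z^{-s}$ dominates and the roles of $g_x,g_y$ (holomorphic vs.\ antiholomorphic leading term) swap; this is purely notational and is absorbed by your ``conjugate to get the other case'' reduction, but the displayed formula for $g_x$ should match whichever range you actually work in.
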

The continuous holomorphic and antiholomorphic kernels 
\begin{align*}
z&\longmapsto\left(\frac{z-x}{w-x}\cdot\frac{w-y}{z-y}\right)^s\frac{1}{\pi(z-w)}\\
z&\longmapsto\overline{\left(\frac{z-x}{w-x}\cdot\frac{w-y}{z-y}\right)}^{-s}\frac{1}{\pi\overline{(z-w)}}
\end{align*}
are uniquely specified by the fact that they are $\rho$-multivalued, have a simple pole at $w$, vanish as $z\rightarrow\infty$, and have singularities of order at most $|s|$ at $x,y$. 
\begin{proof}
{\bf Step 1.} 
As was the case with one singularity (Lemma \ref{Lem:chirkernel0}), we can use $G^\delta_\rho$ to construct an inverting kernel $\hat S_\rho$ for $\rK$ operating on $(\C^{M_B})_\rho$. Among such kernels, it is characterised by the fact that $\hat S_\rho(.,w)$ restricted to $M_V$ is harmonic except at endpoints of the edge of $\Gamma$ corresponding to $w$. The restriction of $\hat S_\rho$ to $M_V$ is $G^\delta_\rho(.,v')-G^\delta_\rho(.,v)$, where $w\in M_W$ corresponds to the oriented edge $(vv')$ of $\Gamma_\delta$. 
On $M_F$, it is deduced by harmonic conjugation; it vanishes on the vertices of $\Gamma^\dg$ corresponding to $x,y$ respectively. A discrete Green's formula argument (cutting the domain along a branch cut from $x$ to $y$) shows that these two conditions for $\hat S_\rho(.,w)_{|M_F}$ are consistent. Indeed, the variation of the harmonic conjugate between these two vertices is proportional to the flux of $\hat S_\rho$ across a branch cut connecting the vertices; as $\hat S_\rho$ decays at infinity, this flux is the integral of the Laplacian of $\hat S_\rho$ in the fundamental domain defined by the cut, hence zero. Note also  that $\hat S_\rho$ is {\em not} invariant under duality $\Gamma\leftrightarrow\Gamma^\dg$ (i.e. reasoning on the random walk on $\Gamma^\dg$ produces $\hat S_\rho^\dg\neq\hat S_\rho$).

In order to estimate $\hat S_\rho$, let us evaluate $\partial_wG_\rho$, $\partial_{\bar w}G_\rho$ (in the continuous limit). As a function of $z$, $\partial_wG_\rho$ is locally the sum of a meromorphic and an antimeromorphic component; the decomposition is unique up to an additive constant. The additive constant can be specified uniquely in order to make these two components individually $\rho$-multivalued. Moreover, $G_\rho(z,w)+\frac 1{2\pi}\log|z-w|$ is harmonic in $z$ in a neighbourhood of $w$.
If $\chi=e^{2i\pi s}$, $s\in (0,1)$, this leaves the only possibility:
$$\partial_wG_\rho(z,w)=\left(\frac{z-x}{w-x}\right)^s\left(\frac{z-y}{w-y}\right)^{1-s}\frac{1}{\pi(z-w)}$$
as there is no (nonvanishing) $\rho$-multivalued, bounded holomorphic (resp. antiholomorphic) function on $\C\setminus\{x,y\}$. (If $\varphi$ is such an holomorphic function, $z\mapsto\varphi(z)(z-x)^{-s}(z-y)^{s-1}$ is bounded, with removable singularities and vanishing at infinity, hence vanishes identically). Similarly,
$$\partial_{\bar w}G_\rho(z,w)=\overline{\left(\frac{z-x}{w-x}\right)}^{1-s}\overline{\left(\frac{z-y}{w-y}\right)}^{s}\frac{1}{\pi\overline{(z-w)}}$$

We note that $G^\delta_\rho(z,w)=\overline{G^\delta_{\bar\rho}(w,z)}$ (as in Lemma \ref{Lem:chirharmest}). 

From Lemma \ref{Lem:Green2pts}, $G^\delta_\rho$ converges to $G_\rho$ uniformly away from singularities. By general discrete harmonic function arguments (e.g. Section 3.3 in \cite{SmiChe_isoradial}), this yields convergence of discrete derivatives of $G^\delta_\rho(z,w)$ (in either variable), also uniformly away from singularities. More precisely,
$$S_\rho(z,w)=\frac 12R_B\left(e^{i\nu(w)}\partial_wG_\rho(z,w)\right)
+\frac 12\bar R_B\left(e^{-i\nu(w)}\partial_{\bar w}G_\rho(z,w)\right)+o(1)$$
uniformly in compact subsets of $(\C\setminus\{x,y\})^2\setminus\Delta$, where $\Delta$ denotes the diagonal. 

Remark that if $x_\delta,y_\delta$ do not converge to $x,y$, one may apply an affine transformation to $\Lambda_\delta$ so that they converge (as long as the graph distance between the two singularities goes to infinity). By a simple scaling argument, this turns the $o(1)$ error term into $o(|x-y|^{-1})$.

{\bf Step 2.}

We are now seeking an inverting kernel $S_\rho$ which vanishes at infinity: $S_\rho(b,w)\rightarrow 0$ as $b\rightarrow\infty$. Such a kernel differs from $\hat S_\rho$ by a discrete holomorphic function (freezing the second variable) and is invariant under duality $\Gamma\leftrightarrow\Gamma^\dg$. We start by describing bounded $\rho$-multivalued discrete holomorphic functions.

Observing that the restriction to $M_V$ of a $\rho$-multivalued discrete holomorphic function is harmonic except possibly at the two vertices adjacent to $x,y$, we conclude (from Lemma \ref{Lem:Green2pts}) that the space of bounded $\rho$-multivalued discrete holomorphic functions is at most two-dimensional. Let us construct two linearly independent such functions.

Let $\gamma_\delta$ be a simple cycle on $\Gamma$ at distance $O(\delta)$ of $\gamma=C(x,|x-y|/2)$. Let $\gamma_\delta^\dg$ be the outer boundary of the union of faces of $\Gamma^\dg$ corresponding to vertices of $\Gamma$ on $\gamma_\delta$. Set $f(z)=\delta^{1-s}2^s\Gamma(s)^{-1}f_{\chi,x}(z)$ for $z\in M_B$ inside or on $\gamma_\delta$ and $f(z)=0$ otherwise (see Lemma \ref{Lem:monholom}). Then:
$$g_x(z)=f(z)-\sum_{w\in M_W}(\rK f)(w)\hat S_\rho(z,w)$$
is $\rho$-multivalued and discrete holomorphic. Moreover $\rK f$ is supported on vertices of $M_W$ corresponding to edges of $\gamma_\delta$ and $\gamma_\delta^\dg$. Assume here $s\in (\frac 12,1)$ (so that the dominant terms near $x$ are holomorphic rather than antiholomorphic; otherwise conjugate): $f(z)=R_B((z-x)^{s-1})+o(1)$ near $\gamma$.

For $z\in M_V$, we have by construction of $\hat S_\rho$ (see also \eqref{eq:discrtcontint})
\begin{align*}
\sum_{w\in\gamma_\delta}(\rK f)(w)\hat S_\rho(z,w)
&=\int_{\gamma_\delta}i(w-x)^{s-1}d_wG_\rho(z,w)+o(1)\\
\sum_{w\in\gamma_\delta^\dg}(\rK f)(w)\hat S_\rho(z,w)
&=\int_{\gamma_\delta^\dg}(w-x)^{s-1}\ast d_wG_\rho(z,w)+o(1)
\end{align*}
where
\begin{align*}
d_wG_\rho(z,w)&=\partial_wG_\rho(z,w)dw+\partial_{\bar w}G_\rho(z,w)d\bar w\\
\ast d_wG_\rho(z,w)&=-i\partial_wG_\rho(z,w)dw+i\partial_{\bar w}G_\rho(z,w)d\bar w
\end{align*}

Taking into account $(w-x)^{s-1}=(\overline{w-x})^{1-s}r^{2s-2}$ on $C(x,r)$, we may write these integrals as contour integrals of closed forms and deform them to $\gamma$ to obtain
\begin{align*}
g_x(z)&=(z-x)^{s-1}\chi_{|z-x|\leq|x-y|/2}\\
&-\oint_\gamma (w-x)^{s-1}\left(\left(\frac{z-x}{w-x}\right)^s\left(\frac{z-y}{w-y}\right)^{1-s}\frac{dw}{\pi(z-w)}
\right)
+o(1)\\
&=(z-x)^{s-1}\chi_{|z-x|\leq|x-y|/2}-(-(z-x)^{s-1}\left(\frac{z-y}{x-y}\right)^{1-s}+(z-x)^{s-1}\chi_{|z-x|\leq|x-y|/2})+o(1)\\
&=(z-x)^{s-1}\left(\frac{z-y}{x-y}\right)^{1-s}+o(1)
\end{align*}
by the residue formula (still for $z\in M_V$). Symmetrically, one may construct another discrete holomorphic function with:
\begin{equation}\label{eq:Lem:Srho2pts}
g_y(z)=(\overline{z-y})^{s-1}\overline{\left(\frac{z-x}{y-x}\right)}^{1-s}+o(1)
\end{equation}
The asymptotics on $M_F$ are obtained by harmonic conjugation (or by an argument similar to the one for $M_V$), the additive constant being fixed by the condition on $\rho$-multivaluedness. This gives:
\begin{align*}
g_x(z)&=R_B\left((z-x)^{s-1}\left(\frac{z-y}{x-y}\right)^{1-s}\right)+o(1)\\
g_y(z)&=\bar R_B\left((\overline{z-y})^{s-1}\overline{\left(\frac{z-x}{y-x}\right)}^{1-s}\right)+o(1)
\end{align*}
for $z\in M_B$. Convergence is uniform on compact subsets away from $x,y,\gamma$. By writing the Cauchy formula on another circle $\gamma'$ (say close to $C(x,|x-y|/4)$), one obtains uniform convergence on compact subsets of $\C\setminus\{x,y\}$. 

From these asymptotics, we see in particular that for a small enough mesh, $g_x$ and $g_y$ are linearly independent and consequently span the space of bounded discrete holomorphic functions in $(\C^{M_B})_\rho$.

{\bf Step 3.}

We have constructed an inverting kernel $\hat S_\rho$ normalised by: $\hat S_\rho(b,w)=0$ for $b\in M_F\simeq\Gamma^\dg$ adjacent to one of the singularities $x,y$; and described bounded $\rho$-multivalued discrete holomorphic functions.

Thus one may add a linear combination of $g_x$, $g_y$ to $\hat S_\rho(.,w)$ to obtain a kernel vanishing at infinity. Indeed, a function on $M_V$ (resp. $M_F$) which is bounded and (discrete) harmonic in a neighbourhood of infinity has a limit at infinity, as is easily shown e.g. by a random walk coupling argument. Convergence on compact sets is enough to ensure that
$$\lim_{z\rightarrow\infty,z\in M_V} g_x(z)=(x-y)^{s-1}+o(1)$$
Similarly, $\lim_{z\rightarrow\infty,z\in M_F} g_x(z)=i(x-y)^{s-1}+o(1)$, $\lim_{z\rightarrow\infty,z\in M_V} g_y(z)=(\overline{y-x})^{s-1}+o(1)$, $\lim_{z\rightarrow\infty,z\in M_F} g_y(z)=-i(\overline{y-x})^{s-1}+o(1)$. 
Thus, for $\delta$ small enough, one may find $a(w)$, $b(w)$ s.t. 
$$S_\rho(z,w)=\hat S_\rho(z,w)-a(w)g_x(z)-b(w)g_y(z)$$
vanishes at infinity. Combining with previous estimates on $\hat S_\rho(.,w)$, $g_x$, $g_y$, we identify $a(w),b(w)$ up to $o(1)$.
\end{proof}

\paragraph{Multiple pairs of singularities.\\}

Let us consider a more general situation, allowing for $n$ pairs of punctures. This is relevant for correlators with $2n$ electric insertions and will illustrate the formalism of Section \ref{Sec:surgery}.

Let $x_1,\dots,x_n$, $y_1,\dots,y_n$ be marked points, $s_1,\dots,s_n\in (-\frac 12,\frac 12)$. We consider $\rho:\pi_1(\C\setminus\{x_1,\dots,y_n\})\rightarrow\U$ the unitary character such that $\rho(\gamma_{x_j})=\rho(\gamma_{y_j})^{-1}=\chi_j=e^{2i\pi s_j}$ where $\gamma_{z}$ is a counterclockwise loop around $z\in\{x_1,\dots,y_n\}$ with no other marked point in its interior. We are interested in the associated operator $\rK:(\C^{M_B})_\rho\rightarrow (\C^{M_W})_\rho$. This may be realised using $n$ disjoint branch cuts running from $x_j$ to $y_j$, $j=1,\dots,n$.

Let us address the case $n=2$ using the surgery formalism (Lemma \ref{Lem:surgery}). Let $\gamma$ be a simple loop with $\{x_1,y_1\}$ in its interior $U_i$ and $\{x_2,y_2\}$ in its exterior $U_o$. Let $\rho_1$(resp. $\rho_2$) be the character corresponding to weights $(s_1,0)$ (resp. $(0,s_2)$). We have (by Lemma \ref{Lem:Srho2pts})
$$S_{\rho_j}(z,w)=\frac 12R_B\left(\left(\frac{z-x_j}{w-x_j}\cdot\frac{w-y_j}{z-y_j}\right)^{s_j}\frac{e^{i\nu(w)}}{\pi(z-w)}\right)
+\frac 12\bar R_B\left(\overline{\left(\frac{z-x_j}{w-x_j}\cdot\frac{w-y_j}{z-y_j}\right)}^{-{s_j}}\frac{e^{-i\nu(w)}}{\pi\overline{(z-w)}}\right)+o(|x-y|^{-1})$$
where all pairwise distances (between singularities $x_1,y_1,x_2,y_2$ and arguments of the kernel $z,w$) are of order 1. The Cauchy data spaces (see \eqref{eg:surgcontdata}) corresponding to the limiting continuous kernels are easy to identify: 
\begin{align*}
C_i&=\{f_{|\gamma}: f(z)=g(z)\left(\frac{z-x_1}{w-x_1}\cdot\frac{w-y_1}{z-y_1}\right)^{s_1}, (g_{\bar z})_{|U_i}=0\}\cap {\rm Lip}(\gamma)\\
\bar C_i&=\{f_{|\gamma}: f(z)=g(z)\overline{\left(\frac{z-x_1}{w-x_1}\cdot\frac{w-y_1}{z-y_1}\right)}^{-s_1}, (g_{z})_{|U_i}=0\}\cap {\rm Lip}(\gamma)\\
C_o&=\{f_{|\gamma}: f(z)=g(z)\left(\frac{z-x_2}{w-x_2}\cdot\frac{w-y_2}{z-y_2}\right)^{s_2}, (g_{\bar z})_{|U_o}=0, g(\infty)=0\}\cap {\rm Lip}(\gamma)\\
\bar C_o&=\{f_{|\gamma}: f(z)=g(z)\overline{\left(\frac{z-x_2}{w-x_2}\cdot\frac{w-y_2}{z-y_2}\right)}^{-s_2}, (g_{z})_{|U_o}=0, g(\infty)=0\}\cap {\rm Lip}(\gamma)
\end{align*}
Note that in general $\bar C_i\neq \overline{C_i}$. We check that $C_i\cap C_o=\bar C_i\cap \bar C_o=\{0\}$ and find the glued Cauchy kernels:
\begin{align*}
S_g(z,w)&=\frac{1}{\pi(z-w)}\prod_{j=1}^2\left(\frac{z-x_j}{w-x_j}\cdot\frac{w-y_j}{z-y_j}\right)^{s_j}\\
\bar S_g(z,w)&=\frac{1}{\pi\overline{(z-w)}}\prod_{j=1}^2\overline{\left(\frac{z-x_j}{w-x_j}\cdot\frac{w-y_j}{z-y_j}\right)}^{-s_j}
\end{align*}
By induction on $n$ (using surgery at each step to add a pair of insertions), we obtain
\begin{equation}\label{eq:Srhomult}
S_\rho(z,w)=\frac 12R_B\left(\frac{e^{i\nu(w)}}{\pi(z-w)}\prod_{j=1}^n\left(\frac{z-x_j}{w-x_j}\cdot\frac{w-y_j}{z-y_j}\right)^{s_j}\right)
+\frac 12\bar R_B\left(\frac{e^{-i\nu(w)}}{\pi\overline{(z-w)}}\prod_{j=1}^n\overline{\left(\frac{z-x_j}{w-x_j}\cdot\frac{w-y_j}{z-y_j}\right)}^{-{s_j}}\right)+o(|x-y|^{-1})
\end{equation}
with uniform convergence on compact subsets of $\Sigma^2\setminus \Delta_\Sigma$, $\Sigma=\C\setminus\{x_1,\dots,y_n\}$. Through surgery, we also retain invertibility (i.e. $S_\rho$ is uniquely characterised by $\rK S_\rho(.,w)=\delta_w$, $S_\rho(.,w)$ vanishes at infinity), at least for small enough $\delta$.

\paragraph{Error estimates away from singularities.\\}

In Lemma \ref{Lem:Srho2pts} - and in greater generality in \eqref{eq:Srhomult} - we established (existence and) convergence of the kernel $S_\rho$, without a quantitative control on the error (this originates in compactness arguments in Lemma \ref{Lem:Green2pts}).
We would like to gain an explicit error estimate for $S_\rho$. The following is a simple (and rather crude) estimate, which will be enough for our purposes.

\begin{Lem}\label{Lem:Srhoquant}
Let $x_1,y_1,\dots,x_n,y_n$ be $n$ pairs of singularities, $s_1,\dots,s_n\in(-\frac 12,\frac 12)$, $\rho:\pi_1(\C\setminus\{x_1,\dots,y_n\})\rightarrow\U$ the corresponding character. 

For $\delta$ small enough, for $w\in M_W$ there is a unique $\rho$-multivalued function $\gls{Srho}(.,w)$ vanishing at infinity s.t. $\rK S_\rho(.,w)=\delta_w$. Moreover, we have
$$S_\rho(z,w)=\frac 12R_B\left(\frac{e^{i\nu(w)}}{\pi(z-w)}\prod_{j=1}^n\left(\frac{z-x_j}{w-x_j}\cdot\frac{w-y_j}{z-y_j}\right)^{s_j}\right)
+\frac 12\bar R_B\left(\frac{e^{-i\nu(w)}}{\pi\overline{(z-w)}}\prod_{j=1}^n\overline{\left(\frac{z-x_j}{w-x_j}\cdot\frac{w-y_j}{z-y_j}\right)}^{-{s_j}}\right)+O(\delta^{1-2\max_j|s_j|})$$
uniformly on compact sets of $\Sigma^2\setminus\Delta_\Sigma$, $\Sigma=\C\setminus\{x_1,\dots,y_n\}$.
\end{Lem}
\begin{proof}
Existence and uniqueness of $S_\rho$ were obtained in the previous surgery argument.

First we estimate $S_\rho(b,w)$ for $w$ near a singularity, $b$ in a compact set of $\Sigma$. From 
$$\uK_\chi^{-1}(b,w)=O(|w|^{-s})$$
for $\chi=e^{2i\pi s}$, $s\in (0,\frac 12)$ (with singularity at $0$, see Lemma \ref{Lem:chirkernel0}), and the replication identity:
\begin{equation}\label{eq:LemSrhoquantrepl}
S_\rho(.,w)=\uK_{\chi_1,x_1}^{-1}(.,w)\ind_{B(x_1,r)}+S_\rho(\rK(\uK_{\chi_1,x_1}^{-1}(.,w)\ind_{B(x_1,r)})-\delta_w)
\end{equation}
we get the estimate: $S_\rho(b,w)=O(|w|^{|s_1|-1})$ for $w$ close to $x_1$, $b$ in a compact subset of $\Sigma$ (note that $-|s_1|>|s_1|-1$).

With similar (and simpler) arguments (using Theorem \ref{Thm:Kencrit} instead of Lemma \ref{Lem:chirkernel0}) , we obtain the estimate: $S_\rho(b,w)=O(|b-w|^{-1})$ for $b,w$ in a compact subset of $\hat\C\setminus\{x_1,\dots,y_n\}$.

Now fix $w_0$ away from the singularities; we wish to estimate $S_\rho(.,w_0)$. Let us set 
$$\tilde S_\rho(.,w_0)=\uK^{-1}(.,w_0)+\frac 12R_B(a+bz)+\frac 12\bar R_B(a'+b'\bar z)$$ 
in $B(w_0,r)$; and
$$\tilde S_\rho(z,w_0)=\frac 12R_B\left(\frac{e^{i\nu(w)}}{\pi(z-w_0)}\prod_{j=1}^n\left(\frac{z-x_j}{w_0-x_j}\cdot\frac{w_0-y_j}{z-y_j}\right)^{s_j}\right)
+\frac 12\bar R_B\left(\frac{e^{-i\nu(w)}}{\pi\overline{(z-w_0)}}\prod_{j=1}^n\overline{\left(\frac{z-x_j}{w_0-x_j}\cdot\frac{w_0-y_j}{z-y_j}\right)}^{-{s_j}}\right)$$
elsewhere ($a,b,a',b',r$ to be fixed). We have (see below)
\begin{equation}\label{eq:LemSrhoquant1}
S_\rho(z,w_0)-\tilde S_\rho(z,w_0)=\sum_{w\neq w_0}(\rK(\tilde S_\rho(.,w_0))(w)S_\rho(z,w)
\end{equation}
On $\partial B(w_0,r)$, we have $\rK((\tilde S_\rho(.,w_0))=O(\delta r^2)$ for an appropriate choice of $a,b,a',b'$ (simply by matching Taylor polynomials). For $r<|w-w_0|\ll 1$, we have
$\rK((\tilde S_\rho(.,w_0))(w)=O(\delta^4|w-w_0|^{-4})$ by \eqref{eq:findiff}. For $w$ in a compact subset of $\Sigma\setminus\{w_0\}$, we have $\rK((\tilde S_\rho(.,w_0))(w)=O(\delta^4)$, also by \eqref{eq:findiff}. 

For $|w-w_0|\gg1$, $\rK((\tilde S_\rho(.,w_0))(w)=O(\delta^4|w-w_0|^{-4})$. Remark that this ensures that the RHS in \eqref{eq:LemSrhoquant1} is convergent. Using the basic bound $S_\rho(z,w)=O(|z-w|^{-1})$ for $z$ and $w$ large enough, it follows that the RHS of \eqref{eq:LemSrhoquant1} goes to zero as $z\rightarrow\infty$. Together with the unique characterisation of $S_\rho$, this justifies \eqref{eq:LemSrhoquant1}.

Finally, for $w$ close to $x_j$,
$\rK((\tilde S_\rho(.,w_0))(w)=O(\delta^4|w-x_j|^{-|s_j|-3})$ (and similarly near $y_j$). This leads to:
\begin{align*}
S_\rho(z,w_0)-\tilde S_\rho(z,w_0)&=O((r\delta^{-1})(\delta r^2)r^{-1})+O(\delta^{-2}\delta^4)+O(\sum_{k=\delta^{-1}}^\infty k(\delta^4(k\delta)^{-4})(k\delta)^{-1})\\
&+\sum_{j=1}^nO(\sum_{k=1}^{\delta^{-1}} k\delta^4(k\delta)^{-|s_j|-3}(k\delta)^{-|s_j|})
\end{align*}
(the error terms coming respectively from $w$ near $w_0$, away from singularities, near $\infty$, and near a singularity in \eqref{eq:LemSrhoquant1}). Thus, by setting $r=\sqrt\delta$, we get
$$S_\rho(z,w)-\tilde S_\rho(z,w)=O(\delta^{1-2\max_j |s_j|})$$
as stated.
\end{proof}

\paragraph{Estimates near singularities.\\}

Our estimates so far (Lemma \ref{Lem:Srhoquant}) describe the asymptotics of the kernel $S_\rho$ when its arguments are away of each other and of singularities. We now need to analyse what happens in the most singular situation, i.e. when both arguments are adjacent to the same singularity. Lemma \ref{Lem:monzero} allows to transfer information from macroscopic to microscopic scale (w.r.t. the singularity).

If $x\in\{x_1,\dots,y_n\}$ is one of the singularities and $s\in(-\frac 12,\frac 12)$ is the corresponding exponent, we expect that the leading behaviour of $S_\rho$ near $x$ is given by $\rK^{-1}_{\chi,x}$ (with $\chi=e^{2i\pi s}$). This leading term depends only on the local data (position of the singularity and exponent); the subleading term, which we will estimate, accounts for the global data (position of the other singularities and values of the other exponents). Compare e.g. with \eqref{eq:diagestplane} or \eqref{eq:torusdiagest}.

In the continuum, let us consider the ``Robin kernels":
\begin{align*}
r_\rho(w)&=\lim_{z\rightarrow w}\left(\frac{1}{\pi(z-w)}\prod_{j=1}^n\left(\frac{z-x_j}{w-x_j}\cdot\frac{w-y_j}{z-y_j}\right)^{s_j}-\frac{1}{\pi(z-w)}\right)&=\frac 1\pi\sum_{j=1}^n\left(\frac{s_j}{w-x_j}-\frac{s_j}{w-y_j}\right)\\
\bar r_\rho(w)&=
\lim_{z\rightarrow w}\left(\frac{1}{\pi\overline{(z-w)}}\prod_{j=1}^n\overline{\left(\frac{z-x_j}{w-x_j}\cdot\frac{w-y_j}{z-y_j}\right)}^{-s_j}-\frac{1}{\pi\overline{(z-w)}}\right)
&=\frac 1\pi\sum_{j=1}^n\left(\frac{-s_j}{\overline{w-x_j}}-\frac{-s_j}{\overline{w-y_j}}\right)
\end{align*}
(so that $\bar r_\rho(w)=\overline{ r_{\bar\rho}(w)}$).

\begin{Lem}\label{Lem:monrobin}
If $b,w$ are adjacent to the singularity $x\in\{x_1,\dots,y_n\}$, $s=\pm s_k$ the corresponding exponent ($s=s_k$ for $x=x_k$ and $s=-s_k$ for $x=y_k$), and $b\in\Gamma$, we have
$$S_\rho(b,w)-\uK_{\chi,x}^{-1}(b,w)=\frac{2s}{1-\bar\chi}\left(\frac{x-b}{w-x}\right)^s\Re\left(ie^{i\nu(w)}\xi\right)%
+O(\delta^\eps)$$
where $\eps=\eps(s)$ is positive for $s_1,\dots,s_n$ small enough and
$$\xi=-\frac{s_k}{x_k-y_k}+\sum_{j:j\neq k}\left(\frac{s_j}{x_k-x_j}-\frac{s_j}{x_k-y_j}\right)=\lim_{w\rightarrow x}\left(\pi r_\rho(w)-\frac s{w-x}\right)$$
\end{Lem}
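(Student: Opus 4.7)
The plan is to represent the difference $h(u) := S_\rho(u,w) - \uK_{\chi,x}^{-1}(u,w)$ (for $w$ adjacent to $x$) as a $\chi$-multivalued discrete holomorphic function on a ball around $x$, and then invoke Lemma \ref{Lem:monzero} to push asymptotic information from a macroscopic annulus down to the lattice scale where $b$ sits. Both kernels are $\chi$-multivalued around $x$ (other monodromies being invisible locally) and satisfy $\rK(\cdot,w)=\delta_w$, so $h$ lies in $(\C^{M_B})_\chi$ and is discrete holomorphic throughout $B(x,r_0)$ for any $r_0$ smaller than half the distance from $x$ to the nearest other singularity.

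First I would identify the leading $(u-x)^s$ and $\overline{(u-x)}^{-s}$ coefficients $a_0,b_0$ of $h$ on the annulus $A(r_0/2,r_0)$. On that macroscopic annulus, writing the Dolbeault factor of $S_\rho$ as $\prod_j (\frac{u-x_j}{w-x_j}\cdot\frac{w-y_j}{u-y_j})^{s_j} = (\frac{u-x}{w-x})^s G(u,w)$ with $G$ regular near $x$ (and $G(x,x)=1$), the asymptotic of $S_\rho$ becomes the asymptotic of $\uK_{\chi,x}^{-1}$ modulated by $G(u,w)$. The singular-at-$x$ parts $(u-x)^{s-1}$ and $\overline{(u-x)}^{-s-1}$ match between the two expansions and cancel in $h$ (as forced by the regularity of $h$ at $x$), so what survives is the first Taylor correction: writing $G(u,w) = 1 + (u-w)A(w) + O(|u-w|^2)$ with $A(w) = -s_k/(w-y_k) + \sum_{j\ne k} s_j(1/(w-x_j) - 1/(w-y_j))$ (so that $A(w)\to z$ as $w\to x$), the $(u-x)^s$ coefficient of $h$ comes out proportional to $(w-x)^{-s} z$, and a parallel computation gives $b_0$ proportional to $\overline{(w-x)}^{s}\bar z$ for the antiholomorphic part.

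Then I would apply Lemma \ref{Lem:monzero} at scale $R = r_0$ to propagate $(a_0,b_0)$ down to $h(b)$ for $b$ adjacent to $x$, landing on an explicit linear combination of $f_\chi(b)$ and $h_\chi(b)$. These adjacent-vertex values are pinned down by Lemmas \ref{Lem:monholom} and \ref{Lem:monpole} (supplying the $\pi/\sin(\pi s)$ normalisation); because $\bar r_\rho(w) = \overline{r_{\bar\rho}(w)}$ forces $b_0$ to be the complex conjugate of $a_0$ in the appropriate sense, the holomorphic and antiholomorphic pieces combine through a real part. After cancellation of the $\Gamma$-function, $\sin(\pi s)$, and monodromy $(1-\bar\chi)$ prefactors against the corresponding normalisations of $f_\chi$ and $h_\chi$ at black vertices adjacent to the singularity, the combination collapses to $\frac{2s}{1-\bar\chi}(\frac{x-b}{w-x})^s\Re(ie^{i\nu(w)}z)$.

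The main obstacle is the first step, and specifically the book-keeping of normalisation constants: I need to verify that the surgery-based asymptotic expansion of $S_\rho$ near the singularity $x$ picks up the same $\Gamma(1-s),\Gamma(1+s)$ prefactors as $\uK_{\chi,x}^{-1}$ (compare Lemma \ref{Lem:monCauchy} part 2), so that the $(u-x)^{s-1}$ and $\overline{(u-x)}^{-s-1}$ contributions genuinely cancel in $h$ rather than leaving a spurious leading-order discrepancy of size $|x-y|^0$. This would likely require rerunning the surgery construction with $w$ at the microscopic scale, tracking prefactors carefully. The stated error $O(|x-y|^{-1-\eps})$ then follows from the H\"older-type remainder in Lemma \ref{Lem:monzero} combined with the quantitative $O(\delta^{1-2\max_j|s_j|})$ error on the macroscopic asymptotic of $S_\rho$ and the Taylor remainder estimate for $G$, with $\eps=\eps(s)$ arising from the exponent in Lemma \ref{Lem:monzero}.
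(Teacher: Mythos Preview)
Your overall strategy---form the difference of kernels, read off the $(u-x)^s$ and $\overline{(u-x)}^{-s}$ coefficients on a macroscopic annulus, then push down to the lattice scale via Lemma~\ref{Lem:monzero} and evaluate using Lemmas~\ref{Lem:monholom}--\ref{Lem:monpole}---is exactly the paper's approach.

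The obstacle you correctly flag (why $S_\rho(\cdot,w)$ for $w$ adjacent to $x$ should carry the same $\Gamma(1\mp s)$ prefactors as $\uK_{\chi,x}^{-1}$ in Lemma~\ref{Lem:monCauchy}(2)) is real, but the paper resolves it more cheaply than by rerunning surgery at the microscopic scale. Instead of working with your $h=S_\rho-\uK_{\chi,x}^{-1}$ directly, the paper truncates: set $f(b)=S_\rho(b,w)-\uK_{\chi,x}^{-1}(b,w)\ind_{B(x,r)}$ and use the discrete Cauchy integral formula
\[
f(b)=\sum_{w'\neq w} S_\rho(b,w')\,\rK\!\left(\uK_{\chi,x}^{-1}(\cdot,w)\ind_{B(x,r)}\right)(w')
\]
for $b$ in the annulus $A(r/4,r/2)$. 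The point is that now $w'$ ranges over $\partial B(x,r)$, which is at \emph{macroscopic} distance from all singularities, so the standard (prefactor-free) expansion of $S_\rho(b,w')$ applies directly. The $\Gamma(1\mp s)$ factors enter only through $\uK_{\chi,x}^{-1}(b',w)$ for $b'$ near $\partial B(x,r)$, where Lemma~\ref{Lem:monCauchy}(2) supplies them with no further work. The contour integral then evaluates to the difference of the full product $\prod_j(\cdots)^{s_j}$ and the single factor $(\frac{z-x_k}{w-x_k})^{s_k}$, each multiplied by the appropriate $\Gamma$-factor, and the leading singular terms cancel automatically---so there is no ``spurious discrepancy'' to rule out separately.
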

\begin{proof}
Let $w$ be adjacent to the singularity $x=x_k$ (which is in a face of $M$, see Figure \ref{fig:monloc}); by translating we may assume $x_k=0$. Let $r$ be of order 1 and small enough so that other singularities are outside of $\overline{B(0,r)}$. Also set $s=s_k$, $\chi=e^{2i\pi s}$. We consider $f(b)=S_\rho(b,w)-\uK_{\chi}^{-1}(b,w)\ind_{B(0,r)}$ which is discrete holomorphic except near $\partial B(0,r)$ and is $\rho$-multivalued. The goal is to estimate $f(b)$ for $b=O(\delta)$ (in particular for $b$ adjacent to the singularity). We have the Cauchy integral formula
$$f(b)=\sum_{w'}S_\rho(b,w')\rK(f)(w')=\sum_{w'\neq w}S_\rho(b,w')\rK(\uK_\chi^{-1}(.,w)\ind_{B(0,r)})(w')$$
for $b\in B(0,r)$ (e.g. from uniqueness in Lemma \ref{Lem:Srhoquant}). From Lemma \ref{Lem:monCauchy} and estimates on $S_\rho$ (Lemma \ref{Lem:Srhoquant}), we obtain for $b\in A(\frac r4,\frac r2)$:
\begin{align*}
f(z)&=\frac{\Gamma(1-s)e^{i\nu(w)}}2R_B\frac{1}{\pi(z-w)}\left(
\prod_{j=1}^n\left(\frac{z-x_j}{w-x_j}\cdot\frac{w-y_j}{z-y_j}\right)^{s_j}
-\left(\frac{z-x_k}{w-x_k}\right)^{s_k}\right)\\
&\hphantom{=}+\frac{\Gamma(1+s)e^{-i\nu(w)}}2\bar R_B
\frac{1}{\pi\overline{(z-w)}}\left(
\prod_{j=1}^n\overline{\left(\frac{z-x_j}{w-x_j}\cdot\frac{w-y_j}{z-y_j}\right)}^{-s_j}
-\overline{\left(\frac{z-x_k}{w-x_k}\right)}^{-s_k}
\right)\\
&\hphantom{=}+O(\delta^{2-|s|}+O(\delta^{1-|s|-2\max_j|s_j|})\\
&=R_B(\psi_1(z))+\overline R_B(\psi_2(z))+O(\delta^{1-3\max_j|s_j|})
\end{align*}
where $\psi_1(z)=z^s\sum_{n\geq 0}a_n z^n$, $\psi_2(z)=\bar z^{-s}\sum_{n\geq 0}b_n \bar z^n$,
\begin{align*}
a_0&=\frac{\Gamma(1-s)e^{i\nu(w)}}{2\pi w^s}\left(-\frac{s_k}{x_k-y_k}+\sum_{j\neq k}\left(\frac{s_j}{x_k-x_j}-\frac{s_j}{x_k-y_j}\right)\right)(1+O(\delta))\\
b_0&=\frac{\Gamma(1+s)e^{-i\nu(w)}}{2\pi \bar w^{-s}}\left(\frac{s_k}{\overline{x_k-y_k}}+\sum_{j\neq k}\left(\frac{-s_j}{\overline{x_k-x_j}}-\frac{-s_j}{\overline{x_k-y_j}}\right)\right)(1+O(\delta))
\end{align*}
(Recall that Lemma \ref{Lem:monCauchy} is written for $\delta=1$, $s\in(0,\frac 12)$. Here, on $\partial B(0,r)$, $\rK^{-1}_\chi(.,w)=O(\delta^{-|s|})$; the approximation error on $S_\rho$ is $O(\delta^{1-2\max_j|s_j|})$. This gives the main contribution to the error term $O(\delta^{1-3\max_j|s_j|})$, with smaller contributions coming from the approximation error on $\rK_\chi^{-1}$, and the Riemann sum approximation.)

By Lemma \ref{Lem:monzero}, we conclude:
$$f(b)=\frac{b_0}{\bar\tau 2^{-s}\Gamma(s)}f_\chi(b)+\frac{a_0}{\tau 2^s\Gamma(-s)}h_\chi(z)+O(\delta^{1-4\max_j|s_j|-\eta})$$
for $b=O(1)$, $\eta>0$ fixed. Taking into account Lemma \ref{Lem:monholom}, we have for $b$ adjacent to the singularity:
\begin{align*}
f_\chi(b)&=\frac{2i\pi e^{i\nu(b)}}{\bar \chi-1}\overline{2(x-b)}^{1-s}\\
h_\chi(b)&=\overline{f_{1,\bar\chi}(b)}=-\frac{2i\pi e^{-i\nu(b)}}{\bar \chi-1}{(2(x-b))}^{1+s}
\end{align*}
and $\tau=\frac 2\delta(b-x)$ if $b\in M_V$ adjacent to $x$. (Recall that $u\Gamma(u)=\Gamma(u+1)$).
\end{proof}

\subsection{Variational analysis}\label{ssec:monovariat}

Here we apply the technical results of the previous subsections (culminating in Lemma \ref{Lem:monrobin}) to the asymptotics of dimer electric correlations. As mentioned earlier, there are two distinct types of variation we will consider: displacing a puncture while fixing the character $\chi$ (which, given our estimates, is enough to handle the case where $\chi$ is close enough to $1$); and varying $\chi$ for a fixed position of the punctures. Finally we consider a more general situation with several pairs of punctures, building in particular on Section \ref{Sec:surgery}.

\subsubsection{Displacing the punctures}

The previous local analysis (Lemma \ref{Lem:monrobin}) may be used to estimate the pair ``electric correlator" 
$$\langle \chi^{h(y)-h(x)}\rangle=\langle\exp(2i\pi s(h(y)-h(x)))\rangle$$
for $x,y\in M^\dg$ at large distance, where $h$ is the height function associated with a matching ${\mf m}$ of $M$ (see Section \ref{ss:height} - the choice of normalisation is crucial here) and $\chi=e^{2i\pi s}$ is fixed ($s\in (0,\frac 12)$). This is a natural discrete analogue of \eqref{eq:electcorrGFF}.

More precisely, we are interested in estimating $\langle \chi^{h(y')-h(x)}\rangle/\langle\chi^{h(y)-h(x)}\rangle$ within $o(|y'-y|/|x-y|)$, i.e. logarithmic first differences. 

\paragraph{Set-up.}

Fix $x\in M^\dg$ and a simple path $\gamma'$ from $x$ to $y'$ on $M^\dg$; and that the penultimate vertex of $\gamma'$ is $y$: $\gamma'=(x,\dots,y,y')$. We also denote $\gamma=(x,\dots,y)$, the path stopped at $y$. Let $E_r=E_r(\gamma)$ be the set of edges of $M$ crossed by $\gamma$, with the black vertex on the righthand side of $\gamma$; and $E_l=E_l(\gamma)$ the edges crossed by $\gamma$, with the black vertex on the lefthand side. Then (see Section 5 in \cite{KPW})
$$h(y)-h(x)=\frac1{2\pi}{\rm wind}(\gamma)+\sum_{e\in E_r}\ind_{e\in{\mf m}}-\sum_{e\in E_l}\ind_{e\in{\mf m}}
$$ 
In order to define the winding number of $\gamma$, we draw it as a succession of smooth arcs connecting midpoints of segments $(bb')$ (ie centers of faces of $M$), where $b\in\Gamma$ and $b'\in\Gamma^\dg$, in such a way that $\gamma$ crosses these segments normally (see Figure \ref{Fig:winding}).\\
\begin{figure}[htb]
\begin{center}
\leavevmode
\includegraphics[width=0.7\textwidth]{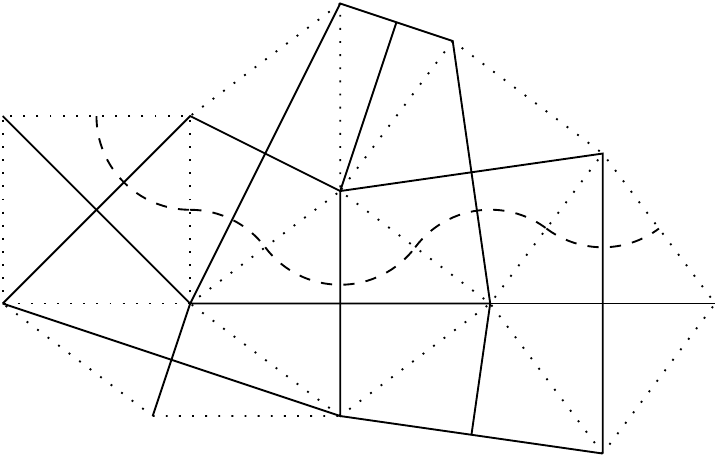}
\end{center}
\caption{Dotted: (portion of a) lozenge tiling $\Lambda$. Solid: graph $M$. Dashed, curved: path $\gamma$ crossing normally the edges of $\Lambda$.
}
\label{Fig:winding}
\end{figure}
Let $K_\gamma:M_B\rightarrow M_W$ be the operator defined by $K_\gamma(b,w)=\bar\chi \rK(b,w)$ if $(bw)\in E_r(\gamma)$; $K_\gamma(b,w)=\chi \rK(b,w)$ if $(bw)\in E_l(\gamma)$; and $K_\gamma(b,w)=\rK(b,w)$ otherwise.\\
Let us remark that if $\tilde\gamma$ is another path from $x$ to $y$, then $K_{\tilde\gamma}$ is conjugate to $K_\gamma$ by diagonal matrices involving the winding number of the loop obtained by concatenating $\gamma$ with $\tilde\gamma$ (in reverse orientation). Moreover, by using $\gamma$ as a branch cut (lifting functions on $M$ to functions in $(\C^M)_\rho$, where $\rho$ is the unitary character of $\pi_1(\C\setminus\{x,y\})$ defined as in Section \ref{sss:moninv}), one can identify $K_\gamma$ with $\rK$ operating on $(\C^{M_B})_\rho$. Correspondingly, we get an inverting kernel $S_\gamma:\C^{M_W}\rightarrow\C^{M_B}$. 

\begin{Lem}\label{Lem:fredelectr}
$K_\gamma\uK^{-1}:M_W\rightarrow M_W$ is a finite rank perturbation of the identity and
$$\langle \chi^{h(y)-h(x)}\rangle=e^{is.{\rm wind}(\gamma)}\det(K_\gamma\uK^{-1})$$
Moreover, for $|s|<\frac 14$, $\langle  \chi^{h(y)-h(x)}\rangle\neq 0$.
\end{Lem}
\begin{proof}
For the first statement, see the proof of Lemma \ref{Lem:detplane}.\\
We may write
$$|\langle \chi^{h(y)-h(x)}\rangle|^2=\langle\chi^{(h-\tilde h)(y)-(h-\tilde h)(x)}\rangle$$
where $\tilde h$ is the height field of an independent sample of the dimer model. The superimposition of the two dimer configurations (corresponding to $h,\tilde h$) is a so-called {\em double-dimer configuration}. A classical argument of ``cycle rotation" shows that
$$ |\langle \chi^{h(y)-h(x)}\rangle|^2=\E\left((\Re(\chi)^{N_{xy}}\right)$$
where $N_{xy}$ is the number of double-dimer loops separating $x$ from $y$, which is bounded by the graph distance between $x$ and $y$ on $M^\dg$. (In order to avoid technical difficulties for infinite volume dimer measures, one can apply the arguments to dimers in a large box and then take a weak limit).

We give an alternative argument, for $\chi\neq 1$ arbitrary and $|y-x|$ large enough, based on the existence and uniqueness of $S_\rho$, see Lemma \ref{Lem:Srho2pts} (this is in a sense more self-contained and extends directly to the case with $n$ pairs of singularities).

Let $\gamma_W$ be the (finite) set of vertices of $M_W$ adjacent to $\gamma$ (i.e. those for which $K_\gamma(w,.)\neq\rK(w,.)$). We may regard $\det(K_\gamma\uK^{-1})$ as the determinant of an operator on $\C^{\gamma_W}$:
$$D_1:f\longmapsto (K_\gamma(\sum_{w\in \gamma_W}f(w)\uK^{-1}(.,w)))_{|\gamma_W}$$
It is enough to construct an inverse of this operator. Consider
$$D_2:g\longmapsto (\rK(\sum_{w\in \gamma_W}g(w)S_\gamma(.,w)))_{|\gamma_W}$$
By construction $S_\gamma$ is a right inverse of $K_\gamma$ in that $K_\gamma S_\gamma(.,w)=\delta_w$ for all $w\in M_W$. It is also a left inverse: $\sum_wS_\gamma(.,w)K_\gamma(w,b)=\delta_b$ for all $b\in M_B$ (otherwise the difference would give a $\rho$-multivalued discrete holomorphic function vanishing at infinity).

Remark that for $w\in\gamma_W$, $K_\gamma(\uK^{-1}(.,w))=0$ on $M_W\setminus\gamma_W$. Then $S_\gamma(K_\gamma(\uK^{-1}(.,w)))=\uK^{-1}(.,w)$, and it follows that $D_2$ is a left inverse of $D_1$.
\end{proof}

\paragraph{Asymptotics.\\}

Here we (finally) apply the analysis of discrete holomorphic functions with monodromy (Lemmas \ref{Lem:chirharmest} to \ref{Lem:monrobin}) to dimers.

\begin{Prop}\label{Prop:electrsmalls}
For $s$ small enough, there exists a $c=c(\Lambda,s)> 0$  and $\eps>0$ such that 
$$|\langle \chi^{h(y)-h(x)}\rangle|=|x-y|^{-2s^2}(c+O(|x-y|^{-\eps}))$$
\end{Prop}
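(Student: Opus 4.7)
The plan is to take moduli in the exact ratio formula derived just above and iterate it along a discrete path from $y$ back to a fixed base point $y_0$, reading off the $|y-x|^{-2s^2}$ asymptotic as a Riemann sum. The starting point is a geometric cancellation: $w \in M_W$ is the center of a rhombus of $\Lambda$, while $y, y' \in M^\dg$ are midpoints of two edges of that rhombus; since every rhombus of side $\delta$ has its four edge midpoints at distance $\delta/2$ from its center, one has $|y-w| = |y'-w|$ and hence
\[
\left|\left(\frac{y'-w}{y-w}\right)^{-s}\right| = 1.
\]
Taking moduli thus collapses the ratio formula to
\[
\frac{|\langle\chi^{h(y')-h(x)}\rangle|}{|\langle\chi^{h(y)-h(x)}\rangle|} = 1 - 2s^2\,\Re\frac{y'-y}{y-x} + O(|y-x|^{-1-\eps}).
\]

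Fix a base point $y_0 \in M^\dg$ with $|y_0-x| \asymp 1$ (and $\langle\chi^{h(y_0)-h(x)}\rangle \neq 0$, which holds for $s$ small by continuity at $s=0$), and choose a nearest-neighbour path $y_0, y_1, \ldots, y_n = y$ in $M^\dg$ with $|y_k-x| \asymp \max(1,k)$. Iterating the modulus identity and taking logs,
\[
\log|\langle\chi^{h(y)-h(x)}\rangle| - \log|\langle\chi^{h(y_0)-h(x)}\rangle| = \sum_{k=0}^{n-1}\log\!\left(1 - 2s^2\Re\tfrac{y_{k+1}-y_k}{y_k-x}\right) + \sum_{k=0}^{n-1} O(|y_k-x|^{-1-\eps}),
\]
and the expansion $\log(1+u) = u + O(u^2)$ together with $|y_{k+1}-y_k| = O(1)$ yields
\[
\sum_k \log(\cdots) = -2s^2\,\Re\sum_k \frac{y_{k+1}-y_k}{y_k-x} + \sum_k O(|y_k-x|^{-2}).
\]
The central sum is a step-discretisation of $\Re\int_\gamma d\xi/(\xi-x) = \log(|y-x|/|y_0-x|)$ with error of size $\sum_k O(|y_k-x|^{-2})$. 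Using $|y_k-x| \asymp k$, all three remainder series converge absolutely (this is where $\eps(s) > 0$ is used) and their tails are $o(1)$ as $n \to \infty$. Combining,
\[
\log|\langle\chi^{h(y)-h(x)}\rangle| = -2s^2\log|y-x| + C(\Lambda, s, x) + o(1),
\]
which gives the claim with $c = e^{C(\Lambda, s, x)} > 0$.

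The main obstacle is the geometric identity $|y-w| = |y'-w|$: without this cancellation the ``local'' factor would accumulate along the path as an uncontrolled product, and extracting a clean $|y-x|^{-2s^2}$ scaling would require a delicate path-averaging argument. The remaining technical points---nonvanishing of $\langle\chi^{h(y_0)-h(x)}\rangle$ and absolute convergence of the error series---both reduce to taking $s$ small enough that $\eps(s) > 0$ in Lemma \ref{Lem:monrobin} and that the short-distance correlator stays near its $s=0$ value $1$.
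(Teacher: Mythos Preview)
Your approach is essentially the paper's: both take the modulus of the one-step ratio formula, observe that the local phase factor has modulus one, and telescope. Your explicit verification that $|y-w|=|y'-w|$ (midpoints of edges of a rhombus are equidistant from its center) is exactly the geometric fact the paper uses tacitly when it passes to $\left|\frac{\langle\chi^{h(y')-h(x)}\rangle|y-x|^{-2s^2}}{\langle\chi^{h(y)-h(x)}\rangle|y'-x|^{-2s^2}}\right|=1+O(|x-y|^{-1-\eps})$. Where the paper forms this normalized ratio directly, you instead isolate $-2s^2\Re\sum_k (y_{k+1}-y_k)/(y_k-x)$ and recognize it as a Riemann sum for $-2s^2\log|y-x|$; the two computations are equivalent up to reorganizing the $O(|y_k-x|^{-2})$ errors.

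There is one genuine gap. Your final constant is $C(\Lambda,s,x)$, but the Proposition asserts $c=c(\Lambda,s)$ independent of $x$. The paper closes this with the symmetric observation that moving $x$ to an adjacent $x'$ gives $|\langle\chi^{h(y)-h(x')}\rangle/\langle\chi^{h(y)-h(x)}\rangle|=1+O(|x-x'|/|x-y|)\to 1$ as $y\to\infty$, forcing $c(x,\Lambda,s)=c(x',\Lambda,s)$. You should add this step. A smaller technical point: anchoring at $|y_0-x|\asymp 1$ is awkward since the error $O(|y-x|^{-1-\eps})$ in Lemma~\ref{Lem:monrobin} is an asymptotic as $|y-x|\to\infty$ and need not be informative at unit scale; it is cleaner to start the telescoping from some fixed large $|y_0-x|\ge R_0$ and absorb $\log|\langle\chi^{h(y_0)-h(x)}\rangle|+2s^2\log|y_0-x|$ into the constant (independence from the choice of $y_0$ then follows from the same ratio estimate).
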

\begin{proof}
We analyse the effect of taking the last step along $\gamma'$ (notations are as before Lemma \ref{Lem:fredelectr}). As mentioned earlier, the goal is to estimate precisely enough
$$\frac{\langle \chi^{h(y')-h(x)}\rangle}{\langle \chi^{h(y)-h(x)}\rangle}=
\frac{\det(K_{\gamma'}\uK^{-1})}{\det(K_{\gamma}\uK^{-1})}
$$
(by Lemma \ref{Lem:fredelectr}).

Let $(bw)$ be the edge of $M$ separating $y,y'\in M^\dg$. Recall (from Lemma \ref{Lem:Srho2pts}) that $S_{\gamma}(.,w)$ can be characterised as the only function in $\C^{M_B}$ vanishing at infinity such that $K_\gamma S_{\gamma}(.,w)=\delta_w$; $S_{\gamma'}(.,w)$ is characterised similarly. Since $K_{\gamma'}-K_\gamma$ is supported on $w$ and 
$$(K_{\gamma}S_{\gamma'}(.,w))(w)=1+(1-\chi^{\pm 1}) \rK(w,b)S_{\gamma'}(b,w)$$
- here $\chi^{\pm 1}=\chi$ (resp. $\chi^{-1}$) if $b$ is to the left (resp. to the right) of $\gamma'$ - we deduce
$$S_{\gamma'}(.,w)=(1+(1-\chi^{\pm 1}) \rK(w,b)S_{\gamma'}(b,w))S_\gamma(.,w)$$

Let us observe that $K_{\gamma'} S_{\gamma}=\Id+(K_{\gamma'}-K_{\gamma})S_{\gamma}$ is a rank 1 perturbation of the identity and that
$$(K_{\gamma'} S_{\gamma})(K_{\gamma}\uK^{-1})=K_{\gamma'} \uK^{-1}$$
as bounded operators in $L^2(M_W,\mu)$ where $\mu\{w\}=(1+|w|)^{-\eps}$ for some $\eps>0$. This follows from $S_{\gamma}K_{\gamma}=\Id$ (indeed, $(S_{\gamma}K_{\gamma})\delta_b-\delta_b$ is in the kernel of $K_{\gamma}$ and vanishes at infinity, hence is identically zero). Consequently,
$$\frac{\det(K_{\gamma'}\uK^{-1})}{\det(K_{\gamma}\uK^{-1})}
=\det(K_{\gamma'}S_\gamma)=\det(\Id+(K_{\gamma'}-K_\gamma)S_\gamma)=1+(K_{\gamma'}-K_\gamma)(w,b)S_\gamma(b,w)=1+(\chi^{\pm 1}-1)\rK(w,b)S_\gamma(b,w)
$$
since $K_{\gamma'}-K_\gamma$ has rank 1. Then we have the key estimate (Lemma \ref{Lem:monrobin})
$$S_\rho(b,w)-\uK_{\bar\chi,y}^{-1}(b,w)=\frac{2s^2}{1-\chi}\left(\frac{w-y}{y-b}\right)^s\Re\left(ie^{i\nu(w)}/(y-x)\right)+O(|x-y|^{-1-\eps})$$
(Recall that Lemma \ref{Lem:monrobin} is written for fixed singularities and mesh $\delta\searrow 0$. Here we may set $M_\delta=(y-x)^{-1}M$, $\delta=|y-x|^{-1}$. Scaling by $\delta$ multiplies matrix elements of $\rK$ by $\delta$ and those of $S_\rho$ by $\delta^{-1}$).

Together with $\pm i\rK(w,b)=y'-y$, $\frac{w-y}{y-b}=\frac{y-w}{y'-w}$ (Section \ref{sss:critgraphs}) and the exact result (Lemma \ref{Lem:monCauchy})
$$\rK(w,b)\uK^{-1}_{\bar\chi,y}(b,w)=\frac 1{1-\chi}\left(1-\left(\frac{y'-w}{y-w}\right)^{-s}\right)$$
this yields
$$\frac{\det(K_{\gamma'}\uK^{-1})}{\det(K_{\gamma}\uK^{-1})}
=\left(\frac{y'-w}{y-w}\right)^{-s}\left(1-2s^2\Re\left(\frac{y'-y}{y-x}\right)\right)+O(|x-y|^{-1-\eps})$$
From the previous computation we get
$$\left|\frac{\langle\chi^{h(y')-h(x)}\rangle\cdot|y-x|^{-2s^2}}
{\langle\chi^{h(y)-h(x)}\rangle\cdot|y'-x|^{-2s^2}}\right|=1+O(|x-y|^{-1-\eps})$$
with $\eps>0$ for $s$ small enough.

To wrap up the argument, one can e.g. fix $x$ and set
$$\varphi_x(y)=\log|\langle\chi^{h(y)-h(x)}\rangle|+2s^2\log|y-x|$$
for $y\in M^\dg\setminus\{x\}$. Then we have 
$$\varphi_x(y')-\varphi_x(y)=O(|y-x|^{-1-\eps})$$
which trivially implies that $\varphi_x(y)$ has a finite limit $\log(c(x,\Lambda))$ as $y\rightarrow\infty$. (Indeed, if $y_n\rightarrow\infty$ with $y_n=n+O(1)$, then $|\varphi_x(y_{m+n})-\varphi_x(y_m)|=O(m^{-\eps})$; and $|\varphi_x(y)-\varphi_x(y_n)|=O(n^{-\eps})$ if $y=n+O(1)$, since the graph distance on $M^\dg$ and the Euclidean distance are comparable). More precisely,
$$\varphi_x(y)=\log(c(x,\Lambda))+O(|y-x|^{-\eps})$$
Remark that the error term is uniform in $(\Lambda,x)$ under $(\spadesuit)$ (see \eqref{eq:spade}), since Lemma \ref{Lem:monrobin} holds along any sequence of tilings $(\Lambda_{\delta_n})$, $\delta_n\searrow 0$. Moreover, for $x'\sim x$,
$$\varphi_x(y)-\varphi_{x'}(y)=O(|y-x|^{-1})$$
(exchanging the roles of $x$ and $y$) and consequently $c(x,\Lambda)=c(x',\Lambda)$ does not depend on $x$, which concludes.
\end{proof}

More generally, if $\{z_1,\dots,z_{2n}\}=\{x_1,\dots,y_n\}$ are marked points, $s_1,\dots,s_{2n}>0$ are small enough exponents with $s_1+s_2=\cdots=s_{2n-1}+s_{2n}=0$, we have (by ``integrating" Lemma \ref{Lem:monrobin}, as in Proposition \ref{Prop:electrsmalls}; a more general statement will be given in Theorem \ref{Thm:electr})
$$\left\langle\exp(2i\pi\sum_{j=1}^{2n} s_jh(z_j))\right\rangle=\prod_{i<j}|z_i-z_j|^{2s_is_j}(c(\Lambda)+O(R^{-\eps}))$$
where the pairwise distances $|z_i-z_j|$ are all comparable to $R\gg 1$.

This in agreement with the electric vertex correlator heuristic (see \eqref{eq:electcorrGFF}), i.e. what we would get by replacing $h(z_j)$ by the average of a scalar free field $\phi$ on a {\em microscopic} ball centred at $z_j$.  

The pairing of insertions $(z_1,z_2)$,\dots, $(z_{2n-1},z_{2n})$ is somewhat restrictive. For instance, it is not obvious how to treat correlators such as 
$$\langle\exp(\frac{2i\pi}3(h(x)+h(y)+h(z))\rangle$$
or even what to expect (as compactification may start to play a role). Indeed, up to a phase, one can write
$$\langle\exp(\frac{2i\pi}3(h(x)+h(y)+h(z))\rangle
=\langle\exp(\frac{2i\pi}{3}(h(x)-h(z)+h(y)-h(z)))\rangle
$$
and if $z'$ is close to, but at macroscopic distance of, $z$, we have asymptotically
$$\langle\exp(\frac{2i\pi}{3}(h(x)-h(z)+h(y)-h(z')))\rangle
\sim |(x-z)(x-z')(y-z)(y-z')|^{-\frac 29}|(x-y)(z-z')|^{\frac 29}
$$
Taking $z'\rightarrow z$ yields the heuristic
$$\langle\exp(\frac{2i\pi}3(h(x)+h(y)+h(z))\rangle
\stackrel{??}{\sim} |(x-z)(y-z)|^{-\frac 49}|x-y|^{\frac 29}
$$
which is plainly incorrect (as it is not symmetric in $x,y,z$).

\subsubsection{Varying the exponent}

We turn to the case where $s$ is not small, and for simplicity we first discuss the case of two marked points. Remark that all previous estimates are uniform in $s$ for $s$ in a compact interval of $(0,\frac 12)$. Again we use a variational argument; now $x,y$ are fixed and the exponent $s$ is varying.
\begin{Prop}\label{Prop:electrpairs}
If $s\in(0,\frac 12)$, there is a constant $c(\Lambda,s)$ such that 
$$|\langle\chi^{h(y)-h(x)}\rangle|=\exp(2c(\Lambda,s))|x-y|^{-2s^2}(1+o(1))$$
as $|x-y|\rightarrow\infty$.
\end{Prop}
\begin{proof}
Starting from $|\langle \chi^{h(y)-h(x)}\rangle|=|\det(K_\gamma\uK^{-1})|$ (see Lemma \ref{Lem:fredelectr}), where $\chi=e^{2i\pi s}$ and $K_\gamma$ is implicitly a function of $s$, we get
$$\frac{d}{ds}\log|\langle \chi^{h(y)-h(x)}\rangle|=\Re\Tr(\dot{K_\gamma}S_{\gamma})$$
and $\dot{K_\gamma}(w,b)=\pm 2i\pi\chi^{\pm 1}\rK(w,b)$ if $\gamma$ crosses the edge $(wb)$. Hence we need to evaluate $S_\gamma(w,b)$ for these edges. We have (uniformly in $(z,w)$ in a compact subset of $\{(z,w):z\neq x,y;w\neq x,y;z\neq w\}$; and in $s$ in a compact subset of $(0,\frac 12)$)
$$S_\gamma(z,w)=\frac 12R_B\left(\left(\frac{z-x}{z-y}\right)^s\left(\frac{w-x}{w-y}\right)^{-s}\frac{e^{i\nu(w)}}{\pi(z-w)}\right)
+\frac 12\bar R_B\left(\overline{\left(\frac{z-x}{z-y}\right)^{-s}}\overline{\left(\frac{w-x}{w-y}\right)^{s}}\frac{e^{-i\nu(w)}}{\pi\overline{(z-w)}}\right)+o(|x-y|^{-1})$$
by Lemma \ref{Lem:Srhoquant}. (Recall that that Lemma \ref{Lem:Srhoquant} is written for small mesh and fixed punctures; by scaling we get an estimate for fixed mesh and large separation of punctures). Here $\left(\frac{u-x}{u-y}\right)^{s'}=\exp(s'\log((u-x)/(u-y)))$ and $u\mapsto\log((u-x)/(u-y))$ is a determination of with branch cut along $\gamma$. 

Since
$$\left(\frac{z-x}{z-y}\right)^s\left(\frac{w-x}{w-y}\right)^{-s}\frac{1}{z-w}=\frac{1}{z-w}+s\left(\frac 1{w-x}-\frac 1{w-y}\right)+O(|z-w|/|w-x|^2+|z-w|/|w-y|^2)$$ 
we obtain that, for $b\sim w$ (by \eqref{eq:dirprobcauchy})
\begin{equation}\label{eq:Sgammadiag}
S_\gamma(b,w)-\uK^{-1}(b,w)=\frac{s}{2\pi} R_B\left(e^{i\nu(w)}\left(\frac{1}{w-x}-\frac{1}{w-y}\right)\right)-\frac{s}{2\pi}\bar R_B\left(e^{-i\nu(w)}\left(\frac{1}{\overline{w-x}}-\frac{1}{\overline{w-y}}\right)\right)+o(1/|x-y|)
\end{equation}
except if $(bw)$ crosses $\gamma$, in which case the LHS is replaced with $\chi^{\pm 1}(S_\gamma(b,w)-\uK^{-1}(b,w))$.\\
Let $\gamma'$ be a subpath of $\gamma$ running from $x'$ to $y'$, and $E(\gamma')$ the set of edges of $M$ crossed by $\gamma'$. If $(vv')=(wb)^\dg$ (as oriented edges), we have $K(w,b)=i(v-v')$ and $\rK(w,b)=e^{-i\nu(w)-i\nu(b)}i(v-v')$ (see Section \ref{sss:critgraphs}). Thus if $v_i,v_{i+1}$ are consecutive points on $\gamma'$ corresponding to the unoriented edge $(wb)$,
$$\pm \rK(w,b)(R_B(e^{i\nu(w)}\alpha)+\bar R_B(e^{-i\nu(w)}\beta))=
i\alpha(v_i-v_{i+1})-i\beta\overline{(v_i-v_{i+1})}=
-i\alpha\int_{v_i}^{v_{i+1}}dz+i\beta\int_{v_i}^{v_{i+1}}d\bar z$$
where $\pm=+$ if $(bw)\in E_r(\gamma)$ and $-$ if $(bw)\in E_r(\gamma)$. Then
\begin{align*}
\Re\sum_{(bw)\in E(\gamma')}\dot{K_\gamma}(b,w)S_\gamma(b,w)&=-s\Re\int_{x'}^{y'}\left(\frac{1}{z-x}-\frac{1}{z-y}\right)dz-s\Re\int_{x'}^{y'}\left(\frac{1}{\overline{z-x}}-\frac{1}{\overline{z-y}}\right)d\bar z+o(1)\\
&=2s\Re\log\left(\frac{(y'-x)(x'-y)}{(x'-x)(y'-y)}\right)+o(1)
\end{align*}
(one may also keep track of the imaginary part, since $\rK(w,b)\uK^{-1}(b,w)=p(w,b)$ is expressed in terms of the local gometry). We still need to address the logarithmic singularities at the endpoints of $\gamma$. A discrete Cauchy formula (see \eqref{eq:LemSrhoquantrepl}) together with estimates on $S_\rho(z,w)$ for $z$ close to $x$ and $|w-x|$ comparable to $|x-y|$ (Lemma \ref{Lem:Srhoquant} and Lemma \ref{Lem:monzerogr}) leads to the estimate
$$S_\rho(z,w)-\uK^{-1}_{\chi,x}(z,w)=O(\frac{|x-y|^{s-1}}{|w-x|^s}\cdot\frac{|x-y|^s}{|w-x|^s})=O(|x-y|^{2s-1}|w-x|^{-2s})$$
for $|w-x|\leq \frac 14|x-y|$, $\frac 12|w-x|\leq |z-x|\leq 2|w-x|$. The same estimate holds when interverting $x$ and $y$. Consequently, if we choose $\gamma$ such that the lengths of its segments are comparable to their Euclidean length, and $\gamma_i$ is the initial segment of $\gamma$ (between $x$ and $x'$), we have
$$\sum_{(bw)\in E(\gamma_i)} \dot{K_\gamma}(b,w)S_\gamma(b,w)=
\sum_{(bw)\in E(\gamma_i)} \dot{K_\gamma}(b,w)\uK_{\chi,x}^{-1}(b,w)+O((|x'-x|/|x-y|)^{1-2s})$$
From the estimate (Lemma \ref{Lem:monCauchy})
$$\uK_{\chi,x}^{-1}(z,w)=\frac 12R_B\left(\left(\frac{z-x}{w-x}\right)^s\frac{e^{i\nu(w)}}{\pi(z-w)}\right)
+\frac 12\bar R_B\left(\overline{\left(\frac{z-x}{w-x}\right)^{-s}}\frac{e^{-i\nu(w)}}{\pi\overline{(z-w)}}\right)+O(|w-x|^{2s-2})$$
for $\frac 12|w-x|\leq |z-x|\leq 2|w-x|$, $|z-w|$ comparable to $|w-x|$, we obtain 
$$\uK_{\chi,x}^{-1}(b,w)-\uK^{-1}(b,w)=\frac{s}{2\pi}R_B\left(\frac{e^{i\nu(w)}}{w-x}\right)-\frac{s}{2\pi}\bar R_B\left(\frac{e^{-i\nu(w)}}{\overline{w-x}}\right)+O(|w-x|^{2s-2})$$
for $b\sim w$ (this corresponds to letting $y\rightarrow\infty$ in the previous estimate \eqref{eq:Sgammadiag}). Since the error term is summable, it follows that
\begin{equation}\label{eq:Prop:electrpairs0}
\Re\sum_{(bw)\in E(\gamma_i)} \dot{K_\gamma}(b,w)\uK_{\chi,x}^{-1}(b,w)=-2s\log|x'-x|+\tilde c(x,\Lambda,\gamma,s)+O(|x-x'|^{2s-1})
\end{equation}
Moreover, the lefthand side is unchanged if $\gamma$ is another path started at $x$ going through $x'$, as the difference can be written as the trace of a commutator. Thus $\tilde c(x,\Lambda,\gamma,s)=\tilde c(x,\Lambda,s)$. Consequently
$$\Re\Tr(\dot{K_\gamma}S_\gamma)=-2s\Re\log|y-x|^2+\tilde c(x,\Lambda,s)+\tilde c(y,\Lambda,s)+o(1)$$
Fix a small $s_0>0$ and set $\chi_0=e^{2i\pi s_0}$. By Proposition \ref{Prop:electrsmalls}, we have
$$\log|\langle\chi_0^{h(y)-h(x)}\rangle|=-2s_0^2\log|y-x|+c_0(\Lambda,s_0)+O(|y-x|^{-\eps})$$
and the previous argument gives
\begin{align*}
\log\left|\frac{\langle\chi^{h(y)-h(x)}\rangle}{\langle\chi_0^{h(y)-h(x)}\rangle}\right|&=\int_{s_0}^s\Re\Tr(\dot{K_\gamma}K_\gamma^{-1})du\\
&=-2(s^2-s_0^2)\log|y-x|+c(x,\Lambda,s)+c(y,\Lambda,s)+o(1)
\end{align*}
Finally, let us now show that $c(x,\Lambda,s)=c(\Lambda,s)$, which we already know for small $s$ (Proposition \ref{Prop:electrsmalls}). We need only show
$$\left|\frac{\langle\chi^{h(y)-h(x')}\rangle}{\langle\chi^{h(y)-h(x)}\rangle}\right|\underset{y\rightarrow\infty}{\longrightarrow} 1$$
if $x'\in M^\dg$ is a neighbour of $x$; in turn (as in Proposition \ref{Prop:electrsmalls}), this follows from
$$S_\rho(b,w)-\uK_{\chi,x}^{-1}(b,w)=o(1)$$
as $y\rightarrow\infty$, where $(bw)$ is the edge of $M$ separating $x$ from $x'$. From Lemma \ref{Lem:monCauchy}, estimates for $S_\rho$ on the macroscopic scale (Lemma \ref{Lem:Srhoquant}), and Lemma \ref{Lem:monzero}, we get
$$S_\rho(b,w)-\uK_{\chi,x}^{-1}(b,w)=O(|y-x|^{s-1+s+\eta})$$
for $\eta>0$. This is enough to conclude that $c(x,\Lambda,s)=c(\Lambda,s)$.
\end{proof}

\subsubsection{More general correlations}\label{sss:generelectr}

We now discuss the ``general" case, where the problem is to estimate 
$$\langle\exp(2i\pi\sum_{j=1}^ns_j(h(y_j)-h(x_j))\rangle$$
where $s_1,\dots,s_n\in (0,\frac 12)$ and the pairwise distances between the $x_j,y_k$'s are large and comparable.

\begin{Thm}\label{Thm:electr}
For weights $s_1,\dots,s_{2n}\in (-\frac 12,\frac 12)$, $s_1+s_2=\cdots=s_{2n-1}+s_{2n}=0$, and singularities $\{z_1,\dots,z_{2n}\}$ with pairwise distances of order $R$, we have as $R$ goes to infinity:
$$\left\langle\exp(2i\pi\sum_{j=1}^{2n} s_jh(z_j))\right\rangle\sim \exp\left(\sum_{j=1}^{2n}c(\Lambda,|s_j|)\right)\prod_{1\leq i<j\leq 2n}|z_i-z_j|^{2s_is_j}$$
\end{Thm}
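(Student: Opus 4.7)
The plan is to extend the two-point variational analysis developed in the preceding pages to the multi-point setting. First, fix disjoint simple paths $\gamma_j$ on $M^\dg$ connecting $x_j=z_{2j-1}$ to $y_j=z_{2j}$, each at macroscopic distance from the other singularities, and set $\gamma=\gamma_1\sqcup\cdots\sqcup\gamma_n$. Define the twisted Kasteleyn operator $K_\gamma$ by $K_\gamma(w,b)=\chi_j^{\pm 1}\rK(w,b)$ on edges crossed by $\gamma_j$ (with $\chi_j=e^{2i\pi s_j}$). Identifying $K_\gamma$ with $\rK$ acting on $\rho$-multivalued functions, the same Fredholm-determinant argument as in Lemma \ref{Lem:detplane} gives
$$\left\langle \exp\Big(2i\pi\sum_j s_j(h(y_j)-h(x_j))\Big)\right\rangle = e^{i\sum_j s_j\,\mathrm{wind}(\gamma_j)}\det(K_\gamma\uK^{-1}),$$
so that it suffices to analyze the modulus of the right-hand side.

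Next I would differentiate with respect to $s_j$, obtaining
$$\partial_{s_j}\log|\det(K_\gamma\uK^{-1})|=\Re\,\Tr(\dot{K_\gamma}S_\rho),$$
where $\dot{K_\gamma}(w,b)=\pm 2i\pi\chi_j^{\pm 1}\rK(w,b)$ is supported on edges crossed by $\gamma_j$. Split $\gamma_j$ into a macroscopic middle piece and two microscopic endpieces near $x_j$ and $y_j$. On the macroscopic piece, substitute the surgery-based expansion
$$S_\rho(b,w)-\uK^{-1}(b,w)=\tfrac12 R_B(e^{i\nu(w)}r_\rho(w))+\tfrac12\overline{R_B}(e^{-i\nu(w)}\bar r_\rho(w))+o(R^{-1}),$$
so that the discrete sum along $\gamma_j$ converts, as in the two-point computation, into a contour integral $\oint_{\gamma_j}r_\rho(w)\,dw$. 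The explicit formula for $r_\rho$ gives
$$\Re\oint^{reg}_{\gamma_j}\pi r_\rho(w)\,dw = s_j\log|x_j-y_j|^2+\sum_{k\neq j}s_k\Re\log\frac{(y_j-x_k)(x_j-y_k)}{(y_j-y_k)(x_j-x_k)}+o(1),$$
where the regularization subtracts the $-\frac{s_j}\pi\log|{\cdot}|$ endpoint divergences. On the microscopic endpieces at $x_j$ (resp.\ $y_j$), compare $S_\rho$ to the chiral Cauchy kernel $\uK^{-1}_{\chi_j,x_j}$ using Lemma \ref{Lem:monrobin} and Lemma \ref{Lem:monzero}; the short-range sum together with the subtracted log produces a quantity that depends only on the local tiling data at the singularity.

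That the local contribution depends only on $(\Lambda,|s_j|)$ is the content of the locality + compactness argument given in the two-point case: the proof that $c(x,\Lambda,s)$ is a continuous function of $(\Lambda,x)$ using locality of $\uK^{-1}_\chi$ and compactness of the space of rooted tilings satisfying $(\spadesuit)$ adapts verbatim, and the independence from the positions of other singularities follows by sending them to infinity and invoking the decay $S_\rho(b,w)-\uK^{-1}_{\chi_j,x_j}(b,w)=o(1)$ as $R\to\infty$. Assembling the pieces, integrating the differential identity in each $s_j$ from $0$ to the given value, and exponentiating produces $\prod_{i<j}|z_i-z_j|^{2s_is_j}$ (the cross terms combine correctly using $s_{2k-1}+s_{2k}=0$) together with the constant factor $\exp\bigl(\sum_j c(\Lambda,|s_j|)\bigr)$.

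The main obstacle, already faced in the two-point case, is controlling the uniformity of the $o(R^{-1})$ and $o(1)$ error terms: for the macroscopic part this requires the surgery lemma to be applied inductively (pairing up singularities one at a time), and the surgery error must remain summable along $\gamma$ despite the accumulation of $n$ singular factors; for the microscopic endpieces, the exponent restriction $s_j\in(-1/2,1/2)$ is essential to guarantee that Lemmas \ref{Lem:monCauchy}--\ref{Lem:monzero} yield a decay strictly better than $R^{-1}$ when compared against the one-point Cauchy kernel. Everything else is bookkeeping of the near-diagonal asymptotics already established.
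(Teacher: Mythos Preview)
Your proposal is essentially correct and follows the same variational strategy as the paper: represent the correlator as a Fredholm determinant, differentiate in $s_j$ to get $\Re\Tr(\dot{K_\gamma}S_\rho)$, split the sum along $\gamma_j$ into a macroscopic part (converted to the regularised contour integral $\oint_{\gamma_j}^{reg}\pi r_\rho(w)\,dw$) and microscopic endpieces producing the local constants, and assemble.

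One organisational difference is worth flagging. The paper does \emph{not} integrate the $s$-variation from $0$. Instead it runs a two-step bootstrap: first the result is established for all exponents simultaneously \emph{small} by displacing a singularity (moving $y\to y'$ across one edge and computing the resulting rank-one ratio $\det(K_{\gamma'}\uK^{-1})/\det(K_\gamma\uK^{-1})$ via Lemma~\ref{Lem:monrobin}, which gives an error $O(R^{-1-\eps})$ only when the $s_j$ are small enough); then the $s$-variation is integrated from a fixed small $s_0>0$ up to the desired value. The reason is that the near-singularity estimates are stated uniformly only for $s$ in compact subsets of $(0,\tfrac12)$, so integrating directly from $0$ would require an extra continuity argument at the endpoint. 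Your route (induction on $n$ at $s_j=0$) is workable but you would need to justify that step. Relatedly, Lemma~\ref{Lem:monrobin} is invoked by the paper in the displacement step, not in the microscopic endpieces of the $s$-variation trace; for the latter the paper only needs the coarser comparison $S_\rho-\uK^{-1}_{\chi_j,x_j}=O(R^{2s-1}|w-x_j|^{-2s})$ together with the near-diagonal expansion of $\uK^{-1}_{\chi_j,x_j}(b,w)-\uK^{-1}(b,w)$ for $b\sim w$ away from the singularity.
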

\begin{proof}
The line of reasoning is the same as in the two-point case (Propositions \ref{Prop:electrsmalls} and \ref{Prop:electrpairs}), so we will simply record the needed changes in the computations. This problem is associated to a character $\rho$ of $\pi_1(\C\setminus\{x_1,\dots,y_n\})$, and the kernel $S_\rho$ inverting $\rK$ on sections of the associated line bundle (more concretely, $\rho$-multivalued functions, of functions with a jump across branch cuts running from $x_j$ to $y_j$). The key estimates are Lemmas \ref{Lem:monrobin} and \ref{Lem:Srhoquant}.

We start with the case where the exponents are small enough and reason as in Proposition \ref{Prop:electrsmalls}. Set 
$$\varphi(z_1,\dots,z_{2n})=\log\left|\left\langle\exp(2i\pi\sum_{j=1}^{2n} s_jh(z_j))\right\rangle\prod_{1\leq i<j\leq 2n}|z_i-z_j|^{-2s_is_j}\right|$$
(the correlator is non-zero for $R$ large enough, see the argument in Lemma \ref{Lem:fredelectr}). Then from Lemma \ref{Lem:monrobin}, if $z'_j-z_j=O(1)$, then
$$\varphi(z'_1,\dots,z'_{2n})-\varphi(z_1,\dots,z_{2n})=O(R^{-1-\eps})$$
(with a uniform error term under $(\spadesuit)$) and then
$$\varphi(z_1,\dots,z_{2n})=O(R^{-\eps})+\lim_{z'_2,\dots,z'_{2n}\rightarrow\infty}\varphi(z_1,z'_2,\dots,z'_{2n})$$
where the limit is taken along any sequence where all the pairwise distances are comparable and go to infinity. The limit does not depend on $z_1$ (just on $\Lambda$), which gives the result for small exponents.

In the general case (exponents not necessarily close to $0$), we reason as in Proposition \ref{Prop:electrpairs}. We start from the estimate for $S_\rho$ (see Lemma \ref{Lem:Srhoquant}):
$$S_\rho(z,w)=\frac 12R_B\left(e^{i\nu(w)}S(z,w)\right)
+\frac 12\bar R_B\left(e^{-i\nu(w)}\bar S(z,w)\right)+o(R^{-1})$$
where all pairwise distances in $\{x_1,\dots,y_n,z,w\}$ are of order $R$, with
\begin{align*}
S(z,w)&=\frac{1}{\pi(z-w)}\prod_{i=1}^n\left(\frac {(z-x_j)(w-y_j)}{(z-y_j)(w-x_j)}\right)^{s_j}=
\frac{1}{\pi(z-w)}+r_\rho(w)+O(z-w)\\
\bar S(z,w)&=\frac{1}{\pi\overline{(z-w)}}\prod_{i=1}^n\overline{\left(\frac {(z-x_j)(w-y_j)}{(z-y_j)(w-x_j)}\right)}^{-s_j}=
\frac{1}{\pi\overline{(z-w)}}+\bar r_\rho(w)+O(z-w)
\end{align*}
where the estimate is for $z,w$ away from the singularities, and the Robin kernels (see before Lemma \ref{Lem:monrobin}) are given by 
\begin{align*}
r_\rho(w)&=
\pi^{-1}\sum_{j=1}^n\left(\frac{s_j}{w-x_j}-\frac{s_j}{w-y_j}\right)\\
\bar r_\rho(w)&=\pi^{-1}\sum_{j=1}^n\left(\frac{-s_j}{\overline{w-x_j}}-\frac{-s_j}{\overline{w-y_j}}\right)=-\overline {r_\rho(w)}
\end{align*}
Let $\gamma_j$ be a simple path from $x_j$ to $y_j$ (at macroscopic distance of the other singularities), and $\gamma'_j$ the subpath from $x'_j$ to $y'_j$. Since $r_\rho(w)\sim \frac{\pi^{-1}s_j}{w-x_j}$ near $x_j$ and $r_\rho(w)\sim -\frac{\pi^{-1}s_j}{w-y_j}$ near $y_j$, we have
$$\int_{\gamma'_j}r_\rho(w)dw=-\frac{s_j}\pi\log\left((y'_j-y_j)(x'_j-x_j)\right)+O(1)$$
Set
$$\int_{\gamma_j}^{reg} r_\rho(w)dw=\lim_{x'_j\rightarrow x_j,y'_j\rightarrow y_j}\left(\int_{\gamma'_j}r_\rho(w)dw+\frac{s_j}\pi\log\left((y'_j-y_j)(x'_j-x_j)\right)\right)$$
There is no determination (branch of logarithm) issue for the real part of this regularised integral, which is what we need. The variational argument of Proposition \ref{Prop:electrpairs} shows that
\begin{align*}
\frac{\partial}{\partial s_j}\log|\langle \exp(2i\pi\sum_{j=1}^ns_j(h(y_j)-h(x_j)))\rangle|&=-s_j\Re\int^{reg}_{\gamma_j} \pi r_\rho(w)dw+ 
s_j\Re\int^{reg}_{\gamma_j} \pi \bar r_\rho(w)d\bar w\\
&\hphantom{=}+\partial_{s} c(x,\Lambda,s_j)+\partial_{s}c(y,\Lambda,s_j)+o(1)
\end{align*}
and a direct computation yields
$$\int^{reg}_{\gamma_j}\pi r_\rho(w)dw=s_j\log(-(x_j-y_j)^2)+\sum_{k\neq j}s_k\log\left(\frac{(y_j-x_k)(x_j-y_k)}{(y_j-y_k)(x_j-x_k)}\right)
$$
as needed. Finally, $c(x,\Lambda,s)=c(\Lambda,s)$ as in Proposition \ref{Prop:electrpairs}.
\end{proof}

\paragraph{Locality.\\}

For, say, a periodic (non isoradial) dimer model with height function converging to the free field (as in Section 4.4 of \cite{KOS}), one would expect the following asymptotics for electric correlators:
$$\langle\chi^{h(y)-h(x)}\rangle\sim\exp(c(x,\Lambda,s)+c(y,\Lambda,s))|y-x|^{-2s^2}$$
where $c(x,\Lambda,s)$ depends on the graph around $x$ (rather like the invariant measure of a random walk). In the isoradial case, we saw that $c(x,\Lambda,s)=c(\Lambda,s)$, which is non-trivial. Let us further comment on that fact.

A by-product of Kenyon's construction (Section 4.2 in \cite{Ken_isoradial}) of the kernel $\uK^{-1}$ is its remarkable {\em locality} property; let us phrase the corresponding statement for the chiral kernel $\uK_\chi^{-1}$.

\begin{Lem}
The kernel $\uK_{\chi}^{-1}$ (with singularity at $x$) is {\em local} in the sense that $\uK_{\chi}^{-1}(b,w)$ depends only on $\Lambda$ restricted to a ball centred at $x$ containing $b,w$.
\end{Lem}
\begin{proof}
The discrete exponentials are by construction local. If $w$ is adjacent to $x$, then $\uK_{\chi}^{-1}(.,w)$ has an integral representation in terms of discrete exponentials (in the variable $b$) and consequently $\uK_\chi^{-1}(.,w)$ is also local (see Lemma \ref{Lem:monpole}). For a general $w$, consider a simple path $(x_0=x,x_1,\cdots,x_n=x')$ from $x$ to $x'$ on $M^\dg$, where $x'$ is a face adjacent to $w$. Let $(w_ib_i)=(x_ix_{i+1})^\dg$. By induction on the length of this path, it is easy to see that $\uK_\chi^{-1}(.,w)$ can be expressed as a linear combination (with local coefficients) of the $\uK_{\chi,x_i}^{-1}(.,w_i)$, which are local. Thus $\uK_\chi^{-1}(.,w)$ is itself local.
\end{proof}

Consider the space $\{(\Lambda,x)\}$ of lozenge tilings rooted at a face (fixing the scale $\delta=1$). As is customary for rooted graphs (\cite{BenSch_rec}), we may define a distance by:
$${\rm dist}((\Lambda,x),(\Lambda',x'))=\inf\{\eps>0:(\Lambda,x)_{|B(x,\eps^{-1})}{\rm\ isomorphic\ to\ }(\Lambda',x')_{|B(x',\eps^{-1})}\}$$
where graph isomorphisms are required to preserve roots. We need to take into account embedding data, i.e. lozenge angles. Thus we may refine the distance as follows:
\begin{align*}
{\rm dist}((\Lambda,x),(\Lambda',x'))&=\inf\{\eps>0:(\Lambda,x)_{|B(x,\eps^{-1})}{\rm\ isomorphic\ to\ }(\Lambda',x')_{|B(x',\eps^{-1})}\\
&\hphantom{=\inf\{\eps>0:}{\rm\ and\ corresponding\ angles\ differ\ by\ at\ most\ }\eps\}
\end{align*}
Then the set $\{(\Lambda,x):\Lambda{\rm\ satisfies\ }(\spadesuit)\}$ is compact. Indeed, under the condition $(\spadesuit)$ (see \eqref{eq:spade}), degrees of vertices are bounded and the number of vertices in a subgraph is comparable to its area; thus there are finitely many isomorphism classes as rooted planar graphs
for restrictions $(\Lambda,x)_{|B(x,\eps^{-1})}$ (for fixed $\eps>0$). Each isomorphism class is parameterised by finitely many angles taking values in compact intervals. 

Using locality of the kernels $\uK_\chi^{-1}$ and their continuous dependence on lozenge angles for a given graph type, we conclude that $(\Lambda,x)\mapsto c(x,\Lambda,s)$ is a continuous function for fixed $s$ (see \eqref{eq:Prop:electrpairs0}). In particular it is bounded under the condition $(\spadesuit)$ for $\Lambda$. Moreover, they are bounded for $s$ in a compact interval of $(0,\frac 12)$.

Let us now discuss the dependence on $\Lambda$ (or lack thereof). The locality and continuity arguments for $(\Lambda,x)\mapsto c(x,\Lambda,s)=c(\Lambda,s)$ show that, for $\Lambda$ satisfying $(\spadesuit)$, $c(\Lambda,s)$ can be approximated within $\eps$ by looking at {\em any} ball of $\Lambda$ of large enough radius $R(\eps)$.

Consider $\Lambda_\theta$ a periodic tiling of the plane by identical lozenges with angles $\theta$, $\pi-\theta$. One may glue $\Lambda_\theta$ and $\Lambda_{\theta'}$ along a line to obtain a lozenge tiling $\Lambda_{\theta,\theta'}$ (as in \cite{GriMan_inhom}). Then for small $s$,
$$c(\Lambda_\theta,s)=c(\Lambda_{\theta,\theta'},s)=c(\Lambda_{\theta'},s)$$
by continuity in $(\Lambda,x)$. 

More generally, consider the following condition for two rhombus tilings $\Lambda_1$, $\Lambda_2$ in the class ($\spadesuit$): for some $\theta_0'>0$, there exists arbitrarily large balls $B_1\subset\Lambda_1$, $B_2\subset\Lambda_2$ and a rhombus tiling $\Lambda_3$  with angles $\geq\theta'_0$ containing copies of $B_1$ and $B_2$. This generates an equivalence relation $\dot\sim$ on rhombus tilings; again by continuity we have (at least for small $s$): $c(\Lambda_1,s)=c(\Lambda_2,s)$ if $\Lambda_1\dot\sim\Lambda_2$. 

The previous argument shows that $\Lambda_\theta\dot\sim\Lambda_{\theta'}$. As another example, set $\Gamma$ to be the triangular lattice and $\Lambda_T$ the corresponding rhombus tiling; it contains an hexagonal lattice as a subgraph. Let $\Lambda'$ be any rhombus tiling obtained from $\Lambda_T$ by star-triangle transformations inside these hexagons. Then $\Lambda_T\dot\sim\Lambda'$; we thus obtain an equivalence class with infinitely many elements which are distinct as graphs. 

It is also easy to see that $\Lambda_T\dot\sim \Lambda_{\pi/3}$, since one may glue a half-space of $\Lambda_T$ and a half-space space $\Lambda_{\pi/3}$ to obtain a planar tiling $\Lambda$.

The triangular lattice has a two-parameter (up to isometry) family of embeddings with isometric faces, leading to a two-parameter family of rhombus tilings. In two steps (as in \cite{GriMan_inhom}), one sees that they all belong to the same equivalence class. Similarly, the isoradial embeddings of the square lattice (under ($\spadesuit)$), which are parameterised by two bi-infinite sequences of angles, fall in the same equivalence class.

It would be of some independent interest to decide whether $\dot\sim$ is trivial (coarse - and thus $c(\Lambda,s)=c(s)$) and if not describe its equivalence classes (building on the structure result of \cite{KenSch}).

\section{Monomers and the Fisher-Stephenson conjecture}\label{sec:mono}

\subsection{Introduction}\label{ssec:FSintro}

We now consider a planar graph $M$, derived from a lozenge tiling as illustrated in Figure \ref{fig:loztil}, with ``defects" (or {\em monomers}) consisting in a pair of missing vertices: a black vertex $b_0$ and a white vertex $w_0$. In this section, we consider the regime where the mesh is fixed ($\delta=1$) and the separation between defects goes to infinity.

\paragraph{Infinite volume limit.\\}
Specifically, consider a sequence $(\Xi_n)_{n\geq 1}$ of subgraphs of $M$ bounded by a simple cycle on $M^\dg$. We assume that for all $n$, $b_0$ and $w_0$ are in $\Xi_n$ and that $\Xi_n$ has a perfect matching; and that the inradius of $\Xi_n$, say seen from $w_0$, goes to infinity as $n\rightarrow\infty$. Moreover we assume that $\rK^{-1}_n(b,w)\rightarrow \uK^{-1}(b,w)$ for all $b\in M_B,w\in M_W$ (here $\rK_n$ denotes the restriction of $\rK$ to $\Xi_n$); this is the case for ``Temperleyan" boundary conditions (see Theorem 13 in \cite{Ken_domino_conformal} for the square lattice; the general case follows easily from e.g. the maximum principle \eqref{eq:maxprincip} and the Harnack inequality for discrete harmonic functions, see \eqref{eq:harnack}). It follows that the sequence of the perfect matching measures on $\Xi_n$ converges weakly to the measure $\P$ on matchings on $M$ described earlier (see \eqref{eq:locstats}). 

Following Fisher-Stephenson \cite{FisSte2}, we consider the monomer correlation 
$$\Mon_{\Xi_n}(b_0,w_0)\stackrel{def}{=}\frac{{\mc Z}(\Xi_n\setminus\{b_0,w_0\})}{{\mc Z}(\Xi_n)}$$
where ${\mc Z}(\Xi)$ is the partition function of perfect matchings of the graph $\Xi$ (with edge weight $|\rK(b,w)|$ for the edge $(bw)$, see \eqref{eq:dimerpartfun}). 

In general, a Kasteleyn orientation of $\Xi_n$ does not restrict to a Kasteleyn orientation of $\Xi_n'\stackrel{def}{=}\Xi_n\setminus\{b_0,w_0\}$ (see Section \ref{sss:critgraphs}). Indeed, if $b_0$ and $w_0$ are not neighbours in $M$, each correspond to a finite face of $\Xi'_n$ where the clockwise odd condition is violated. In order to 
obtain a Kasteleyn orientation of $\Xi'_n$, one may introduce a ``defect line" $\gamma$, i.e. a simple path $\gamma$ on $M^\dg$ running from a face of $M$ adjacent to $b_0$ (denoted by $x$) to a face adjacent to $w_0$ (denoted by $y$, see Figure \ref{Fig:trimer}). Reversing orientations of all edges crossing $\gamma$ yields  a Kasteleyn orientation of $\Xi'_n$. Thus, let us fix such a defect line $\gamma$ and define $\rK':\R^{M_B\setminus\{b_0\}}\rightarrow\R^{M_W\setminus\{w_0\}}$ by 
\begin{equation} \label{eq:Kprime}
\begin{split}
\rK'(w,b) & =-\rK(w,b){\rm\ \ \ \  if\ }\gamma{\rm\ crosses\ }(bw) \\
 & =\rK(w,b){\rm\ \ \ \ \ \ otherwise}
\end{split}
\end{equation}
We denote by $\rK_{n}$, $\rK'_{n}$ the restrictions of $\rK,\rK'$ to $\Xi_n,\Xi'_n$ respectively (throughout this section, the prime recalls the sign change across the defect line). Then \cite{FisSte2}:
$$\Mon_{\Xi_n}(b_0,w_0)=\frac{{\mc Z}(\Xi'_n)}{{\mc Z}(\Xi_n)}=\pm\frac{\det(\rK'_n)}{\det(\rK_{n})}
$$
In order to compare $\rK'_{n}$, $\rK_{n}$, it is rather convenient to extend $\rK'_{n}$ to an operator $\R^{\Xi_B}\rightarrow\R^{\Xi_W}$ by setting $\rK'(w_0,b_0)=1$, with all other new matrices elements $\rK'(w_0,b),\rK'(w,b_0)$ vanishing (one may think of an edge ``handle" connecting directly $b_0,w_0$). This does not change the determinant of $\rK'_{n}$ (up to sign). Then 
$$\Mon_{\Xi_n}(b_0,w_0)=\pm\det(\rK'_n\rK_{n}^{-1})$$
Clearly $\rK'_n-\rK_n$ has bounded support (vertices adjacent to $\gamma$), and does not depend on $n$ (for $n$ large enough) on its support. It follows from the assumption of pointwise convergence of $\rK_n^{-1}$ that: 
$$\gls{MonM}(b_0,w_0)=\left|\lim_{n\rightarrow\infty}\det(\rK'_n\rK_n^{-1})\right|=|\det(\rK'\uK^{-1})|$$  
This defines the infinite volume monomer correlation $\Mon_M(b_0,w_0)$ (which does not depend on the choice of approximating sequence $(\Xi_n)$, under the assumption $\rK_n^{-1}\rightarrow\uK^{-1}$ pointwise). 

\paragraph{Positivity.\\}

In order to study logarithmic first differences of monomer correlations, it will be convenient to justify {\em a priori} positivity of $\Mon_M(b_0,w_0)$. 

Let us sketch a direct argument. Consider a simple path from $b$ to $w$ on $M$: $(b_0=b,w_0,b_1,\dots,b_n,w_{n}=w)$. Then matchings of $\Xi'_n$ containing $(w_0,b_1)$,\dots,$(w_{n-1},b_n)$ are in bijection with matchings of $\Xi_n$ containing $(b_0,w_0)$, \dots, $(b_n,w_{n+1})$ (see Section 7.1 in \cite{Ken_Lap} for related arguments), which gives the lower bound:
\begin{equation}\label{eq:monopos}
\Mon_M(b,w)\geq\frac{|\rK(w_0,b_1)\dots\rK(w_{n-1},b_n)|}{|\rK(w_0,b_0)\dots\rK(w_{n},b_n)|}\P((b_0,w_0)\in\mfm,\dots,(b_n,w_{n})\in\mfm)
\end{equation}
One may choose the path so that $(b_0,\dots,b_{n+1})$ is a simple path on $\Gamma$ and $b_{n+1}$ is connected to infinity in $\Gamma\setminus\{b_0,\dots,b_{n}\}$. Then (see \eqref{eq:locstats})
$$\P((b_0,w_0)\in\mfm,\dots,(b_n,w_{n})\in\mfm)=\left|\rK(w_0,b_0)\dots\rK(w_{n},b_n)\det(\uK^{-1}(b_{i},w_j))_{0\leq i,j\leq n}\right|$$
This is positive; indeed, otherwise one could find a non-trivial $f\in (\C^{M_B})$ vanishing at infinity such that $\rK f$ is supported on $\{w_0,\dots,w_n\}$, and vanishes at $\{b_0,\dots,b_n\}$. It is then easy to see that $f_{|\Gamma}$ is discrete harmonic except at $\{b_0,\dots,b_{n+1}\}$ (see \eqref{eq:KstarK}). Since the harmonic measure of $b_{n+1}$ seen from infinity in $\Gamma\setminus\{b_0,\dots,b_{n+1}\}$ is positive, this forces $f(b_{n+1})=0$, then $f_{|\Gamma}=0$, then $f=0$, yielding a contradiction.

\paragraph{Fisher-Stephenson conjecture.\\}

The monomer correlation question bears on the asymptotic behaviour of $\Mon_M(b_0,w_0)$ as $|b_0-w_0|\rightarrow\infty$, eg with $w_0$ fixed. The Fisher-Stephenson conjecture \cite{FisSte2} proposes:
$$\Mon_{\Z^2}(b,w)\sim c|b-w|^{-\frac 12}$$

In \cite{Ken_Lap} (Section 7), Kenyon analyses the case of a white defect in the bulk and a black defect on the boundary of a simply connected portion of the square lattice. This leads to the length exponent for the loop-erased random walk.

More generally, if $\{b_1,\dots,b_p\}$ and $\{w_1,\dots,w_p\}$ are $p$-tuples of black and white vertices of $M$, one may consider the $2p$-point monomer correlation:
$$\Mon_{M}(b_1,\dots,b_p;w_1,\dots,w_p)=\lim_{n\rightarrow\infty}\frac{{\mc Z}(\Xi_n\setminus\{b_1,\dots,b_p,w_1,\dots,w_p\})}{{\mc Z}(\Xi_n)}$$
where the limit is justified as in the $p=1$ case.

Let us give a heuristic interpretation of these monomer correlators. If we apply the construction of the height function (Section \ref{ss:height}) to dimer-monomer configurations on $M$, with monomers located at the fixed $\{b_1,\dots,b_p,w_1,\dots,w_p\}$, we obtain an additively multivalued function on $M^\dg$, which increases by $1$ (resp. $-1$) when cycling counterclockwise around a white (resp. black) monomer. In the absence of defects, the height field converges to a free field (in the plane, see \cite{dT_isoradial} or e.g. Corollary \ref{Cor:FFplane}). Interpreting the monomers as discrete versions of magnetic operators for the free field, one may expect
$$\Mon_{M}(b_1,\dots,b_p;w_1,\dots,w_p)\sim c\langle:{\mc O}_{1}(w_1)\dots{\mc O}_{1}(w_p){\mc O}_{-1}(b_1)\dots{\mc O}_{-1}(b_p):\rangle_\C$$
in some asymptotic regime, for instance as the pairwise distance between insertions goes to infinity (alternatively as the lattice mesh goes to zero). For the planar free field, the magnetic correlator is explicitly
$$\langle :\prod_j{\mc O}_{m_j}(z_j):\rangle_\C=\prod_{j<k}|z_j-z_k|^{g_0m_jm_k}$$
(see \eqref{eq:GFFmagncorr}) and here $g_0=\frac 12$, in agreement with eg Corollary \ref{Cor:FFplane} (this is the coupling constant for the limit of $2\pi h$; 
recall that magnetic charges are measured as multiples of $2\pi$). This yields the natural extension of the Fisher-Stephenson conjecture (see \cite{Ciucu_PNAS}):
$$\Mon_{M}(z_1,\dots,z_{2p})\sim c\prod _{i<j}|z_i-z_j|^{\eps_i\eps_j/2}$$
where $\eps_i=1$ (resp. -1) if $z_i$ is black (resp. white) and $\sum_i\eps_i=0$. This is supported in particular by Ciucu's work (see \cite{Ciucu_PNAS,Ciucu_TAMS} and references therein). In the case of the square lattice, in view of the correspondence with the 6-vertex model (see e.g. Section 5 in \cite{Dub_abelian}), this may also be seen as part of the classical Coulomb Gas heuristics (e.g. \cite{NieKno_potts,Nien_Coulomb,DiFSalZub_coulomb}).

The (somewhat stronger) variational form of the statement we shall prove here consists in estimating $\log(\Mon_M(b,w)/\Mon_M(b',w))$ within $O(|b-w|^{-1-\eps})$ for $b',b$ black vertices on the same face of $M$ (or similar quantities for correlations with $2n$ monomers), as we did for electric correlators (Proposition \ref{Prop:electrsmalls}).

The analysis (estimates and local computations near singularities) is rather involved (and tedious) and will be carried out in details in Section \ref{subsec:monopair} in the (classical) case of $p=1$ pair of monomers (Proposition \ref{Prop:FS}). In Section \ref{subsec:generalmono}, we simply explain what are the modifications needed for the general case (Theorem \ref{Thm:FS}).

As we shall see, the relevant family of holomorphic line bundles are the line bundles over $\Sigma=\hat\C\setminus\{z_1,\dots,z_{2p}\}$ with holomorphic sections in $U$ written as:
$$s_U(z)=\prod_{i=1}^{2p}(z-z_i)^{\eps_i/2}g(z)$$
where $g$ is holomorphic in $U\setminus\hat\C$ and vanishing at infinity, and $\eps_i=1$ for singularities corresponding to black monomers and $-1$ for white monomers.

\subsection{Monomer pairs}\label{subsec:monopair}

Here, we analyse asymptotics for monomer pair correlations, in the framework laid out in Section \ref{ssec:FSintro}.

\paragraph{Plan.\\}

Let us observe that one may define a dimer measure $\P_{M'}$ on matchings of $M'=M\setminus\{b_0,w_0\}$ as the weak limit of the (weighted) dimer measures on $\Xi'_n$ as $n\rightarrow\infty$. It suffices (\cite{Ken_locstat}) to observe that $(\rK'_n)^{-1}=\rK_n^{-1}(\rK'_n\rK_n^{-1})^{-1}$, and as $n\rightarrow\infty$, $\rK_n^{-1}$ converges pointwise to $\uK^{-1}$ and $(\rK'_n\rK_n^{-1})$ converges pointwise to an invertible (since $\Mon_M(b_0,w_0)>0$), bounded support perturbation of the identity. Let us denote $(\rK')^{-1}$ the limiting kernel (or $(\rK'_{b_0,w_0})^{-1}$ if we need to emphasise the position of monomers). It satisfies $\rK'(\rK')^{-1}(.,w)=\delta_w$ for all $w\in M_W\setminus\{w_0\}$, and $(\rK')^{-1}(b,w)=O(|b-w|^{-1})$.

Denote by $(b_0,w_1,b_1)$ three consecutive vertices (in counterclockwise order, say) on a face $x$ of $M$. We have:
\begin{align*}
{\mc Z}(\Xi_n\setminus\{b_0,w_1,b_1,w_0\})&=
{\mc Z}(\Xi_n\setminus\{b_0,w_0\})|\rK(w_1,b_1)|^{-1}\P_{\Xi_n\setminus\{b_0,w_0\}}\{(b_1,w_1)\in\mfm\}\\
&=
{\mc Z}(\Xi_n\setminus\{b_1,w_0\})|\rK(w_1,b_0)|^{-1}\P_{\Xi_n\setminus\{b_1,w_0\}}
\{(b_0,w_1)\in\mfm\}
\end{align*}
where we consider two defects: the monomer $w_0$ (on the boundary of $y\in M^\dg$), and the ``trimer" $\{b_0,w_1,b_1\}$ (see Figure \ref{Fig:trimer}). 
\begin{figure}[htb]
\begin{center}
\leavevmode
\resizebox{10cm}{!}{\input 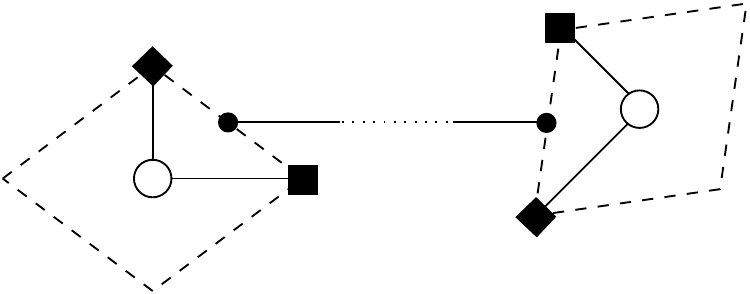_t}
\end{center}
\caption{Local geometry and notations near the monomers.}
\label{Fig:trimer}
\end{figure}

Consequently,
$$\frac{\Mon_{\Xi_n}(b_1,w_0)}{\Mon_{\Xi_n}(b_0,w_0)}=\left|\frac{\rK(w_1,b_0)}{\rK(w_1,b_1)}\right|\frac{\P_{\Xi_n\setminus\{b_0,w_0\}}\{(b_1,w_1)\in\mfm\}}{\P_{\Xi_n\setminus\{b_1,w_0\}}\{(b_0,w_1)\in\mfm\}}=\pm\frac{(\rK'_{\Xi_n\setminus\{b_0,w_0\}})^{-1}(b_1,w_1)}
{(\rK'_{\Xi_n\setminus\{b_1,w_0\}})^{-1}(b_0,w_1)}
$$
and taking the limit as $n\rightarrow\infty$,
\begin{equation}\label{eq:trimerlog}
\frac{\Mon_M(b_1,w_0)}{\Mon(b_0,w_0)}=\pm\frac{(\rK'_{b_0,w_0})^{-1}(b_1,w_1)}
{(\rK'_{b_1,w_0})^{-1}(b_0,w_1)}
\end{equation}
Thus we simply need to estimate $S_{b_0,w_0}=(\rK'_{b_0,w_0})^{-1}$ precisely enough near the singularity $b_0$. The argument is at times somewhat technical; let us briefly sketch the line of reasoning, which is roughly parallel to the one in Section \ref{sss:moninv}.
\begin{enumerate}
\item Random walk arguments give an estimate on $\hat S_\rho$, an inverting kernel for $\rK'_{b_0,w_0}$ which is bounded (rather than vanishing) at infinity. 
\item Bounded functions in the kernel of $\rK'_{b_0,w_0}$ are classified.
\item The correct inverting kernel $S_{b_0,w_0}$ is constructed, and estimated in the macroscopic scale.
\item A priori estimates for $S_{b_0,w_0}$ when one or both arguments are near singularities are given.
\item $\hat g$, an approximation of $S_{b_0,w_0}(.,w_1)$ is constructed, using as building blocks discrete holomorphic functions mesoscopically near singularities, and continuous holomorphic functions elsewhere.
\item $\hat g(b_1)-S_{b_0,w_0}(.,w_1)$ is evaluated within $O(|b_0-w_0|^{-1-\eps})$ (this requires controlling the leading term, which depends only on the local geometry around $x$, and the first subleading correction which carries the ``global" information; compare e.g. with Lemma \ref{Lem:monrobin}).
\item This is applied to monomer correlations via \eqref{eq:trimerlog}.
\end{enumerate}

Recall (from \eqref{eq:Kprime}) that $\rK'_{b_0,w_0}$ is obtained from $\rK$ by deleting the row and column corresponding to the pair of monomers and changing the sign of matrix entries corresponding to edges crossing the defect line $\gamma$ which runs  from $x\in M^\dg$ adjacent to $b_0$ to $y\in M^\dg$ adjacent to $w_0$:
$$\rK'_{b_0,w_0}:\R^{M_B\setminus\{b_0\}}\longrightarrow \R^{M_W\setminus\{w_0\}}$$
Displacing $\gamma$ (with endpoints fixed) results in composing $\rK_{b_0,w_0}$ with diagonal matrices (with $\pm 1$ diagonal coefficients). Equivalently, one may consider multivalued functions on $M$ corresponding to the character $\rho:\pi_1(\C\setminus\{x,y\})\rightarrow\{\pm 1\}$ with monodromy $-1$ around $x,y$. One may identify elements of $\R^{M_B\setminus\{b_0\}}$ with functions on $M_B$ vanishing at $b_0$, and elements of $\R^{M_W\setminus\{w_0\}}$ as functions on $M_W$ modulo $\delta_{w_0}$.\\

\paragraph{Bounded inverting kernel via harmonic functions.\\}

We start with by constructing an inverting kernel $S_{b_0,w_0}$ for $\rK'_{b_0,w_0}$, at least for $|b_0-w_0|$ large enough.

Assume without loss of generality that $b_0\in\Gamma^\dg$. We observe that the kernel $\hat S_\rho$ obtained from the Green kernel for the random walk with monodromy $-1$ at $b_0,w_0$ on $\Gamma$ vanishes at $b_0$ (see Lemma \ref{Lem:Green2pts} and afterwards). Hence $\rK'_{b_0,w_0} \hat S_\rho(.,w)=\delta_w$. Moreover we have
\begin{align*}
\hat S_\rho(z,w)&=\frac 12R_B\left(\sqrt{\frac{(z-x)(z-y)}{(w-x)(w-y)}}\cdot \frac{e^{i\nu(w)}}{\pi(z-w)}\right)+(cc)+o(1/|x-y|)\\
&=\Re R_B\left(\sqrt{\frac{(z-x)(z-y)}{(w-x)(w-y)}}\cdot \frac{e^{i\nu(w)}}{\pi(z-w)}\right)
+o(1/|x-y|)
\end{align*}
for $(z,w)$ in a compact set of $\{z\neq x,y,w;w\neq x,y,z\}$. (Here $(cc)$ denotes the complex conjugate of the previous term). Note that in this case, a closed form expression for the continuous Green kernel $G_\rho$ is easily obtained by going to a double cover of $\hat\C\setminus\{b_0,w_0\}$. We consider a branch of $z\mapsto\sqrt{z-x}$ (resp. $z\mapsto\sqrt{z-y}$) which is single-valued on $\C\setminus\gamma$.

\paragraph{Bounded discrete holomorphic functions on $M'$.\\}

The kernel $\hat S_\rho$ does not necessarily vanish at infinity; on the other hand we have some freedom since $S_{b_0,w_0}(.,w)$ is not required to be holomorphic at $w_0$. In order to correct $\hat S_\rho$ and obtain an inverting kernel vanishing at infinity, we need to study bounded functions in the kernel of $\rK'_{b_0,w_0}$ (this is a point where the argument differs from the construction for electric correlators, see Section \ref{sss:moninv}). 

We claim that $\dim\{f:\rK'_{b_0,w_0}f=0, \|f\|_\infty<\infty\}\leq 2$. Indeed, $\rK'_{b_0,w_0}f=0$ implies that $\Lap_\Gamma f_{|\Gamma}$ is supported on the two vertices of $\Gamma$ abutting $w_0$ (see \eqref{eq:KstarK}), and there are no non-zero bounded $\rho$-multivalued harmonic functions on $\Gamma$ ($f_{|\Gamma^\dg}$ is specified by harmonic conjugation and $f(b_0)=0$). We simply need to construct two linearly independent bounded functions in the kernel of $\rK'_{b_0,w_0}$.

We have constructed (Lemma \ref{Lem:monpole}) $g$ such that $\rK_{-1,y}g=0$ outside of $w_0$ and 
$$g(z)=R_B((z-y)^{-1/2})+O(|z-y|^{-3/2})$$
We then consider $f=g\ind_B-\hat S_\rho(g\ind_B)$ where $B=B(y,r)$, $r\leq \frac 12|x-y|$. Plainly, $f$ is bounded and $\rK_{b_0,w_0}'f=0$. Since
$$\frac 1{2i\pi}\oint_{C(y,r)}\frac{1}{\sqrt{w-y}}\cdot\sqrt{\frac{(z-x)(z-y)}{(w-x)(w-y)}}\cdot\frac{dw}{z-w}=\sqrt\frac{z-x}{(z-y)(y-x)}-\frac{\ind_B(z)}{\sqrt{z-y}}$$
and the contribution from the conjugate term in $\hat S_\rho$ vanishes (since $(z-y)^{-1/2}\propto \overline{(z-y)^{1/2}}$ on the contour), we get
$$f(z)=R_B\left(\sqrt{\frac{z-x}{(z-y)(y-x)}}\right)+o(|x-y|^{-1/2})$$
for $z$ in a compact subset of $\C\setminus\{x,y\}$. (This is the same construction as in \eqref{eq:Lem:Srho2pts}). 

We know a priori that $f_{|\Gamma}$ (resp. $f_{|\Gamma^\dg}$) has a limit as $z\rightarrow\infty$ (as it is bounded and harmonic near $\infty$). Plainly, $\lim_{z\rightarrow\infty}f_{|\Gamma}(z)=1/\sqrt{y-x}+o(|x-y|^{-1/2})$ and $\lim_{z\rightarrow\infty}f_{|\Gamma^\dg}(z)=i/\sqrt{y-x}+o(|x-y|^{-1/2})$.

We observe that $\bar f$ is also in the kernel of $\rK'_{b_0,w_0}$ (a real operator); examining the limits of $f_{|\Gamma}$, $f_{|\Gamma^\dg}$ at infinity shows that $f,\bar f$ are linearly independent. Hence they constitute a basis of bounded functions in the kernel of $\rK'_{b_0,w_0}$.

Moreover (at least for $b_0,w_0$ far enough), there is no non-vanishing function in the kernel of $\rK'_{b_0,w_0}$ going to zero at infinity.

\paragraph{Inverting kernel vanishing at infinity.\\}

As a consequence, there is a unique kernel $S_{b_0,w_0}$ inverting $\rK'_{b_0,w_0}$ (on the right) and vanishing at infinity. By uniqueness, this kernel is real. It may be realised as
$$S_ {b_0,w_0}(b,w)=\hat S_\rho(b,w)-\Re(a(w)f(b))$$
where $a(w)$ is fixed by the behaviour as $b\rightarrow\infty$: $a(w)=\frac{e^{i\nu(w)}}2\sqrt{\frac{y-x}{(w-x)(w-y)}}+o(|x-y|^{-1/2})$. Consequently, we obtain the estimate
\begin{equation}\label{eq:magnpairker}
S_ {b_0,w_0} (z,w)=\frac 12R_B\left(\sqrt{\frac{(z-x)(w-y)}{(z-y)(w-x)}}\cdot \frac{e^{i\nu(w)}}{\pi(z-w)}\right)+(cc)+o(|x-y|^{-1})
\end{equation}
for $(z,w)$ in a compact set of $\{z\neq x,y,w;w\neq x,y,z\}$. (Compare with Lemma \ref{Lem:Srho2pts}). Remark also that the error $o(|x-y|^{-1})$ is uniform over lozenge tilings under $(\spadesuit)$.

\paragraph{Estimates for $S_{b_0,w_0}(b,w)$ for $b$ close to $x$.\\}

Let us review some a priori estimates on $S_{b_0,w_0}(b,w)$ for $b$ close to $x$, and $w$ close to $x,y,\infty$ respectively. 
Remark that, for the purpose of these estimates, it is enough to know that $S_{b_0,w_0}(b,w)=O(1)$ on macroscopic scale (i.e. when all pairwise distances are comparable and large). In particular they apply {\em mutatis mutandis} to the kernel $S_{\underline b,\underline w}$ of Section \ref{subsec:generalmono}.

\begin{Lem}\label{Lem:magn_apriori}
There is $\eps>0$ such that:
\begin{enumerate} 
\item If $\rK_{-1,x} f=0$ in $B(x,r)$, $f(b_0)=0$, then $f(b)=O((|b-x|/r)^\eps\|f_{\partial B(x,r)}\|_\infty)$ for $b\in B(x,r)$.
\item There is a unique kernel $S_{b_0}$ s.t. $S_{b_0}(.,w)$ has monodromy $(-1)$ around $x$, $\rK_{-1,x} S_{b_0}(.,w)=\delta_w$, and $S_{b_0}(.,w)$ vanishes at $b_0$ and infinity.
\item $S_{b_0,w_0}(b,w)=O(|w-x|^{-1-\eps})$ if $b$ adjacent to $x$, $|w-x|\leq\frac 12|x-y|$
\item $S_{b_0,w_0}(b,w)=O(|w-y|^{1/2-\eps}|x-y|^{-3/2})$ if $b$ adjacent to $x$, $|w-y|\leq\frac 12|x-y|$
\item $S_{b_0,w_0}(b,w)=O(|w-x|^{-1}|x-y|^{-\eps})$ if $b$ adjacent to $x$, $|w-x|\geq\frac 12|x-y|$,$|w-y|\geq\frac 12|x-y|$.
\end{enumerate}
\end{Lem}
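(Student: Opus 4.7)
The proof proceeds by first establishing the regularity statement (1) and the construction/uniqueness in (2), then using these as building blocks for the a priori bounds (3)--(5).

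For (1), the strategy is a scale-by-scale analysis. At the critical exponent $s = 1/2$ (corresponding to $\chi = -1$), the near-singularity decomposition of Lemma~\ref{Lem:monzero} gives, for any bounded $\chi$-multivalued discrete holomorphic $f$ in a ball around $x$, an approximation $f(b) \approx \alpha f_\chi(b) + \beta h_\chi(b)$ up to higher-order corrections, where the basic modes $f_\chi, h_\chi$ are the ones constructed in Lemma~\ref{Lem:monholom}. Pointwise boundedness of $f$ forces the coefficient of the singular mode $f_\chi \sim |z-x|^{-1/2}$ to vanish, leaving only $h_\chi \sim |z-x|^{1/2}$. The additional constraint $f(b_0) = 0$, combined with the explicit (nonzero) evaluation of $h_\chi(b_0)$ from Lemma~\ref{Lem:monholom}, then kills the leading coefficient $\beta$ as well, so $f$ vanishes at $x$ at the next order, namely as $|z-x|^{3/2}$. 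Iterating this decomposition on dyadic annuli from scale $r$ down to scale $|b-x|$ yields the desired power-law decay with some positive exponent $\eps$ (numerically close to $1/2$).

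For (2), set $S_{b_0}(\cdot, w) = \uK_\chi^{-1}(\cdot, w) - \lambda(w)\, f_\chi$ with $\lambda(w) = \uK_\chi^{-1}(b_0, w)/f_\chi(b_0)$; this is well-defined since $f_\chi(b_0) \neq 0$. Then $\rK S_{b_0}(\cdot, w) = \delta_w$ (because $\rK f_\chi = 0$), $S_{b_0}(b_0, w) = 0$ by construction, and the decay at infinity follows from that of $\uK_\chi^{-1}(\cdot, w)$ and $f_\chi$. Uniqueness: the difference of two candidates is a bounded $\chi$-multivalued discrete holomorphic function, hence by Lemma~\ref{Lem:monholom} a scalar multiple of $f_\chi$; the vanishing-at-$b_0$ constraint forces the scalar to zero.

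For (3)--(5), fix $w$; near $x$ the function $b \mapsto S_{b_0, w_0}(b, w)$ is $\chi$-multivalued discrete holomorphic (away from $w$) and bounded. We apply Lemma~\ref{Lem:monzero} on a suitable mesoscopic annulus $A(\rho/2, \rho)$ around $x$, with $\rho$ chosen between $\delta$ and $\min(|w-x|, |x-y|)$ as the regime dictates, to obtain an expansion $S_{b_0, w_0}(b, w) = \alpha(w) f_\chi(b) + \beta(w) h_\chi(b) + \text{error}$ for $b$ adjacent to $x$. The macroscopic formula for $S_{b_0, w_0}$ (uniform estimate on compact subsets of $\Sigma^2 \setminus \Delta_\Sigma$, established earlier in this section) provides explicit bounds on the holomorphic/antiholomorphic components $\psi_1, \psi_2$ of $S_{b_0, w_0}(\cdot, w)$ on this annulus, and hence on $\alpha(w), \beta(w)$. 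The constraint $S_{b_0, w_0}(b_0, w) = 0$ imposes one real linear relation between $\alpha$ and $\beta$ through the known values of $f_\chi(b_0), h_\chi(b_0)$, reducing the problem to a single unknown. Solving and evaluating at $b$ adjacent to $x$ yields the three claimed bounds: the clean case (5) directly from the macroscopic formula; (3), where $w$ is close to $x$, through the matching of the macroscopic kernel with the local expansion on scale $\rho \sim |w-x|$; and (4), where $w$ is near $y$, through the $\sqrt{w-y}$ factor in the continuum kernel which transfers to the coefficient of $h_\chi$.

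The principal obstacle is two-fold. First, several of the key results of Section~7 (Lemmas~\ref{Lem:monzero} and~\ref{Lem:monCauchy}) were stated for $s \in (0, 1/2)$ strictly; here $s = 1/2$ is the boundary case, where the two natural scales $|z-x|^{s}$ and $|z-x|^{-s}$ coincide at $|z-x|^{1/2}$. These results need to be extended, either by a limit $s \nearrow 1/2$ or by direct adaptation, since at $s = 1/2$ the error terms in Lemma~\ref{Lem:monzero} degenerate (the exponent $3s-1$ reaches $1/2$, making the matching with the mesoscopic scale more delicate). Second, regime (3) is the most technical because both $b$ and $w$ are near the same singularity, and the macroscopic formula and local expansion must be carefully glued; the loss of $\eps$ in the stated exponent of (3)--(5) relative to the sharp continuous prediction is an artifact of this coarse matching at the mesoscopic scale, but is nevertheless sufficient for the monomer-correlation analysis that follows.
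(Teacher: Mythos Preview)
Your argument for (1) rests on a misreading of Lemma~\ref{Lem:monholom}: the expansion $f_\chi(u)\sim |u-v_0|^{-1/2}$ there is for $|u-v_0|\to\infty$, not near the singularity. Near $v_0$ the function $f_\chi$ is the \emph{bounded} $\chi$-multivalued holomorphic function---it takes specific finite values at the neighbouring lattice points---so pointwise boundedness of $f$ in $B(x,r)$ places no constraint on its $f_\chi$-coefficient. The only constraint you actually have is $f(b_0)=0$, which is one real equation for two unknowns $\alpha,\beta$; and at $s=\tfrac12$ the error term $O(R^{3s-1+\eta}\|\psi_i\|)$ in Lemma~\ref{Lem:monzero} is of the same size as the leading terms, so no decay follows. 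You flag this degeneration but do not repair it; the same issue undermines your route to (3)--(5), and your construction in (2) presupposes a well-defined $\uK_\chi^{-1}$ at $s=\tfrac12$, where its characterising decay condition $O(|b|^{s-1})$ coincides with that of $f_\chi$ and no longer pins it down uniquely.

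The paper's approach avoids mode expansion entirely. For (1) the point is that the local identity $\rK^*\rK=\Delta_\Gamma\oplus\Delta_{\Gamma^\dg}$ fails at the $\Gamma$-vertex $x_0$ adjacent to the singularity face by a term proportional to $f(b_0)$ (with $b_0\in\Gamma^\dg$); thus the hypothesis $f(b_0)=0$ makes $f_{|\Gamma}$ genuinely harmonic with monodromy $-1$ throughout $B(x,r)$, and the random-walk coupling contraction from the first lemma of Section~7.1 gives the power decay directly; $f_{|\Gamma^\dg}$ is then the harmonic conjugate normalised by $f(b_0)=0$. For (2) the paper builds $S_{b_0}$ from first differences of the twisted Green kernel on $\Gamma$ extended by harmonic conjugation, which is unambiguous at $s=\tfrac12$ and yields the two-sided bound $S_{b_0}(b,w)=O(|w|^{-1}((|b|/|w|)^\eps\wedge(|w|/|b|)^\eps))$. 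For (3)--(5) the paper writes replication identities $S_{b_0,w_0}(\cdot,w)=\ind_B S_{\mathrm{ref}}(\cdot,w)-S_{b_0,w_0}\bigl(\rK'(\ind_B S_{\mathrm{ref}}(\cdot,w))-\delta_w\bigr)$ with a reference kernel adapted to each regime ($S_{b_0}$ near $x$, a corrected $\hat S_y$ near $y$, $\uK^{-1}$ far from both), and then applies (1) to the second term.
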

\begin{proof}
\begin{enumerate}
\item This follows from observing that $f_{|\Gamma}$ is harmonic, with monodromy $(-1)$ around $x$, and writing $f_{|\Gamma^\dg}$ as the harmonic conjugate of $f_{|\Gamma}$ vanishing at $b_0$ (see Lemma \ref{Lem:chirharmest}).
\item Uniqueness follows from the fact that the space of $(-1)$-multivalued bounded holomorphic functions is spanned by a single function $f_{-1}$, which does not vanish at $b_0$ (Lemma \ref{Lem:monholom}). By uniqueness this kernel is real. 
As before (Lemma \ref{Lem:chirkernel0}), one can write $S_{b_0}(.,w)_{|\Gamma}$ as a first difference of the Green kernel on $\Gamma$ (with monodromy $-1$ around $x$) and extend it to $\Gamma^\dg$ by harmonic conjugation. This leads to the a priori estimate
$$S_{b_0}(b,w)=O(|w-x|^{-1}((|b-x|/|w-x|)^\eps\wedge(|w-x|/|b-x|)^\eps))$$
e.g. if $|b-w|\geq \frac 12|w|$.
\item Let us write 
$$S_{b_0,w_0}(.,w)=\ind_BS_{b_0}(.,w)-S_{b_0,w_0}(\rK'_{b_0,w_0}(\ind_BS_{b_0}(.,w)))$$
where $B=B(x,\frac 34|x-y|)$. We have $\rK'_{b_0,w_0}(\ind_B S_{b_0}(.,w))(w')=O(|w-x|^{-1}(|w-x|/R)^\eps)$ for $w'\in\partial B$ (where $R=|x-y|$), and $S_{b_0,w_0}(b,w')=O(R^{-1-\eps})$ by 1. Thus 
$$S_{b_0,w_0}(b,w)=O(|w-x|^{-1-\eps})+O(|w-x|^{\eps-1}R^{-2\eps})=O(|w-x|^{-1-\eps})$$
\item Here the reference kernel $S_y$ is constructed from the Green kernel which has monodromy $(-1)$ at $y$ and is such that $S_y(b,w)=O(|w-y|^{-1})$ if $|b-y|$ and $|b-w|$ are comparable to $|w-y|$. We can construct (Lemma \ref{Lem:monpole}) two functions $g_y$, $\bar g_y$ with monodromy $(-1)$ at $y$ which are holomorphic except possibly at $w_0$ and with expansion $g_y=R_B((b-w_0)^{-1/2})+O((b-w_0)^{-3/2})$. Correcting  $S_y(.,w)$ by an appropriate linear combination of $g_y,\bar g_y$, we obtain a kernel $\hat S_y$  such that $\hat S_y(b,w)=O(|w-y|^{-1}(|b-y|/|w-y|)^{-3/2+\eps})$ for $|b-y|\geq 2|w-y|$, say (see Lemma \ref{Lem:moninf}).\\
Then we may represent $S_{b_0,w_0}(.,w)$ as 
$$S_{b_0,w_0}(.,w)=\ind_B\hat S_y(.,w)-S_{b_0,w_0}(\rK'_{b_0,w_0}(\ind_B\hat S_y(.,w)))$$
with $B=B(y,\frac 12|x-y|)$ to obtain for $b$ close to $x$:
$$S_{b_0,w_0}(b,w)=O(|x-y|^{-\eps}|w-y|^{-1}(|w-y|/|x-y|)^{3/2-\eps})$$
\item We simply write
$$S_{b_0,w_0}(.,w)=\ind_B\uK^{-1}(.,w)-S_{b_0,w_0}(\rK'_{b_0,w_0}(\ind_B\uK^{-1}(.,w)))$$
with $B=B(x,\frac 12|x-y|)\cup B(y,\frac 12|x-y|)$, and use 1.
\end{enumerate}
In 3.,4.,5., the discrete Cauchy integral formulae are justified by the absence of non-trivial functions in the nullspace of $\rK'_{b_0,w_0}$ vanishing at infinity.
\end{proof}

Let us denote by $(b_0,w_1,b_1,w_2)$ the vertices of the face $x$ of $M$, listed counterclockwise. Recall that we eventually want to estimate $S_ {b_0,w_0}(w_1,b_1)$ within $O(|w_0-b_0|^{-1-\eps})$. We shall need basic discrete holomorphic functions with monodromy $\chi=-1$ around the singularity $x\in M^\dg$. Set $g=\uK_{\chi}^{-1}(.,w_1)$, $f_{-1}=\Re f_\chi$ at $\chi=-1$; $h=\overline{f_{1,-1}}/\tau$ where $\tau=(b_1-x)/|b_1-x|$. We list some useful evaluations (specialised from Lemmas \ref{Lem:monholom}, \ref{Lem:monpole}). 
\begin{equation}\label{eq:magnevals}
\begin{split}
g(b)&=\frac{\Gamma(\frac 12)}{2}R_B\left(\frac{e^{i\nu(w_1)}}{\pi(b-w_1)}\sqrt{\frac{b-x}{w_1-x}}\right)+O(|b-x|^{-3/2})\\
g(b_0)&=\frac{e^{i\theta_0}-1}{2\sin\theta_0}\\
g(b_1)&=\frac{e^{i\theta_1}-1}{2\sin\theta_1}\\
h(b)&=\sqrt 2\Gamma(-\frac 12)R_B(\sqrt{b-x})+O(|b-x|^{-3/2})\\
h(b_0)&=-i\pi(2(b_0-x))^{1/2}\\
h(b_1)&=-\pi (2(b_1-x))^{1/2}=-h(b_0)
\end{split}\end{equation}
where $\theta_i=\arg(\frac{b_i-w_1}{x-w_1})$, and we assume that the defect line $\gamma$ does not cross $(b_0w_1)$, $(b_1w_1)$. 

\paragraph{Pole adjacent to the singularity.\\}

We now want to construct an approximation of $S_{b_0,w_0}(.,w_1)$ (within $o(|b_0-w_0|^{-1-\eps})$ near the singularity). We are looking for such an approximation $\hat g$ of the form: $\hat g$ is a linear combination of $g,\bar g,h,\bar h$ in $B(x,r)$; 
\begin{equation}\label{eq:magnhatgmacr}
\hat g(b)=\Re R_B\left(\frac{\mu}{\pi(b-w_1)}\sqrt{\frac{(b-b_0)(w_1-w_0)}{(w_1-b_0)(b-w_0)}}\right)
\end{equation}
in $(B(x,r)\cup B(y,\tilde r))^c$. %
In $B(y,\tilde r)$ we simply set $\hat g=0$ (rather crude but sufficient for our purposes). 
Set $R=|x-y|$; the mesoscopic scales $1\ll r,\tilde r\ll R$ are yet to be specified.

We need: 
\begin{equation}\label{eq:monapproxcond}
\hat g(b_0)=0,{\rm\ \ }(\rK'_{b_0,w_0}\hat g)(w_1)=1,
\end{equation}
and the values of $\hat g$ across $\partial B(x,r)$ differ by $o(\sqrt r/R)$: this defines uniquely $\hat g$ as a linear combination of $g,\bar g,h,\bar h$ and fixes $\mu$. Since $\bar{\hat g}$ satisfies the same conditions, we may set $\hat g=\Re(\alpha g+\beta h)$ in $B(x,r)$. From \eqref{eq:magnevals}, we get 
$$\mu=\frac\alpha 2 e^{i\nu(w_1)}\Gamma(1/2)\sqrt{\frac{w_1-b_0}{w_1-x}}$$
We need to fit $\alpha,\beta$ in order to make the error term small near $b_0$. Since
\begin{equation}\label{eq:monapproxcond1}
\begin{split}
\frac{1}{b-w_1}\sqrt{\frac{(b-b_0)(w_1-w_0)}{(w_1-b_0)(b-w_0)}}
&=\sqrt{\frac{b-b_0}{w_1-b_0}}\left(\frac{1}{b-w_1}+\frac 12\cdot\frac
{1}{w_0-w_1}+O(|b-w_1|/R^2)
\right)\\
&=\sqrt{\frac{b-b_0}{w_1-b_0}}\left(\frac{1}{b-w_1}+\xi+O(|b-w_1|/R^2)
\right)\\
&=\frac{1}{\sqrt{(w_1-b_0)(b-w_0)}}\left(1+O(|b-w_0|/R)\right)
\end{split}\end{equation}
if we set
$\xi=\frac 12\cdot\frac
{1}{w_0-b_0}$. As e.g. in Lemma \ref{Lem:monrobin}, the leading term in the expansion is local (depends on the singularity at $b_0,b_1,w_1$), while the subleading term $\xi$ captures the global information (position of the other monomer $w_0$) that we will need.

We require: 
\begin{equation}\label{eq:monapproxcond2}
\Re(\alpha)=1,{\rm\ \ }\Re(\alpha g(b_0)+\beta h(b_0))=0
\end{equation}
(forced by \eqref{eq:monapproxcond}), and
$$\beta\sqrt 2\Gamma(-\frac 12)=\alpha\frac{\Gamma(\frac 12)e^{i\nu(w_1)}}{2\pi\sqrt{w_1-x}}\cdot\xi$$
i.e. $\beta=\alpha\xi'$ where
\begin{equation}\label{eq:xiprime}
\xi'=-\frac{e^{i\nu(w_1)}}{4\pi\sqrt{2(w_1-x)}}\cdot\xi=-\frac{e^{i\nu(w_1)}}{4\pi\sqrt{2(w_1-x)}}\cdot\frac{1}{2(w_0-b_0)}=O(R^{-1})
\end{equation}
This is to ensure to ensure that the definitions of $\hat g$ inside and outside of $\partial B(x,R)$ match up to the subleading term.

Let us remark that $\Im(g(b_i))=\frac 12$ does not depend on the local geometry (while $\Re(g(b_i))$ does), see \eqref{eq:magnevals}. From $\Re(\alpha g(b_0)+\beta h(b_0))=0$, we get (see \eqref{eq:magnevals})
\begin{equation}\label{eq:monapproxcond3}
\frac 12\Im(\alpha)=\Re(g(b_0)+\beta h(b_0))=\Re(g(b_0)+\alpha\xi' h(b_0))
\end{equation}
and since $\Re(\alpha)=1$ (see \eqref{eq:monapproxcond2})
$$\alpha=1+2i\Re(g(b_0))=2i\overline{g(b_0)}+O(1/R)$$
Plugging this in the RHS of \eqref{eq:monapproxcond3}
$$\alpha=2i\overline{g(b_0)}+4i\Re(i\overline{g(b_0)}\xi'h(b_0))+O(1/R^2)$$
and we have (taking also into account \eqref{eq:magnevals})
\begin{equation}\label{eq:monapproxcond4}
\begin{split}
\hat g(b_1)&=\Re(\alpha g(b_1)+\beta h(b_1))=
2\Re(i\overline{g(b_0)}g(b_1))+4\Re(i\overline{g(b_0)}\xi'h(b_0))(-\frac 12)+\Re(2i\overline{g(b_0)}\xi'h(b_1))+O(1/R^2)\\
&=2\Re(i\overline{g(b_0)}g(b_1))+\Re(2i\overline{g(b_0)}\xi'(h(b_1)-h(b_0))
\end{split}\end{equation}

\paragraph{Approximation error near the singularity.\\}

Finally we estimate the approximation error, for $b$ within $O(1)$ of $b_0$, based on
$$ERR=\hat g-S_{b_0,w_0}(.,w_1)=S_{b_0,w_0}(\rK'_{b_0,w_0}\hat g-\delta_{w_1})$$
and the estimates of Lemma \ref{Lem:magn_apriori}. This may be broken down according to $\rK'_{b_0,w_0}\hat g$ on $\partial B(x,r)$, $\partial B(y,\tilde r)$; on mesoscopic scale (around $x$ and $y$); and on macroscopic scale. Recall that we are aiming for an error of order $ERR=O(R^{-1-\eps})$ for $b$ close to $b_0$.
\begin{itemize}
\item {\em On $\partial B(x,r)$.}\\ 
We have 
$$\rK'_{b_0,w_0}\hat g=O(r^{-3/2}+r^{3/2}/R^2+r^{-1/2}/R)$$%
on $\partial B(x,r)$ (with errors coming from \eqref{eq:magnevals}, \eqref{eq:monapproxcond1}, and a - crude - first-order Taylor's formula estimate for $\hat g$ in \eqref{eq:magnhatgmacr}). From Lemma \ref{Lem:magn_apriori}, this contributes $O(r^{-3/2-\eps}+r^{3/2-\eps}R^{-2}+r^{-1/2-\eps}R^{-1})$ towards the estimation error $ERR$.
\item {\em On mesoscopic scale.\\}
In $B(x,R)\setminus B(x,r)$, we have $(\rK'_{b_0,w_0}\hat g)(w)=O(|w-x|^{-1/2-3})$ (from \eqref{eq:findiff}). From Lemma \ref{Lem:magn_apriori}, this contributes an error of order $O(\sum_{k=r}^R k^{-1/2-3-\eps})=O(r^{-1/2-2-\eps})$ to $ERR$.

Similarly, in $B(y,R)\setminus B(y,\tilde r)$, we have $(\rK'_{b_0,w_0}\hat g)(w)=O(|w-y|^{-1/2-3})$, contributing to an error of order $O(\sum_{k=\tilde r}^R k^{-1/2-3}k^{3/2-\eps} R^{-3/2})=O(\tilde r^{-1-\eps}R^{-3/2})$ to $ERR$.

\item{\em On $\partial B(y,\tilde r)$.\\} 
On $\partial B(y,\tilde r)$, we estimate $\rK'_{b_0,w_0}\hat g=O(\tilde r^{-1/2})$ (from the discontinuity). From Lemma \ref{Lem:magn_apriori}, this leads to
a contribution of
 $O(\tilde r^{1-\eps}R^{-3/2})$ to $ERR$.
 
 \item{\em On macroscopic scale.\\} 
Finally, outside of $B(x,R)\cup B(y,R)$ we have $(\rK'_{b_0,w_0}\hat g)(w)=O(|w|^{-4}\sqrt R)$ (again by \eqref{eq:findiff}) and by Lemma \ref{Lem:magn_apriori} a contribution of order $O(\sum_{k=R}^\infty k^{-4}\sqrt R.R^{-\eps})=O(R^{1/2-\eps-3})$.\\
\end{itemize}

Summing up, by setting e.g. $r=R^{2/3}$, $\tilde r=R^{1/3}$, we obtain:
\begin{equation}\label{eq:monoapproxerr}
S_{b_0,w_0}(b_1,w_1)=\hat g(b_1)+O(R^{-1-\eps})
\end{equation}

\paragraph{Application to monomer correlations.\\}
We may exchange the roles of $b_0,b_1$. From \eqref{eq:monapproxcond4} and \eqref{eq:monoapproxerr}, we have
\begin{equation}\label{eq:monapproxfinal}
\begin{split}
S_{b_0,w_0}(b_1,w_1)&=2\Re(i\overline {g(b_0)}g(b_1))+\Re\left(2i\overline{g(b_0)}\xi'(h(b_1)-h(b_0))\right)+O(R^{-1-\eps})\\
S_{b_1,w_0}(b_0,w_1)&=2\Re(i\overline {g(b_1)}g(b_0))+\Re\left(2i\overline{g(b_1)}\xi'(h(b_0)-h(b_1))\right)+O(R^{-1-\eps})
\end{split}
\end{equation}
Observe that $2\Re(i\overline {g(b_0)}g(b_1)))=g(b_1)-g(b_0)$, which is real (since $\Im(g(b_0))=\Im(g(b_1))=\frac 12$), and that $h(b_1)-h(b_0)=-2\pi (2(b_1-x))^{1/2}$ (see \eqref{eq:magnevals}). An examination of the local geometry yields 
$$ie^{i\nu(w_1)}\frac{h(b_1)-h(b_0)}{4\pi\sqrt{2(w_1-x)}}=-\frac 12(b_1-b_0)$$ Consequently (see \eqref{eq:trimerlog} and \eqref{eq:xiprime}),
\begin{align*}
\frac{\Mon_M(b_1,w_0)}{\Mon_M(b_0,w_0)}=-\frac{S_{b_0,w_0}(b_1,w_1)}{S_{b_1,w_0}(b_0,w_1)}&=1+\Re(2i\xi'(h(b_1)-h(b_0)))+O(|b_0-w_0|^{-1-\eps})\\
&=1+\Re\left(\xi(b_1-b_0)\right)+O(|b_0-w_0|^{-1-\eps})\\
&=1-\frac 12\Re\left(\frac{b_1-b_0}{b_0-w_0}\right)+O(|b_0-w_0|^{-1-\eps})\\
&=\left|\frac{b_1-w_0}{b_0-w_0}\right|^{-1/2}+O(|b_0-w_0|^{-1-\eps})
\end{align*}
and thus we have obtained (wrapping up the argument as in the proof of Proposition \ref{Prop:electrsmalls}):

\begin{Prop}\label{Prop:FS}
There is $c(\Lambda,w)>0$ such that
$$\Mon_M(b,w)\sim c(\Lambda,w)|b-w|^{-1/2}$$
as $b\rightarrow\infty$.
\end{Prop}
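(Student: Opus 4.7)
The plan is to integrate the variational identity just derived. Observe that for two black vertices $b_0,b_1$ sharing a face of $M$, the Taylor expansion
\begin{equation*}
\left(\frac{|b_0-w_0|}{|b_1-w_0|}\right)^{1/2}=1-\tfrac12\Re\!\left(\frac{b_1-b_0}{b_0-w_0}\right)+O(|b_0-w_0|^{-2})
\end{equation*}
matches, up to $O(|b_0-w_0|^{-1-\eps})$, the ratio $\Mon_M(b_1,w_0)/\Mon_M(b_0,w_0)$ established just above. Taking logarithms,
\begin{equation*}
\log\!\bigl(|b_1-w_0|^{1/2}\Mon_M(b_1,w_0)\bigr)-\log\!\bigl(|b_0-w_0|^{1/2}\Mon_M(b_0,w_0)\bigr)=O(|b_0-w_0|^{-1-\eps}).
\end{equation*}

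Fix a reference black vertex $b_*$ (with $b_*\neq w_0$). Given any black vertex $b$ at macroscopic distance from $w_0$, choose a path $b_*=b^{(0)},b^{(1)},\dots,b^{(n)}=b$ on $M$ such that consecutive $b^{(i)},b^{(i+1)}$ share a face of $M$ (faces of $M$ are quadrilaterals with two black and two white vertices, alternating between $\Gamma$ and $\Gamma^\dg$). One may choose the path so that $|b^{(i)}-w_0|$ is comparable to $i+|b_*-w_0|$; the same variational identity is valid regardless of whether $b_0\in\Gamma$ or $b_0\in\Gamma^\dg$ (by exchanging the roles of $\Gamma$ and $\Gamma^\dg$, which only swaps the convention on the defect line and leaves the derivation intact). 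Summing over the path and using $\sum_i(i+1)^{-1-\eps}<\infty$,
\begin{equation*}
\log\!\bigl(|b-w_0|^{1/2}\Mon_M(b,w_0)\bigr)-\log\!\bigl(|b_*-w_0|^{1/2}\Mon_M(b_*,w_0)\bigr)=\sum_{i=0}^{n-1}O(|b^{(i)}-w_0|^{-1-\eps}),
\end{equation*}
and the tail of this sum tends to $0$ as $b\to\infty$. Hence $|b-w_0|^{1/2}\Mon_M(b,w_0)$ is Cauchy and converges to some $c(\Lambda,w_0)\in[0,\infty)$ as $b\to\infty$; moreover the limit does not depend on the path chosen.

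Positivity $c(\Lambda,w_0)>0$ follows from the finite-energy lower bound on $\Mon_M(b_*,w_0)$ established at the beginning of the section, together with the fact that the total logarithmic correction accumulated along any admissible path is bounded. The main obstacle is the bookkeeping: one must verify that the path can be taken to recede monotonically from $w_0$ at unit speed so that the error terms $|b^{(i)}-w_0|^{-1-\eps}$ are genuinely summable, and that the constants in the $O(\cdot)$ are uniform in the local geometry under the isoradiality hypothesis $(\spadesuit)$ — both points follow from the locality of the discrete CR kernels used in the variational estimate and from condition $(\spadesuit)$, which bounds degrees and rhombus angles uniformly.
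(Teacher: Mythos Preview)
Your argument is correct and is exactly the approach intended in the paper: the proposition is stated immediately after the variational identity
\[
\frac{\Mon_M(b_1,w_0)}{\Mon_M(b_0,w_0)}=1-\tfrac12\Re\!\left(\frac{b_1-b_0}{b_0-w_0}\right)+O(|b_0-w_0|^{-1-\eps}),
\]
and the paper leaves the telescoping along a path of black vertices (with the summability of the $O(|b^{(i)}-w_0|^{-1-\eps})$ errors and the appeal to the finite-energy lower bound for positivity) implicit. You have spelled out precisely these omitted details, including the duality $\Gamma\leftrightarrow\Gamma^\dg$ needed to alternate colours of $b^{(i)}$ along the path.
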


Moreover $c$ is a continuous function of the face-rooted rhombus tiling $(\Lambda,w)$ (in the sense of Section \ref{sss:generelectr}), and is bounded away from $0$ and $\infty$ under $(\spadesuit)$. Presumably, $c$ depends only on $\Lambda$ (and thus is constant on a large class of rhombus tiling, see the discussion after Theorem \ref{Thm:electr}). To see this, it is enough to check that 
$$\frac{\Mon(b,w')}{\Mon(b,w)}\rightarrow 1$$
as $b\rightarrow\infty$, if $w,b',w'$ are consecutive vertices on the boundary of a face of $M$. In turn this is equivalent to
$$\left|\frac{S_{b,w}(b',w')}{S_{b,w'}(b',w)}\right|\rightarrow 1$$
as $b\rightarrow\infty$. This seems to require another {\em ad hoc} local computation (considering a ``trimer" with two white and one black vertex around $x$, instead of two black and one white vertex), which we leave to the dedicated reader.

Notice however that if $M$ is the square (or rectangular) lattice, one can interchange the roles of black and white vertices (or use periodicity) and consequently $c(\Z^2,w)=c(\Z^2)$.

\subsection{General monomer correlators}\label{subsec:generalmono}

In this subsection we indicate how to extend the previous argument to the general case of $2p$ monomers $b_1,\dots,b_p,w_1,\dots,w_p$ in the plane, which does not involve any substantial additional difficulty. We denote $\underline b=\{b_1,\dots,b_p\}$, $\underline w=\{w_1,\dots,w_p\}$.
 
Let $x_i$ (resp. $y_i$) be the face of $M$ adjacent to $b_i\in M_B$ (resp. $w_i\in M_W$). Let $\gamma_i$ be a defect line running from $x_i$ to $y_i$ on $M^\dg$; we assume that the $\gamma_i$'s are disjoint and that the pairwise distances between singularities are of order $R\gg1$. Let
$$\rK':\R^{M_B\setminus\{b_1,\dots,b_m\}}\longrightarrow
\R^{M_W\setminus\{w_1,\dots,w_m\}}$$
be the operator obtained from $\rK'$ by removing rows and columns corresponding to monomers and changing signs of entries corresponding to edges of $M$ crossing one of the defect lines. Equivalently, one may think of $\rK'$ as mapping $\{f\in\R^{M_B}:f(b_1)=\cdots=f(b_p)=0\}$ to $\R^{M_W}/\R^{\{w_1,\dots,w_p\}}$. 

By \eqref{eq:magnpairker}, surgery (Lemma \ref{Lem:surgery}) and induction on $m$ (as we did for multiple electric correlators leading to \eqref{eq:Srhomult}), we get that for $R$ large enough, there exists a unique inverting kernel $S_{\underline{b},\underline{w}}$ for $\rK'$ vanishing at infinity and that
$$S_{\underline{b},\underline{w}}(z,w)=\frac 12R_B\left(e^{i\nu(w)}S(z,w)\right)+(cc)+o(1)$$
where all pairwise distances are of order $R$ and $S$ is the continuous kernel 
$$S(z,w)=
\prod_{i=1}^p\left(\frac{(z-x_i)(w-y_i)}{(z-y_i)(w-x_i)}\right)^{1/2}\cdot\frac{1}{\pi(z-w)}$$
Moreover, the $o(1)$ (as $R\rightarrow\infty$) is uniform on lozenge tilings under $(\spadesuit)$. 

The corresponding ``Robin kernel" (compare with Lemma \ref{Lem:monrobin}) is 
$$r(w)=\lim_{z\rightarrow w}\left(S(z,w)-\frac{1}{\pi(z-w)}\right)=\frac {1}{2\pi}\sum_{i=1}^p\left(\frac{1}{w-x_i}-\frac 1{w-y_i}\right)$$
so that
$$\hat r_k\stackrel{def}{=}\lim_{x\rightarrow x_k}\left(r(w)-\frac 1{2\pi (w-x_k)}\right)=\frac{1}{2\pi(y_k-x_k)}+\frac {1}{2\pi}\sum_{i:i\neq k}\left(\frac{1}{x_k-x_i}-\frac 1{x_k-y_i}\right)$$
For $w$ within $O(1)$ of $x_k$ and $z$ away from other singularities, we have
\begin{equation*}
\pi S(z,w)=\frac{1}{z-w}\sqrt{\frac{z-x_k}{w-x_k}}\left(1+(z-w)\pi\hat r_k+O((z-w)^2R^{-2})\right)
\end{equation*}
which plays the role of \eqref{eq:monapproxcond1} with $\pi\hat r_k$ replacing $\xi$. 

If $(b_k,w,b'_k)$ are on the boundary of the face $x_k\in M^\dg$, a straightforward extension of the two-point argument (leading to \eqref{eq:monapproxfinal})yields
\begin{align*}
S_{\underline b,\underline w}(b'_k,w)&=2\Re(i\overline {g(b_k)}g(b_k'))+\Re\left(\frac{i\overline {g(b_k)}(h(b_k)-h(b_k'))e^{i\nu(w)}}{2\sqrt{2(w-x)}}\hat r_k\right)+O(R^{-1-\eps})\\
&=g(b'_k)-g(b_k)+\Re\left(\overline{g(b_k)}(b_k-b'_k)\pi\hat r_k\right)+O(R^{-1-\eps})
\end{align*}
(compare also with Lemma \ref{Lem:monrobin}). Let $\underline b'=(b_1,\dots,b_p)$ with $b'_k$ substituted for $b_k$. Then taking into account $g(b_k')-g(b_k)\in\R$ (see \eqref{eq:magnevals}), we get (by exchanging the roles of $b_k$ and $b'_k$)
$$\frac{S_{\underline b,\underline w}(b'_k,w)}
{S_{\underline b',\underline w}(b_k,w)}
=1+\Re\left((b'_k-b_k)\pi\hat r_k\right)+O(R^{-1-\eps})
$$
Reasoning as in \eqref{eq:trimerlog} we get
$$\frac{\Mon_M(\underline b',\underline w)}{\Mon_M(\underline b,\underline w)}=
\pm \frac{S_{\underline b,\underline w}(b'_k,w)}
{S_{\underline b',\underline w}(b_k,w)}$$
Positivity of the correlators can be justified as in \eqref{eq:monopos}.

Recall (from \eqref{eq:GFFmagncorr}) the expression for the magnetic correlators for the free field (with $g_0=\frac 12$)
$$\langle:{\mc O}_1(w_1)\dots{\mc O}_1(w_p){\mc O}_{-1}(b_1)\dots {\mc O}_{-1}(b_p):\rangle_\C=\left|\frac{\prod_{i<j}(b_i-b_j)(w_i-w_j)}{\prod_{i\neq j}(b_i-w_j)}\right|^{1/2}$$
from which we get the following variation when $b_k$ is replaced with $b_k'$: 
\begin{align*}
\frac{\langle:{\mc O}_1(w_1)\dots{\mc O}_1(w_p){\mc O}_{-1}(b_1)\dots{\mc O}(b'_k)\dots{\mc O}_{-1}(b_p):\rangle_\C
}
{\langle:{\mc O}_1(w_1)\dots{\mc O}_1(w_p){\mc O}_{-1}(b_1)\dots{\mc O}(b_k)\dots{\mc O}_{-1}(b_p):\rangle_\C}
&=1+\Re\left((b'_k-b_k)\pi\hat r_k)\right)+O(R^{-2})\\
&=\frac{\Mon_M(\underline b',\underline w)}{\Mon_M(\underline b,\underline w)}+O(R^{-1-\eps})
\end{align*}
where all pairwise distances are comparable to $R$, and the $O(R^{-1-\eps})$ is uniform over tilings under $(\spadesuit)$. Wrapping up the argument as in Theorem \ref{Thm:electr}, we obtain the

\begin{Thm}\label{Thm:FS}
Let $b_1,\dots,b_p$, $w_1,\dots,w_p$ be marked black (resp. white) vertices on $M$, with pairwise distances of order $R\gg1$. Then there is $c_p(\Lambda,\underline w)>0$ such that
$$\Mon_M(\underline b,\underline w)=c_m(\Lambda,\underline w)\left|\frac{\prod_{i<j}(b_i-b_j)(w_i-w_j)}{\prod_{i\neq j}(b_i-w_j)}\right|^{1/2}
(1+o(1))$$
\end{Thm}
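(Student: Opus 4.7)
The strategy is to iterate the two--monomer argument of the previous subsection. Since the variational estimate for $S_{\underline b,\underline w}$ and the ratio
$$\pm\frac{S_{\underline b',\underline w}(b_0,w)}{S_{\underline b,\underline w}(b_0',w)}=1+\Re\bigl((b_0'-b_0)\pi\hat r(x_k)\bigr)+O(R^{-1-\eps})$$
have already been established in the paragraphs preceding the statement (with $b_0=b_k$, $b_0'$ adjacent black vertices on face $x_k$, and $\underline b'=(\underline b\setminus\{b_0\})\cup\{b_0'\}$), only two elements are missing: (i) the identification of this ratio with the ratio of infinite-volume monomer correlations, and (ii) a telescoping step that extracts the absolute asymptotics from these relative ones.

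For (i), I would repeat the two--way partition function count used in the two--monomer case, now for the partition function $\mathcal{Z}(\Xi_n\setminus(\underline b\cup\underline w\cup\{b_0',w\}))$. Writing it either as a Gibbs probability of $\{(b_0,w)\in\mathfrak m\}$ in the defect measure with monomers at $\underline b'\cup\underline w$, or as a Gibbs probability of $\{(b_0',w)\in\mathfrak m\}$ with monomers at $\underline b\cup\underline w$, and expressing the resulting ratio via the defect inverse kernels, gives (after passing to the limit $n\to\infty$ via pointwise convergence of $\mathsf{K}_n^{-1}$)
$$\frac{\Mon_M(\underline b',\underline w)}{\Mon_M(\underline b,\underline w)}=\pm\frac{S_{\underline b',\underline w}(b_0,w)}{S_{\underline b,\underline w}(b_0',w)}.$$
Combining with the variational estimate and the elementary computation
$$\frac{C(\underline b',\underline w)}{C(\underline b,\underline w)}=1+\Re\bigl((b_0'-b_0)\pi\hat r(x_k)\bigr)+O(R^{-2}),\qquad C(\underline b,\underline w):=\left|\frac{\prod_{i<j}(b_i-b_j)(w_i-w_j)}{\prod_{i\neq j}(b_i-w_j)}\right|^{1/2},$$
which is just logarithmic differentiation of the explicit formula, yields
$$\frac{\Mon_M(\underline b',\underline w)}{\Mon_M(\underline b,\underline w)}\Big/\frac{C(\underline b',\underline w)}{C(\underline b,\underline w)}=1+O(R^{-1-\eps}).$$

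For (ii), set $F(\underline b):=\Mon_M(\underline b,\underline w)/C(\underline b,\underline w)$ with $\underline w$ fixed. Any two admissible configurations $\underline b,\tilde{\underline b}$ (with every pairwise distance in $\underline b\cup\underline w\cup\tilde{\underline b}\cup\underline w$ comparable to $R$) can be joined by a sequence of elementary single--monomer moves of the above type, routed so that during transit each moving monomer stays at distance $\asymp R$ from every other singularity. Moving one black monomer over a macroscopic distance takes $O(R/\delta)=O(R)$ elementary steps, and each step multiplies $F$ by $1+O(R^{-1-\eps})$, so telescoping gives $F(\underline b)=F(\tilde{\underline b})(1+O(R^{-\eps}))$. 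Hence $F$ has a common limit $c_m(\Lambda,\underline w)$ as $R\to\infty$; positivity of the limit follows from the finite--energy lower bound on $\Mon_M$ extending the one--monomer argument given at the beginning of the section.

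The main obstacle is making the $O(R^{-1-\eps})$ error in the variational identity genuinely uniform along a telescoping path of length $O(R)$. Inspection of the derivation (which relied on the Robin kernel $\hat r(x_k)$, on the local discrete holomorphic basis $g,h$ near $x_k$, and on estimates of $S_{\underline b,\underline w}$ for one argument near a singularity) shows the implicit constant is controlled by a lower bound on all pairwise distances between singularities; the geometric routing described above maintains such a bound throughout the path, and so the telescoping sum $\sum_{j=1}^{O(R)}O(R^{-1-\eps})=O(R^{-\eps})$ is legitimate.
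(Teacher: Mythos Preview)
Your proposal is correct and follows essentially the same route as the paper. The paper derives the variational identity and the comparison with the Coulomb-gas formula in the paragraphs preceding the theorem, then simply writes ``Consequently, we have:'' and states the result; your proposal makes explicit exactly the two steps the paper leaves implicit by analogy with the two-point Proposition---the identification $\Mon_M(\underline b',\underline w)/\Mon_M(\underline b,\underline w)=\pm S_{\underline b',\underline w}(b_0,w)/S_{\underline b,\underline w}(b_0',w)$ via the partition-function double count, and the telescoping of $O(R)$ elementary moves with summable error $O(R^{-1-\eps})$ along a path kept at distance $\asymp R$ from all singularities.
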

Again, in the case of the square lattice one may switch colours, so that $c_p(\Lambda,\underline w)=c_p(\Z^2)$. Presumably $c_p(\Z^2)=c_1(\Z^2)^p$, though that requires an additional argument.

Let us point out a by-product of the method. In the course of the argument, we have considered trimers of type $\{b_0,w_0,b_0'\}$, three consecutive vertices on the boundary of a face $f$ of $M$. Then we have for example the following expression for the trimer-monomer correlator:
$$\Mon_M(\{b_0,w_0,b'_0\},w_1)=\Mon_M(b_0,w_1)|S_{b_0,w_1}(b'_0,w_0)|=\Mon_M(b'_0,w_1)|S_{b_0,w_1}(b'_0,w_0)|$$
and $|S_{b_0,w_1}(b'_0,w_0)|=|g(b'_0-g(b_0)|(1+o(1))$ as $|b_0-w_1|\rightarrow\infty$, where $g(b'_0)-g(b_0)$ depends only on the local geometry of the face $x\in M^\dg$:
$$g(b'_0)-g(b_0)=\frac 12\cot\arg\left(\frac{b'_0-w_0}{x-w_0}\right)-\frac 12\cot\arg\left(\frac{b_0-w_0}{x-w_0}\right)\neq 0$$
where $x=\frac{b'_0+b_0}2$ (see \eqref{eq:magnevals}). In the general case, we have
$$\Mon_M(\{b_1,w'_1,b'_1\},\dots,\{b_p,w'_p,b'_p\},w_1,\dots,w_p)\sim c_p(\Lambda,\underline w)\prod_{i=1}^p\left(\frac{\cot(\theta_i)+\tan(\theta_i)}2\right)\left|\frac{\prod_{i<j}(b_i-b_j)(w_i-w_j)}{\prod_{i\neq j}(b_i-w_j)}\right|^{1/2}$$
where $\theta_i\in(0,\frac\pi 2)$ is an angle of the right-angled triangle 
$\{b_i,w'_i,b'_i\}$.

Following Ciucu \cite{Ciucu_PNAS} (note also the relation with the Coulomb Gas formalism, see e.g. \cite{NieKno_potts,Nien_Coulomb,DiFSalZub_coulomb}), one can state a more general conjecture. For simplicity we consider the case $M=\Z^2$. An {\em islet} $I$ is a finite subset of vertices of $\Z^2$ bounded by a simple loop on $(\Z^2)^\dg$. Its {\em charge} is $\eps(I)=|I\cap M_W|-|I\cap M_B|$. The conjecture is that an islet $I$ is a discrete version of a magnetic operator ${\mc O}_{\eps(I)}$ in the sense that 
$$\Mon_M(I_1+x_1,\dots,I_n+x_n)\sim c(I_1,\dots,I_n)\prod_{1\leq i<j\leq n}|x_i-x_j|^{\eps(I_i)\eps(I_j)/2}(1+o(1))$$
as the pairwise distances $|x_i-x_j|$ go to infinity (we require here $\sum_i\eps(I_i)=0$). At the technical level, there is a significant difference between the cases of even and odd charges.

Lastly let us point out that the surgery argument (Lemma \ref{Lem:surgery}) enables to analyse at essentially no additional technical cost mixed magnetic-electric correlators, the simplest of which is
$$\langle{\mc O}_{1}(w){\mc O}_{-1}(b)\exp(2i\pi s(h(f')-h(f)))\rangle$$
where $s\in (0,\frac 12)$ and the pairwise distances between insertions $b,w,f',f$ go to infinity (and ${\mc O}_1(w)$, ${\mc O}_{-1}(b)$ represent monomer defects). Of some interest are the coincident magnetic-electric operators (which may be thought of as spinor variables, see e.g. Section 5 in \cite{Dub_abelian}). At the lattice level, one may consider the above correlator with $w$ on the boundary of $f$ and $b$ on the boundary of $f'$. For general $s$, their analysis seems to require additional arguments. Note however that in the coincident case, for $s=\frac 12$, 
$$\langle {\mc O}_{1}(w){\mc O}_{-1}(b)\exp(i\pi (h(f')-h(f)))\rangle=\pm \uK^{-1}(b,w)$$
the asymptotics of which underpin the analysis of all other correlators.

{\bf Acknowledgments.} I wish to thank Rick Kenyon, C\'edric Boutillier and B\'eatrice de Tili\`ere for interesting conversations during the preparation of this article, as well as anonymous referees for helpful comments.

\printglossary

\bibliographystyle{abbrv}
\bibliography{biblio}

-----------------------

Columbia University

Department of Mathematics

2990 Broadway 

New York, NY 10027

\end{document}